\documentclass[11pt]{article}
\usepackage{amssymb,amsbsy,amsmath,amsfonts,amssymb,amscd,amsthm}
\usepackage{fullpage}

\usepackage[colorlinks=true, linkcolor=black, urlcolor=blue, citecolor=black]{hyperref}
\usepackage{cancel}
\usepackage{booktabs}
\usepackage{multirow}
\usepackage{diagbox}
\usepackage{bm}
\usepackage{soul}

\numberwithin{equation}{section}
\usepackage[T1]{fontenc}
\usepackage[toc,page]{appendix}
\usepackage{hyperref}

\usepackage{url}
\usepackage{courier}
\usepackage{easyReview}
\usepackage[capitalise]{cleveref}
\crefname{equation}{}{}
\usepackage{mathtools}
\usepackage{xcolor}
\graphicspath{{./}{figures/}} 
\usepackage{tikz}
\usetikzlibrary{matrix}
\usepackage{algpseudocode}
\allowdisplaybreaks

\newtheorem{thm}{Theorem}[section]
\newtheorem{lemma}[thm]{Lemma}
\newtheorem{prop}[thm]{Proposition}
\newtheorem{cor}[thm]{Corollary}
\newtheorem{remark}[thm]{Remark}

\theoremstyle{definition}
\newtheorem{defn}[thm]{Definition}

\crefname{lemma}{lemma}{lemmas}
\crefname{proposition}{proposition}{propositions}

\newcommand{\EE}{\mathbb{E}}

\newcommand{\R}{\mathbb{R}}

\def\su{{\mathsf{u}}}
\def\sv{{\mathsf{v}}}

\def\i{{(\iota)}}

\newcommand{\eps}{\varepsilon}

\newcommand{\ux}{{\underline x}}

\newcommand{\beq}{\begin{equation}}
\newcommand{\eeq}{\end{equation}}

\def\sz{{\boldsymbol{z}}}
\def\sq{{\boldsymbol{q}}}
\newcommand{\ld}{\lambda}
\newcommand{\f}{\frac}
\newcommand{\e}{\epsilon}
\newcommand{\g}{\gamma}

\newcommand{\p}{\psi}

\newcommand{\z}{\zeta}

\newcommand{\hx}{\hat{x}}
\newcommand{\lc}{\left(}
\newcommand{\rc}{\right)}

\theoremstyle{plain}

\def\sideremark#1{\ifvmode\leavevmode\fi\vadjust{
		\vbox to0pt{\hbox to 0pt{\hskip\hsize\hskip1em
				\vbox{\hsize3cm\tiny\raggedright\pretolerance10000
					\noindent #1\hfill}\hss}\vbox to8pt{\vfil}\vss}}}

\titlepage
\title{Continuous-Time Heterogeneous Agent Models \\with Recursive Utility and Preference for Late Resolution}
\author{Yves Achdou\thanks{Universit\'{e} de Paris Cit\'{e} and Sorbonne Universit\'{e}, CNRS, Laboratoire Jacques-Louis Lions,  achdou@ljll.univ-paris-diderot.fr}, Qing Tang\thanks{School of Mathematics and Physics, China University of Geosciences (Wuhan),  tangqingthomas@gmail.com}}
\date{}

\begin{document}

\maketitle

\begin{abstract} 

We consider continuous-time heterogeneous agent models with recursive utility (Epstein-Zin utility) cast as mean field games, in which agents prefer late resolution of uncertainty. The model leads to a system coupling a pair of Hamilton-Jacobi-Bellman equations with state constraints and Fokker-Planck-Kolmogorov equations. We investigate the existence of solutions to the mean field game system and discuss some important qualitative features of the model. 
\end{abstract}

\section{Introduction}
	In this paper we extend the study of continuous-time heterogeneous agent models done in Achdou, Han, Lasry, Lions, and Moll \cite{achdou2022income} by addressing the {}{Epstein-Zin recursive utility}. The continuous-time formulation of the Aiyagari-Bewley-Huggett models \cite{huggett1993risk,aiyagari1994uninsured,ljungqvist2018recursive}, classical in recursive macroeconomics, and the related system of partial differential equations can be studied in the light of the mathematical theory of mean field games (cf. \cite{achdou2021mean,MR2295621}). Such models involve a large number of \textit{ex ante} identical but \textit{ex post} heterogeneous agents in an incomplete market setting. Each agent maximizes her/his utility over time with consumption decisions, facing debt limits and idiosyncratic income risks. She/he solves the stochastic optimal control problem, taking as given an equilibrium interest rate $r$:
	\begin{equation}\label{control_problem}
	v(x,y)=\max_{c_\tau}\EE \left[\int_t^{\infty} f(c_\tau,v_\tau)\, d\tau \big\vert \boldsymbol{x}_t=x,\,\boldsymbol{y}_t=y\right],\, \text{subject to}\,
      \left\{
        \begin{array}[c]{rcl}
          d\boldsymbol{x}_\tau &=& (r \boldsymbol{x}_\tau+\boldsymbol{y}_{\tau} - c_\tau)d\tau,\,\,\tau>t,\\
          \boldsymbol{x}_\tau  &\geq& \underline{x}.    
        \end{array}
      \right.  
	\end{equation}
	Here, $\boldsymbol{x}_\tau$ and $\boldsymbol{y}_{\tau}$ respectively stand for the agent's wealth and labor income at time $\tau$. The control variable is the consumption $c_\tau$ and the flow $f$ depends also on the future value $v_\tau=v(\boldsymbol{x}_\tau,\boldsymbol{y}_{\tau})$. It is assumed that $\boldsymbol{y}_{\tau}$ is a Poisson process with two states $y_1<y_2$. The Epstein-Zin recursive utility is defined as:
       \begin{equation}
         \label{EZ}
       f(c,v)=\frac{\rho}{1-\p^{-1}}\frac{c^{1-\p^{-1}}-((1-\g)v)^\theta}{((1-\g)v)^{\theta-1}},\qquad \theta=\frac{1-\p^{-1}}{1-\g},
       \end{equation}
where $\rho$ is the subjective discount rate. It is assumed that $\p$, the elasticity of intertemporal substitution (EIS) and $\g$, the risk aversion parameter, are both positive and do not take the value $1$. This type of nonexpected, recursive utility was proposed in \cite{EZ1989} in discrete time and continuous time models were introduced in \cite{duffie1992stochastic} and \cite{duffie1992pde}. In the present paper, we use the same notation as in more recent economic literature \cite{pindyck2013economic,wang2016optimal}. A key feature of the Epstein-Zin utility \eqref{EZ} is the separation between risk aversion $\gamma$ and elasticity of intertemporal substitution (EIS) $\psi$. The time-additive separable CRRA utility is a special case of recursive utility in which $\g=\psi^{-1}$ and 
\beq\label{Eq: flow CRRA}
f(c,v)=\frac{\rho c^{1-\g}}{1-\g}-\rho v.
\eeq
It was observed in \cite[p. 952]{EZ1989} that the attitude towards the timing of the resolution of uncertainty is pinned down by the constant $\g \p$: early (late) resolution is preferred if $\g \p> (<) 1$. With $\g \p=1$, the agent is indifferent to the timing of uncertainty resolution. 
In this paper, we only consider the case $\frac{1-\p^{-1}}{1-\g}>1$ with $\g > 1$, and the results can be extended to the case $\frac{1-\p^{-1}}{1-\g}>1$ with $\g < 1$. The extension to the case $\g \p=1$ is easy. Our assumption thus implies $\g \p<1$. In this context, it is more straightforward to use the viscosity solution theory and have a complete theory. Therefore, the theoretical results in this paper {}{are} only applicable to the case when agents prefer late resolution. \par 
We use the shorthand notation $v_t=v(\boldsymbol{x}_t,\boldsymbol{y}_t)$ when needed. 
Since $\boldsymbol{y}_t$ takes two values it is convenient to set $v_j(x)=v(x,y_j)$.
 The agent's optimization problem \eqref{control_problem} leads to the weakly coupled system of Hamilton-Jacobi-Bellman (HJB) equations for the value functions $v_j(x)$:
	\begin{equation}\label{eq:HJB}
		0= \max_{c\ge 0} \left\{ f(c,v_j) +(rx +y_j- c)Dv_j \right\}+\lambda_j(v_{\bar \jmath}(x)-v_j(x)),\quad j\in \{1,2\}\quad\text{and}\quad\bar \jmath=3-j,
	\end{equation}
	with a state constraint boundary condition at $\ux$.	In the spirit of ``Bewley models'', the interest rate $r$ is determined in equilibrium and depends on the aggregate wealth and labor in the economy. To close the model, we need to consider the distribution of agents by wealth and income. We introduce a stationary Fokker-Planck-Kolmogorov (FPK) equation, which describes the invariant measure $m_j$ of agents with income $y_j$. The measure $m_j$ has a density $g_j$ and possibly exhibits a Dirac mass at $\ux$ weighted by $\mu_j$, and the density satisfies: 
	\begin{equation}\label{eq:FP}
		- \frac{\partial}{\partial x} \left[ (rx + y_j - c_j(x)) g_j(x) \right]+\lambda_{\bar \jmath}g_{\bar \jmath}(x) -\lambda_jg_j(x)=0 .
	\end{equation}
The aggregate capital supply and labor are denoted by
\beq\label{def K N}
\begin{aligned}
&K[m]=\int_{x\geq \ux} xdm_1+\int_{x\geq \ux} xdm_2=\int_{x\geq \ux} xg_1(x)dx+\int_{x\geq \ux} xg_2(x)dx+\sum_{j\in \{1,2\}}\mu_j\ux,\\
& N[m]=\int_{x\geq \ux} y_1dm_1+\int_{x\geq \ux} y_2dm_2=\int_{x\geq \ux} y_1g_1(x)dx+\int_{x\geq \ux}y_2g_2(x)dx+\sum_{j\in \{1,2\}}\mu_jy_j.
\end{aligned}
\eeq
In the Huggett model \cite{huggett1993risk}, each agent can borrow or lend at interest rate $r$ while the aggregate capital is fixed to the value $B$. The recursive equilibrium in a stationary Huggett model is then summarized by the system
\begin{equation}\label{MFG}
\left\{\begin{aligned}
&(i)\qquad &&0 = \max_{c\ge 0} \left\{ f(c,v_j) +(r^*x+y_j - c)Dv_j(x) \right\}+\lambda_j(v_{\bar \jmath}(x)-v_j(x)),\\
&&&c_{j}^*(x)=\mathop{\arg \max}\limits_{c\ge 0} \left\{ f(c,v_j) +(r^*x+y_j - c)Dv_j(x) \right\},\\
&(ii)&&- \frac{\partial}{\partial x} \left[ (rx+y_j - c_{j}^*(x)) g_j(x) \right]+\lambda_{\bar \jmath}g_{\bar \jmath}(x) -\lambda_jg_j(x)=0,\\
&&&\int_{x\geq \ux} g_1(x)dx+\int_{x\geq \ux} g_2(x)dx+\sum_{j\in \{1,2\}}\mu_j=1,
\end{aligned}\right.
\end{equation}
with the equilibrium $r$ determined by the implicit coupling condition
\beq \label{Huggett}
 (iii_H)\qquad \mathcal{K}[r^*]=K[m^{(r^*)}]=B.
 \eeq
In the Aiyagari model \eqref{Aiyagari}, the asset $x$ is interpreted as homogeneous physical capital. The total factor of productivity (TFP) and the capital depreciation rate are respectively denoted by $A$ and $\delta$. The production in the economy is described using the Cobb-Douglas production function $\textbf{F}$ such that $\textbf{F}(K,N)=AK^{\alpha}N^{1-\alpha}$ with $0<\alpha<1$ and where $K$ is the aggregate capital and $N$ is the aggregate labor. {}{The profit of the producer is $\textbf{F}(K,N)-(r+\delta)K-wN$ where $w$ is the wage. Here we normalize the wage to unity, i.e. $w=1$.} Then, at any given interest rate $r$, the producer chooses a level of capital demand $\mathcal{K}_d[r]$ such that 
\begin{equation}\label{F}
\mathcal{K}_d[r]=\mathop{\arg\max}\limits_{K}\left\{\textbf{F}(K,N)-(r+\delta)K\right\}.
\end{equation}
It can be deduced from state constraints that under any given interest rate $r$,
\beq\label{normal N}
\mu_j+\int_{x> \ux}g_{j}(x)dx=\frac{\lambda_{\bar \jmath}}{\lambda_j+{}{\lambda_{\bar \jmath}}}, \quad N[m]=\frac{y_{\bar \jmath}\lambda_j}{\lambda_j+\lambda_{\bar \jmath}}+\frac{y_j\lambda_{\bar \jmath}}{\lambda_j+\lambda_{\bar \jmath}}.
\eeq
The stationary Aiyagari model is described by an equilibrium interest rate $r^*$ such that $\mathcal{K}[r^*]=\mathcal{K}_d[r^*]$, i.e. supply equals demand for capital. This leads one to supplement system \eqref{MFG} with the coupling condition: 
\begin{equation}\label{Aiyagari}
(iii_A)\qquad r^*=A\alpha \left(\frac{\mathcal{K}[r^*]}{N}\right)^{\alpha-1}-\delta=A\alpha \left(\frac{K[m^{(r^*)}]}{N}\right)^{\alpha-1}-\delta. 
\end{equation}
Continuous-time consumption-saving models with recursive utility have been used for studying {}{financial consequence of disasters} \cite{pindyck2013economic}, dynamic portfolio choice \cite{dimmock2024endowment} and economics of climate change \cite{hong2023mitigating}. In particular, {}{such a model with state constraints} (incomplete market setting) and stochastic income process has been considered in \cite{wang2016optimal}. \par
Discrete-time recursive utility models have been widely used for macroeconomic asset pricing \cite{fernandez2021estimating,guvenen2009parsimonious,hansen2008consumption} and {}{were} considered in a more general mathematical framework of abstract dynamic programming \cite{Sargent2025,Sargent_Stachurski_2025}. There is a vast literature on the existence and uniqueness of the value function for recursive utility process in discrete time, e.g. \cite{bloise2024not} and \cite{BOROVICKA2020}. The connection between discrete-time and continuous-time models with recursive utilities has been addressed in \cite{kraft2014stochastic}.\par

It was proposed in \cite{achdou2022income} that the suitable notion of solution to the HJB equation in these heterogeneous agent systems is the {\it{constrained viscosity solution}} \cite{capuzzo1990hamilton}. Since then, several works  \cite{camilli2025semi,hofer2025price,shigeta2023existence} {}{have contributed to the mathematical analysis of} such models with time-additive utilities. In this paper, we extend the {\it{constrained viscosity solution}} theory to HJB equations with recursive utility and this allows us to propose monotone numerical schemes (monotonicity in the sense of Barles-Souganidis \cite{barles1991convergence}).
Numerical methods based on finite difference schemes have been proposed in \cite{achdou2022income} for the same kind of models with CRRA utilities. Recently, a semi-Lagrangian scheme was proposed in \cite{camilli2025semi}. Both methods are well adapted for the models addressed in the present paper. \par
This paper is organized as follows. In \cref{Sec: Preliminaries} we give some preliminaries and recall useful notions in convex analysis. \cref{Sec: HJB} is devoted to the analysis of HJB equations. We discuss the existence and uniqueness of the {\it{constrained viscosity solution}}, and prove that the latter is the value function of the recursive optimal control problem. Higher regularity of the solution is then obtained using in particular arguments from convex analysis. We establish some key features of the optimal saving policies. In \cref{Sec: FPK} we consider the solution to the FPK equation given $r<\rho$. In \cref{Sec: MFG} we first study the HJB equation and saving policies with $r=\rho$ and prove the nonexistence of an invariant measure in this case. This leads to the blow up of the aggregate wealth as $r\to \rho$. We then address the existence of an equilibrium for the Aiyagari model. Finally, we give some numerical examples in \cref{Sec: numerical}.
\section{Preliminaries}\label{Sec: Preliminaries}
The assumptions that follow will be made in the whole paper:
\beq\label{asp1}
\g>1,\,\,{}{0<\p<1},\,\,\g \p< 1.
\eeq
$$
\rho\geq r,\quad \rho \ux+y_j>0.
$$
An equivalent statement to \eqref{asp1} would be $\g>1$ and $\theta>1$. \par
 With Epstein-Zin utility \eqref{EZ}, we can rewrite the HJB equation \eqref{eq:HJB}
	\beq\label{HJB}
	\begin{aligned}
	\frac{\rho}{\theta} v_j(x)={}& \max_{c\ge 0} \left\{ \mathcal{F}(c,v_j) +(rx +y_j- c)Dv_j \right\}+\lambda_j(v_{\bar \jmath}(x)-v_j(x))\\
	={}& H(x,y_j,v_j,Dv_j)+\lambda_j(v_{\bar \jmath}(x)-v_j(x)),
	\end{aligned}
	\eeq
	where we use the notation
	\beq\label{EZ flow}
	f(c,v_j)=\mathcal{F}(c,v_j)-\frac{\rho}{\theta} v_j,\qquad \mathcal{F}(c,v_j)=\frac{\rho}{1-\p^{-1}}\frac{c^{1-\p^{-1}}}{((1-\g)v_j)^{\theta-1}}.
	\eeq
The Hamiltonian in \Cref{HJB} is
	\begin{equation}\label{def H}
  H(x,y,v,p)=\left\{
      \begin{aligned}
       (rx +y)p+\frac{\rho^{\p}}{\p-1} p^{1-\p}((1-\g)v)^{\frac{1-\g \p}{1-\g}},\quad \quad & \text{if }\quad p\geq 0,\\
       +\infty, \quad \quad  & \text{if }\quad p< 0.
     \end{aligned}
   \right.
\end{equation}
We summarize the notation in \Cref{tab1}. 
\begin{table}[h!]
    \centering
    \caption{Symbols}
    \label{tab1}
    \begin{tabular}{l c c}
        \toprule
        {}{Subjective discount factor} & $\rho>0$ \\
        Risk aversion  & $\gamma>1$  \\
        {}{Elasticity of intertemporal substitution} (EIS) &{}{$0<\psi<1$}\\
        A parameter arising in Epstein-Zin utility &$\theta:=\frac{1-\p^{-1}}{1-\g}>1$\\
        A convenient parameter &$b:= \rho\left[\frac{r+\p(\rho-r)}{\rho}\right]^{\frac{1}{1-\p}}$\\
        Scaled discount &$\z=\rho/\theta$\\
        Aggregator & $f(c,v)$\\
        Modified aggregator & $ \mathcal{F}(c,v)=f(c,v)+\z v$\\
        Borrowing limit & $\ux$ \\
        Equilibrium interest rate & $r^*< \rho$\\
        \bottomrule
    \end{tabular}
\end{table}\par
It is clear with \eqref{asp1} that for any $c>0$, $\mathcal{F}(c,v)$ is decreasing in $v$ since 
\beq\label{F decreasing}
\mathcal{F}_v(c,v)<0.
\eeq
For all $c>0$ and $v<0$, the second order derivatives of the aggregator $f(c,v)$ and the determinants of Hessian matrices satisfy
\beq\label{Eq: f concave}
\begin{aligned}
&f_{cc}(c,v)=\mathcal{F}_{cc}(c,v)=\frac{-\rho c^{-\p^{-1}-1}}{\psi((1-\g)v)^{\theta-1}}<0,\quad f_{vv}(c,v)=\mathcal{F}_{vv}(c,v)=(\g-\psi^{-1})\frac{\rho c^{1-\p^{-1}}}{((1-\g)v)^{\theta+1}}<0,\\
&
{}{
\det
\begin{pmatrix}
f_{cc}(c,v) & f_{cv}(c,v) \\
f_{vc}(c,v) & f_{vv}(c,v)
\end{pmatrix}
=
\rho^2\g (\p^{-1}-\g)\frac{c^{-2\p^{-1}}}{((1-\g)v)^{2\theta}}
>0.
}
\end{aligned}
\eeq
Therefore $f(c,v)$ and $\mathcal{F}(c,v)$ are jointly concave in $(c,v)$.\par
	The optimal consumption (away from the borrowing limit) is 
	\beq\label{c optimal}
	c_j=\mathop{\arg\max}\limits_{c\geq 0} \left\{ \mathcal{F}(c,v_j) -cDv_j \right\}=\rho^{\p}(Dv_j)^{-\p}((1-\g)v_j)^{\frac{1-\g \p}{1-\g}}.
	\eeq
We observe that $H(x,y,v,p)$ defined in \Cref{def H} is convex in $p$ for fixed $(x,y,v)$ with $p>0$,
\beq\label{H min}
\min_{p>0}H(x,y,v,p)=\rho\frac{(rx+y)^{1-\p^{-1}}}{1-\p^{-1}}\big((1-\g)v\big)^{\frac{\p^{-1}-\g}{1-\g}}.
\eeq
Moreover, since $rx+y>0$ and $p^{1-\p}$ is sublinear, we infer from \Cref{def H} the coercivity condition
\beq\label{H coerc}
H(x,y,v,p)\to +\infty \quad \text{when}\quad  p\to +\infty.
\eeq
We denote by $H_v(x,y,v,p)$ and $H_{vv}(x,y,v,p)$ the first and second order derivatives of $H(x,y,v,p)$ with respect to $v$. We use $H_{vp}(x,y,v,p)$ to denote the cross second order derivative of $H(x,y,v,p)$ with respect to $v$ and $p$. We obtain from \eqref{asp1}:
\begin{itemize}
\item $H(x,y,v,p)$ is decreasing in the $v-$ variable:
\beq\label{Eq:Hv<0}
H_v(x,y,v,p)=\frac{\rho^{\p}(1-\g \p)}{\p-1} p^{1-\p}((1-\g)v)^{\frac{\g(1-\p)}{1-\g}}< 0,
\eeq
\item
$H_{v}(x,y,v,p)$ is decreasing in both the $v-$ and $p-$ variables: 
\beq\label{Hvp}
H_{vp}(x,y,v,p)=-\rho^{\p}(1-\g \p)p^{-\p}((1-\g)v)^{\frac{\g(1-\p)}{1-\g}}< 0.
\eeq
\beq\label{Hvv}
H_{vv}(x,y,v,p)=-\g \rho^{\p}(1-\g \p) p^{1-\p}((1-\g)v)^{\frac{\g(1-\p)}{1-\g}-1}< 0.
\eeq
\end{itemize}
From \cref{Hvv}, we deduce that $H(x,y,v,p)$ is strictly concave in the $v$ variable. \par

We will also use the following elementary but important inequality whose proof  is left to the reader.
\begin{lemma}\label{Lemma: b}
For $\rho>r$, the parameter $b$ introduced in \Cref{tab1} is such that $r<b<\rho$. 
\end{lemma}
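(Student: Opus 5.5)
We have $b/\rho = \bigl(1 - (1-\psi)(1 - r/\rho)\bigr)^{1/(1-\psi)}$. The plan is to normalize by $\rho$ and reduce the claim to an inequality for a single power function on an interval.

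Set $s = r/\rho$. Then $r + \psi(\rho - r) = \rho\bigl(s + \psi(1-s)\bigr)$, and hence
\[
\frac{b}{\rho} = \bigl(s+\psi(1-s)\bigr)^{\frac{1}{1-\psi}} = \bigl(1-(1-\psi)(1-s)\bigr)^{\frac{1}{1-\psi}}.
\]
Since $\rho > r$, we have $s<1$. The base $s+\psi(1-s)$ must be positive for $b$ to be well defined; this holds if $r \ge 0$, and more generally whenever $s > -\psi/(1-\psi)$. I would state this positivity of $r+\psi(\rho-r)$ explicitly as the standing condition under which $b$ makes sense.

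\textbf{Upper bound $b<\rho$.} Because $0<\psi<1$ and $s<1$, the base satisfies $\psi \cdot 1 + (1-\psi) s < 1$, being a convex combination of $1$ and $s$ with weight $1-\psi$ on $s<1$. The exponent $1/(1-\psi)$ is positive, so raising a number in $(0,1)$ to this power keeps it below $1$. This gives $b/\rho < 1$.

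\textbf{Lower bound $b>r$, i.e.\ $\bigl(s+\psi(1-s)\bigr)^{1/(1-\psi)} > s$.} If $s\le 0$ this is immediate, since the left side is positive. For $0<s<1$, take logarithms, or equivalently raise both sides to the power $1-\psi$, which preserves order. The claim becomes
\[
s+\psi(1-s) > s^{1-\psi}.
\]
This follows from strict concavity of $t\mapsto t^{1-\psi}$, through its tangent line at $t=1$: $t^{1-\psi} \le 1 + (1-\psi)(t-1)$, with equality only at $t=1$. At $t=s\ne 1$ this reads $s^{1-\psi} < 1-(1-\psi)(1-s) = s+\psi(1-s)$, which is exactly what is needed. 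Equivalently, it is the weighted AM--GM inequality $s^{1-\psi}\cdot 1^{\psi} < (1-\psi)s + \psi$.

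The only delicate point is the case distinction on the sign of $r$, together with making sure $b$ is well defined. The core of the argument is the concavity (tangent-line) step, which is routine.
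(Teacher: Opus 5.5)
The paper gives no proof of this lemma (it is explicitly ``left to the reader''), so there is nothing to compare against. Your argument is correct and complete.

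The pieces you use are exactly what is needed:
\begin{itemize}
\item the normalization $s=r/\rho$;
\item the convex-combination bound $\psi+(1-\psi)s<1$, which gives $b<\rho$;
\item the strict tangent-line (weighted AM--GM) inequality $s^{1-\psi}<\psi+(1-\psi)s$ for $0<s<1$, which gives $b>r$.
\end{itemize}
Both strict inequalities genuinely rely on $0<\psi<1$ from \eqref{asp1}.

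Your remark on well-definedness is sound but essentially moot in context. In the paper $b$ only enters through the supersolution $\check{\mathsf{v}}_j$, which contains $y_j/r$ and is used only with $r>0$. So the case $s\le 0$ is included for completeness rather than necessity.
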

Let us recall some notions from convex analysis. 
\begin{defn}
Consider $\phi\in C[\ux,+\infty)$. The super- and the subdifferential of $\phi$ at $x$ are given by
\beq\label{Eq: Sub Sup Diff}
\begin{aligned}
(1)\quad D^+\phi(x):=\left\{p\in \R:\,\,\limsup_{z\to x,\,z\geq \ux}\frac{\phi(z)-\phi(x)-p(z-x)}{\vert z-x\vert} \leq 0\right\},\\
(2)\quad D^-\phi(x):=\left\{p\in \R:\,\,\liminf_{z\to x,\,z\geq \ux}\frac{\phi(z)-\phi(x)-p(z-x)}{\vert z-x\vert} \geq 0\right\}.
\end{aligned}
\eeq
\end{defn}
$D^+\phi(x)$ and $D^-\phi(x)$ are closed and convex subsets of $\R$. Moreover, it is clear that if $\phi$ is nondecreasing in a neighborhood of $x$ then $D^+\phi(x)$ and $D^-\phi(x)$ are subsets of $\mathbb{R}_+$. \par
For a locally Lipschitz function $\phi$, we define the set (cf. \cite[p. 63]{bardi1997optimal})
\beq
D^*\phi(x)=\left\{p\in \R: p=\lim_{n\to +\infty}D\phi(x_n),\,\,x_n\to x\right\},
\eeq
and we denote by ${\rm{co}}D^*\phi(x)$ the convex hull of $D^*\phi(x)$. We observe that if $\phi$ is nondecreasing in a neighborhood of $x$ then all elements of $D^*\phi(x)$ are nonnegative. 
\begin{defn}\label{def:semiconcave}
We say that $\phi: (\ux,+\infty)\to \mathbb{R}$ is semiconcave if there exists a constant $C\geq 0$ such that for all $x,z\in (\ux,+\infty)$ we have 
$
\frac{1}{2} \phi(x)+\frac{1}{2}\phi(z)\leq \phi \lc \frac{x+z}{2}\rc+\frac{1}{4}C\vert x-z\vert^2.
$
\end{defn}
It is obvious that any concave function is also semiconcave.
\begin{lemma}\label{lemma:semiconcave}
(\cite[Proposition 4.7]{bardi1997optimal}) Let $\phi$ be a semiconcave function in $(\ux,+\infty)$. Then $D^+\phi(x)={\rm{co}}D^*\phi(x)$ for all $x\in (\ux,+\infty)$. Moreover, if $D^+\phi(x)$ is a singleton then $\phi$ is differentiable at $x$. 
\end{lemma}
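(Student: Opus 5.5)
The plan is to reduce everything to the one-dimensional theory of concave functions. Set $\psi(x)=\phi(x)-\frac{C}{2}x^2$ on $(\ux,+\infty)$. Since $\frac12 x^2+\frac12 z^2-\left(\frac{x+z}{2}\right)^2=\frac14|x-z|^2$, the inequality in \Cref{def:semiconcave} becomes exactly midpoint concavity of $\psi$:
\[
\tfrac12\psi(x)+\tfrac12\psi(z)\le \psi\!\left(\tfrac{x+z}{2}\right).
\]
I will use that $\phi$ is continuous; this holds in all situations where the lemma is applied, since $\phi\in C[\ux,+\infty)$. Then midpoint concavity upgrades to full concavity in the standard way: first dyadic convex combinations, then density and continuity. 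So $\psi$ is concave on $(\ux,+\infty)$.

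The problem then transfers to $\psi$. Adding the smooth function $\frac{C}{2}x^2$ shifts everything by $Cx$: $D^+\phi(x)=D^+\psi(x)+Cx$. Likewise $D^*\phi(x)=D^*\psi(x)+Cx$, because the correction term has continuous derivative. Convex hulls commute with translations, and differentiability is unaffected. Hence it suffices to prove both claims for the concave function $\psi$.

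For concave $\psi$ on an open interval I will use the classical facts below.
\begin{enumerate}
\item $\psi$ is locally Lipschitz.
\item The one-sided derivatives $\psi'_\pm(x)$ exist everywhere, with $\psi'_+(x)\le\psi'_-(x)$.
\item Both $\psi'_\pm$ are nonincreasing, and $\psi'_+$ is right-continuous and $\psi'_-$ left-continuous.
\item $\psi$ is differentiable except at countably many points.
\end{enumerate}

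The first step is to show $D^+\psi(x)=[\psi'_+(x),\psi'_-(x)]$. Splitting the $\limsup$ in the definition into $z\downarrow x$ and $z\uparrow x$ gives exactly $p\ge \psi'_+(x)$ and $p\le\psi'_-(x)$.

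The second step, which I expect to be the only delicate point, is to show $D^*\psi(x)=\{\psi'_+(x),\psi'_-(x)\}$.
\begin{itemize}
\item \emph{Both values are attained.} Take points of differentiability $x_n\downarrow x$. Then $D\psi(x_n)=\psi'_+(x_n)\to\psi'_+(x)$ by right-continuity. Symmetrically, $x_n\uparrow x$ gives $\psi'_-(x)$.
\item \emph{No other values occur.} For any sequence $x_n\to x$ of differentiability points, monotonicity gives $D\psi(x_n)\le\psi'_+(x)$ when $x_n>x$ and $D\psi(x_n)\ge\psi'_-(x)$ when $x_n<x$. Passing to monotone subsequences and using the one-sided continuity, the limit is one of the two one-sided derivatives.
\end{itemize}
Taking the convex hull then gives $\mathrm{co}\,D^*\psi(x)=[\psi'_+(x),\psi'_-(x)]=D^+\psi(x)$.

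Finally, if $D^+\phi(x)$ is a singleton, then $\psi'_+(x)=\psi'_-(x)$. Hence $\psi$, and therefore $\phi$, is differentiable at $x$.
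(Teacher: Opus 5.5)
Your proof is correct apart from one arithmetic slip. It takes a different route from the paper, which gives no argument of its own and simply cites the general result of Bardi--Capuzzo-Dolcetta.

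The slip is in the reduction step. With the normalization of \cref{def:semiconcave}, the slack is $\tfrac14 C|x-z|^2$. Subtracting $\tfrac C2x^2$ only removes $\tfrac C2\cdot\tfrac14|x-z|^2=\tfrac C8|x-z|^2$, so $\phi-\tfrac C2x^2$ need not be midpoint concave. Take $\psi=\phi-Cx^2$ instead. Then $D^+$ and $D^*$ are translated by $2Cx$ rather than $Cx$, and nothing else changes. Your tacit continuity assumption is harmless, since the paper defines $D^\pm$ only for continuous functions and $D^*$ only for locally Lipschitz ones.

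The cited proposition is an $n$-dimensional statement, where $D^+\phi(x)$ cannot be described by two one-sided derivatives. The standard argument there has two halves:
\begin{itemize}
\item The inclusion $\mathrm{co}\,D^*\phi(x)\subseteq D^+\phi(x)$ comes from passing to the limit in $\phi(z)\le\phi(x_n)+D\phi(x_n)(z-x_n)+C|z-x_n|^2$ at differentiability points $x_n\to x$, then using convexity of $D^+\phi(x)$.
\item The reverse inclusion needs a genuinely multidimensional tool, such as support functions or Clarke's generalized gradient, on top of Rademacher's theorem.
\end{itemize}

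Your one-dimensional proof replaces all of this with the elementary calculus of one-sided derivatives of concave functions. It also gives sharper information: $D^+\phi(x)=[\psi'_+(x),\psi'_-(x)]+2Cx$, and $D^*\phi(x)$ consists of at most the two endpoints.

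The paper only applies the lemma to the concave functions $v_j$ on $(\ux,+\infty)$ in \cref{Prop: v C1}. There one may even take $C=0$ and skip the reduction, so your self-contained version is fully sufficient. The cited route buys only dimension-independence, which is not needed here.
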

The following result is a direct consequence of \cite[Theorem 25.7]{tyrrell1970convex} and used in \cite{shigeta2023existence}. It allows one to prove the continuous dependence of the consumption and saving policies upon the interest rate $r$ by studying the convergence of the value functions, see also \cite{achdou2023mean} and \cite{camilli2025semi}.
\begin{lemma}\label{Lemma: Dv converge}
Let $v^{\i}$, $\iota \in \mathbb{N}$ be a sequence of strictly concave, differentiable and bounded functions defined on $(\ux,+\infty)$. If $v^{\i}(x)\to v(x)$ for all $x$ as $\iota \to +\infty$, then $Dv^{\i}$ converges to $Dv$ locally uniformly. 
\end{lemma}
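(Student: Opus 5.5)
The plan is to reduce everything to one-dimensional monotonicity arguments for concave functions, which is essentially the content of \cite[Theorem 25.7]{tyrrell1970convex} specialized to an interval. The limit $v$ is a pointwise limit of concave functions, so it is concave on $(\ux,+\infty)$ and finite there. For the conclusion to make sense, $Dv$ must exist everywhere. I will therefore use that $v$ is differentiable on $(\ux,+\infty)$. This is implicit in the statement, which refers to $Dv$ without qualification, and it is exactly the hypothesis of \cite[Theorem 25.7]{tyrrell1970convex}. In the applications, $v$ is itself a value function whose differentiability is established independently. Strict concavity of the $v^{\i}$ is not needed for the argument; only concavity is used. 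A concave differentiable function of one variable is automatically $C^1$, since its derivative is monotone and has no jumps, so $Dv$ is continuous.

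\textbf{Step 1 (pointwise convergence of derivatives).} Fix $x>\ux$ and $h>0$ small enough that $x-h>\ux$. Concavity of $v^{\i}$ gives the chord inequalities
\[
\frac{v^{\i}(x+h)-v^{\i}(x)}{h}\le Dv^{\i}(x)\le \frac{v^{\i}(x)-v^{\i}(x-h)}{h}.
\]
Letting $\iota\to+\infty$ and using pointwise convergence at the three points $x-h,x,x+h$, the $\liminf$ and $\limsup$ of $Dv^{\i}(x)$ are squeezed between the corresponding difference quotients of $v$. Letting then $h\to 0^+$ and using differentiability of $v$ at $x$, both bounds tend to $Dv(x)$. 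Hence $Dv^{\i}(x)\to Dv(x)$ for every $x$.

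\textbf{Step 2 (upgrade to local uniform convergence).} This is the only step that needs care, and I will do it with a Dini/P\'olya-type argument for monotone functions. Fix a compact interval $[a,b]\subset(\ux,+\infty)$ and $\eps>0$. Since $Dv$ is uniformly continuous on $[a,b]$, choose a partition $a=x_0<x_1<\dots<x_N=b$ with $Dv(x_{k})-Dv(x_{k+1})<\eps$ for every $k$. By Step 1 there is $\iota_0$ such that $|Dv^{\i}(x_k)-Dv(x_k)|<\eps$ for all $k$ and all $\iota\ge\iota_0$.

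Now take $x\in[x_k,x_{k+1}]$. Both $Dv^{\i}$ and $Dv$ are nonincreasing, so
\[
Dv^{\i}(x)-Dv(x)\le Dv^{\i}(x_k)-Dv(x_{k+1})< 2\eps ,
\]
and symmetrically
\[
Dv^{\i}(x)-Dv(x)\ge Dv^{\i}(x_{k+1})-Dv(x_k)>-2\eps .
\]
Therefore $\sup_{[a,b]}|Dv^{\i}-Dv|\le 2\eps$ for all $\iota\ge\iota_0$, which proves the lemma.

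Boundedness of the $v^{\i}$ is not needed in this argument. It only guarantees that the limit $v$ is finite, and that already follows from pointwise convergence.
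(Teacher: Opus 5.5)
Your proof is correct, but it follows a different route from the paper, which gives no argument of its own and simply invokes \cite[Theorem 25.7]{tyrrell1970convex}.

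Rockafellar's proof is $n$-dimensional and goes through convergence of subdifferentials. Pointwise convergence of finite convex (here concave) functions on an open convex set is automatically locally uniform. From this one gets that limit points of $Dv^{\i}(x_\iota)$ with $x_\iota\to x$ lie in $D^+v(x)=\{Dv(x)\}$. Local uniform convergence then follows by a compactness/contradiction argument combined with continuity of $Dv$.

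You instead use the order structure of the real line. The chord inequalities give pointwise convergence of $Dv^{\i}$. Monotonicity of $Dv^{\i}$ and $Dv$ on a fine partition (a P\'olya--Dini type argument) then upgrades this to uniform convergence on compact intervals. The citation buys generality in the dimension; your argument is self-contained and makes explicit which hypotheses actually matter.

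You are right that strict concavity and boundedness are superfluous. You are also right that differentiability of the limit $v$ must be assumed. It is a hypothesis of Rockafellar's theorem but is missing from the lemma as stated, and it cannot be dropped: $v^{\i}(x)=-\iota^{-1}\log\bigl(e^{-\iota(x-\ux)}+e^{-\iota}\bigr)$ is smooth, bounded and strictly concave on $(\ux,+\infty)$, yet it converges pointwise to $\min(x-\ux,1)$, which has a kink at $\ux+1$.

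In the paper's application (Proposition~\ref{Prop: stability r}) the limit is the HJB solution, which is $C^1$ by Proposition~\ref{Prop: v C1}. So the extra hypothesis is satisfied there.
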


\section{Analysis of the Hamilton-Jacobi-Bellman equation}\label{Sec: HJB}
Let us recall the definition of viscosity solution for the HJB system \eqref{HJB}, which extends naturally the definition used in \cite{camilli2025semi} when the utility is of CRRA type.
\begin{defn}\hfill\label{def vis sol}
	\begin{itemize}
		\item[1.] An upper semicontinuous (u.s.c.) function $v=(v_1,v_2)$ is said to be a viscosity subsolution of \eqref{HJB} at $x$, if whenever $\varphi$ is a smooth function and $v_j-\varphi$ has a local maximum at $x$, then
	\[\frac{\rho}{\theta} v_j(x)\le H(x,y_j,{}{v_j(x)},D\varphi(x) )+\lambda_j (v_{\bar \jmath}(x)-v_j(x)).\]	
		\item[2.] A  lower semicontinuous (l.s.c.) function $v=(v_1,v_2)$ is said to be a viscosity supersolution of \eqref{HJB} at $x$, if whenever $\varphi$ is a smooth function and $v_j-\varphi$ has a local  minimum at $x$, then
	\[\frac{\rho}{\theta} v_j(x)\ge H(x,y_j,v_j(x),D\varphi(x) )+\lambda_j (v_{\bar \jmath}(x)-v_j(x)).\]		
	\end{itemize}
A continuous function $v$ is said to be a constrained viscosity solution to  system \eqref{HJB} if $v$ is a viscosity supersolution in $(\ux,\infty)$ and a viscosity subsolution in $[\ux,\infty)$.
\end{defn}

It is useful to state the following equivalent definition using sub- and superdifferentials. 
\begin{defn}\hfill\label[definition]{def vis sol2}\\
	\begin{itemize}
		\item[1.] An upper semicontinuous (u.s.c.) function $v=(v_1,v_2)$ is a viscosity subsolution of \eqref{HJB} at $x$, if 
	\[\frac{\rho}{\theta} v_j(x)\le H(x,y_j,v_j(x),p)+\lambda_j (v_{\bar \jmath}(x)-v_j(x)) \quad \forall p\in D^+v_j(x).\]	
		\item[2.] A  lower semicontinuous (l.s.c.) function $v=(v_1,v_2)$ is a viscosity supersolution of \eqref{HJB} at $x$, if 
			\[\frac{\rho}{\theta} v_j(x)\ge H(x,y_j,v_j(x),p)+\lambda_j (v_{\bar \jmath}(x)-v_j(x)) \quad \forall p\in D^-v_j(x).\]			\end{itemize}
\end{defn}

We next state a strong comparison principle for viscosity sub- and supersolutions of \eqref{HJB}.
\begin{prop}\label{comparison}
Assume that $\mathsf{u}=(\mathsf{u}_1,\mathsf{u}_2)$ and $\mathsf{v}=(\mathsf{v}_1,\mathsf{v}_2)$ are bounded viscosity sub- and supersolution of system \eqref{HJB}.We extend $\mathsf{v}_j$ at $\ux$ by setting 
$
\mathsf{v}_j(\ux)=\liminf_{\substack{z\rightarrow \ux,\, z>\ux}} \mathsf{v}_j(z) .
$
Then $\mathsf{u}\leq \mathsf{v}$ in $[\ux,+\infty)$, i.e. $\mathsf{u}_j\le \mathsf{v}_j$ in $[\ux,+\infty)$ for $j=1,2$.
\end{prop}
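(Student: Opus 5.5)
The plan is the classical doubling-of-variables argument for state-constrained problems (Soner, Capuzzo-Dolcetta–Lions), adapted to three features of the system: the weak coupling between $j=1,2$, the nonlinear dependence of $H$ on $v$, and the unbounded domain $[\ux,+\infty)$. The monotonicity in $v$ will come from \eqref{Eq:Hv<0}. Since $H$ is decreasing in $v$, for $a>b$ and $p\ge0$ we have
\[
\tfrac{\rho}{\theta}(a-b)-\big[H(x,y,a,p)-H(x,y,b,p)\big]\ \ge\ \tfrac{\rho}{\theta}(a-b),
\]
so the operator is strictly proper with constant $\rho/\theta$. Since $(1-\g)v$ must be positive for $H$ to be defined, $\mathsf u$ and $\mathsf v$ take negative values. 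I will assume, as is implicit in \eqref{def H}, that they range in a compact subset of $(-\infty,0)$. Then $v\mapsto((1-\g)v)^{\frac{1-\g\p}{1-\g}}$ is bounded and Lipschitz there.

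Argue by contradiction and assume $M:=\max_j\sup_{x\ge\ux}(\mathsf u_j-\mathsf v_j)>0$. For small parameters $\delta,\eps>0$ and a fixed $L>0$, consider
\[
\Phi_j(x,z)=\mathsf u_j(x)-\mathsf v_j(z)-\Big|\frac{x-z}{\eps}+L\Big|^2-\delta(x-\ux),\qquad x,z\ge\ux .
\]
The linear penalization $\delta(x-\ux)$ is nonnegative and handles infinity, because $\mathsf u,\mathsf v$ are bounded. Its particular choice matters: with $p_z:=\frac{2}{\eps}\big(\frac{x-z}{\eps}+L\big)$, the two test slopes are $p_x=p_z+\delta$ for $\mathsf u_j$ at $x$ and $p_z$ for $\mathsf v_j$ at $z$, so $p_x\ge p_z$. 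The shift $L$ is Soner's inward-pointing trick. At a maximum point one has $z\approx x+\eps L>\ux$, so $z$ lies in the open set where the supersolution property holds. The extension of $\mathsf v_j$ at $\ux$ by its $\liminf$ is what makes the value of $\Phi$ near $x=\ux$ comparable to $M$.

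Take a maximum point $(j,x_\eps,z_\eps)$ over $j$ as well. The standard estimates give $|x_\eps-z_\eps|^2/\eps^2$ bounded, $z_\eps>\ux$ for $\eps$ small, $\mathsf u_j(x_\eps)-\mathsf v_j(z_\eps)\ge M/2$ for $\delta$ small, and $|x_\eps-z_\eps|\to0$. Here I use continuity of $\mathsf u,\mathsf v$ (or the semicontinuity plus the $\liminf$ extension) to handle the inward shift.

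Next apply the supersolution inequality at $z_\eps$ with slope $p_z$. Since $H=+\infty$ for negative slopes, the inequality forces $p_z\ge0$, and hence $p_x\ge0$, so the subsolution inequality at $x_\eps$ is not vacuous. Subtracting the two inequalities, the coupling terms satisfy
\[
\lambda_j\big(\mathsf u_{\bar\jmath}(x_\eps)-\mathsf v_{\bar\jmath}(z_\eps)\big)-\lambda_j\big(\mathsf u_j(x_\eps)-\mathsf v_j(z_\eps)\big)\ \le\ 0
\]
up to a penalization error, because $j$ was chosen maximizing. Properness then yields
\[
\tfrac{\rho}{\theta}\,\tfrac{M}{2}\ \le\ H(x_\eps,y_j,\mathsf v_j(z_\eps),p_x)-H(z_\eps,y_j,\mathsf v_j(z_\eps),p_z).
\]
The right-hand side splits into three pieces. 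The first is $(rx_\eps+y_j)(p_x-p_z)=\delta(rx_\eps+y_j)$. The second is $r(x_\eps-z_\eps)p_z$, which is of order $|x_\eps-z_\eps|^2/\eps$ plus $L|x_\eps-z_\eps|$, and therefore tends to $0$. The third is $\frac{\rho^\p}{\p-1}\big(p_x^{1-\p}-p_z^{1-\p}\big)(\cdot)$, which is \emph{nonpositive} because $\p<1$ and $p_x\ge p_z$. So the concave, non-Lipschitz term $p^{1-\p}$ causes no trouble thanks to the sign of the penalization.

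The term $\delta(rx_\eps+y_j)$ is the main obstacle, since $x_\eps$ may escape to infinity as $\delta\to0$. I would first try to show that $\delta x_\eps$ stays bounded, using $\delta(x_\eps-\ux)\le 2\sup(|\mathsf u|+|\mathsf v|)$, and use $r\le\rho$ to absorb the remainder. If this fails, I would replace $\delta(x-\ux)$ by $\delta\log(1+x-\ux)$. The derivative of that penalization is $\delta/(1+x-\ux)$, so the product with $rx+y_j$ is $O(\delta)$ uniformly in $x$, and it is still nonnegative, so the sign $p_x\ge p_z$ is preserved. Letting $\eps\to0$ and then $\delta\to0$ then contradicts $M>0$.
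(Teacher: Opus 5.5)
Your ingredients are the right ones, and largely the paper's: properness from $H_v<0$, the sign $p_x\ge p_z$ to neutralize the non-Lipschitz term $p^{1-\psi}$, maximizing over $j$ to dispose of the coupling, and a penalty at infinity. However, the step you call the standard estimates has a genuine gap. With the shifted penalization $|(x-z)/\varepsilon+L|^2$, the bound $\mathsf u_j(x_\varepsilon)-\mathsf v_j(z_\varepsilon)\ge M/2$ comes from comparing $\max\Phi$ with $\Phi$ at a pair $(\bar x,\bar x+\varepsilon L)$, where $\bar x$ is a (near-)maximizer of $\mathsf u_j-\mathsf v_j$ minus your penalty. This needs $\limsup_{\varepsilon\to0}\mathsf v_j(\bar x+\varepsilon L)\le\mathsf v_j(\bar x)$, i.e.\ upper semicontinuity of $\mathsf v_j$ from the right. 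The statement only gives you a lower semicontinuous $\mathsf v$, and the semicontinuous version is also what Perron's method, invoked in \Cref{Prop: Exist}, requires. Lower semicontinuity gives the opposite inequality: if $\mathsf v_j$ jumps up immediately to the right of the point where $\mathsf u_j-\mathsf v_j$ peaks, every pair $(x,x+\varepsilon L)$ misses the peak, and $\sup\Phi$ need not approach $M$. Your parenthetical alternative does not repair this. The $\liminf$ extension provides only \emph{some} sequence $\zeta_k\downarrow\ux$ with $\mathsf v_j(\zeta_k)\to\mathsf v_j(\ux)$. It says nothing about $\mathsf v_j(\ux+\varepsilon L)$ along an arbitrary $\varepsilon\to0$, and nothing at all at interior points.

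The paper avoids this with a case distinction. If $\max_j\sup(\mathsf u_j-\mathsf v_j)$ is attained at $\ux$, the penalization parameter is tied to the $\liminf$ sequence, $\e_k=\zeta_k-\ux$. The shift term is $[((z-x)/\e_k-1)_-]^2$, plus $|z-\ux|^2$ to localize, and at the test point $(\ux,\zeta_k)$ the shift penalty vanishes exactly. Otherwise the supremum strictly exceeds the boundary value, and no shift is needed. One uses $\mathsf u_j(x)-\mathsf v_j(z)-|x-z|^2/\e-\beta\mathtt{h}(x)$ with $\beta$ small; the lower bound is read off the diagonal $z=x$, the maximizers converge to some $x^*>\ux$, and only semicontinuity is used.

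Alternatively, you could keep your single argument after first proving that a bounded supersolution becomes continuous on $[\ux,\infty)$ once extended. Minimize $\mathsf v_j(z)+C_0|z-x|$ as in the proof of \Cref{Prop:Lip}. Negative slopes are excluded because $H=+\infty$ there, which makes $\mathsf v_j$ nondecreasing and hence right-continuous at $\ux$ with the $\liminf$ extension. Large positive slopes are excluded by coercivity. That is a separate lemma you have not supplied.

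Your first option for the penalty at infinity also fails as stated. Boundedness of $\delta(x_\varepsilon-\ux)$ leaves an $O(1)$ error $r\delta(x_\varepsilon-\ux)$, while properness only provides $\frac{\rho}{\theta}\delta(x_\varepsilon-\ux)$, and $r>\rho/\theta$ is allowed since $\theta>1$. Your logarithmic fallback is the right choice and is essentially the paper's $\mathtt{h}(x)=\frac12\ln(1+|x-\ux|^2)$.
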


The proof is contained in appendix \ref{Comparison proof}. Note that this result holds also if $\g \p=1$, cf. \cite{camilli2025semi}.\par
Next we obtain some explicit sub- and supersolution of system \eqref{HJB}. To gain intuition, we first consider the decoupled system, i.e. the case when $\ld_j=0$, $j\in \{1,2\}$:
\begin{equation}\label{eq:HJB1d}
\frac{\rho}{\theta} v_j(x) = H(x, y_j, v_j,Dv_j).
\end{equation}

\begin{lemma}\label{prop:sub super sol}
Consider
\beq\label{Def:b}
\check{\su}_j=\frac{(rx+y_j)^{1-\gamma}}{1-\gamma},\quad \check{\sv}_j=\frac{(b(x+y_j/r))^{1-\gamma}}{1-\gamma},
\eeq
where $b$ is defined in \cref{tab1}. The functions $\check{\su}_j$ and $\check{\sv}_j$ are respectively a subsolution of \eqref{eq:HJB1d} in $[\ux,+\infty)$ and a supersolution of \eqref{eq:HJB1d} in $(\ux,+\infty)$.
\end{lemma}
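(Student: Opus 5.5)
The plan is to observe that both $\check{\su}_j$ and $\check{\sv}_j$ are smooth on the relevant intervals, so the viscosity inequalities of \Cref{def vis sol2} reduce to pointwise algebraic inequalities. The one delicate point is the boundary $\ux$ for the subsolution, where $D^+\check{\su}_j(\ux)$ is a half-line and not a single point. I will handle it by proving a stronger statement: the subsolution inequality holds for \emph{every} $p\in\R$. Throughout I take $r>0$, which is implicit in the definition of $\check{\sv}_j$ and of $b$. Since $r\le\rho$ and $\rho\ux+y_j>0$, we get $r\ux+y_j>0$: if $\ux<0$ then $r\ux\ge\rho\ux$, and if $\ux\ge0$ it is obvious. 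Hence $rx+y_j>0$ and $x+y_j/r>0$ on $[\ux,+\infty)$, so both functions are well defined, negative and smooth there.

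\textbf{Subsolution.} Set $a=rx+y_j>0$, so that $(1-\g)\check{\su}_j=a^{1-\g}$ and $\big((1-\g)\check{\su}_j\big)^{\frac{\p^{-1}-\g}{1-\g}}=a^{\p^{-1}-\g}$. For $p<0$ we have $H=+\infty$, so the inequality is trivial. For $p\ge 0$ I will use the minimum formula \eqref{H min}:
\[
H(x,y_j,\check{\su}_j,p)\;\ge\;\rho\,\frac{a^{1-\p^{-1}}}{1-\p^{-1}}\,a^{\p^{-1}-\g}=\frac{\rho\,a^{1-\g}}{1-\p^{-1}} .
\]
A direct computation gives $\frac{\rho}{\theta}\check{\su}_j=\frac{\rho(1-\g)}{1-\p^{-1}}\cdot\frac{a^{1-\g}}{1-\g}$, which is exactly the same quantity. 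So $\frac{\rho}{\theta}\check{\su}_j(x)\le H(x,y_j,\check{\su}_j(x),p)$ for all $p$ and all $x\ge\ux$. In particular it holds for all $p\in D^+\check{\su}_j(x)$, including $x=\ux$. This gives the subsolution property on $[\ux,+\infty)$ with no boundary analysis needed.

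\textbf{Supersolution.} On $(\ux,+\infty)$ the function $\check{\sv}_j$ is smooth, so $D^-\check{\sv}_j(x)=\{D\check{\sv}_j(x)\}$, and it suffices to check $\frac{\rho}{\theta}\check{\sv}_j\ge H(x,y_j,\check{\sv}_j,D\check{\sv}_j)$ pointwise. Write $s=x+y_j/r$, so that $rx+y_j=rs$. Then
\[
(1-\g)\check{\sv}_j=b^{1-\g}s^{1-\g},\qquad D\check{\sv}_j=b^{1-\g}s^{-\g}>0 .
\]
Substituting into \eqref{def H} and collecting powers (the exponent of $b$ is $(1-\g)(1-\p)+1-\g\p=2-\g-\p$, and that of $s$ is $1-\g$), I expect
\[
H=s^{1-\g}\Big[r\,b^{1-\g}+\frac{\rho^{\p}}{\p-1}\,b^{2-\g-\p}\Big],\qquad \frac{\rho}{\theta}\check{\sv}_j=\frac{\rho\p}{\p-1}\,b^{1-\g}s^{1-\g}.
\]
Dividing by $b^{1-\g}s^{1-\g}>0$, the desired inequality becomes $\frac{\rho\p}{\p-1}\ge r+\frac{\rho^\p b^{1-\p}}{\p-1}$. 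The definition of $b$ in \Cref{tab1} is designed exactly so that $\rho^{\p}b^{1-\p}=r+\p(\rho-r)$. With this, the right-hand side equals $\frac{r(\p-1)+r+\p\rho-\p r}{\p-1}=\frac{\rho\p}{\p-1}$, and we get equality.

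So $\check{\sv}_j$ is in fact a classical solution of \eqref{eq:HJB1d} in $(\ux,+\infty)$, and in particular a supersolution there. It is not claimed to satisfy the state constraint at $\ux$, which is consistent with the statement. The only step requiring care is the bookkeeping of the exponents in the supersolution computation, and verifying the identity $\rho^\p b^{1-\p}=r+\p(\rho-r)$ from the formula for $b$. The subsolution part is immediate once \eqref{H min} is used.
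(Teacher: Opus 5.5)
Your proof is correct and follows essentially the paper's route. The subsolution inequality comes from the lower bound \eqref{H min}, which gives $\frac{\rho}{\theta}\check{\su}_j\le H(x,y_j,\check{\su}_j,p)$ for every $p$; the paper uses this only at $\ux$ and treats interior points by direct substitution. The supersolution property holds because $\check{\sv}_j$ is a classical solution on $(\ux,+\infty)$, which you verify explicitly via $\rho^{\p}b^{1-\p}=r+\p(\rho-r)$ where the paper merely asserts it.
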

\begin{proof}
$\check{\sv}_j(x)$ is a classical solution to \Cref{eq:HJB1d} in $(\ux,+\infty)$. Hence $\check{\sv}_j$ is a supersolution of \eqref{eq:HJB1d} in $(\ux,+\infty)$. \par
It is straightforward to verify that for all $x>\ux$, $\check{\su}_j$ satisfies 
$$
\frac{\rho}{\theta} \check{\su}_j\leq H(x, y_j, \check{\su}_j,D\check{\su}_j).
$$
Let $\varphi$ be a smooth function such that $\check{\su}_j-\varphi$ has a local maximum at $\ux$. It follows from \Cref{H min} that
$$
H(\ux, y_j, \check{\su}_j(\ux),D\varphi(\ux))\geq \rho\frac{(r\ux+y_j)^{1-\p^{-1}}}{1-\p^{-1}}\big((1-\g)\check{\su}_j(\ux)\big)^{\frac{\p^{-1}-\g}{1-\g}}=\rho \frac{(r\ux+y_j)^{1-\g}}{1-\p^{-1}}.
$$
On the other hand, we have 
$$
\frac{\rho}{\theta} \check{\su}_j(\ux)=\rho \frac{1-\g}{1-\p^{-1}}\frac{(r\ux+y_j)^{1-\g}}{1-\g}=\rho \frac{(r\ux+y_j)^{1-\g}}{1-\p^{-1}},
$$
hence $\frac{\rho}{\theta} \check{\su}_j(\ux)\leq H(\ux, y_j, \check{\su}_j(\ux),D\varphi(\ux))$ and $\check{\su}_j$ is a subsolution of \eqref{eq:HJB1d} in $[\ux,+\infty)$. 
\end{proof}
We can then build the {}{sub- and supersolutions} of \eqref{eq:HJB} by taking advantage of \Cref{prop:sub super sol}. 
\begin{prop}\label{Prop:sub-super}
Assume $r>0$. The functions 
$$
{}{(\check{\su}_1,\check{\su}_2)}=\lc \frac{(rx+y_1)^{1-\gamma}}{1-\gamma}, \frac{(rx+y_1)^{1-\gamma}}{1-\gamma}\rc\quad {\rm{and}}\quad {}{(\check{\sv}_1,\check{\sv}_2)}=\lc \frac{(b(x+y_2/r))^{1-\gamma}}{1-\gamma},\frac{(b(x+y_2/r))^{1-\gamma}}{1-\gamma}\rc
$$
 are respectively a subsolution of \eqref{HJB} in $[\ux,+\infty)$ and a supersolution of \eqref{HJB} in $(\ux,+\infty)$.
\end{prop}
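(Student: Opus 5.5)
Both pairs in the statement have equal components, so for either pair the coupling term $\lambda_j(v_{\bar \jmath}(x)-v_j(x))$ in \eqref{HJB} vanishes identically. The plan is therefore to reduce each claim to the decoupled equation \eqref{eq:HJB1d}, but with the income $y_j$ in the Hamiltonian. The pair $\check{\su}$ is built with $y_1$ and the pair $\check{\sv}$ with $y_2$, so \Cref{prop:sub super sol} covers directly the index $j$ for which $y_j$ coincides with the income used in the formula. The other index is handled by monotonicity of $H$ in $y$. From \eqref{def H}, for $p\ge 0$ we have $H(x,y_2,v,p)-H(x,y_1,v,p)=(y_2-y_1)p\ge 0$, and for $p<0$ both values equal $+\infty$. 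Hence $H(x,y,v,p)$ is nondecreasing in $y$ for every $(x,v,p)$.

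\emph{Subsolution.} For $j=1$, \Cref{prop:sub super sol} says $\check{\su}_1$ is a subsolution of \eqref{eq:HJB1d} with $y=y_1$ on $[\ux,+\infty)$, and this is exactly the subsolution property for \eqref{HJB} since the coupling term is zero. For $j=2$, let $\varphi$ be smooth and let $\check{\su}_2-\varphi$ have a local maximum at $x\in[\ux,+\infty)$. Since $\check{\su}_2=\check{\su}_1$, the lemma gives
\[
\tfrac{\rho}{\theta}\check{\su}_2(x)\le H(x,y_1,\check{\su}_2(x),D\varphi(x)).
\]
Monotonicity in $y$ then gives
\[
H(x,y_1,\check{\su}_2(x),D\varphi(x))\le H(x,y_2,\check{\su}_2(x),D\varphi(x)).
\]
If $D\varphi(x)<0$, which can only happen at the boundary point $\ux$, the right-hand side is $+\infty$ and the inequality is trivial.

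\emph{Supersolution.} For $j=2$, $\check{\sv}_2$ is a classical solution of \eqref{eq:HJB1d} with $y_2$, so the supersolution property is immediate. For $j=1$, $\check{\sv}_1$ is smooth on $(\ux,+\infty)$ with $D\check{\sv}_1=b^{1-\g}(x+y_2/r)^{-\g}>0$. Any test function touching from below at an interior point therefore has $D\varphi(x)=D\check{\sv}_1(x)\ge 0$. Using monotonicity in $y$ in the other direction,
\[
H(x,y_1,\check{\sv}_1,D\check{\sv}_1)\le H(x,y_2,\check{\sv}_1,D\check{\sv}_1)=\tfrac{\rho}{\theta}\check{\sv}_1,
\]
which is the required supersolution inequality.

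The main point to check is that the formulas are well defined on the relevant domains. We need $rx+y_1>0$ on $[\ux,+\infty)$ and $x+y_2/r>0$ on $(\ux,+\infty)$, so that the powers with exponent $1-\g<0$ make sense and $(1-\g)v>0$. The plan is to use $r>0$, $r\le\rho$ and the assumption $\rho\ux+y_j>0$. If $\ux\ge 0$, both conditions are clear. If $\ux<0$, then $rx+y_1\ge r\ux+y_1\ge\rho\ux+y_1>0$ for $x\ge\ux$. Dividing by $r>0$ gives $x+y_1/r>0$, and since $y_2>y_1$ also $x+y_2/r>0$.
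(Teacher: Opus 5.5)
Your proof is correct and is essentially the paper's argument: the coupling term vanishes because the components coincide, \Cref{prop:sub super sol} handles the index whose income matches the formula, and the identity $H(x,y_2,v,p)=H(x,y_1,v,p)+(y_2-y_1)p$ for $p\ge 0$ handles the other index, with the supersolution case only needing interior points. Your check that $rx+y_1>0$ and $x+y_2/r>0$ is a useful addition that the paper leaves implicit; your aside that $D\varphi<0$ could occur at $\ux$ is unnecessary, though harmless, since a local maximum of $\check{\su}_1-\varphi$ at $\ux$ forces $D\varphi(\ux)\ge D\check{\su}_1(\ux)>0$, as the paper observes.
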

\begin{proof}
We first consider $(\check{\su}_1,\check{\su}_1)$. It is clear from \Cref{prop:sub super sol} that for $j=1$,
$$
H(x,y_1,\check{\su}_1,D\check{\su}_1)+\lambda_1(\check{\su}_1(x)-\check{\su}_1(x))\geq \frac{\rho}{\theta} \check{\su}_1(x),\quad \forall x> \ux. 
$$
For $x=\ux$, the argument is the same as in the proof of \Cref{prop:sub super sol}. For $j=2$ and $x>\ux$: 
$$
H(x,y_2,\check{\su}_1,D\check{\su}_1)+\lambda_1(\check{\su}_1(x)-\check{\su}_1(x))=H(x,y_1,\check{\su}_1,D\check{\su}_1)+\lambda_1(\check{\su}_1(x)-\check{\su}_1(x))+(y_2-y_1)D\check{\su}_1(x) \geq \z \check{\su}_1(x). 
$$
Let  $\varphi$ be a smooth function such that $\check{\su}_1-\varphi$ has a local maximum at $\ux$. The inequality  
$
D\varphi(\ux)\geq D\check{\su}_1(\ux)>0
$
implies
$$
H(\ux,y_2,\check{\su}_1(\ux),D\varphi(\ux))+\lambda_1(\check{\su}_1(\ux)-\check{\su}_1(\ux))>\z \check{\su}_1(\ux).
$$
We argue similarly with $(\check{\sv}_2,\check{\sv}_2)$ and notice that for the supersolution property we only need to consider $x>\ux$. 
\end{proof}
\begin{remark} Next we use \Cref{def vis sol} and \Cref{Lemma: b} to show why {}{$\check{\sv}_j$} is not a subsolution to \eqref{eq:HJB1d}, even though it is a classical solution in $(\ux,+\infty)$. Note that $D\check{\sv}_j(\ux)=b^{1-\gamma}(\ux+y_j/r)^{-\g}$, we consider the {}{test function} 
$$
\varphi(x)=\rho b^{\frac{1}{\p}-\g}r^{-1/\p}\frac{(x+y_j/r)^{1-\g}}{1-\g},\qquad D\varphi(x)=\rho b^{\frac{1}{\p}-\g}r^{-1/\p}(x+y_j/r)^{-\g}.
$$
Since $b>r$, $\rho>r$ and $\p>0$ we have first $r^{-1/\p}>b^{-1/\p}$, then 
$
\rho b^{\frac{1}{\p}-\g}r^{-1/\p}>b^{1+\frac{1}{\p}-\g}b^{-1/\p}>b^{1-\g}.
$
This implies $D\varphi(\ux)>D\check{\sv}_j(\ux)$. Therefore $\ux$ is a local maximum of $\check{\sv}_j-\varphi$. Now
$$
H(\ux,y_j,\check{\sv}_j(\ux),D\varphi(\ux))=\frac{\rho b^{\frac{1}{\p}-\g}r^{1-\frac{1}{\p}}(\ux+y_j/r)^{1-\g}}{1-\frac{1}{\p}}.
$$
Because 
$
\frac{z^{1-\frac{1}{\p}}}{1-\frac{1}{\p}}
$
is an increasing function in $(0,+\infty)$ and $b>r$, we have
 $$
 \frac{\rho b^{1-\g}(\ux+y_j/r)^{1-\g}}{1-\frac{1}{\p}}>\frac{\rho b^{\frac{1}{\p}-\g}r^{1-\frac{1}{\p}}(\ux+y_j/r)^{1-\g}}{1-\frac{1}{\p}}.
 $$
On the other hand,
 $$
  \frac{\rho b^{1-\g}(\ux+y_j/r)^{1-\g}}{1-\frac{1}{\p}}= \frac{\rho b^{1-\g}(\ux+y_j/r)^{1-\g}}{\theta(1-\g)}=\frac{\rho}{\theta} \check{\sv}_j(\ux),
 $$
 hence we have obtained:
$
\frac{\rho}{\theta} \check{\sv}_j(\ux)>H(\ux,y_j,\sv_j(\ux),D\varphi(\ux)).
$
Therefore, $\check{\sv}_j$ is not a subsolution at $\ux$.
\end{remark}
Similar explicit calculations show that $\check{\su}_j<\check{\sv}_j$, in agreement with the comparison principle in \Cref{comparison}. 
{}{
\begin{prop}\label{Prop: Exist}
Assume $0<r\leq \rho$. There exists a unique bounded viscosity solution $v=(v_1,v_2)$ to \eqref{HJB}. 
\end{prop}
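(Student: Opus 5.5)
Existence will follow from Perron's method, using the ordered sub- and supersolution of \Cref{Prop:sub-super}; uniqueness will follow from the comparison principle \Cref{comparison}. The case $r=\rho$ needs a separate remark, since then $b=\rho$.

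For $0<r<\rho$, set $\underline{w}=(\check{\su}_1,\check{\su}_2)$ and $\overline{w}=(\check{\sv}_1,\check{\sv}_2)$ as in \Cref{Prop:sub-super}. Both are bounded on $[\ux,+\infty)$: they are negative, since $\g>1$, and they tend to $0$ at infinity, while $r\ux+y_1>0$ and $b(\ux+y_2/r)>0$ keep them finite at $\ux$. I will check $\underline{w}\le\overline{w}$ directly: this amounts to $rx+y_1\le b(x+y_2/r)$, which holds because $b>r$ by \Cref{Lemma: b}. It is also consistent with \Cref{comparison}. I then define
\[
v_j(x)=\sup\{\mathsf{w}_j(x):\ \mathsf{w}\ \text{subsolution of \eqref{HJB} in }[\ux,+\infty),\ \underline{w}\le \mathsf{w}\le \overline{w}\}.
\]
The standard Perron arguments (Ishii) have to be adapted in two ways: to the weakly coupled system, and to the nonlinear dependence of $H$ on $v$ through $((1-\g)v)^{\frac{1-\g\p}{1-\g}}$. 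The system is monotone because $H_v<0$ by \eqref{Eq:Hv<0} and the coupling coefficient $\lambda_j$ has the right sign. The sup-closure property then gives that the upper semicontinuous envelope $v^*$ is a subsolution on $[\ux,+\infty)$, including at the boundary point. This uses the continuity of $H$ in $(x,v,p)$ for $v<0$, $p\ge0$, and the fact that $H=+\infty$ for $p<0$. Since $v$ is nondecreasing, test functions from above satisfy $D\varphi\ge 0$, so the $+\infty$ branch causes no trouble. The bump argument then shows that the lower envelope $v_*$ is a supersolution in $(\ux,+\infty)$: if it failed at some point, a small local perturbation of a strict subsolution would stay below $\overline{w}$ and contradict maximality. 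No boundary condition is required for supersolutions, so the interior argument suffices.

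Next I apply \Cref{comparison} with $\mathsf{u}=v^*$ and $\mathsf{v}=v_*$, extending $v_*$ at $\ux$ by its $\liminf$. This gives $v^*\le v_*$, hence $v$ is continuous and is a bounded constrained viscosity solution. Uniqueness is immediate: two bounded constrained solutions are each a sub- and a supersolution, so comparison applied both ways gives equality.

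The main obstacle is the endpoint $r=\rho$. There $b=\rho$, and $\check{\sv}$ is still a supersolution; boundedness and ordering also still hold, since $\rho x+y_1\le\rho(x+y_2/\rho)$. I expect the barrier construction to go through unchanged, but if some step degenerates I would instead approximate with $r_n\uparrow\rho$. The solutions $v^{(n)}$ are uniformly bounded between the barriers, which converge as $n\to\infty$. Stability of viscosity solutions under half-relaxed limits, combined with \Cref{comparison} at $r=\rho$, then identifies the limit as the unique bounded solution. A further technical check, needed throughout, is that every function in the Perron class stays in the region $v<0$ and bounded away from $0$ on compacts, so that $H$ remains finite and locally Lipschitz in $v$. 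The lower barrier $\underline{w}$ does not give this bound away from $0$, because it is negative; it is the upper barrier $\overline{w}<0$ that supplies it on compact sets.
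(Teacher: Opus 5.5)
Most of your outline is sound: the barriers of Proposition~\ref{Prop:sub-super} and their ordering via $b\ge r$, the bump argument for the supersolution property (it only needs lower semicontinuity of $H$), uniqueness by comparison, and the case $r=\rho$, where $b=\rho$. The gap is the step where you dismiss the $+\infty$ branch.

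Since $H=+\infty$ for $p<0$ while $H(x,y,v,0)=0$, the Hamiltonian is lower but not upper semicontinuous at $p=0$. Definition~\ref{def vis sol} tests subsolutions with $H$ itself, not with its upper envelope. In the sup-closure argument you get subsolutions $w^n$ and points $x_n\to x_0$ where $w^n_j-\varphi$ is maximal. If $D\varphi(x_0)=0$, the gradients $D\varphi(x_n)$ may be negative, so the inequalities at $x_n$ carry no information. Yet at $x_0$ you must prove the nontrivial inequality $(\frac{\rho}{\theta}+\lambda_j)v^*_j(x_0)\le\lambda_j v^*_{\bar\jmath}(x_0)$.

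Your justification, that $v$ is nondecreasing, is not available at this point. The subsolution test is vacuous on decreasing pieces, so the Perron class contains non-monotone functions, and nothing in the construction makes the supremum monotone. Even a monotone $v$ would not stop the approximants $w^n$ from decreasing near $x_n$. For instance, the family $w^n_j=a-\frac{1}{n}(1+x-x_0)$ near $x_0$ is increasing in $n$ and passes the test trivially near $x_0$. Its flat supremum $a$, however, violates the $p=0$ inequality as soon as $(\frac{\rho}{\theta}+\lambda_j)a>\lambda_j w_{\bar\jmath}(x_0)$. Ruling this out for the Perron class is the missing step.

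This is exactly what the paper's regularization handles. Restricting consumption to $[0,R]$ gives a finite, continuous $H_R$, so Perron's method and comparison are standard. The solution $v^R$ is then identified as a concave $C^1$ value function. Step 2 proves $Dv^R_j\ge 0$, which is where monotonicity is actually established, together with a bound on $Dv^R_j(\ux)$ uniform in $R$ via coercivity. Then $v^R$ increases with $R$, stays below $(\check{\sv}_1,\check{\sv}_2)$, and converges locally uniformly by Arzel\`a--Ascoli. Because the limiting test gradients are nonnegative, where $H$ is continuous, stability and comparison give the unique solution.

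If you want to keep a direct Perron argument, you need three things:
\begin{enumerate}
\item Use the upper envelope $H^*$ (equal to $+\infty$ at $p=0$) in the subsolution definition.
\item Check that the comparison proof of Appendix~\ref{Comparison proof} still applies. It does, because the subsolution inequality is only evaluated at strictly positive gradients, thanks to the term $2(z_k-\ux)$ in Step 1 and $\beta D\mathtt{h}(x_k)$ in Step 2.
\item Recover the $p=0$ inequality for the continuous solution you obtain. This needs an additional argument that your proposal does not supply.
\end{enumerate}
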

\begin{proof}
We consider the following regularized problem. Given a constant $R>r\ux+y_2$, set 
$$
H_R(x,y_j,v_j,Dv_j)=\max_{0\leq c\leq R} \left\{ \mathcal{F}(c,v_j) +(rx +y_j- c)Dv_j \right\},
$$
and consider the system
\beq\label{Eq: HJB R}
	\begin{aligned}
	\frac{\rho}{\theta} v_j(x)=H_R(x,y_j,v_j,Dv_j)+\lambda_j(v_{\bar \jmath}(x)-v_j(x)),
	\end{aligned}
	\eeq
with state constraints. We proceed in several steps. \\
{\it{Step 1.}} We claim that for each $R$, the comparison principle is satisfied for the sub- and supersolution of \cref{Eq: HJB R}. The proof is similar to that of \Cref{comparison} that guarantees the uniqueness of solution if it exists. We observe that 
$$
\max_{0\leq c\leq R} \left\{ \mathcal{F}(c,\check{\sv}_j) +(rx +y_j- c)D\check{\sv}_j \right\}\leq \max_{c\geq 0} \left\{ \mathcal{F}(c,\check{\sv}_j) +(rx +y_j- c)D\check{\sv}_j \right\},
$$
hence $(\check{\sv}_1,\check{\sv}_2)$ is a supersolution of \cref{Eq: HJB R}. On the other hand, by defining
$$
\check{\su}^R_1(x)=\check{\su}^R_2(x)
=\left\{
      \begin{aligned}
       \quad \check{\su}_1(x) \quad & \text{if }\quad rx+y_1\leq R,\\
       \quad \frac{R^{1-\g}}{1-\g} \quad  & \text{if }\quad rx+y_1> R,
     \end{aligned}
   \right.
$$
we can use a similar argument as for \cref{Prop:sub-super} to show that $(\check{\su}^R_1(x),\check{\su}^R_2(x))$ is a subsolution of \cref{Eq: HJB R}. From the comparison principle, we deduce that for a state constrained viscosity solution $\lc v^R_1,v^R_2\rc $ of \cref{Eq: HJB R}, $\check{\su}^R_j(x)\leq v^R_j(x)\leq \check{\sv}_j(x)$ holds for all $x\geq \ux$. The existence of a solution $\lc v^R_1,v^R_2\rc $ to \cref{Eq: HJB R} follows from Perron's method. We observe that $\lc v^R_1,v^R_2\rc$ is the value function of a stochastic optimal control problem and that $v^R_j$ is concave. Following the same argument as in \cref{Prop: v C1} below, $v^R_j$ belongs to $C^1(\ux,+\infty)$. \\
{\it{Step 2.}} We now show that $Dv^R_j\geq 0$. Suppose that there exists $x^*$ such that $Dv^R_j(x^*)=\eta< 0$, then for all $x\geq x^*$, $Dv^R_j(x)\leq \eta$ and 
$$
\max_{0\leq c\leq R} \left\{ \mathcal{F}(c,v^R_j) +(rx +y_j- c)Dv^R_j \right\}=\mathcal{F}(R,v^R_j) +(rx +y_j- R)Dv^R_j .
$$
From \cref{F decreasing} and $\check{\su}_j(\ux)\leq v^R_j(x)$, we deduce $\mathcal{F}(R,v^R_j)\leq \mathcal{F}(R,\check{\su}_j(\ux))$. Since $Dv^R_j(x)\leq \eta$ for all $x\geq x^*$, $(rx +y_j- R)Dv^R_j\to -\infty$ as $x\to +\infty$. Therefore, $H_R\lc x,y_j,v^R_j,Dv^R_j\rc\to -\infty$ as $x\to +\infty$, which contradicts the boundedness of $v^R_j$. We have proved that there does not exist $x^*$ such that $Dv^R_j(x^*)< 0$. \\
\indent Similarly, from \cref{F decreasing} and $R>r\ux+y_2$, and the state constraint at $\ux$,
$$
H_R\lc \ux,y_j,v^R_j(\ux),Dv^R_j(\ux)\rc= H\lc \ux,y_j,v^R_j(\ux),Dv^R_j(\ux)\rc \geq H\lc \ux,y_j,\check{\sv}_j(\ux),Dv^R_j(\ux)\rc,
$$
we deduce from the coercivity of $H$ that $Dv^R_j(\ux)$ is bounded independently of $R$. The upper bound on $Dv^R_j$ then follows from the concavity of $v^R_j$. \\
{\it{Step 3.}} Suppose $R'>R$, we deduce from 
$$
H_R\lc x,y_j,v^R_j,Dv^R_j\rc\leq H_{R'}\lc x,y_j,v^R_j,Dv^R_j\rc,
$$
that $\lc v^R_1,v^R_2\rc $ is a subsolution of the system
\beq\label{Eq: HJB R'}
	\begin{aligned}
	\frac{\rho}{\theta} v_j(x)=H_{R'}(x,y_j,v_j,Dv_j)+\lambda_j(v_{\bar \jmath}(x)-v_j(x)).
	\end{aligned}
	\eeq
The comparison principle then gives $v^R_j\leq v^{R'}_j$. Therefore, $v^R_j$ depends on $R$ in a monotone increasing manner for $R$ sufficiently large. Since $\lc v^R_1,v^R_2\rc $ is bounded above by the supersolution $(\check{\sv}_1,\check{\sv}_2)$ independently of $R$, we infer that $\lc v^R_1,v^R_2\rc $ converges pointwise to the limit $\lc v_1,v_2\rc $ as $R\to +\infty$. By the uniform boundedness of $Dv^R_j$ and the Arzel\`a–Ascoli theorem, the sequence $\lc v^R_1,v^R_2\rc $ converges locally uniformly to $\lc v_1,v_2\rc $. Moreover the functions $v_j$ are continuous, strictly increasing, concave and tend to $0$ as $x\to \infty$. From the stability of viscosity solutions, we conclude that $(v_1,v_2)$ is a constrained viscosity solution to (2.2), in fact the unique one from the comparison principle. 
\end{proof}
}

\begin{prop}\label{Prop:Lip}
The solution $v_j$ is locally Lipschitz. 
\end{prop}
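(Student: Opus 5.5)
The most direct route runs through the construction in the proof of \Cref{Prop: Exist}, where $v=(v_1,v_2)$ is the locally uniform limit of the regularized solutions $(v^R_1,v^R_2)$ of \eqref{Eq: HJB R}. Each $v^R_j$ is concave on $[\ux,+\infty)$, and Step 2 of that proof gives $Dv^R_j\geq 0$ and $Dv^R_j(\ux)\leq C$ with $C$ independent of $R$. By concavity, $0\leq Dv^R_j(x)\leq Dv^R_j(\ux)\leq C$ for all $x\geq \ux$. So the family $\{v^R_j\}_R$ is uniformly Lipschitz on $[\ux,+\infty)$ with constant $C$. Passing to the pointwise limit in $|v^R_j(x)-v^R_j(z)|\leq C|x-z|$ shows that $v_j$ is Lipschitz, hence locally Lipschitz.

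The key step is the bound on $Dv^R_j(\ux)$, so I would make it explicit. Since $v^R_j$ is concave and $C^1$, the state-constraint subsolution inequality at $\ux$ holds with $p=Dv^R_j(\ux)$. Using $H_R=H$ at $\ux$ (valid because $R>r\ux+y_2$ and $p\geq 0$), we get
\[
\frac{\rho}{\theta}v^R_j(\ux)\leq H(\ux,y_j,v^R_j(\ux),p)+\lambda_j\big(v^R_{\bar\jmath}(\ux)-v^R_j(\ux)\big).
\]
The bounds $\check{\su}^R_j\leq v^R_j\leq \check{\sv}_j$, together with $\check{\su}^R_j=\check{\su}_1$ near $\ux$, make the left-hand side and the coupling term bounded from above by constants independent of $R$. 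By \eqref{Eq:Hv<0}, $H$ is decreasing in $v$, so $H(\ux,y_j,v^R_j(\ux),p)\leq H(\ux,y_j,\check{\su}_1(\ux),p)$.

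This step needs care. Monotonicity turns the lower bound on $v^R_j$ into an upper bound on $H$, which is the wrong direction for trapping $p$ with coercivity. I would therefore rewrite the constraint as $\frac{\rho}{\theta}v-\lambda_j(v_{\bar\jmath}-v)\leq H$. Because $v$ is confined to the compact interval $[\check{\su}_1(\ux),\check{\sv}_j(\ux)]\subset(-\infty,0)$ and $H$ is jointly continuous in $(v,p)$ there, a uniform estimate on $p$ has to come from the other inequality. That is the supersolution inequality at points $x>\ux$, where $p=Dv^R_j(x)$ gives equality. Since the equation holds classically for $x>\ux$, we have
\[
H(x,y_j,v^R_j,Dv^R_j)=\frac{\rho}{\theta}v^R_j-\lambda_j\big(v^R_{\bar\jmath}-v^R_j\big)\leq C',
\]
where $C'$ is independent of $R$. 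For $x$ in a compact set, and as long as the unconstrained maximizer $c$ stays below $R$, so that $H_R=H$, the coercivity \eqref{H coerc}, applied uniformly in $v$ on the compact interval, bounds $Dv^R_j(x)$ uniformly in $x$ and $R$.

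If the uniform-in-$R$ argument becomes delicate, for example because of the cap $c\leq R$, there is a fallback that works directly on the limit. The function $v_j$ is concave and increasing, being a limit of concave increasing functions. A finite concave function on the open interval $(\ux,+\infty)$ is automatically locally Lipschitz there. At $\ux$, I would get Lipschitz behaviour from the one-sided bound $D^+v_j(x)\subset[0,Dv_j(x_0)]$ for $x\geq x_0>\ux$, combined with the viscosity equation and \eqref{H coerc}. Applying coercivity to superdifferentials near $\ux$, via \Cref{def vis sol2}, shows that $D^+v_j(\ux)$ is nonempty and bounded, which rules out infinite slope at the boundary. I expect the main obstacle to be exactly this boundary behaviour at $\ux$; away from $\ux$, concavity does all the work.
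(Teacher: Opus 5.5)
Your route is genuinely different from the paper's, and it is sound only relative to the concavity (and $C^1$ regularity) of $v^R_j$ that the proof of \Cref{Prop: Exist} asserts.

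\textbf{Your argument, granting that input.} The main argument works:
at interior points the equation holds with equality; coercivity bounds $Dv^R_j$ near $\ux$ uniformly in $R$; monotonicity of $Dv^R_j$ propagates the bound to $[\ux,+\infty)$; and the bound passes to the limit. The cap $c\le R$ is harmless. For fixed $c_0\in(0,r\ux+y_1)$ one has $H_R(x,y_j,v,p)\ge \mathcal{F}(c_0,v)+(rx+y_j-c_0)p$, which is coercive uniformly in $R$ and in $v$ on compact subsets of $(-\infty,0)$.

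\textbf{The paper's argument.} It uses neither concavity nor differentiability. On $[x_1,x_2]\subset(\ux,+\infty)$ it combines the bounds $\check{\su}_1\le v_j\le \check{\sv}_2$ with coercivity to get $\hat C$ such that $\frac{\rho}{\theta}v_j<H(x,y_j,v_j,\hat C)+\lambda_j(v_{\bar\jmath}-v_j)$. It then shows that $z=x$ minimizes $z\mapsto v_j(z)+C_0|z-x|$, by testing the supersolution inequality with $\varphi(z)=-C_0|z-x|$. A minimizer to the left of $x$ gives slope $C_0>\hat C$, contradicting the strict inequality; a minimizer to the right gives slope $-C_0<0$, where $H=+\infty$.

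\textbf{Why the difference matters.} In the paper, concavity of $v$ is proved only afterwards, in \Cref{Prop: v concave}. That proof invokes \Cref{Prop:Lip} to get uniqueness for \eqref{Eq: HJBV} and so identify $v$ with the value function. The concavity of $v^R_j$ is only asserted in \Cref{Prop: Exist}, with a forward reference for $C^1$. Justifying it along the same lines would require a Lipschitz estimate for $v^R_j$ of exactly the kind proved here.

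A finite concave function on $(\ux,+\infty)$ is automatically locally Lipschitz. So your whole proof rests on the concavity input, and it is non-circular only if you prove concavity of $v^R_j$ independently, for instance directly from the control representation. In exchange you would get a global Lipschitz bound up to $\ux$, which the paper's interior argument does not give.

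\textbf{The boundary step in your fallback is wrong as stated.} For concave $v_j$, $D^+v_j(\ux)=[Dv_j(\ux^+),+\infty)$ (or empty), so it is never bounded. The subsolution inequality cannot bound $p$ from above; this is the same direction problem you identified earlier. A bound at $\ux$ has to come from the equation at interior points near $\ux$, as in your main argument.
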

\begin{proof}
We consider some $x^*>\ux$ and its neighborhood $(x^*-\eps,x^*+\eps)$ such that $x^*-\eps>\ux$. We consider an interval $[x_1,x_2]$ such that $\ux<x_1<x^*-\eps<x^*+\eps<x_2<+\infty$ and then proceed in two steps. \\
{\bf{Step 1}}. We show that there exists $\hat{C}>0$, $\hat{C}$ depends on $x_1, x_2$ such that
\beq\label{Eq: Lip contra}
\frac{\rho}{\theta} v_j(x)< H(x,y_j,v_j(x),\hat{C})+\lambda_j (v_{\bar \jmath}(x)-v_j(x))\quad \forall x\in [x_1,x_2].
\eeq
We observe from the comparison principle and \eqref{Eq:Hv<0} that for all $\hat{C}>0$,
$$
H(x_1,y_1,\check{\sv}_2(x_2),\hat{C})\leq H(x,y_j,v_j(x),\hat{C}),
$$
and 
$$
\frac{\rho}{\theta} v_j(x)-\lambda_j (v_{\bar \jmath}(x)-v_j(x))< \lambda_j\check{\su}_1(x_1).
$$
Moreover, $H(x_1,y_1,\check{\sv}_2(x_2),C)$ is a monotone increasing function of $C$ if 
$$
C>\rho (rx_1+y_1)^{-\p^{-1}}\big((1-\g)\check{\sv}_2(x_2)\big)^{\frac{\p^{-1}-\g}{1-\g}}.
$$
Therefore, there exists $\hat{C}>0$ such that 
$
\lambda_j\check{\su}_1(x_1)<H(x_1,y_1,\check{\sv}_2(x_2),\hat{C}),
$
hence \eqref{Eq: Lip contra} also holds. \\
{\bf{Step 2}}. We now consider the minimization problem, for some $C_0>\hat{C}$, $C_0$ may depend on $x^*$ and $\eps$, and $x\in (x^*-\eps,x^*+\eps)$,
\beq\label{Eq: loc min z}
\min_{z\in [x_1,x_2]}v_j(z)+C_0\vert z-x\vert.
\eeq
From the boundedness of $v_j$, $C_0$ can be chosen large enough such that {}{neither $x_1$ nor $x_2$ can be the minimizer of \eqref{Eq: loc min z}.} 
We claim that the unique minimizer in \eqref{Eq: loc min z} is $\hat{z}=x$. Otherwise, if $\hat{z}\neq x$ then $\vert z-x\vert$ is differentiable at $\hat{z}$ and, from the inequality for supersolution in \Cref{def vis sol} it follows
$$
\frac{\rho}{\theta} v_j(\hat{z})\geq H(\hat{z},y_j,v_j(\hat{z}),-C_0\frac{\hat{z}-x}{\vert \hat{z}-x\vert})+\lambda_j (v_{\bar \jmath}(\hat{z})-v_j(\hat{z})).
$$
This is in contradiction with \eqref{Eq: Lip contra} if $\hat{z}<x$, or with \eqref{def H} if $\hat{z}>x$. Therefore, $v_j(z)+C_0\vert z-x\vert \geq v_j(x)$ for all $z\in (x^*-\eps,x^*+\eps)$. By symmetry we can show $v_j(x)+C_0\vert x-z\vert \geq v_j(z)$.
\end{proof}

\begin{prop}\label{Prop: v concave}
The viscosity solution $v=(v_1,v_2)$ is the value function of the optimal control problem \eqref{control_problem}. Furthermore, the function $v_j(x)$ is strictly concave in $x$.
\end{prop}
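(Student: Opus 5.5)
Two things have to be shown: the unique bounded constrained viscosity solution of Proposition~\ref{Prop: Exist} coincides with the value function $V=(V_1,V_2)$ of \eqref{control_problem}, and each $v_j$ is strictly concave.

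\textbf{Identification with the value function.} The recursive functional is defined implicitly: for an admissible consumption process $c$, the utility $V^c_t$ is the solution of the backward equation $V^c_t=\EE_t\int_t^\infty f(c_\tau,V^c_\tau)d\tau$. I would first show that this equation is well posed among processes taking values between $\check{\su}$ and $\check{\sv}$. This follows from the monotonicity \eqref{F decreasing}: $f(c,v)=\mathcal F(c,v)-\zeta v$ is strictly decreasing in $v$, so a standard contraction or comparison argument for infinite-horizon BSDEs with a decreasing driver applies, driven here by the two-state Markov chain.

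With this in hand, I would prove $V=v$ by a verification argument in two directions.

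\emph{Lower bound $v\le V$.} Take the $C^1$ approximations $v^R$ from Step~1 of the proof of Proposition~\ref{Prop: Exist}. Use the feedback $c=\arg\max$ over $[0,R]$. Because $Dv^R_j\ge0$ and $H_R$ is finite at $\ux$, the drift at $\ux$ is nonnegative, so this feedback is admissible for the state constraint. Applying Itô/Dynkin to $v^R(\boldsymbol x_\tau,\boldsymbol y_\tau)$ gives $v^R=V^{c^R}\le V$. Letting $R\to\infty$ with the monotone convergence $v^R\uparrow v$ yields $v\le V$.

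\emph{Upper bound $V\le v$.} Fix an arbitrary admissible $c$. I would show that $V^c\le v$ by comparison of $V^c$ with the supersolution property of $v$; the state constraint enters only through the supersolution property on $(\ux,\infty)$. An alternative route is to prove that $V$ satisfies the dynamic programming principle for the recursive utility. Then $V$ is a bounded constrained viscosity solution lying between $\check{\su}$ and $\check{\sv}$, and Proposition~\ref{comparison} gives $V=v$.

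\textbf{Strict concavity.} Take $x^0,x^1$, $\lambda\in(0,1)$, and $\epsilon$-optimal controls $c^0,c^1$ from $x^0,x^1$ with the same income path. Set $c^\lambda=\lambda c^0+(1-\lambda)c^1$. Since the dynamics are linear, the wealth $\lambda\boldsymbol x^0+(1-\lambda)\boldsymbol x^1$ is admissible and corresponds to $c^\lambda$.

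Define the process $W=\lambda V^{c^0}+(1-\lambda)V^{c^1}$. By the joint concavity \eqref{Eq: f concave}, $W$ is a subsolution of the backward equation driven by $c^\lambda$. The comparison principle for that BSDE (again from $f_v<0$) then gives $V^{c^\lambda}\ge W$, hence $v$ is concave.

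For strictness I would use the strict inequality in \eqref{Eq: f concave}: the Hessian is negative definite, so $f$ is strictly concave in $(c,v)$. The gain is then strictly positive unless $c^0=c^1$ and $V^{c^0}=V^{c^1}$ a.e. If $x^0\ne x^1$ and the controls coincide, the wealth paths differ by $(x^0-x^1)e^{rt}$. Combined with strict monotonicity of $v$, this forces $V^{c^0}\ne V^{c^1}$ on a set of positive measure, so the gain is positive.

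The $\epsilon$-optimality must be handled carefully, because the gain may vanish as $\epsilon\to0$. To avoid this I would use the existence of optimal controls, obtained from the compactness and concavity of the problem in $c$ (a Komlós-type argument), so that no $\epsilon$ is needed.

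\textbf{Main obstacle.} I expect the main difficulty to be making the recursive-utility verification and BSDE comparison rigorous on an infinite horizon under the state constraint, in particular the upper bound $V\le v$. I would first try the dynamic programming route, which reduces that inequality to Proposition~\ref{comparison}.
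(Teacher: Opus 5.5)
Your route is genuinely different from the paper's. The paper reads \eqref{control_problem} as a fixed-point problem and freezes $v$ inside the aggregator. Then \eqref{control_problem2} is an ordinary, non-recursive state-constrained control problem whose value $\mathbf V$ is a viscosity solution of \eqref{Eq: HJBV}. Uniqueness for \eqref{Eq: HJBV} (where the local Lipschitz bound of \cref{Prop:Lip} enters), together with the fact that $v$ itself solves \eqref{Eq: HJBV}, gives $\mathbf V=v$. Concavity is then simply quoted from Duffie--Epstein via \eqref{Eq: f concave}, and strictness is not argued separately. You instead define the value as $\sup_c V^c_0$ over genuine stochastic differential utilities and identify it with $v$ by BSDE well-posedness and a two-sided verification. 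You then re-derive the Duffie--Epstein convex-combination argument and add a strictness step. This costs BSDE machinery for a non-Lipschitz aggregator, but it proves the stronger, economically standard statement. It is also exactly the framework the concavity citation needs: in the frozen problem the reward $\mathcal F(c,v(x))$ need not be jointly concave in $(c,x)$, because $\mathcal F$ is decreasing in $v$, so freezing alone does not deliver concavity.

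Several steps need repair; the first is a real gap.

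(a) The crux $V\le v$ is not proved, and your first option is not available as stated. At this stage $v$ is only a locally Lipschitz viscosity solution; its $C^1$ regularity, \cref{Prop: v C1}, is deduced from the very concavity you are proving. So there is no It\^o or verification argument for $v$ along an arbitrary admissible path without a nonsmooth verification theorem. The DPP route is the one that works: show that the semicontinuous envelopes of $V$ are a constrained sub- and supersolution, then apply \cref{comparison}. This gives $V=v$ in both directions and makes the $v^R$ lower bound superfluous. That is fortunate, because the $C^1$ regularity of $v^R$ you rely on is obtained in Step~1 of \cref{Prop: Exist} from concavity of $v^R$ as a value function, i.e.\ from the truncated analogue of the present statement.

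(b) The BSDE cannot be posed among processes between $\check{\su}$ and $\check{\sv}$. These functions bound the optimal value, not $V^c$ for an arbitrary admissible $c$: constant consumption $c<r\ux+y_1$ is admissible and has utility $c^{1-\g}/(1-\g)<\check{\su}_1(\ux)$. Moreover, $f$ is singular as $c\to0$ and as $v\to0^-$. Use instead the discounted form $V_t=\EE_t\int_t^\infty e^{-\rho(\tau-t)/\theta}\mathcal F(c_\tau,V_\tau)\,d\tau$, which also disposes of transversality. Pose it for consumption streams bounded and bounded away from $0$, then extend by monotone approximation.

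(c) Nonnegative drift at $\ux$ does not follow from $Dv^R_j\ge0$ and finiteness of $H_R$. It follows from the state-constraint subsolution inequality at $\ux$: every $p\ge Dv^R_j(\ux)$ lies in $D^+v^R_j(\ux)$. Combined with the equation at $p=Dv^R_j(\ux)$, obtained from the interior by continuity, this forces $\partial_pH_R\ge0$ there, as in Case~2 of \cref{Prop: v2 v1}.

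(d) In the strictness step, $V^c$ depends only on the consumption stream, so the remark about wealth paths is beside the point. What you need is:
\begin{itemize}
\item $V^{c^0}_0=v(x^0)\ne v(x^1)=V^{c^1}_0$, by strict monotonicity of $v$;
\item right-continuity, to get a set of positive $dt\otimes d\mathbb P$ measure where $(c^0,V^{c^0})\ne(c^1,V^{c^1})$;
\item a strict BSDE comparison theorem.
\end{itemize}
Exact optimizers are also cheaper to get than through Koml\'os. Once plain concavity is proved with $\varepsilon$-optimal controls, \cref{Prop: v C1} gives $C^1$, and the feedback \eqref{c optimal} is then optimal by verification.
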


\begin{proof}
We consider the optimal control problem, involving the function $v$ found in \cref{Prop:Lip}: 
\begin{equation}\label{control_problem2}
	\mathbf{V}(x,y)=\max_{c_\tau}\EE \left[\int_t^{\infty} f(c_\tau,v_\tau)\, d\tau \big\vert \boldsymbol{x}_t=x,\,\boldsymbol{y}_t=y\right],\, \,
      \left\{
        \begin{array}[c]{rcl}
          d\boldsymbol{x}_\tau &=& (r \boldsymbol{x}_\tau+\boldsymbol{y}_{\tau} - c_\tau)dt,\,\,\tau>t,\\
          \boldsymbol{x}_\tau  &\geq& \underline{x}.    
        \end{array}
      \right.  
	\end{equation}
The value function of \eqref{control_problem} is a fixed point of \eqref{control_problem2}. From the dynamic programming principle, the value function $\mathbf{V}$ of \eqref{control_problem2} is a viscosity solution of :
\beq\label{Eq: HJBV}
	\frac{\rho}{\theta} \mathbf{V}_j(x)
	=H(x,y_j,v_j,D\mathbf{V}_j)+\lambda_j(\mathbf{V}_{\bar \jmath}(x)-\mathbf{V}_j(x)).
\eeq
From the local Lipschitz continuity of $v$, we can then show the viscosity solution to \eqref{Eq: HJBV} is unique. Moreover, from \Cref{Prop:Lip}, we know $\mathbf{V}=v$ is itself a viscosity solution of \eqref{Eq: HJBV}. Therefore, the function $v$ is a solution of \eqref{control_problem}.\par 
From \Cref{Eq: f concave}, $f(c,v_j)$ is concave in $(c,v_j)$. The concavity of the value function then follows from \cite{duffie1992stochastic}.
\end{proof}
\begin{prop}\label{Prop: v C1}
The function $v_j$ belongs to $C^1(\ux,+\infty)$. 
\end{prop}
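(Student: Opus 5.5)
Since $v_j$ is concave by \Cref{Prop: v concave}, it is semiconcave, so \Cref{lemma:semiconcave} gives $D^+v_j(x)={\rm co}D^*v_j(x)$ for every $x\in(\ux,+\infty)$. We also know that $v_j$ is differentiable at $x$ exactly when $D^+v_j(x)$ is a singleton. A concave function on an interval that is differentiable everywhere is automatically $C^1$: its derivative is monotone and has the intermediate value property (Darboux), so it has no jumps. The whole proof therefore reduces to showing that $D^+v_j(x)$ is a singleton at every interior point $x$.

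We argue by contradiction. Suppose that at some $x_0>\ux$ we have $D^+v_j(x_0)=[p_1,p_2]$ with $p_1<p_2$. By concavity and monotonicity (\Cref{Prop: Exist} gives $v_j$ increasing), $0\le p_1$. Moreover, $p_1=D^+_r v_j(x_0)$ and $p_2=D^-_l v_j(x_0)$, the one-sided derivatives. Because $D^+v_j(x_0)={\rm co}D^*v_j(x_0)$, both endpoints $p_1,p_2$ belong to $D^*v_j(x_0)$. Hence there are differentiability points $x_n\to x_0$ with $Dv_j(x_n)\to p_i$. At a differentiability point the equation holds in the classical sense, because both sub- and superdifferentials contain $Dv_j(x_n)$:
\[
\frac{\rho}{\theta}v_j(x_n)=H(x_n,y_j,v_j(x_n),Dv_j(x_n))+\lambda_j(v_{\bar\jmath}(x_n)-v_j(x_n)).
\]
We pass to the limit using the continuity of $v$ (\Cref{Prop:Lip}) and the continuity of $H$ on $p>0$. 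If $p_1=0$, the left side is finite while $H\to+\infty$ as $p\to 0^+$ (the term $\frac{\rho^\p}{\p-1}p^{1-\p}((1-\g)v)^{\cdot}$ is negative and bounded, and the other term tends to $0$; actually one must check this), so we need more care. Suppose first $p_1>0$. Then both $p_1$ and $p_2$ satisfy
\[
\frac{\rho}{\theta}v_j(x_0)-\lambda_j(v_{\bar\jmath}(x_0)-v_j(x_0))=H(x_0,y_j,v_j(x_0),p).
\]

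The map $p\mapsto H(x_0,y_j,v_j(x_0),p)$ is strictly convex on $(0,\infty)$, since $p^{1-\p}$ is strictly concave and multiplied by a negative constant. Hence $H<$ the common value strictly inside $(p_1,p_2)$. But $v_j$ is a viscosity subsolution, so every $p\in D^+v_j(x_0)=[p_1,p_2]$ must satisfy $\frac{\rho}{\theta}v_j(x_0)\le H(x_0,y_j,v_j(x_0),p)+\lambda_j(\cdots)$. Taking $p=(p_1+p_2)/2$ gives a contradiction. So $D^+v_j(x_0)$ is a singleton, $v_j$ is differentiable everywhere in $(\ux,\infty)$, and $v_j\in C^1$.

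The main obstacle is excluding the degenerate endpoint $p_1=0$, where $H$ is not smooth. My plan is to show directly that $Dv_j>0$ at differentiability points, with a positive lower bound locally. From the classical equation, $H(x,y_j,v_j,p)\ge\min_p H$ by \eqref{H min}. Moreover, as $p\to0^+$ we have $H\to 0$, while the left side $\frac{\rho}{\theta}v_j-\lambda_j(v_{\bar\jmath}-v_j)$ is a quantity that can be compared with $0$ using the bounds $\check{\su}\le v\le\check{\sv}<0$. I would first try to obtain strict monotonicity via strict concavity (\Cref{Prop: v concave}) together with the increasing property: a strictly concave nondecreasing function has a right derivative that is strictly positive at every interior point, since otherwise it would be constant beyond that point. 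This forces $p_1>0$, after which the convexity argument above applies verbatim.
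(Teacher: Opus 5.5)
Your proposal is correct and follows the paper's argument: concavity gives $D^+v_j={\rm co}D^*v_j$, the equation holds classically at differentiability points and passes to the limit at two distinct elements of $D^*v_j(x_0)$, strict convexity of $H$ in $p$ at the midpoint contradicts the subsolution inequality, and continuity of $Dv_j$ then follows (you use monotonicity plus Darboux, the paper uses upper semicontinuity of $D^+v_j$). Your separate exclusion of $p_1=0$ via strict concavity and monotonicity is valid but unnecessary. Your passing remark that $H\to+\infty$ as $p\to 0^+$ is wrong: in fact $H(x,y,v,p)\to 0$, and $H$ is continuous and strictly convex in $p$ on all of $[0,\infty)$, which is why the paper does not single out this case.
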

\begin{proof}
Our strategy is similar to that in \cite[Section 5.2, Proposition 5.7]{bardi1997optimal}. The function $v_j$ is concave and therefore semiconcave. Since $v_j$ is differentiable almost everywhere in $(\ux,+\infty)$, the set $D^*v_j(x)$ is nonempty and closed for any $x>\ux$. \par
To prove that $v_j$ is differentiable in $(\ux,+\infty)$, we only need to establish that $D^+v_j$ is a singleton. From the {\textit{semiconcavity}} of $v_j$,  $D^+v_j={\rm{co}}D^*v_j$ and we only need to prove that $D^*v_j$ is a singleton. \par
Suppose by contradiction $p^1\neq p^2\in D^*v_j$. Then there exist subsequences $\{x_n\}$, $\{z_k\}$ such that $v_j$ is differentiable at each $\{x_n\}$ and $\{z_k\}$, and 
$$
x=\lim_{n\to +\infty}x_n=\lim_{k\to +\infty}z_k,\quad p^1=\lim_{n\to +\infty}Dv_j(x_n),\quad p^2=\lim_{k\to +\infty}Dv_j(z_k).
$$
\beq
\begin{aligned}
\frac{\rho}{\theta} v_j(x_n)-H(x_n,y_j,v_j(x_n),Dv_j(x_n))&=\lambda_j (v_{\bar \jmath}(x_n)-v_j(x_n)),\\
\frac{\rho}{\theta} v_j(z_k)-H(z_k,y_j,v_j(z_k),Dv_j(z_k))&=\lambda_j (v_{\bar \jmath}(z_k)-v_j(z_k)).
\end{aligned}
\eeq
Using the supersolution found in \Cref{prop:sub super sol} and the comparison principle in \Cref{comparison}, we deduce that $v_j(x)$ is bounded away from $0$ for any $x<+\infty$. This allows us to take advantage of the continuity properties of $H$ and the continuity of $v_j$, $j\in \{1,2\}$ to infer that 
\beq
\frac{\rho}{\theta} v_j(x)-H\left(x,y_j,v_j(x),p^1\right)={}{\frac{\rho}{\theta} v_j(x)}-H\left(x,y_j,v_j(x),p^2\right)=\lambda_j (v_{\bar \jmath}(x)-v_j(x)) .
\eeq
Set $\bar{p}=(p^1+p^2)/2$, from the strict convexity of $H$ we infer 
\beq\label{Eq: C1 H>}
\frac{\rho}{\theta} v_j(x)-H\left(x,y_j,v_j(x),\bar{p}\right)>\frac{\rho}{\theta} v_j(x)-\frac{1}{2}H\left(x,y_j,v_j(x),p^1\right)-\frac{1}{2}H\left(x,y_j,v_j(x),p^2\right)=\lambda_j (v_{\bar \jmath}(x)-v_j(x)) .
\eeq
On the other hand $\bar{p}\in {\rm{co}}D^*v_j$, hence from \Cref{lemma:semiconcave} $\bar{p}\in D^+v_j$. Therefore, by \Cref{def vis sol2}
$$
\frac{\rho}{\theta} v_j(x)-H\left(x,y_j,v_j(x),\bar{p}\right)\leq \lambda_j (v_{\bar \jmath}(x)-v_j(x)), 
$$
in contradiction with \eqref{Eq: C1 H>}, therefore $D^+v_j={\rm{co}}D^*v_j$ is a singleton. \par
Finally, $Dv_j$ is continuous in $(\ux,+\infty)$, {}{by} the upper semicontinuity of the multivalued map $D^+v_j$ for the semiconcave function $v_j$. The coercivity of the Hamiltonian and the concavity of $v_j$ imply that $Dv_j$ is uniformly bounded in $(\ux,+\infty)$.
\end{proof}
\begin{prop}\label{Dv UC}
The function $Dv_j$ is uniformly continuous in $[\ux,R]$ for any $R>\ux$. Moreover, $Dv_j$ is bounded on $[\ux,R]$ by a constant depending only on $\g$, $\p$, $(y_1,y_2)$ and $(\lambda_1,\lambda_2)$. 
\end{prop}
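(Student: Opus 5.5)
Since $v_j$ is concave and $C^1$ on $(\ux,+\infty)$ by \Cref{Prop: v concave} and \Cref{Prop: v C1}, the derivative $Dv_j$ is nonincreasing and continuous on $(\ux,+\infty)$. Uniform continuity on $[\ux,R]$ therefore reduces to two points: a bound on $Dv_j$ near $\ux$, and the existence of $\lim_{x\to\ux^+}Dv_j(x)$. The limit exists and is finite once $Dv_j$ is bounded near $\ux$, because a monotone function has one-sided limits. Extending $Dv_j$ by this limit at $\ux$ gives a continuous function on the compact interval $[\ux,R]$, hence uniformly continuous. I also need $Dv_j\ge 0$; this follows as in Step 2 of \Cref{Prop: Exist}, since $v_j$ is strictly increasing, so $Dv_j(x)\in[0,Dv_j(\ux^+)]$ on $[\ux,R]$. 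The whole task is thus to bound $\limsup_{x\to\ux^+}Dv_j(x)$.

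To get this bound I use the equation pointwise. At every $x>\ux$, $v_j$ is differentiable, so the sub- and supersolution inequalities give the classical identity
\[
\tfrac{\rho}{\theta}v_j(x)-\lambda_j\bigl(v_{\bar\jmath}(x)-v_j(x)\bigr)=H\bigl(x,y_j,v_j(x),Dv_j(x)\bigr).
\]
The left side is bounded above independently of $x$. Indeed $v_j<0$ gives $\tfrac{\rho}{\theta}v_j<0$, and by comparison $v_{\bar\jmath}-v_j\ge \check{\su}_1(\ux)-\check{\sv}_2(x)$. Comparison (\Cref{comparison} together with \Cref{Prop:sub-super}) also gives $\check{\su}_1\le v_j\le \check{\sv}_2$, and then \eqref{Eq:Hv<0} yields
\[
H\bigl(x,y_j,v_j(x),p\bigr)\ge H\bigl(x,y_j,\check{\sv}_2(x),p\bigr).
\]
For $x\in(\ux,\ux+1]$ we have $rx+y_j\ge r\ux+y_1>0$ and $(1-\g)\check{\sv}_2(x)$ is bounded above and below by positive constants. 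So by \eqref{def H}, $H(x,y_j,\check{\sv}_2(x),p)\ge (r\ux+y_1)p-Cp^{1-\p}$ with $C$ explicit, and the right side tends to $+\infty$ as $p\to+\infty$ since $1-\p<1$. Comparing with the bounded left side gives $Dv_j(x)\le M$ for $x\in(\ux,\ux+1]$. Here $M$ depends only on $\g,\p,y_1,y_2,\lambda_1,\lambda_2$ and on $\rho,r,\ux$, which are implicit in the statement. Monotonicity of $Dv_j$ then extends the bound $0\le Dv_j\le M$ to all of $(\ux,R]$, and in fact to $(\ux,\infty)$, which is why the constant does not depend on $R$.

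The main obstacle is the boundary point itself: the identity above holds only for $x>\ux$, so one must check that $D v_j(\ux)$, understood as the one-sided limit, is a genuine derivative consistent with the state constraint. For this I would note that $v_j$ is concave and Lipschitz on $[\ux,\ux+1]$, so the right derivative at $\ux$ exists and equals $\lim_{x\to\ux^+}Dv_j(x)$ (a standard property of concave functions). Passing to the limit in the identity then shows the subsolution inequality holds at $\ux$ with $p=Dv_j(\ux)$, as in Step 2 of \Cref{Prop: Exist}. The uniform-in-$x$ control near $\ux$ relies on the assumption $r\ux+y_1>0$. If only $\rho\ux+y_j>0$ were available, I would instead use \eqref{H min} with the growth in $p$ coming from $p^{1-\p}$ and the lower bound on $v_j$, but I expect the positivity of the drift to be the intended route.
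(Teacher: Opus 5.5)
Your proof is correct and follows the paper's route: concavity makes $Dv_j$ monotone, the one-sided limit at $\ux$ coincides with the right derivative (the paper gets this from the mean value theorem), and continuity on the compact interval $[\ux,R]$ gives uniform continuity. Your coercivity bound via comparison with $\check{\sv}_2$ is the argument the paper uses in Step 2 of \Cref{Prop: Exist} and invokes at the end of \Cref{Prop: v C1}. Like the paper's, your constant depends on $r$ through $\check{\sv}_2$: it blows up as $r\to 0$ because of the $y_2/r$ term, so it is uniform only for $r$ in compact subsets of $(0,\rho]$, not literally independent of $r$ (and of $\rho,\ux$) as the statement suggests.
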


\begin{proof}
From the concavity of $v_j$, $Dv_j(x)$ is monotone increasing as $x\to \ux$. There exists $Dv_j(\ux^+)$ such that $Dv_j(\ux^+)=\lim_{x\to \ux,x>\ux}Dv_j(x)$. Moreover, we define $Dv_j(\ux)=\lim_{\epsilon \to 0,\epsilon>0}\frac{v_j(\ux+\epsilon)-v_j(\ux)}{\epsilon}$ if it exists. Since $v_j$ is $C^1$ in $(\ux,R]$ and continuous in $[\ux,R]$, we obtain that $Dv_j(\ux)$ exists and $Dv_j(\ux)=Dv_j(\ux^+)$ {}{by the mean value theorem}. Therefore, $Dv_j$ is continuous on the interval $[\ux,R]$ and we obtain the uniform continuity by Heine-Borel theorem. 
\end{proof}

\begin{cor}\label{col: c min}
Assume $0<r\leq \rho$. There exists a constant $c_{\min}>0$, independent of $r$, such that $c_j(x)>c_{\min}$ for all $x\geq \ux$ and $j\in \{1,2\}$.  
\end{cor}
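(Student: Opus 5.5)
We would read off a lower bound on consumption from the explicit formula \eqref{c optimal},
$$
c_j(x)=\rho^{\p}(Dv_j(x))^{-\p}\big((1-\g)v_j(x)\big)^{\frac{1-\g \p}{1-\g}},
$$
valid for $x>\ux$ and extended to $x=\ux$ by continuity of $Dv_j$ (\Cref{Dv UC}). Since $\p>0$, we need an \emph{upper} bound on $Dv_j$. Since $\frac{1-\g\p}{1-\g}<0$ under \eqref{asp1} and $(1-\g)v_j>0$, we also need an \emph{upper} bound on $(1-\g)v_j$, i.e.\ a lower bound on $v_j$. Both bounds must be uniform in $x\ge\ux$ and in $r\in(0,\rho]$.

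\textbf{Step 1 (lower bound on $v_j$).} By \Cref{Prop:sub-super} and \Cref{comparison}, $v_j\ge \check{\su}_1=\frac{(rx+y_1)^{1-\g}}{1-\g}$. Since $v_j$ is increasing, it suffices to bound $v_j(\ux)$ from below, and $r\ux+y_1$ must stay bounded away from $0$ uniformly in $r$. If $\ux\ge0$, we have $r\ux+y_1\ge y_1>0$. If $\ux<0$, we have $r\ux+y_1\ge\rho\ux+y_1>0$ because $r\le\rho$. Either way, $(1-\g)v_j\le \big(\min(y_1,\rho\ux+y_1)\big)^{1-\g}=:M$, independent of $r$.

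\textbf{Step 2 (upper bound on $Dv_j$).} By concavity (\Cref{Prop: v concave}), $Dv_j(x)\le Dv_j(\ux)$, so it suffices to bound $p:=Dv_j(\ux)$. We would use the subsolution inequality at $\ux$ (\Cref{def vis sol2}, with $p\in D^+v_j(\ux)$ by concavity and $C^1$ up to $\ux$):
$$
\z v_j(\ux)\le H(\ux,y_j,v_j(\ux),p)+\lambda_j(v_{\bar\jmath}(\ux)-v_j(\ux)).
$$
The left side is bounded below by $\z\,\check{\su}_1(\ux)$, and $v_{\bar\jmath}(\ux)-v_j(\ux)\le -\check{\su}_1(\ux)$ since $v_{\bar\jmath}<0$. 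So $H(\ux,y_j,v_j(\ux),p)\ge -K$ with $K$ uniform in $r$. This alone gives nothing, because $H\to+\infty$; a lower bound on $H$ does not bound $p$. The useful direction is the supersolution/equation side. For $x>\ux$ the equation holds classically,
$$
H(x,y_j,v_j,Dv_j)=\z v_j-\lambda_j(v_{\bar\jmath}-v_j)\le \lambda_j|\check{\su}_1(\ux)|,
$$
using $v_j<0$ and $v_{\bar\jmath}\ge \check{\su}_1$. We also have $H(x,y,v,p)\ge (rx+y)p+\frac{\rho^\p}{\p-1}p^{1-\p}\big((1-\g)v\big)^{\frac{1-\g\p}{1-\g}}$, whose negative term is controlled by $M'\,p^{1-\p}$ with $M'$ uniform in $r$, thanks to Step 1 and the upper bound $v_j\le\check{\sv}_2<0$. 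Here the exponent is negative, so we need $(1-\g)v_j$ bounded below; $\check{\sv}_2$ depends on $r$, so we would instead use $v_j\le 0$ together with the bound at a fixed point, or simply evaluate at points near $\ux$. Coercivity, using $rx+y_j\ge\min(y_1,\rho\ux+y_1)>0$ and sublinearity of $p^{1-\p}$, then gives $Dv_j(x)\le P$ uniformly in $r$. Letting $x\downarrow\ux$ bounds $Dv_j(\ux)$.

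\textbf{Main obstacle.} The obstacle is uniformity of the factor $\big((1-\g)v_j\big)^{\frac{1-\g\p}{1-\g}}$ in the Hamiltonian, which requires $v_j$ bounded away from $0$ near $\ux$ independently of $r$. The supersolution $\check{\sv}$ degenerates as $r\to0$. We would first try replacing it by a comparison with the $r$-independent quantity: by monotonicity in $r$ of the value function (a larger $r$ enlarges the admissible set), $v_j^{(r)}(\ux)\le v_j^{(\rho)}(\ux)<0$. We would then use that value as the uniform upper bound. With $Dv_j\le P$ and $(1-\g)v_j\le M$, the formula gives $c_j(x)\ge \rho^\p P^{-\p}M^{\frac{1-\g\p}{1-\g}}$, and we take $c_{\min}$ to be half of this.
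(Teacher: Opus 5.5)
Your route is the paper's. The paper also reads the bound off \eqref{c optimal}. It treats the factor $((1-\g)v_j)^{\frac{1-\g\p}{1-\g}}$ exactly as in your Step~1: comparison with $\check{\su}_1$, then the case split on the sign of $\ux$ using $r\le\rho$. For $(Dv_j)^{-\p}$ it simply invokes the bound on $Dv_j$ stated in \Cref{Dv UC}, extended to all $x\ge\ux$ by concavity.

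The gap is in your Step~2, where you try to prove that bound uniformly in $r$ yourself. Coercivity needs $(1-\g)v_j$ bounded away from $0$ near $\ux$, uniformly in $r$. Appealing to $v_j\le 0$ gives nothing. You correctly observe that $\check{\sv}_2\to 0$ as $r\to 0$. Your fix is to claim $v_j^{(r)}(\ux)\le v_j^{(\rho)}(\ux)$ because a larger $r$ enlarges the admissible set. That fix works when $\ux\ge 0$ but is false when $\ux<0$. For negative wealth the drift $rx+y_j-c$ decreases in $r$, since more interest is paid on debt, so admissible sets shrink near $\ux$. For instance, in the decoupled case $\lambda_j=0$ with $r<\rho$, the agent stays at $\ux$ consuming $r\ux+y_j$. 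Then $v_j^{(r)}(\ux)=\frac{(r\ux+y_j)^{1-\g}}{1-\g}$, which is strictly decreasing in $r$ when $\ux<0$ --- the reverse of your inequality. So, as written, Step~2 does not deliver an $r$-independent $P$ in the case of interest.

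You do not need any upper bound on $Dv_j$, nor on $v_j$, to close the argument. For $x>\ux$ set $p=Dv_j(x)>0$ and $c=c_j(x)$. Since $cp=\rho^{\p}p^{1-\p}((1-\g)v_j)^{\frac{1-\g\p}{1-\g}}$, the Hamiltonian reads $H(x,y_j,v_j,p)=p\big(rx+y_j-\frac{c}{1-\p}\big)$. Let $m=\min(y_1,\rho\ux+y_1)$, so that $m\le rx+y_j$ for all $x\ge\ux$. The equation holds classically for $x>\ux$. Together with $v_j<0$ and $v_{\bar\jmath}\ge\check{\su}_1$, it gives $H\le C:=\max_j\lambda_j\,\frac{m^{1-\g}}{\g-1}$.

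Now suppose $c\le (1-\p)m/2$. Then $H\ge pm/2$, so $p\le 2C/m$, and your Step~1 yields $c\ge \rho^{\p}(m/2C)^{\p}m^{1-\g\p}$. Hence
\[
c_j(x)\ge \min\Big\{\tfrac{(1-\p)m}{2},\ \rho^{\p}\big(\tfrac{m}{2C}\big)^{\p}m^{1-\g\p}\Big\}
\]
for all $x>\ux$, uniformly in $r\in(0,\rho]$, and at $\ux$ by continuity. Halving this constant gives the strict inequality.

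Citing \Cref{Dv UC}, as the paper does, also closes the argument. In that case the uniformity in $r$ rests entirely on that proposition, whose proof in the paper addresses continuity of $Dv_j$ rather than the $r$-independence of its bound.
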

\begin{proof}
From \eqref{c optimal} and {}{\cref{Dv UC}}, the first order condition for the optimal control \eqref{c optimal} holds 
 pointwise. \Cref{Prop:Lip} and \Cref{Dv UC} imply that $(Dv_j(x))^{-\p}$ is bounded below for all $x\geq \ux$. By comparison with the subsolution $\check{\su}_1$, see \Cref{Prop:sub-super}, we obtain that
 $$
 ((1-\g)v_j(x))^{\frac{1-\g \p}{1-\g}}\geq ((1-\g)\check{\su}_1(x))^{\frac{1-\g \p}{1-\g}}\geq (rx+y_1)^{1-\g \p}.
 $$
 If $\ux\geq 0$, then $(rx+y_1)^{1-\g \p}\geq y_1^{1-\g \p}$. If $\ux< 0$, then $(rx+y_1)^{1-\g \p}\geq (\rho \ux+y_1)^{1-\g \p}$. Therefore, we can find a lower bound $c_{\min}$ uniform w.r.t. $r$. 
\end{proof}
Now we can consider the continuous dependence of the consumption and saving policies upon the interest rate $r$. 
\begin{prop}\label{Prop: stability r}
We denote by $v^\i$ the solution to system \eqref{eq:HJB} corresponding to an interest rate $r^\i$ with $0\leq r^\i\leq \rho$. For $r^\i \to r$, the sequence $v^\i$ converges in $C^1[\ux,R]$ to $v$ for any $R>\ux$. 
\end{prop}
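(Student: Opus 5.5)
The proof combines three ingredients: compactness from uniform bounds, identification of the limit through stability of viscosity solutions together with the comparison principle of \Cref{comparison}, and an upgrade from $C^0$ to $C^1$ convergence using \Cref{Lemma: Dv converge} and the equation near $\ux$.

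\emph{Step 1: uniform bounds.} For $r^\i$ in a neighborhood of $r$, the sub- and supersolutions of \Cref{Prop:sub-super} give uniform two-sided bounds on $v^\i_j$ on $[\ux,R]$. One checks that $b$ depends continuously on $r$ and that $\check{\su}_1,\check{\sv}_2$ depend continuously on $r$ on compact sets when $r>0$. The case $r=0$, where $y_2/r$ degenerates, has to be handled separately: I would use a supersolution of the form $\rho'(x+K)$-type with a constant $K$, or simply note that $v^\i<0$ is bounded above by $0$ and below by the subsolution. Next, \Cref{Dv UC} and Step 2 of \Cref{Prop: Exist} give a bound on $Dv^\i_j(\ux)$ independent of $\iota$. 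This follows from the coercivity \eqref{H coerc} together with the lower bound $H(\ux,y_j,v^\i_j(\ux),Dv^\i_j(\ux))\le \frac{\rho}{\theta}v^\i_j(\ux)+\lambda_j|\cdots|$ and the uniform bounds on $v^\i$. Concavity then makes $Dv^\i_j$ uniformly bounded on $[\ux,\infty)$, since it lies in $[0,Dv^\i_j(\ux)]$.

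\emph{Step 2: convergence of values.} By Arzel\`a--Ascoli, every subsequence has a further subsequence converging locally uniformly on $[\ux,\infty)$ to some concave, nondecreasing, bounded $w=(w_1,w_2)$. Stability of viscosity solutions shows that $w$ is a constrained viscosity solution of \eqref{HJB} with rate $r$. The Hamiltonian is continuous in $(r,x,v,p)$ wherever $v<0$ is bounded away from $0$ and $p\ge0$. The state-constraint subsolution property at $\ux$ passes to the limit by the standard argument, because a strict local maximum of $w_j-\varphi$ at $\ux$ is approximated by maxima of $v^\i_j-\varphi$ on $[\ux,\infty)$. Uniqueness from \Cref{Prop: Exist} (via \Cref{comparison}) gives $w=v$, so the whole sequence converges.

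\emph{Step 3: convergence of derivatives, the main obstacle.} By \Cref{Prop: v concave} and \Cref{Prop: v C1}, the functions $v^\i_j$ are strictly concave and differentiable on $(\ux,\infty)$, so \Cref{Lemma: Dv converge} yields $Dv^\i_j\to Dv_j$ locally uniformly on $(\ux,\infty)$. The difficulty is the endpoint $\ux$, where the lemma gives no information. Here I would use the equation. By \Cref{Dv UC} and the state constraint, $Dv^\i_j(\ux)$ satisfies the supersolution equality in the limit from the interior. Indeed, by continuity of $Dv^\i_j$ up to $\ux$, the HJB holds pointwise at $\ux$ with $p=Dv^\i_j(\ux^+)$. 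Any limit point $\bar p$ of $Dv^\i_j(\ux)$ then satisfies the same HJB at $\ux$ with rate $r$.

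It remains to show $\bar p=Dv_j(\ux)$. Concavity gives $\limsup Dv^\i_j(\ux)\ge \lim_{x\downarrow\ux}Dv_j(x)=Dv_j(\ux)$: for each $x>\ux$ we have $Dv^\i_j(\ux)\ge Dv^\i_j(x)\to Dv_j(x)$. For the reverse inequality, I would exploit the fact that $H(\ux,\cdot)$ is strictly convex in $p$ with the equation $\frac{\rho}{\theta}v=H+\lambda(\cdots)$. Given the value, this equation has at most two roots $p$. At the constraint, the optimal drift $r\ux+y_j-c_j(\ux)$ must be $\ge0$, which selects the root with $H_p\ge0$, i.e. the larger root. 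Since $Dv^\i_j(\ux)$ is the root of its own equation on the side $H_p\ge0$, and this root depends continuously on the data, the limit is the corresponding root for $v$, which equals $Dv_j(\ux)$. Uniform continuity from \Cref{Dv UC}, combined with convergence at $\ux$ and locally uniform convergence inside, then gives uniform convergence on $[\ux,R]$ by a monotonicity/Dini-type argument for the monotone functions $Dv^\i_j$ converging to a continuous limit.
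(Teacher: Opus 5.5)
Your proof is correct and follows the paper's route: uniform bounds from the explicit sub/supersolutions and coercivity, then stability of constrained viscosity solutions together with uniqueness (via \Cref{comparison}) to get $v^{(\iota)}\to v$ uniformly on $[\ux,R]$, then \Cref{Lemma: Dv converge} for the derivatives. Your main addition closes a step the paper's proof passes over, since \Cref{Lemma: Dv converge} alone only gives convergence on compact subsets of $(\ux,+\infty)$: you identify every limit of $Dv^{(\iota)}_j(\ux)$ as the unique root of the boundary equation on the branch $H_p\ge 0$, and conclude with a P\'olya--Dini argument for the nonincreasing $Dv^{(\iota)}_j$, whose limit is continuous; likewise, as $r^{(\iota)}\to 0$, your $K$-shifted supersolution (e.g.\ $\frac{(\rho(x+K))^{1-\g}}{1-\g}$ with $K=y_2/\rho+|\ux|$, valid for all $r\in[0,\rho]$) is the right choice, whereas the bound $v^{(\iota)}<0$ alone would not keep $v^{(\iota)}$ away from $0$, which the factor $((1-\g)v)^{\frac{1-\g\p}{1-\g}}$ in $H$ requires.
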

\begin{proof}
From \eqref{Eq:Hv<0}, \Cref{Prop:sub-super} and \Cref{Prop:Lip}, $H_v(x,y_j,v^\i_j,Dv^\i_j)$ is uniformly bounded for all $r^\i$ such that $0\leq r^\i\leq \rho$. By the stability of constrained viscosity solution $v^\i$ converges to $v$ uniformly. Since $v_j^\i$ is strictly concave (\Cref{Prop: v concave}), the local uniform convergence of $D v^\i_j$ to $Dv_j$ follows from \Cref{Lemma: Dv converge}. 
\end{proof}
We denote by $c_j^{\i}$ and $s_j^{\i}$ the consumption and saving policies with interest rate $r^\i$. From \Cref{Prop: stability r}, the sequences $c_j^{\i}$ and $s_j^{\i}$ converge locally uniformly to $c_j$ and $s_j$ as $r^\i\to r$ .

The semiconvexity of the value function can be obtained with control theoretic arguments, as in \cite{cannarsa2004semiconcave}.  
\begin{prop}\label{Prop: v semiconvex}
The value function $(v_1,v_2)$ of \eqref{control_problem} is locally semiconvex.
\end{prop}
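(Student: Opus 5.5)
We follow the control-theoretic route of \cite{cannarsa2004semiconcave}, adapted to the recursive aggregator. Fix a compact interval $[x_1,x_2]\subset(\ux,+\infty)$ and $h>0$ small with $x\pm h\in[x_1,x_2]$. Local semiconvexity of $v_j$ means that
\[
v_j(x+h)+v_j(x-h)-2v_j(x)\geq -Ch^2
\]
with $C$ depending only on $[x_1,x_2]$. Because of the max in \eqref{control_problem}, we bound $v_j(x\pm h)$ from below by admissible (suboptimal) controls built from the optimal control at $x$. The value $v_j(x)$ itself is expressed through the optimal feedback $c^*$, which exists since $v_j\in C^1$ by \Cref{Prop: v C1}.

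\textbf{Step 1: the trajectories.} Start from $x$ with the optimal consumption $c_\tau$, and let $\boldsymbol{x}_\tau$ be the resulting trajectory; by \Cref{col: c min}, $c_\tau>c_{\min}$. From $x\pm h$ use the shifted consumption
\[
c^\pm_\tau=c_\tau\pm \eta_\tau,
\]
where $\eta_\tau$ is chosen so that the deviation $\pm h e^{r(\tau-t)}-\int_t^\tau e^{r(\tau-s)}\eta_s\,ds$ vanishes on $[t,t+1]$. For example, take $\eta_\tau=h\,e^{r(\tau-t)}$ on that interval (with the obvious rescaling of length), so that the trajectories from $x\pm h$ coincide with $\boldsymbol{x}_\tau$ after time $t+1$. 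Then both perturbed paths stay above $\ux$ for $h$ small, since $x\pm h\geq x_1>\ux$ and the perturbation only decreases. Moreover $c^\pm_\tau\geq c_{\min}-Ch>0$, so the controls are admissible. After the merging time the continuation values are equal, because we use the same control and the same income path.

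\textbf{Step 2: the recursive utility comparison.} The difficulty is that $f$ depends on the utility process $V_\tau$ itself. We use the linearized (BSDE/ODE between jumps) representation of the utility process: the utilities $V^\pm$ generated by $c^\pm$ satisfy
\[
-dV^\pm=f(c^\pm,V^\pm)\,d\tau-dM^\pm,
\]
where $M^\pm$ is the jump martingale. Set $\Delta=V^++V^--2V$. By the joint concavity of $f$ in $(c,v)$, see \eqref{Eq: f concave}, the second difference yields the lower bound
\[
f(c^+,V^+)+f(c^-,V^-)-2f(c,V)\geq f_c\,(c^++c^--2c)+f_v\,\Delta-C\big(|\eta|^2+|V^+-V|^2+|V^--V|^2\big),
\]
with $C$ a uniform bound on the second derivatives of $f$. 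Here $c^++c^--2c=0$. The derivatives $f_{cc},f_{cv},f_{vv}$ are bounded on the relevant range: $c$ lies between $c_{\min}$ and an upper bound obtained from \eqref{c optimal} and the uniform bounds on $Dv_j$ and on $v_j$, and $v$ is pinned between $\check{\su}$ and $\check{\sv}$ and bounded away from $0$ on compacts. Furthermore $|V^\pm-V|\leq Ch$ by the local Lipschitz property (\Cref{Prop:Lip}) together with a Gronwall estimate on $[t,t+1]$. Hence $\Delta$ solves a linear backward equation with coefficient $f_v$ (bounded) and a source of size $-Ch^2$ supported on $[t,t+1]$, while $\Delta=0$ after $t+1$. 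A comparison/Gronwall argument for linear BSDEs, or equivalently the explicit discounting by $\exp\!\big(\int f_v\big)$, gives $\Delta_t\geq -Ch^2$.

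\textbf{Step 3: conclusion and main obstacle.} Since $v_j(x\pm h)\geq V^\pm_t$ and $v_j(x)=V_t$, we obtain
\[
v_j(x+h)+v_j(x-h)-2v_j(x)\geq \Delta_t\geq -Ch^2,
\]
which is the required local semiconvexity on $[x_1,x_2]$. The main obstacle is Step 2: justifying uniform bounds on the second derivatives of $f$ along all three utility processes. This requires controlling $c_\tau$ from above near $\ux$, and the constraint-induced behaviour at $\ux$ is precisely why the result is only local. It also requires the comparison principle for the recursive utility process, which in turn needs $f$ to be Lipschitz in $v$ on the relevant range. I would first try to obtain all bounds pathwise on $[t,t+1]$, using that the deterministic dynamics keep $\boldsymbol{x}_\tau$ within a compact set determined by $x_2$ and the upper bound on consumption.
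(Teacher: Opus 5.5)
Your Step 1 is the paper's construction: you perturb the optimal consumption by $\pm\eps e^{rt}$ (your $\eta$) so that the paths from $x\pm h$ rejoin the optimal one at a fixed time. Step 2, however, takes a genuinely different route.

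\emph{The paper's route.} It never forms the utility processes of the perturbed streams. It works with the frozen problem \eqref{control_problem2}, in which the aggregator along any path is $f(c,v(\boldsymbol{x}_\tau,\boldsymbol{y}_\tau))$ with $v$ the value function itself. The dynamic programming bounds for $v_j(x\pm h)$ therefore involve only $v(\boldsymbol{x}^\pm_t,\boldsymbol{y}_t)$. The paper then uses the concavity of $v$ (\Cref{Prop: v concave}) twice:
\begin{itemize}
\item to drop the term $-\frac{\rho}{\theta}\big(v(\boldsymbol{x}^+_t,\boldsymbol{y}_t)-2v(\boldsymbol{x}^*_t,\boldsymbol{y}_t)+v(\boldsymbol{x}^-_t,\boldsymbol{y}_t)\big)\geq 0$;
\item together with $\mathcal{F}_v<0$, to replace $v(\boldsymbol{x}^*_t,\boldsymbol{y}_t)$ by the midpoint of $v(\boldsymbol{x}^\pm_t,\boldsymbol{y}_t)$, so that the Taylor expansion of $\mathcal{F}$ at that midpoint has no first-order terms.
\end{itemize}
The quadratic remainder is then controlled by $c_{\min}$, $\check{\sv}_2$ and the Lipschitz bound on $v$.

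\emph{Your route.} You use the genuine Epstein--Zin utility processes $V^\pm$ and absorb the first-order term $f_v\Delta$ into the positive discount factor $\exp(\int f_v)$ of a linear backward equation. This buys independence from the concavity of $v$ and from the sign of $\mathcal{F}_v$, and it treats the utility of a suboptimal stream as Duffie--Epstein define it. The price is several inputs that the frozen formulation sidesteps:
\begin{itemize}
\item existence, uniqueness and comparison for the recursive utility of arbitrary perturbed streams;
\item the identification of $v$ with the supremum of these utilities;
\item a priori bounds keeping $V^\pm$ in a region where $f$ has bounded first and second derivatives, e.g.\ $V^\pm_\tau\leq v(\boldsymbol{x}^\pm_\tau,\boldsymbol{y}_\tau)\leq\check{\sv}_2(\boldsymbol{x}^\pm_\tau)<0$, plus a lower bound by comparison with the constant stream $c_{\min}/2$.
\end{itemize}
Two smaller remarks. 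No upper bound on $c$ is needed, since the derivatives of $f$ degenerate only as $c\to0$ or $v\to0$. Also, your lower bound on the second difference of $f$ comes from the Hessian bounds, not from joint concavity; concavity gives the opposite inequality.

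\textbf{Two steps need repair.}

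\emph{Admissibility in Step 1.} The justification ``$x\pm h\geq x_1>\ux$ and the perturbation only decreases'' controls the deviation but not the base path $\boldsymbol{x}_\tau$. Nothing prevents $\boldsymbol{x}_\tau$ from reaching $\ux$ before $t+1$. This does happen when $x_1$ is close to $\ux$: $s_1<0$ on $(\ux,+\infty)$ and vanishes only like $\sqrt{x-\ux}$ at $\ux$ (Appendix \ref{Expansion ux}), so a low-income path hits $\ux$ in finite time. The path from $x-h$ lies strictly below $\boldsymbol{x}_\tau$ until merging, so at that moment it leaves $[\ux,+\infty)$.

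The fix is to choose the merging time $T_0$ in terms of $x_1-\ux$ and a bound on $|s_j|$ over the relevant compact, so that $\boldsymbol{x}_\tau\geq\ux+\frac12(x_1-\ux)$ on $[t,t+T_0]$ for every income path. Then take $h$ small and $\eta_\tau=\frac{h}{T_0}e^{r(\tau-t)}$. This is exactly the role of $T$ and $\delta$ in the paper.

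\emph{The bound $|V^\pm-V|\leq Ch$ in Step 2.} This does not follow from \Cref{Prop:Lip}. That proposition controls $v(\boldsymbol{x}^\pm_\tau,\boldsymbol{y}_\tau)-v(\boldsymbol{x}_\tau,\boldsymbol{y}_\tau)$, whereas $V^\pm_\tau\neq v(\boldsymbol{x}^\pm_\tau,\boldsymbol{y}_\tau)$ for a suboptimal stream. The correct argument is the first-order analogue of your Step 2. The difference $V^\pm-V$ solves a linear backward equation whose coefficient is $f_v$ evaluated on the segment between $(c,V)$ and $(c^\pm,V^\pm)$, and whose source is $\pm f_c\,\eta=O(h)$. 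It vanishes after the merging time, which gives $|V^\pm-V|\leq Ch$.
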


\begin{proof}
From the stationarity of the control problem \eqref{control_problem}, we can take $t=0$ in \eqref{control_problem} and for the rest of the proof, we denote by $t$ the generic time variable. There exists $T$ such that the optimal trajectory $\boldsymbol{x}^*_t$ starting from $(x,y)$ does not reach $\ux$ for all $y\in \{y_1,y_2\}$ and all realizations of $\boldsymbol{y}_t$. From \Cref{col: c min}, for all $t\geq 0$:
$$
c\lc \boldsymbol{x}^*_t,\boldsymbol{y}_t\rc =\rho^{\p}\lc Dv\lc \boldsymbol{x}^*_t,\boldsymbol{y}_t\rc \rc^{-\p}\lc (1-\g)v\lc \boldsymbol{x}^*_t,\boldsymbol{y}_t\rc \rc^{\frac{1-\g \p}{1-\g}}>c_{\min}. 
$$
For $0<h<x-\ux$ and $\eps$ that will be chosen soon, consider the trajectories
\beq
	\left\{
\begin{aligned}
\quad &d\boldsymbol{x}^+_t=r\boldsymbol{x}^+_t+\boldsymbol{y}_t-c\lc \boldsymbol{x}^*_t,\boldsymbol{y}_t\rc-\eps e^{rt},\quad &&\boldsymbol{x}^+_0=x+h,\\
&d\boldsymbol{x}^-_t=r\boldsymbol{x}^-_t+\boldsymbol{y}_t-c\lc \boldsymbol{x}^*_t,\boldsymbol{y}_t\rc+\eps e^{rt},&&\boldsymbol{x}^-_0=x-h.
\end{aligned}
\right.
\eeq
We have
$$
\begin{aligned}
d\lc \boldsymbol{x}^+_t-\boldsymbol{x}^*_t\rc=r\lc \boldsymbol{x}^+_t-\boldsymbol{x}^*_t\rc-\eps e^{rt},\\
d\lc \boldsymbol{x}^-_t-\boldsymbol{x}^*_t\rc=r\lc \boldsymbol{x}^-_t-\boldsymbol{x}^*_t\rc+\eps e^{rt},
\end{aligned}
$$
and by taking $\eps=2h/T$ we obtain
$$
\begin{aligned}
& \boldsymbol{x}^+_t-\boldsymbol{x}^*_t=e^{rt}\lc \boldsymbol{x}^+_0-x\rc-\eps \int_0^t e^{r(t-\tau)}e^{r\tau}d\tau=he^{rt}-\eps te^{rt}=he^{rt}\lc 1-\frac{2t}{T}\rc,\\
& \boldsymbol{x}^-_t-\boldsymbol{x}^*_t=-he^{rt}+\eps te^{rt}=-he^{rt}\lc 1-\frac{2t}{T}\rc.
\end{aligned}
$$
By construction, $\boldsymbol{x}^+_{T/2}=\boldsymbol{x}^-_{T/2}=\boldsymbol{x}^*_{T/2}$. Moreover, 
\beq\label{Eq: x-x+}
{}{
\boldsymbol{x}^-_t<\boldsymbol{x}_t<\boldsymbol{x}^+_t\,\,{\text{and}}\,\,\frac{\boldsymbol{x}^+_t-\boldsymbol{x}^-_t}{2}=he^{rt}\lc 1-\frac{2t}{T}\rc, \quad \forall t<T/2.
}
\eeq
Consider the positive value $\delta$, depending on $x$:
\beq\label{Eq: delta time}
\delta:=\min_{\tau \in [0,T/2]}x^*(\tau)>0,
\eeq
and choose $h$ such that 
\beq\label{Eq:def h}
\max_{t \in [0,T/2]}he^{rt}\lc 1-\frac{2t}{T}\rc<\delta\quad {\rm{and}}\quad c_{\min}-\frac{2he^{rT}}{T}>\frac{c_{\min}}{2}.
\eeq
Without loss of generality we take $y_0=y_j$. From the dynamic programming principle
$$
\begin{aligned}
v_j(x)={}&\mathbb{E}\left[\int_0^{T/2}f\lc c\lc \boldsymbol{x}^*_t,\boldsymbol{y}_t\rc, v\lc \boldsymbol{x}^*_t,\boldsymbol{y}_t\rc \rc dt +v(\boldsymbol{x}^*_{T/2},\boldsymbol{y}_{T/2})\right],\\
v_j(x+h)={}& \mathbb{E}\left[\int_0^{T/2}f\lc c\lc \boldsymbol{x}^*_t,\boldsymbol{y}_t\rc-\eps e^{rt}, v\lc \boldsymbol{x}^+_t,\boldsymbol{y}_t\rc\rc dt +v(\boldsymbol{x}^*_{T/2},\boldsymbol{y}_{T/2})\right],\\
 v_j(x-h)={}& \mathbb{E}\left[\int_0^{T/2}f\lc c\lc \boldsymbol{x}^*_t,\boldsymbol{y}_t\rc+\eps e^{rt}, v\lc \boldsymbol{x}^-_t,\boldsymbol{y}_t\rc\rc dt+v(\boldsymbol{x}^*_{T/2},\boldsymbol{y}_{T/2})\right].
\end{aligned}
$$
Using the notation from \eqref{EZ flow}, we have 
\beq\label{v semi con}
\begin{aligned}
&v_j(x+h)-2v_j(x)+v_j(x-h)\\
= {}&\mathbb{E}\int_0^{T/2}\lc \mathcal{F}\lc c\lc \boldsymbol{x}^*_t,\boldsymbol{y}_t\rc-\eps e^{rt}, v\lc \boldsymbol{x}^+_t,\boldsymbol{y}_t\rc\rc -2\mathcal{F}\lc c\lc \boldsymbol{x}^*_t,\boldsymbol{y}_t\rc, v\lc \boldsymbol{x}^*_t,\boldsymbol{y}_t\rc \rc+\mathcal{F}\lc c\lc \boldsymbol{x}^*_t,\boldsymbol{y}_t\rc+\eps e^{rt}, v\lc \boldsymbol{x}^-_t,\boldsymbol{y}_t\rc\rc \rc dt\\
&- \frac{\rho}{\theta}\mathbb{E}\int_0^{T/2}\lc v\lc \boldsymbol{x}^+_t,\boldsymbol{y}_t\rc-2v\lc \boldsymbol{x}^*_t,\boldsymbol{y}_t\rc+v\lc \boldsymbol{x}^-_t,\boldsymbol{y}_t\rc\rc dt\\
>{}&\mathbb{E}\int_0^{T/2}\underbrace{\lc \mathcal{F}\lc c\lc \boldsymbol{x}^*_t,\boldsymbol{y}_t\rc-\eps e^{rt}, v\lc \boldsymbol{x}^+_t,\boldsymbol{y}_t\rc\rc -2\mathcal{F}\lc c\lc \boldsymbol{x}^*_t,\boldsymbol{y}_t\rc, v\lc \boldsymbol{x}^*_t,\boldsymbol{y}_t\rc \rc+\mathcal{F}\lc c\lc \boldsymbol{x}^*_t,\boldsymbol{y}_t\rc+\eps e^{rt}, v\lc \boldsymbol{x}^-_t,\boldsymbol{y}_t\rc\rc \rc}_{(I)} dt,
\end{aligned}
\eeq
where for the last inequality we used \eqref{Eq: x-x+} and the concavity of $v$.\par
Next, by using again the concavity of $v$ and $\boldsymbol{x}^*_t=(\boldsymbol{x}^+_t+\boldsymbol{x}^-_t)/2$, 
$$
v\lc \boldsymbol{x}^*_t,\boldsymbol{y}_t\rc=v\lc (\boldsymbol{x}^+_t+\boldsymbol{x}^-_t)/2,\boldsymbol{y}_t\rc>\frac{ v\lc \boldsymbol{x}^+_t,\boldsymbol{y}_t\rc+ v\lc \boldsymbol{x}^-_t,\boldsymbol{y}_t\rc}{2}.
$$
From \eqref{F decreasing} $-\mathcal{F}(c,v)$ is increasing in $v$, we have 
$$
 -\mathcal{F}\lc c\lc \boldsymbol{x}^*_t,\boldsymbol{y}_t\rc, v\lc \boldsymbol{x}^*_t,\boldsymbol{y}_t\rc \rc> -\mathcal{F}\lc c\lc \boldsymbol{x}^*_t,\boldsymbol{y}_t\rc, \frac{ v\lc \boldsymbol{x}^+_t,\boldsymbol{y}_t\rc+ v\lc \boldsymbol{x}^-_t,\boldsymbol{y}_t\rc}{2}\rc.
$$
$$
\begin{aligned}
 (I)>{}&\mathcal{F}\lc c\lc \boldsymbol{x}^*_t,\boldsymbol{y}_t\rc-\eps e^{rt}, v\lc \boldsymbol{x}^+_t,\boldsymbol{y}_t\rc\rc -2\mathcal{F}\lc c\lc \boldsymbol{x}^*_t,\boldsymbol{y}_t\rc, \frac{ v\lc \boldsymbol{x}^+_t,\boldsymbol{y}_t\rc+ v\lc \boldsymbol{x}^-_t,\boldsymbol{y}_t\rc}{2}\rc\\
&+\mathcal{F}\lc c\lc \boldsymbol{x}^*_t,\boldsymbol{y}_t\rc+\eps e^{rt}, v\lc \boldsymbol{x}^-_t,\boldsymbol{y}_t\rc\rc.
\end{aligned}
$$
where $c\lc \boldsymbol{x}^*_t,\boldsymbol{y}_t\rc-\eps e^{rt}>0$ from the choice of $h$ in \eqref{Eq:def h}. From \eqref{Eq:def h} $x+\delta\geq \boldsymbol{x}^+_t$ with $\delta$ defined in \eqref{Eq: delta time}. From the monotonicity of $v$ and by comparison with the supersolution  $\check{\sv}_2$, we have
$$
v\lc \boldsymbol{x}^+_t,\boldsymbol{y}_t\rc \leq 
v\lc x+\delta, \boldsymbol{y}_t\rc \leq \check{\sv}_2(x+\delta).
$$
From \Cref{col: c min} we obtain that for all 
\beq\label{Eq: (c,v)}
c\in [c\lc \boldsymbol{x}^*_t,\boldsymbol{y}_t\rc-\eps e^{rt},c\lc \boldsymbol{x}^*_t,\boldsymbol{y}_t\rc+\eps e^{rt}],\quad v\in [v\lc \boldsymbol{x}^-_t,\boldsymbol{y}_t\rc,v\lc \boldsymbol{x}^+_t,\boldsymbol{y}_t\rc],
\eeq
$$
\begin{aligned}
&\mathcal{F}_{cc}(c,v)\geq \frac{-\rho c_{\min}^{-\p^{-1}-1}}{\psi((1-\g)\check{\sv}_2(x+\delta))^{\theta-1}}=C_1,\quad \mathcal{F}_{vv}(c,v)\geq (\g-\psi^{-1})\frac{\rho c_{\min}^{1-\p^{-1}}}{((1-\g)\check{\sv}_2(x+\delta))^{\theta+1}}=C_2,\\
&0<\mathcal{F}_{cv}(c,v) \leq (\psi^{-1}-\g)\frac{\rho c_{\min}^{-\p^{-1}}}{((1-\g)\check{\sv}_2(x+\delta))^{\theta}}=C_3.
\end{aligned}
$$
{}{There exists 
$
C>\max\left\{-C_1,\,-C_2,\,C_3\right\}
$
such that the term  $(I)$ in \eqref{v semi con} satisfies 
$$
(I)>-C\lc \vert \eps e^{rt}\vert^2+\left\vert \frac{v\lc \boldsymbol{x}^+_t,\boldsymbol{y}_t\rc-v\lc \boldsymbol{x}^-_t,\boldsymbol{y}_t\rc}{2}\right\vert^2\rc.
$$
Since $\eps=2h/T$ and $t\leq T/2$ in \cref{v semi con} we have $\vert \eps e^{rt}\vert^2=\frac{4e^{2rt}h^2}{T^2}\leq \frac{4e^{\rho T}h^2}{T^2}$. 
From \Cref{Dv UC} and \cref{Eq: x-x+}, we have 
$$
\left\vert \frac{v\lc \boldsymbol{x}^+_t,\boldsymbol{y}_t\rc-v\lc \boldsymbol{x}^-_t,\boldsymbol{y}_t\rc}{2}\right\vert^2\leq \|Dv_j\|_{L^\infty}\left\vert \frac{\boldsymbol{x}^+_t-\boldsymbol{x}^-_t}{2}\right\vert^2\leq e^{\rho T}\|Dv_j\|_{L^\infty}h^2.
$$
 Therefore from \eqref{v semi con} we obtain 
 $$
v_j(x+h)-2v_j(x)+v_j(x-h)>-\frac{CTe^{\rho T}}{2}\lc \frac{4}{T^2}+\|Dv_j\|_{L^\infty}\rc h^2
 $$
 }
 and we obtain the local semi-convexity of $v_j$ from \Cref{def:semiconcave}. 
\end{proof}
 From \Cref{Prop: v concave} and \Cref{Prop: v semiconvex} $v_j$ is strictly concave and locally semiconvex. Then, from \cite[Lemma 10.7]{fleming2006controlled}:
\begin{prop}\label{Prop:v W2}
The function $v_j$ is $W^{2,\infty}_{loc}$ in $(\ux,+\infty)$. 
\end{prop}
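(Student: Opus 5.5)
The plan is to combine the two one-sided second-order bounds already established: concavity (\Cref{Prop: v concave}) gives an upper bound on second differences, and local semiconvexity (\Cref{Prop: v semiconvex}) gives a lower bound. On a compact subinterval $[a,a']\subset(\ux,+\infty)$ these together bound $Dv_j$ by a Lipschitz constant, which is exactly $W^{2,\infty}$ in one dimension. This is the content of \cite[Lemma 10.7]{fleming2006controlled}; the work is to check that its hypotheses hold locally with uniform constants.

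First fix $[a,a']$ with $\ux<a<a'<\infty$. By concavity, for all $x\in[a,a']$ and $h$ small (so that $x\pm h\in(\ux,+\infty)$),
\[
v_j(x+h)-2v_j(x)+v_j(x-h)\le 0 .
\]
Next, I will trace the proof of \Cref{Prop: v semiconvex} to get a constant uniform over $x\in[a,a']$:
\[
v_j(x+h)-2v_j(x)+v_j(x-h)\ge -C_{a,a'}h^2 .
\]
The ingredients there are $T$, $\delta$, $c_{\min}$, $\check{\sv}_2(x+\delta)$ and $\|Dv_j\|_{L^\infty}$. The quantity $c_{\min}$ is global by \Cref{col: c min}, and $Dv_j$ is bounded by \Cref{Dv UC}. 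The positive time $T$ and the distance $\delta$ of the optimal trajectory from $\ux$ can be chosen uniformly for starting points $x\ge a$. The reason is that the saving rate $rx+y_j-c_j(x)$ is bounded on compacts by \Cref{Prop: v C1} and \Cref{Dv UC}, so trajectories started at $x\ge a$ need a time bounded below to reach, say, $(a+\ux)/2$. Taking the worst constants over $[a,a']$ then yields $C_{a,a'}$.

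With both bounds in hand, $v_j+\frac{C_{a,a'}}{2}x^2$ is convex and $v_j$ is concave on $[a,a']$. Since $v_j\in C^1$ by \Cref{Prop: v C1}, the map $Dv_j$ is nonincreasing while $Dv_j+C_{a,a'}x$ is nondecreasing. Hence for $a\le z<x\le a'$,
\[
0\le Dv_j(z)-Dv_j(x)\le C_{a,a'}(x-z),
\]
so $Dv_j$ is Lipschitz on $[a,a']$. By Rademacher's theorem $D^2v_j$ then exists almost everywhere, belongs to $L^\infty(a,a')$, and satisfies $-C_{a,a'}\le D^2v_j\le 0$. Together with the boundedness of $v_j$ and $Dv_j$, this gives $v_j\in W^{2,\infty}(a,a')$. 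Since $[a,a']$ was arbitrary, $v_j\in W^{2,\infty}_{loc}(\ux,+\infty)$.

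The main obstacle is the uniformity in $x$ of the semiconvexity constant. \Cref{Prop: v semiconvex} is stated pointwise, with $T$ and $\delta$ depending on $x$, so I would first establish the lower bound on the exit time described above, uniformly on $[a,a']$, and then reread the chain of estimates in that proof to confirm that every constant depends on $x$ only through $T$, $\delta$ and $x+\delta\le a'+\delta$.
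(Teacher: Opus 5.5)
Your proposal is correct and follows the paper's route: the paper combines the concavity from \Cref{Prop: v concave} with the local semiconvexity from \Cref{Prop: v semiconvex} and cites \cite[Lemma 10.7]{fleming2006controlled}, and your one-dimensional argument ($Dv_j$ nonincreasing, $Dv_j+C_{a,a'}x$ nondecreasing, hence $Dv_j$ Lipschitz on $[a,a']$) simply unpacks that lemma. Your check that $T$, $\delta$, and hence the semiconvexity constant, can be taken uniform on compact subintervals is a worthwhile detail the paper leaves implicit in the word ``locally''; it holds because $|s_j|$ is bounded on $[\ux,a]$, where $Dv_j$ is continuous and strictly positive, so any trajectory started at $x\ge a$ needs a time bounded below to reach $(a+\ux)/2$.
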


Below, we prove the expected fact that the value of the productive agents is larger than that of the unproductive ones: 
\begin{prop}\label{Prop: v2 v1}
We have $v_2> v_1$.
\end{prop}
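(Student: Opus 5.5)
We will argue by contradiction, exploiting the $C^1$ regularity and concavity of $v_j$ together with the HJB system. Set $w=v_2-v_1$, which is continuous on $[\ux,+\infty)$, belongs to $C^1(\ux,+\infty)$ by \Cref{Prop: v C1}, and tends to $0$ at infinity by the construction in \Cref{Prop: Exist}. A first step is the weak inequality $v_2\ge v_1$. It follows from a control-theoretic coupling: an agent starting at $(x,y_2)$ can mimic the optimal consumption of the agent starting at $(x,y_1)$ until the first income switch. During that period the wealth stays higher and feasibility is preserved, since the drift is larger by $y_2-y_1>0$. After the switch, both agents face the same problem from a larger wealth, and $v$ is increasing in $x$. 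Since $v$ is the value function (\Cref{Prop: v concave}), this gives $v_2\ge v_1$. Alternatively, one can run the comparison argument of \Cref{comparison} directly on the pair, using that $H(x,y_2,v,p)\ge H(x,y_1,v,p)$ for $p\ge 0$ and $Dv_j\ge 0$.

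For strict inequality, we suppose that $w(x_0)=0$ for some $x_0\ge\ux$. We first treat $x_0>\ux$. Since $w\ge 0$ attains its minimum at $x_0$, we have $Dv_2(x_0)=Dv_1(x_0)=:p$, and $p>0$ because $v_j$ is strictly increasing. Subtracting the two equations \eqref{HJB} at $x_0$, where $v_1(x_0)=v_2(x_0)$, gives
\[
0=\zeta(v_2-v_1)(x_0)=H(x_0,y_2,v_2,p)-H(x_0,y_1,v_1,p)+\lambda_2(v_1-v_2)(x_0)-\lambda_1(v_2-v_1)(x_0)=(y_2-y_1)p>0,
\]
which is a contradiction.

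If $x_0=\ux$, the equation need not hold in the classical sense, so we use the viscosity inequalities with the one-sided derivatives $Dv_j(\ux)$ from \Cref{Dv UC}. Since $w\ge 0$ and $w(\ux)=0$, we get $Dv_2(\ux)\ge Dv_1(\ux)$. For $v_1$, the subsolution inequality at $\ux$ holds with any $q\ge Dv_1(\ux)$, because such $q$ lie in $D^+v_1(\ux)$ by concavity. For $v_2$, the supersolution inequality holds at interior points $x>\ux$ and passes to $\ux$ by continuity of $Dv_2$, so the equation for $v_2$ holds at $\ux$ with $Dv_2(\ux)$. We then combine these with $q=Dv_2(\ux)$. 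This yields
\[
\zeta v_1(\ux)\le H(\ux,y_1,v_1(\ux),q)<H(\ux,y_2,v_2(\ux),q)\le \zeta v_2(\ux)+\lambda_2(v_2-v_1)(\ux),
\]
where we used $v_1(\ux)=v_2(\ux)$ and the coupling terms vanish. This gives $0<0$, a contradiction.

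The main obstacle is this boundary case. It requires checking carefully that the equation for $v_2$ extends to $\ux$ with the one-sided derivative. This rests on the continuity of $Dv_j$ up to $\ux$ from \Cref{Dv UC}, together with the continuity of $H$ in $(x,v,p)$ for $p>0$ and $v$ bounded away from $0$.
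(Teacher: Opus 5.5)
Your argument ruling out $w(x_0)=0$ is correct and follows the paper's two-case structure. At $\ux$ it is slightly more direct than the paper: you test the subsolution inequality for $v_1$ with the slope $q=Dv_2(\ux)\in D^+v_1(\ux)$. The paper instead uses the state constraint $s_1(\ux)=H_p(\ux,y_1,v_1(\ux),Dv_1(\ux))\ge 0$ together with convexity of $H$ in $p$.

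The gap is the preliminary inequality $v_2\ge v_1$, which you do not actually prove. Everything else rests on it: without it, $x_0$ need not be a minimum point of $w$, and $w<0$ somewhere is not excluded. The coupling sketch needs two facts that are not available.
\begin{itemize}
\item \emph{Monotonicity of the utility.} It requires monotonicity of the Epstein--Zin utility in consumption and continuation value, i.e.\ a comparison theorem for the genuine utility process, which the paper never develops. In the fixed-point form \eqref{control_problem2}, through which \Cref{Prop: v concave} identifies $v$, the running reward is $f(c_\tau,v(\boldsymbol{x}_\tau,\boldsymbol{y}_\tau))$ with $f_v<0$. So more wealth with the same consumption lowers the reward, and the comparison is not pathwise.
\item \emph{Admissibility.} The mimicking control must switch at the first jump of either income process. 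The $y_2$-agent does not observe the other agent's jump, so this is a randomized control whose admissibility needs justification.
\end{itemize}
Your ``alternative'' is not an application of \Cref{comparison}, which compares a sub- and a supersolution of one system rather than two components of one solution. Carried out, it is the maximum-principle argument that makes up the whole proof, boundary case included.

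The repair is immediate and is what the paper does. Take a minimum point $\hx$ of $w$ and assume $w(\hx)\le 0$. Such a point exists as soon as $w$ takes a nonpositive value, since $w$ is continuous on $[\ux,+\infty)$ and $w\to 0$ at infinity. Then keep the extra terms in your computations; they have the right sign because $H_v<0$.
\begin{itemize}
\item \emph{Interior case.} With $p=Dv_1(\hx)=Dv_2(\hx)>0$,
\[
(\zeta+\lambda_1+\lambda_2)\,w(\hx)=(y_2-y_1)p+H(\hx,y_1,v_2(\hx),p)-H(\hx,y_1,v_1(\hx),p)\ge (y_2-y_1)p>0,
\]
while the left side is $\le 0$.
\item \emph{Boundary case.} Subtract your sub- and supersolution inequalities at $\ux$ with $q=Dv_2(\ux)\ge Dv_1(\ux)$ and $q>0$:
\[
(\zeta+\lambda_1+\lambda_2)(v_1-v_2)(\ux)\le -(y_2-y_1)q+H(\ux,y_1,v_1(\ux),q)-H(\ux,y_1,v_2(\ux),q)<0,
\]
while the left side is $\ge 0$.
\end{itemize}
This gives $v_2>v_1$ in one stroke and makes the coupling step unnecessary.
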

\begin{proof}
Let $v_2-v_1$ achieve its minimum at $\hx$. Assume by contradiction that $v_2(\hx)-v_1(\hx)\leq 0$. We {}{distinguish} two cases.\par
{\textit{Case 1}}:  $\hx>\ux$. From \Cref{Prop: v C1}, we deduce that $Dv_2(\hx)=Dv_1(\hx)$ and 
$$
\lc \frac{\rho}{\theta}+\lambda_1+\lambda_2\rc(\underbrace{v_2(\hx)-v_1(\hx)}_{\leq 0})=(y_2-y_1)Dv_2(\hx)+\underbrace{H(\hx,y_1,v_2(\hx),Dv_2(\hx))-H(\hx,y_1,v_1(\hx),Dv_1(\hx))}_{>0},
$$
the desired contradiction. \par
{\textit{Case 2}}:  $\hx=\ux$. Since $v_2-v_1$ is $C^1$ up to the boundary and achieves its minimum at $\ux$, $Dv_2(\ux)\geq Dv_1(\ux)$. From the state constraint condition,  
$$
H_p(\ux,y_1,v_1(\ux),Dv_1(\ux))=s_1(\ux)\geq 0,
$$
and, from the convexity of $H(x,y_j,v_j,p)$ in the $p$-variable and $Dv_2(\ux)\geq Dv_1(\ux)$, 
\beq\label{HDv2-HDv1}
H(\ux,y_1,v_1(\ux),Dv_2(\ux))-H(\ux,y_1,v_1(\ux),Dv_1(\ux))\geq s_1(\ux)\lc Dv_2(\ux)-Dv_1(\ux)\rc\geq 0.
\eeq
Subtracting the two HJB equations, we obtain
\begin{align*}
&\lc \frac{\rho}{\theta}+\lambda_1+\lambda_2\rc (v_2(\ux)-v_1(\ux))\\
={}& (y_2-y_1)Dv_2(\hx)+H(\ux,y_1,v_2(\ux),Dv_2(\ux))-H(\ux,y_1,v_1(\ux),Dv_1(\ux)),
\end{align*}
where
\begin{align*}
&H(\ux,y_1,v_2(\ux),Dv_2(\ux))-H(\ux,y_1,v_1(\ux),Dv_1(\ux))\\
={}&\underbrace{H(\ux,y_1,v_2(\ux),Dv_2(\ux))-H(\ux,y_1,v_1(\ux),Dv_2(\ux))}_{\geq 0}+\underbrace{H(\ux,y_1,v_1(\ux),Dv_2(\ux))-H(\ux,y_1,v_1(\ux),Dv_1(\ux))}_{\geq 0,\,\Cref{HDv2-HDv1}}.
\end{align*}
This is in contradiction with the assumption $v_2(\ux)\leq v_1(\ux)$. 
\end{proof}
The following proposition states that the savings of the unproductive agents are negative for all values of $x>\ux$. This result is well known in the model involving a CRRA utility but its proof is more difficult in the present case, because one has to handle the {}{dependence of the Hamiltonian} (hence the optimal consumption policy away from the borrowing limit, see \eqref{c optimal}) on the value $v_j$. 
\begin{prop}\label{Prop: s1<0}
The optimal saving policy $s_1$ has the following properties: $s_1(x)< 0$ for all $x> \ux$ and $s_1(\ux)=0$. 
\end{prop}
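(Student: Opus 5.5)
The plan is to work with the differentiated HJB equation for $j=1$, combined with the state constraint at $\ux$. Throughout, $s_1(x)=rx+y_1-c_1(x)=H_p(x,y_1,v_1(x),Dv_1(x))$, and $s_1$ is continuous on $[\ux,+\infty)$ by \Cref{Dv UC} and \eqref{c optimal}.

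\emph{Step 1 (the boundary value).} The state constraint gives $s_1(\ux)\ge 0$. The boundary statement will then follow from the interior one: if $s_1(\ux)>0$, continuity gives $s_1>0$ on some $(\ux,\ux+\eta)$, which contradicts Step 2. So $s_1(\ux)=0$.

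\emph{Step 2 (interior sign).} I argue by contradiction and assume $s_1(x_0)\ge 0$ for some $x_0>\ux$. By \Cref{Prop:v W2}, $v_1,v_2\in W^{2,\infty}_{loc}$, so the equation for $j=1$ can be differentiated a.e.:
\[
s_1\,D^2v_1=\Big(\tfrac{\rho}{\theta}-r+\lambda_1-H_v(x,y_1,v_1,Dv_1)\Big)Dv_1-\lambda_1 Dv_2 .
\]
Strict concavity (\Cref{Prop: v concave}) gives $D^2v_1\le 0$, so wherever $s_1\ge0$ the right-hand side must be $\le 0$. I will show it is strictly positive near $x_0$. This rests on two ingredients.

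\emph{Ingredient (a).} I need $Dv_1\ge Dv_2$. I would prove it like \Cref{Prop: v2 v1}: look at the maximum of $Dv_2-Dv_1$, or equivalently compare $v_2$ with $v_1(\cdot+\kappa)$ for suitable shifts. The tools are the monotonicity $H_{vp}<0$ from \eqref{Hvp}, the fact $v_2>v_1$, and convexity of $H$ in $p$.

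\emph{Ingredient (b).} I need $\frac{\rho}{\theta}-r-H_v>0$ at points where $s_1\ge 0$. For this I use the homogeneity of $H$ in its last two arguments. Writing $H=(rx+y)p+Kp^{1-\p}((1-\g)v)^{a}$ with $a=\frac{1-\g\p}{1-\g}$, one gets
\[
H-pH_p=\p Kp^{1-\p}((1-\g)v)^{a},\qquad vH_v=aKp^{1-\p}((1-\g)v)^{a}.
\]
Hence $H_v$ can be written through $c_1$, $Dv_1$ and $v_1$. Substituting $s_1\ge 0$, i.e.\ $c_1\le rx+y_1$, and using that $v_1$ is a strict supersolution of the decoupled equation \eqref{eq:HJB1d} with $y=y_1$ (because $v_2>v_1$), I expect an inequality that forces $\frac{\rho}{\theta}-H_v$ to dominate $r$. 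The relevant bound uses $r\le\rho$, possibly in the sharper form of \Cref{Lemma: b}.

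Once (a) and (b) hold, the right-hand side is strictly positive a.e.\ on a neighbourhood of $x_0$. On the set where $s_1>0$ this contradicts $s_1D^2v_1\le 0$. If instead $s_1(x_0)=0$ and $s_1<0$ nearby, I would compare directly with the sign of $Ds_1$: it can be computed from $D^2v_1$ and the identity above, which makes it possible to exclude a zero at an interior point.

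\emph{Main obstacle.} I expect ingredient (b) to be the hardest part, since it is exactly where the $v$-dependence of the consumption rule \eqref{c optimal} enters. If the homogeneity computation does not close, my fallback is a control-theoretic argument. An unproductive agent at $x_0$ who keeps $s_1\ge0$ until the jump can be compared with the strategy of consuming the constant income $rx+y_1$. Its value is the subsolution $\check{\su}_1$ of \Cref{prop:sub super sol}. Together with $r<b<\rho$, this should show that dissaving is strictly better.
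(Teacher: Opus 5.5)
Your Step 1, your handling of the case $s_1(x_0)=0$ (where $s_1D^2v_1\to0$ because $D^2v_1\in L^\infty_{loc}$), and your ingredient (b) are sound. In fact (b) closes as you suggest. With your $a=\frac{1-\g\p}{1-\g}$ one has $H_v(x,y_1,v_1,Dv_1)=\frac{a}{\p-1}\frac{c_1Dv_1}{v_1}$. When $c_1\le rx+y_1$, the strict supersolution inequality gives $\frac{\rho}{\theta}\vert v_1\vert<Dv_1\big(\frac{c_1}{1-\p}-(rx+y_1)\big)\le\frac{\p}{1-\p}c_1Dv_1$. Hence $-H_v>\rho(1-\frac1\theta)$ and $\frac{\rho}{\theta}-r-H_v>\rho-r\ge0$; the paper obtains essentially the same bound from $\check{\su}_1$.

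The genuine gap is ingredient (a). Given (b), the differentiated equation for $j=1$ at any point with $s_1(x_0)\ge0$ forces $\lambda_1(Dv_1-Dv_2)(x_0)<0$. So $Dv_1(x_0)\ge Dv_2(x_0)$ at such a point is equivalent to the proposition itself. What you actually need is a global ordering $Dv_1\ge Dv_2$, which the paper never proves and your sketch does not deliver.

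At an interior maximum of $Dv_2-Dv_1>0$, the subtracted differentiated equations contain the term $(s_2-s_1)D^2v$. In the CRRA case $Dv_2>Dv_1$ gives $c_2<c_1$, so $s_2-s_1>y_2-y_1>0$ and this term has the right sign. Here $c_j=\rho^{\p}(Dv_j)^{-\p}((1-\g)v_j)^{a}$ with $a<0$, so $v_2>v_1$ pushes $c_2$ up. The available bound $c_2\le c_1\big(\frac{rx+y_2}{rx+y_1}\big)^{1-\g\p}$ gives $s_2>s_1$ only where $c_1\le rx+y_1$, i.e. where $s_1\ge0$, not at points where the unproductive agent dissaves. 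Your lower bound on $\frac{\rho}{\theta}-r-H_v$ carries the same restriction. If the maximum sits at $\ux$, where $D^2v_1\to-\infty$, the paper's boundary comparison of $Dv_1(\ux)$ and $Dv_2(\ux)$ itself relies on $s_1<0$. So the route through (a) is circular as it stands. The control-theoretic fallback compares values of strategies, which does not determine the local sign of the optimal drift.

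The missing idea is to require no a priori ordering and to bring in the equation for $j=2$. Assuming $s_1(\hx)\ge0$, the paper uses (b) in \eqref{Eq: Dv1} to \emph{derive} $Dv_1(\hx)\le Dv_2(\hx)$. With $v_2>v_1$ and \eqref{Hvp}, \eqref{Hvv}, this gives $H_v(\hx,y_2,v_2,Dv_2)<-\rho(1-\frac1\theta)$, so \eqref{Eq: Dv2} yields $s_2(\hx)D^2v_2(\hx)>0$, i.e. $c_2(\hx)>r\hx+y_2$. Together with $c_1(\hx)\le r\hx+y_1$ this gives $\frac{c_2(\hx)}{c_1(\hx)}>\frac{r\hx+y_2}{r\hx+y_1}$. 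On the other hand, $Dv_1(\hx)\le Dv_2(\hx)$, comparison with $\check{\su}_1,\check{\sv}_2$ and $b\ge r$ give $\frac{c_2(\hx)}{c_1(\hx)}\le\big(\frac{r\hx+y_2}{r\hx+y_1}\big)^{1-\g\p}$. This is impossible since $\g\p>0$ and $y_2>y_1$. In other words, the paper turns the ``wrong'' ordering $Dv_1\le Dv_2$ into a contradiction through the consumption ratio, instead of proving the ``right'' ordering globally.
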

\begin{proof}
We argue by contradiction and suppose $s_1(\hx)\geq 0$ for some $\hx>\ux$. We may first suppose that $s_2(\hx)\neq 0$ and $s_1(\hx)>0$, this implies that the functions $v_j$ are $C^2$ in a neighborhood of $\hx$ and that $-\infty<D^2v_j(\hx)<0$. Differentiating the HJB equations at $\hx$ leads to
\beq\label{Eq: Dv1}
\begin{aligned}
&\lc \rho-r\rc Dv_1(\hx)+\lambda_1\lc Dv_1(\hx)-Dv_2(\hx)\rc\\
={}&s_1(\hx)D^2v_1(\hx)+\lc 1-\frac{1}{\theta}\rc \rho Dv_1(\hx)+H_v(\hx,y_1,v_1(\hx),Dv_1(\hx))Dv_1(\hx),
\end{aligned}
\eeq
and 
\beq\label{Eq: Dv2}
\begin{aligned}
&\lc \rho-r\rc Dv_2(\hx)+\lambda_2\lc Dv_2(\hx)-Dv_1(\hx)\rc\\
={}&s_2(\hx)D^2v_2(\hx)+\lc 1-\frac{1}{\theta}\rc \rho Dv_2(\hx)+H_v(\hx,y_2,v_2(\hx),Dv_2(\hx))Dv_2(\hx).
\end{aligned}
\eeq
If on the contrary, $s_1(\hx)= 0$ or $s_2(\hx)= 0$, then from \Cref{Prop:v W2} we know that the functions $Dv_j$ are differentiable almost everywhere and $D^2v_j$ are essentially bounded in a neighborhood of $\hx$. Therefore, $s_j(x)D^2v_j(x)$ has a sense at almost every $x$ in a neighborhood of $\hx$ and we can pass to the limit. On the other hand, from the continuity of $v_j$ and $Dv_j$ with respect to $x$ and the continuity of $H_v(x,y_j,v_j,Dv_j)$ w.r.t. $v_j$ and $Dv_j$,
$$
\lim_{x\to \hx}Dv_j(x)=Dv_j(\hx),\quad \lim_{x\to \hx}H_v(x,y_j,v_j(x),Dv_j(x))Dv_j(\hx)=H_v(\hx,y_j,v_j(\hx),Dv_j(\hx))Dv_j(\hx).
$$
Hence, with the convention $s_j(\hx)D^2v_j(\hx)=\lim_{x\to \hx}s_j(x)D^2v_j(x)$, we can still write \eqref{Eq: Dv1} and \eqref{Eq: Dv2} if $s_2(\hx)= 0$ or $s_1(\hx)= 0$.\par
Since $s_1(\hx)\geq 0$ we deduce
$$
\begin{aligned}
\rho^{\p}(Dv_1(\hx))^{-\p}((1-\g)v_1(\hx))^{\frac{1-\g \p}{1-\g}}=c_1(\hx)\leq r\hx+y_1,\\
(Dv_1(\hx))^{1-\p}\geq \rho^{1-\p}((1-\g)v_1(\hx))^{\frac{(1-\g \p)(1-\p)}{(1-\g)\p}}(r\hx+y_1)^{1-\frac{1}{\p}}.
\end{aligned}
$$
This implies
\beq
\begin{aligned}
H_v(\hx,y_1,v_1(\hx),Dv_1(\hx))\leq  \underbrace{-\rho \lc 1-\frac{1}{\theta}\rc ((1-\g)v_1(\hx))^{\frac{\p^{-1}-1}{1-\g}}(r\hx+y_1)^{1-\frac{1}{\p}}}_{decreasing\,\, in\,\, v_1(\hx)}
\leq -\rho \lc 1-\frac{1}{\theta}\rc.
\end{aligned}
\eeq
For the last inequality we used the comparison with the subsolution in \eqref{Def:b}:
$
v_1(\hx)\geq \frac{(r\hx+y_1)^{1-\gamma}}{1-\gamma}.
$
This implies 
\beq\label{Ineq: Hv hx}
\lc 1-\frac{1}{\theta}\rc \rho Dv_1(\hx)+H_v(\hx,y_1,v_1(\hx),Dv_1(\hx))Dv_1(\hx)\leq 0.
\eeq
If $s_1(\hx)>0$, then $s_1(\hx)D^2v_1(\hx)\leq 0$. Moreover, since $r\leq \rho$ and $Dv_1\geq 0$, there holds $\lc \rho-r\rc Dv_1(\hx)\geq 0$. With $s_1(\hx)D^2v_1(\hx)\leq 0$, $\lc \rho-r\rc Dv_1(\hx)\geq 0$ and 
\eqref{Ineq: Hv hx}, we deduce from \eqref{Eq: Dv1} that:
\beq\label{Ineq: Dv1 Dv2}
Dv_1(\hx)\leq Dv_2(\hx).
\eeq
Then, from $v_1(\hx)<v_2(\hx)$ in \Cref{Prop: v2 v1} we infer (notice that $H_v(x,y,v,p)$ in fact does not depend on $y$):
\beq\label{Eq: Dv1 r}
H_v(\hx,y_2,v_2(\hx),Dv_2(\hx))<H_v(\hx,y_1,v_1(\hx),Dv_1(\hx))\leq -\rho \lc 1-\frac{1}{\theta}\rc,
\eeq
hence 
$$
\lc 1-\frac{1}{\theta}\rc \rho Dv_2(\hx)+H_v(\hx,y_2,v_2(\hx),Dv_2(\hx))Dv_2(\hx)<0.
$$
Next, since $\rho\geq r$, $Dv_2(\hx)>0$ and $Dv_1(\hx)\leq Dv_2(\hx)$, 
$$\lc \rho-r\rc Dv_2(\hx)+\lambda_2\lc Dv_2(\hx)-Dv_1(\hx)\rc\geq 0.$$
 Therefore, from \eqref{Eq: Dv2} and \eqref{Eq: Dv1 r} we deduce $s_2(\hx)D^2v_2(\hx)>0$. But we know that $D^2v_2(\hx)<0$, so $s_2(\hx)<0$. This gives
\beq\label{Eq: c2}
c_2(\hx)>r\hx+y_2.
\eeq
Recall that the assumption $s_1(\hx)\geq 0$ is equivalent to
\beq\label{Eq: c1}
 c_1(\hx)\leq r\hx+y_1.
 \eeq
 With $c_j(\hx)>c_{\min}>0$, we deduce from \eqref{Eq: c2} and \eqref{Eq: c1} that
\beq
 \frac{c_2(\hx)}{c_1(\hx)}>\frac{r\hx+y_2}{r\hx+y_1}.
\eeq
On the other hand,  
$$
 \frac{c_2(\hx)}{c_1(\hx)}\underbrace{=}_{\eqref{c optimal}}\lc \frac{Dv_2(\hx)}{Dv_1(\hx)}\rc^{-\p}\lc \frac{(1-\g)v_2(\hx)}{(1-\g)v_1(\hx)}\rc^{\frac{1-\g \p}{1-\g}}\underbrace{\leq}_{\eqref{Ineq: Dv1 Dv2}} \lc \frac{(1-\g)v_2(\hx)}{(1-\g)v_1(\hx)}\rc^{\frac{1-\g \p}{1-\g}}\leq \lc \frac{(1-\g)\check{\sv}_2(\hx)}{(1-\g)\check{\su}_1(\hx)}\rc^{\frac{1-\g \p}{1-\g}},
$$
where for the last inequality we used comparisons with sub- and supersolutions constructed in \eqref{Def:b} (cf. \Cref{Prop:sub-super}). 
Recall that $b/r\geq 1$, from \Cref{Lemma: b}. Moreover $\frac{1-\g \p}{1-\g}<0$, hence 
$$
\lc \frac{b}{r}\rc^{\frac{1-\g \p}{1-\g}}\leq 1,\quad 
\lc \frac{(1-\g)\check{\sv}_2(\hx)}{(1-\g)\check{\su}_1(\hx)}\rc^{\frac{1-\g \p}{1-\g}}=\lc \frac{b}{r}\rc^{\frac{1-\g \p}{1-\g}}\lc \frac{r\hx+y_2}{r\hx+y_1}\rc^{1-\g\p}\leq \lc \frac{r\hx+y_2}{r\hx+y_1}\rc^{1-\g\p}.
$$
We therefore obtain 
$$
\frac{r\hx+y_2}{r\hx+y_1}<\lc \frac{r\hx+y_2}{r\hx+y_1}\rc^{1-\g\p}\Rightarrow \lc \frac{r\hx+y_2}{r\hx+y_1}\rc^{\g\p}<1\underbrace{\Rightarrow}_{\g \p>0}\frac{r\hx+y_2}{r\hx+y_1}<1,
$$
in contradiction with $y_2>y_1$. Therefore, we conclude $s_1(x)<0$ for all $x>\ux$. Finally, from the state constraint $s_1(\ux)\geq 0$ and the continuity of $s_1$, we obtain $s_1(\ux)=0$. 
\end{proof}
\begin{cor}\label{Cor: Dv1 Dv2}
If $s_2(\ux)>0$, then $Dv_1(\ux)>Dv_2(\ux)$. 
\end{cor}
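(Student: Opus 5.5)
We will work directly at the boundary point $\ux$, combining the boundary values of the two HJB equations with the information on $s_1(\ux)$ from \Cref{Prop: s1<0}. Since $s_1(\ux)=0$, the optimal consumption at $\ux$ in state $1$ equals $r\ux+y_1$. By \Cref{Dv UC} the first order condition \eqref{c optimal} holds at $\ux$ (with one-sided derivatives), so
\[
\rho^{\p}(Dv_1(\ux))^{-\p}\big((1-\g)v_1(\ux)\big)^{\frac{1-\g\p}{1-\g}}=r\ux+y_1 .
\]
Likewise, since $s_2(\ux)>0$, the interior first order condition gives
\[
c_2(\ux)=\rho^{\p}(Dv_2(\ux))^{-\p}\big((1-\g)v_2(\ux)\big)^{\frac{1-\g\p}{1-\g}}<r\ux+y_2 .
\]

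The strategy is to argue by contradiction: assume $Dv_1(\ux)\le Dv_2(\ux)$. Taking the ratio of the two relations above yields
\[
\frac{c_2(\ux)}{c_1(\ux)}=\Big(\frac{Dv_2(\ux)}{Dv_1(\ux)}\Big)^{-\p}\Big(\frac{(1-\g)v_2(\ux)}{(1-\g)v_1(\ux)}\Big)^{\frac{1-\g\p}{1-\g}} .
\]
Under the contradiction hypothesis, the first factor is at most $1$. Since $v_2>v_1$ by \Cref{Prop: v2 v1}, both values are negative, so $(1-\g)v_2<(1-\g)v_1$. The exponent $\frac{1-\g\p}{1-\g}$ is negative, so this makes the second factor larger than $1$. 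The naive ratio comparison therefore does not immediately close. Instead, I would mimic the proof of \Cref{Prop: s1<0}, using comparison with the sub- and supersolutions of \Cref{Prop:sub-super}. This gives $\big(\frac{(1-\g)v_2}{(1-\g)v_1}\big)^{\frac{1-\g\p}{1-\g}}\le \big(\frac{r\ux+y_2}{r\ux+y_1}\big)^{1-\g\p}$, but this bound points the wrong way for a contradiction. So the main route should be the differentiated HJB equations at $\ux$.

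The plan is therefore to use \eqref{Eq: Dv1} at the boundary, passing to the limit $x\to\ux^+$ as in the proof of \Cref{Prop: s1<0}, using \Cref{Prop:v W2} and the convention for $s_1D^2v_1$. Since $s_1(\ux)=0$ and $D^2v_1$ is essentially bounded near $\ux$, the term $s_1D^2v_1$ vanishes in the limit. Since $s_1(\ux)=0$ also means $c_1(\ux)=r\ux+y_1$, the bound \eqref{Ineq: Hv hx} holds at $\ux$, and \eqref{Eq: Dv1} gives
\[
(\rho-r)Dv_1(\ux)+\lambda_1\big(Dv_1(\ux)-Dv_2(\ux)\big)\le 0 .
\]
Under the hypothesis $Dv_1\le Dv_2$, this yields no contradiction directly. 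So the key is the state-$2$ equation \eqref{Eq: Dv2} at $\ux$. Here $s_2(\ux)>0$ and $D^2v_2\le0$, so $s_2D^2v_2\le 0$. Moreover \eqref{Eq: Dv1 r} gives $H_v(\ux,y_2,v_2,Dv_2)<H_v(\ux,y_1,v_1,Dv_1)\le-\rho(1-\frac1\theta)$, using $v_2>v_1$, $Dv_2\ge Dv_1$ and \eqref{Hvp}, \eqref{Hvv}. Hence the right-hand side of \eqref{Eq: Dv2} is strictly negative. Its left-hand side, $(\rho-r)Dv_2+\lambda_2(Dv_2-Dv_1)$, is nonnegative under the hypothesis. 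This is a contradiction, so $Dv_1(\ux)>Dv_2(\ux)$.

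The main obstacle is justifying the differentiated equations \eqref{Eq: Dv1} and \eqref{Eq: Dv2} up to the boundary $\ux$, where the equation is only known in the constrained viscosity sense. For state $2$, since $s_2(\ux)>0$ and by continuity, $s_2>0$ on a neighborhood $[\ux,\ux+\eta)$. There the HJB equation holds classically, because the characteristics enter the domain, and one can differentiate a.e. using the $W^{2,\infty}_{loc}$ regularity and pass to the limit $x\to\ux^+$. For state $1$, the equation holds on $(\ux,\ux+\eta)$ with $s_1<0$, and the limit uses that $s_1(x)D^2v_1(x)\to0$; this requires boundedness of $D^2v_1$ up to $\ux$. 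If that boundedness is unavailable, I would avoid the state-$1$ equation entirely. The state-$2$ argument only needs $Dv_1\le Dv_2$ and the bound $H_v(\ux,y_1,v_1,Dv_1)\le-\rho(1-\frac1\theta)$, which follows from $c_1(\ux)=r\ux+y_1$ alone.
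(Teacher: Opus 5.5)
Your final argument is correct, and it takes a different route from the paper's.

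\textbf{The paper's route.} It argues directly. It writes the differentiated HJB equations at $\ux$ for both states; for state $1$ this is a limit from the right, keeping the term $\lim_{x\to\ux}s_1(x)D^2v_1(x)\ge 0$. It then subtracts the two equations and uses the mean value theorem with \eqref{Hvp}, \eqref{Hvv} and \eqref{Eq: Hv ux}. This shows that a positive coefficient times $Dv_1(\ux)-Dv_2(\ux)$ equals a positive quantity.

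\textbf{Your route.} You argue by contradiction using only the state-$2$ equation \eqref{Eq: Dv2}. Assume $Dv_1(\ux)\le Dv_2(\ux)$. The identity $c_1(\ux)=r\ux+y_1$ and the monotonicity of $H_v$ in $(v,p)$ then give
\[
\big((1-\tfrac1\theta)\rho+H_v(\ux,y_2,v_2(\ux),Dv_2(\ux))\big)Dv_2(\ux)<0 .
\]
But the left side of \eqref{Eq: Dv2} is $\ge 0$ and $s_2D^2v_2\le 0$ near $\ux$, a contradiction. This is exactly the step in \Cref{Prop: s1<0} that produced $s_2(\hx)D^2v_2(\hx)>0$, moved to the boundary point.

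\textbf{What each buys.} Your version is leaner in three ways:
\begin{itemize}
\item the limit is harmless, since $s_2\ge 0$ and $D^2v_2\le 0$ a.e.\ on $(\ux,\ux+\eta)$, and all other terms are continuous on $[\ux,R]$ by \Cref{Dv UC};
\item you never need the boundary behaviour of $s_1D^2v_1$;
\item you only need $s_2D^2v_2\le 0$, not the strict sign $-s_2(\ux)D^2v_2(\ux)>0$ that the paper invokes.
\end{itemize}
For the strict inequality you do use $Dv_2(\ux)>0$. State explicitly that it follows from $Dv_2(\ux)\ge Dv_1(\ux)>0$, where the last inequality comes from the first order condition at $\ux$. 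In exchange, the paper's route yields the boundary identity \eqref{Eq: Euler ux} for state $1$, which it reuses immediately in \cref{Cor: D^2v1} and Appendix~\ref{Expansion ux}.

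\textbf{One correction.} The state-$1$ step you set aside is not merely unjustified; it is false. \Cref{Prop:v W2} gives $W^{2,\infty}$ only on compact subsets of the open interval $(\ux,+\infty)$. In fact $D^2v_1(x)\to-\infty$ as $x\to\ux$ (\cref{Cor: D^2v1}), and $s_1D^2v_1\to\varkappa>0$ (Appendix~\ref{Expansion ux}), so $s_1D^2v_1$ does not vanish at the boundary. Worse, the inequality you derived from it,
\[
(\rho-r)Dv_1(\ux)+\lambda_1\big(Dv_1(\ux)-Dv_2(\ux)\big)\le 0,
\]
would force $Dv_1(\ux)\le Dv_2(\ux)$, since $\rho\ge r$ and $Dv_1(\ux)>0$. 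That is the negation of the statement you are proving. Delete that paragraph, together with the hedge ``if that boundedness is unavailable''; your state-$2$ argument stands on its own.
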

\begin{proof}
From $s_2(\ux)>0$, we can differentiate the HJB equation for $v_2$ at $\ux$. From \Cref{Prop: s1<0}, we can also differentiate the HJB equation satisfied by $v_1$ at $x>\ux$ and pass to the limit as $x\to \ux$,
\beq\label{Eq: Euler ux}
\begin{aligned}
&\lc \rho-r\rc Dv_1(\ux)+\lambda_1\lc Dv_1(\ux)-Dv_2(\ux)\rc\\
={}&\lim_{x\to \ux}s_1(x)D^2v_1(x)+\lc 1-\frac{1}{\theta}\rc \rho Dv_1(\ux)+H_v(\ux,y_1,v_1(\ux),Dv_1(\ux))Dv_1(\ux).
\end{aligned}
\eeq
 By subtracting the resulting equations, we obtain
\beq\label{Eq: Dv1-Dv2 1}
\begin{aligned}
&(\rho-r+\ld_1+\ld_2)(Dv_1(\ux)-Dv_2(\ux))\\
={}& \lim_{x\to \ux}s_1(x)D^2v_1(x)-s_2(\ux)D^2v_2(\ux)+\lc 1-\frac{1}{\theta}\rc \rho (Dv_1(\ux)-Dv_2(\ux))\\
&+H_v(\ux,y_1,v_1(\ux),Dv_1(\ux))Dv_1(\ux)-H_v(\ux,y_2,v_2(\ux),Dv_2(\ux))Dv_2(\ux).
\end{aligned}
\eeq
Since $s_1(x)<0$ we know that $\lim_{x\to \ux}s_1(x)D^2v_1(x)\geq 0$. On the other hand, $s_2(\ux)> 0$ implies $-s_2(\ux)D^2v_2(\ux)>0$ and finally
$$
\lim_{x\to \ux}s_1(x)D^2v_1(x)-s_2(\ux)D^2v_2(\ux)>0.
$$
By rearranging \eqref{Eq: Dv1-Dv2 1} we have 
\beq\label{Eq: Dv1-Dv2 2}
\begin{aligned}
&(\rho-r+\ld_1+\ld_2)(Dv_1(\ux)-Dv_2(\ux))-\lc \lc 1-\frac{1}{\theta}\rc \rho+H_v(\ux,y_1,v_1(\ux),Dv_1(\ux))\rc (Dv_1(\ux)-Dv_2(\ux))\\
={}& \lim_{x\to \ux}s_1(x)D^2v_1(x)-s_2(\ux)D^2v_2(\ux)
+\lc H_v(\ux,y_1,v_1(\ux),Dv_1(\ux))-H_v(\ux,y_2,v_2(\ux),Dv_2(\ux))\rc Dv_2(\ux),
\end{aligned}
\eeq

\beq\label{Eq: Hv ux}
\begin{aligned}
H_v(\ux,y_1,v_1(\ux),Dv_1(\ux))\leq -\rho \lc 1-\frac{1}{\theta}\rc ((1-\g)v_1(\ux))^{\frac{\p^{-1}-1}{1-\g}}(r\ux+y_1)^{1-\frac{1}{\p}}
\leq -\rho \lc 1-\frac{1}{\theta}\rc.
\end{aligned}
\eeq
Next we consider 
$$
\begin{aligned}
&H_v(\ux,y_1,v_1(\ux),Dv_1(\ux))-H_v(\ux,y_2,v_2(\ux),Dv_2(\ux))\\
={}&H_v(\ux,y_1,v_1(\ux),Dv_1(\ux))-H_v(\ux,y_2,v_2(\ux),Dv_1(\ux))+H_v(\ux,y_2,v_2(\ux),Dv_1(\ux))-H_v(\ux,y_2,v_2(\ux),Dv_2(\ux)).
\end{aligned}
$$
From the mean value theorem, there exists $\chi_1$, $v_1(\ux)<\chi_1<v_2(\ux)$ such that
$$
H_v(\ux,y_1,v_1(\ux),Dv_1(\ux))-H_v(\ux,y_2,v_2(\ux),Dv_2(\ux))=H_{vv}(\ux,y_1,\chi_1,Dv_1(\ux))(v_1(\ux)-v_2(\ux)).
$$
From \eqref{Hvv}, $v_1(\ux)<v_2(\ux)$ and $Dv_2(\ux)\geq 0$ we have
$$
H_{vv}(\ux,y_1,\chi_1,Dv_1(\ux))(v_1(\ux)-v_2(\ux))Dv_2(\ux)\geq 0.
$$
For some $\xi \in (0,1)$ and $\chi_2=\xi Dv_1(\ux)+(1-\xi)Dv_2(\ux)$,
$$
H_v(\ux,y_2,v_2(\ux),Dv_1(\ux))-H_v(\ux,y_2,v_2(\ux),Dv_2(\ux))=H_{vp}(\ux,y_2,v_2(\ux),\chi_2)(Dv_1(\ux)-Dv_2(\ux)).
$$
{}{From \eqref{Hvp}, $H_{vp}(\ux,y_2,v_2(\ux),\chi_2)Dv_2(\ux)\leq 0$.} Combining \eqref{Eq: Dv1-Dv2 2} and the observation above yields $Dv_1(\ux)>Dv_2(\ux)$.
\end{proof}
A consequence of the above results is that the value of the unproductive agents is singular at the borrowing limit.
\begin{cor}\label{Cor: D^2v1}
We have 
\beq
\lim_{x\to \ux}D^2v_1(x)=-\infty.
\eeq
\end{cor}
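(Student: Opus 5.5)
We plan to argue by contradiction, using the differentiated HJB equation for $v_1$ near $\ux$, written as in the proof of \Cref{Cor: Dv1 Dv2}. For $x>\ux$ close to $\ux$ we have $s_1(x)<0$ by \Cref{Prop: s1<0}, so $v_1$ is $C^2$ there. Moreover,
\begin{equation*}
\begin{aligned}
s_1(x)D^2v_1(x)={}&(\rho-r)Dv_1(x)+\lambda_1\lc Dv_1(x)-Dv_2(x)\rc\\
&-\lc 1-\frac{1}{\theta}\rc\rho Dv_1(x)-H_v(x,y_1,v_1(x),Dv_1(x))Dv_1(x).
\end{aligned}
\end{equation*}
By \Cref{Dv UC}, the right-hand side is continuous up to $\ux$, so $L:=\lim_{x\to\ux}s_1(x)D^2v_1(x)$ exists. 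We will show $L>0$. Since $s_1(x)\to s_1(\ux)=0$ and $s_1<0$, this forces $D^2v_1(x)=L/s_1(x)\to-\infty$, which is the claim.

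To prove $L>0$, we evaluate the right-hand side at $\ux$. At $\ux$ we have $s_1(\ux)=0$, i.e. $c_1(\ux)=r\ux+y_1$. We repeat the computation from the proof of \Cref{Prop: s1<0}, now with equality. Substituting $c_1$ from \eqref{c optimal} into $H_v$ gives
\[
H_v(\ux,y_1,v_1(\ux),Dv_1(\ux))=-\rho\lc1-\tfrac1\theta\rc((1-\g)v_1(\ux))^{\frac{\p^{-1}-1}{1-\g}}(r\ux+y_1)^{1-\frac1\p}\le-\rho\lc1-\tfrac1\theta\rc,
\]
as in \eqref{Eq: Hv ux}. Hence $-\lc 1-\frac1\theta\rc\rho Dv_1-H_vDv_1\ge0$. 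The terms $(\rho-r)Dv_1(\ux)\ge0$ and $\lambda_1(Dv_1(\ux)-Dv_2(\ux))$ then remain to be controlled.

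We split into cases according to $s_2(\ux)$. If $s_2(\ux)>0$, \Cref{Cor: Dv1 Dv2} gives $Dv_1(\ux)>Dv_2(\ux)$, so $L\ge\lambda_1(Dv_1(\ux)-Dv_2(\ux))>0$. If $s_2(\ux)\le0$, we have $c_2(\ux)\ge r\ux+y_2>r\ux+y_1=c_1(\ux)$. Comparing the ratios via \eqref{c optimal}, we get
\[
\lc\tfrac{Dv_2}{Dv_1}\rc^{-\p}\lc\tfrac{v_2}{v_1}\rc^{\frac{1-\g\p}{1-\g}}>1 .
\]
Because $v_1<v_2<0$ and $\frac{1-\g\p}{1-\g}<0$, the second factor is less than $1$. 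This forces $Dv_2(\ux)<Dv_1(\ux)$, so again $L>0$.

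The main obstacle is guaranteeing strict positivity of $L$ in every case, including the boundary case where $Dv_1(\ux)=Dv_2(\ux)$ might seem possible. The case split above should handle it, since strictness comes from \Cref{Cor: Dv1 Dv2} or from $y_2>y_1$. As a backup we have the strict inequality in the $H_v$ bound: \Cref{Prop: v2 v1} gives $v_1(\ux)>\check{\su}_1(\ux)$ strictly, which makes the bound on $H_v$ strict. We would also check that $s_1$ is continuous at $\ux$ with $s_1(\ux)=0$, which \Cref{Prop: s1<0} provides.
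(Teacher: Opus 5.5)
\paragraph{Comparison with the paper.} Your overall route is the paper's. You identify $L=\lim_{x\to\ux}s_1(x)D^2v_1(x)$ with the right-hand side of the differentiated equation at $\ux$, bound the $H_v$ term by \eqref{Eq: Hv ux}, and get $L>0$ from $Dv_1(\ux)>Dv_2(\ux)$. Your case $s_2(\ux)>0$ is exactly the paper's proof. That proof only covers this case, because it invokes \Cref{Cor: Dv1 Dv2}. So treating $s_2(\ux)=0$ separately is a worthwhile addition, but as written that second case does not work.

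\paragraph{The gap in the case $s_2(\ux)=0$.} In
\[
\frac{c_2(\ux)}{c_1(\ux)}=\Bigl(\frac{Dv_2(\ux)}{Dv_1(\ux)}\Bigr)^{-\p}A,\qquad A:=\Bigl(\frac{(1-\g)v_2(\ux)}{(1-\g)v_1(\ux)}\Bigr)^{\frac{1-\g\p}{1-\g}},
\]
the base of $A$ lies in $(0,1)$ because $v_1<v_2<0$, and the exponent is negative. Hence $A>1$, not $A<1$; the same observation appears in the proof of \Cref{Prop:s2boundary}. Then $c_2/c_1>1$ only gives $Dv_2(\ux)/Dv_1(\ux)<A^{1/\p}$ with $A^{1/\p}>1$. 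This says nothing about the sign of $Dv_1(\ux)-Dv_2(\ux)$. The first-order condition alone carries two competing effects: $r\ux+y_1<r\ux+y_2$ raises $Dv_1$, while $v_1<v_2$ lowers it.

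Your backup does not close this. A strict version of \eqref{Eq: Hv ux} only rescues the borderline case $Dv_1(\ux)=Dv_2(\ux)$, not $Dv_1(\ux)<Dv_2(\ux)$. Also, that strictness does not follow from \Cref{Prop: v2 v1} alone.

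\paragraph{How to fix it.} The missing ingredient is the HJB equation itself at $\ux$. It holds classically there, by continuity of $v_j$ and $Dv_j$ up to $\ux$ (\Cref{Dv UC}). Set $a_j=r\ux+y_j$. Since $s_1(\ux)=s_2(\ux)=0$, the equation reads
\[
f(a_j,v_j(\ux))=-\ld_j\bigl(v_{\bar\jmath}(\ux)-v_j(\ux)\bigr).
\]
Write $w_j=(1-\g)v_j(\ux)$ and $\phi_j=a_j^{1-1/\p}w_j^{-\theta}$. Then
\[
f(a_j,v_j(\ux))=\frac{\rho w_j}{1-\p^{-1}}(\phi_j-1),
\]
and the prefactor is negative, so $v_1(\ux)<v_2(\ux)$ yields $\phi_1>1>\phi_2$. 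The first-order condition gives
\[
Dv_j(\ux)=\rho a_j^{-1/\p}w_j^{1-\theta}=\rho\phi_jw_j/a_j .
\]
Since $w_1>w_2>0$ and $0<a_1<a_2$, this gives $Dv_1(\ux)>Dv_2(\ux)$. Hence
\[
L\ge(\rho-r)Dv_1(\ux)+\ld_1\bigl(Dv_1(\ux)-Dv_2(\ux)\bigr)>0 .
\]
Incidentally, $H_v(\ux,y_1,v_1(\ux),Dv_1(\ux))=-\rho(1-\frac1\theta)\phi_1$. So the same fact $\phi_1>1$ is what makes your backup inequality strict.
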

\begin{proof}
We deduce from \eqref{Eq: Hv ux} that
$$
\begin{aligned}
\lc 1-\frac{1}{\theta}\rc \rho Dv_1(\ux)+H_v(\ux,y_1,v_1(\ux),Dv_1(\ux))Dv_1(\ux)\leq 0.
\end{aligned}
$$
With this inequality,  $\rho\geq r$, \Cref{Cor: Dv1 Dv2} and \eqref{Eq: Euler ux} lead to
$$
0<(\rho-r)Dv_1(\ux)+\ld_1(Dv_1(\ux)-Dv_2(\ux))\leq \lim_{x\to \ux}s_1(x)D^2v_1(x).
$$
This implies the desired result. 
\end{proof}

The following proposition provides a sufficient condition under which the savings of productive agents are negative for all $x > \ux$. In this regime, productive agents decumulate capital regardless of their wealth.
\begin{prop}\label{Prop:s2boundary}
Suppose $r>0$ and 
\beq\label{rho-r}
\begin{aligned}
\lc \frac{\rho}{\theta}-r\rc(r\ux+y_2)^{-1/\psi}+\lambda_2\lc \underbrace{(r\ux+y_2)^{-1/\psi}-(r\ux+y_1)^{-1/\psi}}_{<0}\rc\geq 0.
\end{aligned}
\eeq
Then $s_2(x)<0$ for all $x>\ux$ and $s_2(\ux)=0$. 
\end{prop}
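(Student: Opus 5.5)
The plan is to argue by contradiction, along the lines of \Cref{Prop: s1<0}, using the differentiated HJB equation (the Euler equation) for $v_2$. Suppose $s_2(\hx)\geq 0$ for some $\hx>\ux$. As in the proof of \Cref{Prop: s1<0}, \Cref{Prop:v W2} gives \eqref{Eq: Dv2} at $\hx$, with the convention $s_2(\hx)D^2v_2(\hx)=\lim_{x\to\hx}s_2(x)D^2v_2(x)$ when $s_2(\hx)=0$. Using $\rho-(1-\frac1\theta)\rho=\frac{\rho}{\theta}$, I rewrite \eqref{Eq: Dv2} as
$$
\Big(\frac{\rho}{\theta}-r\Big)Dv_2(\hx)+\lambda_2\big(Dv_2(\hx)-Dv_1(\hx)\big)-H_v(\hx,y_2,v_2(\hx),Dv_2(\hx))Dv_2(\hx)=s_2(\hx)D^2v_2(\hx)\leq 0 .
$$
The right-hand side is $\le 0$ because $v_2$ is concave and $s_2(\hx)\ge0$. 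By \eqref{Eq:Hv<0} and $Dv_2>0$, the term $-H_vDv_2$ is strictly positive. So it suffices to show that $(\frac{\rho}{\theta}-r)Dv_2(\hx)+\lambda_2(Dv_2(\hx)-Dv_1(\hx))\geq 0$, which gives the contradiction $0<0$.

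\textbf{Step 1 (gradient bounds from consumption).} The first-order condition \eqref{c optimal} converts the sign information on savings into bounds on gradients. From $s_2(\hx)\ge 0$, i.e. $c_2(\hx)\le r\hx+y_2$, I get
$$Dv_2(\hx)\ge \rho\,((1-\g)v_2(\hx))^{a}(r\hx+y_2)^{-1/\p},\qquad a=\frac{1-\g\p}{(1-\g)\p}<0.$$
From \Cref{Prop: s1<0}, $c_1(\hx)>r\hx+y_1$, hence $Dv_1(\hx)<\rho((1-\g)v_1(\hx))^{a}(r\hx+y_1)^{-1/\p}$. By \Cref{Prop: v2 v1}, $0<(1-\g)v_2<(1-\g)v_1$, and since $a<0$ this gives $((1-\g)v_2)^a>((1-\g)v_1)^a$. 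Setting $K=\rho((1-\g)v_2(\hx))^a$, I therefore obtain $Dv_2\ge K(r\hx+y_2)^{-1/\p}$ and $Dv_1< K(r\hx+y_1)^{-1/\p}$.

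\textbf{Step 2 (reducing to the hypothesis, the main obstacle).} If $\frac{\rho}{\theta}-r\ge0$, the bounds of Step 1 plug in directly, since the left-hand side is increasing in $Dv_2$ and decreasing in $Dv_1$:
$$
\Big(\frac{\rho}{\theta}-r\Big)Dv_2+\lambda_2(Dv_2-Dv_1)> K\Big[\Big(\frac{\rho}{\theta}-r+\lambda_2\Big)(r\hx+y_2)^{-1/\p}-\lambda_2(r\hx+y_1)^{-1/\p}\Big].
$$
It remains to show that the bracket is nonnegative at $\hx$, given that \eqref{rho-r} holds at $\ux$. Dividing by $(r\hx+y_2)^{-1/\p}$, I need $\frac{\rho}{\theta}-r+\lambda_2\ge \lambda_2\big(\frac{r\hx+y_2}{r\hx+y_1}\big)^{1/\p}$. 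Since $r>0$ and $y_2>y_1$, the ratio $\frac{rx+y_2}{rx+y_1}$ is decreasing in $x$, so the inequality at $\ux$ propagates to every $\hx\ge\ux$.

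If $\frac{\rho}{\theta}-r<0$, the lower bound on $Dv_2$ cannot be used in the first term. I would instead keep the combination $(\frac{\rho}{\theta}-r+\lambda_2)Dv_2$, whose coefficient must be nonnegative because \eqref{rho-r} forces $\frac{\rho}{\theta}-r+\lambda_2>0$. I then bound $\lambda_2 Dv_1$ from above by Step 1 and apply the same monotone-ratio argument. Handling this sign issue carefully is where I expect the main difficulty. If it does not close cleanly, I would absorb the defect into the strictly positive term $-H_vDv_2$, using the explicit form \eqref{Eq:Hv<0} together with the lower bound on $Dv_2$.

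\textbf{Step 3 (boundary).} Having shown $s_2<0$ on $(\ux,\infty)$, the state constraint gives $s_2(\ux)\ge0$, and continuity of $s_2$ (\Cref{Dv UC}) yields $s_2(\ux)=0$, exactly as at the end of \Cref{Prop: s1<0}.
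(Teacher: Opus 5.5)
Your proposal is correct and follows the paper's proof. The shared steps are the contradiction at a point $\hx$ with $s_2(\hx)\ge 0$, the differentiated HJB equation for $v_2$, the gradient bounds from the first-order condition combined with $v_1<v_2$, and the monotonicity of $\frac{rx+y_2}{rx+y_1}$ to carry \eqref{rho-r} from $\ux$ to $\hx$.

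The sign issue you flag is not a real obstacle. Only the coefficient $\frac{\rho}{\theta}-r+\lambda_2$ of $Dv_2$ matters, and \eqref{rho-r} forces it to be positive. Your first-case computation therefore applies verbatim whatever the sign of $\frac{\rho}{\theta}-r$, and no fallback on $-H_vDv_2$ is needed. The paper instead splits on the sign of $\frac{\rho}{\theta}-r+\lambda_2$ and observes that the nonpositive case is incompatible with the hypothesis.
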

\begin{proof}
We argue by contradiction and suppose that there exists $\hx>\ux$ such that $s_2(\hx)\geq 0$. Then 
$$
c_2(\hx)\leq r\hx+y_2\quad {\text{and}}\quad Dv_2(\hx)\geq \rho(r\hx+y_2)^{-1/\psi}((1-\g)v_2(\hx))^{\frac{1-\g \psi}{\psi(1-\g)}}. 
$$
From \Cref{Prop: s1<0}, $s_1(\hx)<0$, hence
$$
c_1(\hx)>r\hx+y_1\quad {\text{and}}\quad Dv_1(\hx)<\rho(r\hx+y_1)^{-1/\psi}((1-\g)v_1(\hx))^{\frac{1-\g \psi}{\psi(1-\g)}}. 
$$

With \Cref{Prop:v W2}, if $s_2(\hx)> 0$ we differentiate the HJB equation for $v_2$ at $\hx$ to obtain
\beq\label{Eq: Dv2 2}
\lc \frac{\rho}{\theta}-r\rc Dv_2(\hx)+\lambda_2\lc Dv_2(\hx)-Dv_1(\hx)\rc=s_2(\hx)D^2v_2(\hx)+H_v(\hx,y_2,v_2(\hx),Dv_2(\hx))Dv_2(\hx)\leq 0.
\eeq
{If $s_2(\hx)=0$ we obtain \cref{Eq: Dv2 2} by differentiating in a neighborhood of $\hx$ and pass to the limit.}\par
{Let us consider the case $\frac{\rho}{\theta}-r+\lambda_2 \leq 0$. In this case,}
\beq\label{Ineq: hx}
\lc \frac{\rho}{\theta}-r\rc(r\hx+y_2)^{-1/\psi}+\lambda_2\lc \underbrace{(r\hx+y_2)^{-1/\psi}-(r\hx+y_1)^{-1/\psi}}_{<0}\rc< 0,
\eeq
hence
\beq\label{Ineq: ux}
\frac{\frac{\rho}{\theta}-r+\lambda_2}{\lambda_2}<\lc \frac{r\hx+y_2}{r\hx+y_1}\rc^{1/\p}<\lc \frac{r\ux+y_2}{r\ux+y_1}\rc^{1/\p},
\eeq
where for the last inequality we used $y_2>y_1$, $r\hx>r\ux$, $r\ux+y_j>0$ and $1/\p>0$. It is easy to see that \eqref{Ineq: ux} contradicts \eqref{rho-r}. \par
Now let us consider the case when $\frac{\rho}{\theta}-r+\lambda_2 > 0$. From \Cref{Eq: Dv2 2}, we obtain that
$$
\lc \frac{\rho}{\theta}-r+\lambda_2 \rc (r\hx+y_2)^{-1/\psi}\lc\frac{v_2(\hx)}{v_1(\hx)}\rc^{\frac{1-\g \psi}{\psi(1-\g)}}-\lambda_2 (r\hx+y_1)^{-1/\psi}<0.
$$
Since $v_1(\hx)<v_2(\hx)<0$ and $\frac{1-\g \psi}{\psi(1-\g)}<0$, we observe that
$
\frac{v_2(\hx)}{v_1(\hx)}<1$, thus
$ \lc\frac{v_2(\hx)}{v_1(\hx)}\rc^{\frac{1-\g \psi}{\psi(1-\g)}}>1$.
We recover \eqref{Ineq: hx} and then \eqref{Ineq: ux}, in contradiction with \eqref{rho-r}. We have therefore proved that $s_2(x)<0$ for all $x>\ux$ and conclude $s_2(\ux)=0$, from the state constraint condition and the continuity of $s_2$. 
\end{proof}

By contrast with \cref{Prop:s2boundary}, the following proposition contains a sufficient condition for the savings of the productive agents at the borrowing limit to be positive. 
\begin{prop}\label{Prop:s2boundary 2}
Suppose $0\leq r\leq \rho$ and 
\beq\label{rho-r2}
\begin{aligned}
\lc \rho-r\rc(r\ux+y_2)^{-1/\psi}+\lambda_2\lc \underbrace{(r\ux+y_2)^{-1/\psi}-(r\ux+y_1)^{-1/\psi}}_{<0}\rc< 0.
\end{aligned}
\eeq
Then $s_2(\ux)>0$. 
\end{prop}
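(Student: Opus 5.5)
We argue by contradiction. By the state constraint we already know $s_2(\ux)\geq 0$, so suppose $s_2(\ux)=0$, i.e. $c_2(\ux)=r\ux+y_2$. By \Cref{Prop: s1<0} we also have $s_1(\ux)=0$, i.e. $c_1(\ux)=r\ux+y_1$. Since $Dv_j$ is continuous up to $\ux$ (\Cref{Dv UC}), the first order condition \eqref{c optimal} holds at $\ux$. It gives the explicit values
$$
Dv_j(\ux)=\rho\,(r\ux+y_j)^{-1/\p}\big((1-\g)v_j(\ux)\big)^{\frac{1-\g\p}{\p(1-\g)}},\qquad j=1,2.
$$

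\textbf{Euler relation at $\ux$.} Using \Cref{Prop:v W2}, we differentiate the HJB equation for $v_2$ at points $x>\ux$ near $\ux$ and pass to the limit, exactly as in \eqref{Eq: Euler ux}. This gives
$$
(\rho-r)Dv_2(\ux)+\lambda_2\big(Dv_2(\ux)-Dv_1(\ux)\big)=L+\Big(\big(1-\tfrac1\theta\big)\rho+H_v\big(\ux,y_2,v_2(\ux),Dv_2(\ux)\big)\Big)Dv_2(\ux),
$$
where $L=\lim_{x\to\ux}s_2(x)D^2v_2(x)$. Because $c_2(\ux)=r\ux+y_2$, the $H_v$ term can be computed exactly, as in the proof of \Cref{Prop: s1<0}:
$$
H_v=-\rho\big(1-\tfrac1\theta\big)\big((1-\g)v_2(\ux)\big)^{\frac{\p^{-1}-1}{1-\g}}(r\ux+y_2)^{1-\frac1\p}.
$$
We then use the bound $v_2(\ux)\geq v_1(\ux)\geq \check{\su}_1(\ux)$ from \Cref{Prop: v2 v1} and \Cref{Prop:sub-super} to control the sign of the bracket $(1-1/\theta)\rho+H_v$. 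The aim is to show this bracket is $\leq 0$, as in \eqref{Ineq: Hv hx}.

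\textbf{Comparing with \eqref{rho-r2}.} Next we substitute the explicit $Dv_j(\ux)$ into the left-hand side. Dividing by the common factor leaves the left-hand side of \eqref{rho-r2}, multiplied by ratios of the form $\big(v_2(\ux)/v_1(\ux)\big)^{\frac{1-\g\p}{\p(1-\g)}}$. These ratios must be controlled using $v_1(\ux)<v_2(\ux)<0$ and the sign of the exponent $\frac{1-\g\p}{\p(1-\g)}<0$, as in the proof of \Cref{Prop:s2boundary}. The goal is to conclude that the left-hand side is strictly negative under \eqref{rho-r2}. On the right-hand side we need $L\geq 0$, which would give the contradiction.

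\textbf{Sign of $L$.} To get $L\geq 0$, observe that if $s_2(\ux)=0$, then the trajectory from $\ux$ stays at $\ux$. We would then argue that $s_2\leq 0$ in a right neighbourhood of $\ux$. Otherwise $s_2>0$ on a sequence $x_n\to\ux$, and \Cref{Prop:s2boundary}-type reasoning, using the differentiated equation at $x_n$ with $s_2(x_n)D^2v_2(x_n)\leq 0$, should contradict \eqref{rho-r2} by continuity. With $s_2\leq 0$ near $\ux$ and $D^2v_2\leq 0$ we get $L\geq 0$. Hence the right-hand side is $\geq$ the (nonpositive) bracket term, while the left-hand side is $<0$. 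This needs a careful check, and it is where the contradiction should close.

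\textbf{Main obstacle.} The hard step is matching the ratio $\big(v_2(\ux)/v_1(\ux)\big)^{\frac{1-\g\p}{\p(1-\g)}}$ with \eqref{rho-r2}, together with the $(1-1/\theta)\rho+H_v$ term. The naive bound on this factor goes the wrong way: it is $>1$ and multiplies $Dv_2(\ux)$. So the negativity of the bracket must be used to absorb it. I would first try to combine the bracket with $(\rho-r)$ so that the effective coefficient of $Dv_2(\ux)$ becomes at most $\rho-r$ times a factor $\leq 1$ coming from the subsolution bound. If that fails, I would work with the exact identity for $H_v$ at $c_2=r\ux+y_2$ rather than with an inequality.
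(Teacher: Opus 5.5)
Your proposal has genuine gaps, which I take in order.

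\textbf{The sign of the bracket is backwards, and this breaks the contradiction.} When $c_2(\ux)=r\ux+y_2$, the quantity $(1-\frac1\theta)\rho+H_v(\ux,y_2,v_2(\ux),Dv_2(\ux))$ is \emph{positive}, not nonpositive. The paper uses $H_v\ge-(1-\frac1\theta)\rho$ at this point. Independently, the HJB equation at $\ux$ reads $f(r\ux+y_2,v_2(\ux))=\lambda_2(v_2(\ux)-v_1(\ux))>0$, which forces $v_2(\ux)<\frac{(r\ux+y_2)^{1-\gamma}}{1-\gamma}$ and hence a strictly positive bracket. Comparison with $\check{\su}_1$ only bounds $v_2(\ux)$ from below in terms of $y_1$, so it cannot give the sign you want. (In \eqref{Ineq: Hv hx} it works only because consumption and subsolution both involve $y_1$.)

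Even a nonpositive bracket would not give a contradiction. ``Right-hand side $\ge$ a nonpositive quantity, left-hand side $<0$'' is perfectly consistent. What you need is that the right-hand side be $\ge0$, i.e.\ $L\ge0$ together with a \emph{nonnegative} bracket. This is exactly how the paper reaches \eqref{Eq: Dv2 bdy}.

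\textbf{Your route to $L\ge0$ also fails.} At points where $s_2>0$, the differentiated equation gives an upper bound on $(\frac\rho\theta-r)Dv_2+\lambda_2(Dv_2-Dv_1)$. That is what contradicts \eqref{rho-r} in \Cref{Prop:s2boundary}, but it is compatible with \eqref{rho-r2}: $s_2>0$ near $\ux$ is precisely the behaviour \eqref{rho-r2} is meant to produce. So you cannot deduce $s_2\le0$ near $\ux$. The paper never needs that. It treats separately three situations:
\begin{itemize}
\item $s_2\equiv0$ on some $[\ux,\ux+\delta^+]$;
\item $s_2<0$ near $\ux$, where $\liminf s_2D^2v_2\ge0$;
\item $s_2>0$ near $\ux$, or $s_2(x_n)=0$ along some $x_n\to\ux$.
\end{itemize}
In the third situation, $D^2v_2\to-\infty$ is excluded because the explicit formula for $Ds_2$ would then make $s_2$ negative just to the right of $\ux$. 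Hence $s_2D^2v_2\to0$ along a sequence.

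\textbf{The obstacle you flag at the end is genuine, and your absorption idea cannot remove it.} Substituting the explicit $Dv_j(\ux)$ into \eqref{Eq: Dv2 bdy} gives
\[
(\rho-r+\lambda_2)(r\ux+y_2)^{-1/\psi}\Big(\frac{v_2(\ux)}{v_1(\ux)}\Big)^{\frac{1-\gamma\psi}{\psi(1-\gamma)}}\ge\lambda_2(r\ux+y_1)^{-1/\psi},
\]
where the factor is $>1$. This inequality is compatible with \eqref{rho-r2}. The paper asserts that the contradiction follows from \eqref{Eq: Dv2 bdy}, the explicit formulas and $v_1(\ux)<v_2(\ux)$. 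Here that ordering points the wrong way; in \Cref{Prop:s2boundary} the inequality is reversed, which is why the same ordering works there. So you have located a step the paper does not justify.

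Keeping the exact bracket on the left only lowers the coefficient $\rho-r+\lambda_2$ to $\frac\rho\theta-r+\lambda_2-H_v(\ux,y_2,v_2(\ux),Dv_2(\ux))$. That is still not enough in general.

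\textbf{What does close the argument, under a stronger hypothesis.} Set $w_j=((1-\gamma)v_j(\ux))^{1/(1-\gamma)}$. The two HJB equations at $\ux$ give $r\ux+y_1<w_1<w_2<r\ux+y_2$, and the factor above equals $(w_2/w_1)^{1/\psi-\gamma}$. Hence \eqref{Eq: Dv2 bdy} forces
\[
(\rho-r+\lambda_2)(r\ux+y_2)^{-\gamma}>\lambda_2(r\ux+y_1)^{-\gamma}.
\]
The proof therefore goes through if the exponent $-1/\psi$ in \eqref{rho-r2} is replaced by $-\gamma$. This is a stronger hypothesis, since $\gamma<1/\psi$. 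With \eqref{rho-r2} as stated, neither your proposal nor the paper's argument establishes the final contradiction.
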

\begin{proof}
We argue by contradiction. Suppose $s_2(\ux)=0$. Since $c_2(\ux)=r\ux+y_2$ and using the comparison with supersolution found in \Cref{Prop:sub-super},  
$$
H_v(\ux,y_2,v_2(\ux),Dv_2(\ux))\geq -\lc 1-\frac{1}{\theta}\rc \rho \underbrace{\lc \frac{\rho}{r}\rc^{-\theta}}_{\leq 1}\geq -\lc 1-\frac{1}{\theta}\rc \rho.
$$
Moreover $s_1(\ux)=0$ and $s_2(\ux)=0$ yield 
\beq\label{Eq: Dv1 Dv2 ux}
Dv_1(\ux)=\rho(r\ux+y_1)^{-1/\p}((1-\g)v_1(\ux))^{\frac{1-\g \p}{\p(1-\g)}}\,\, {\text{and}}\,\, Dv_2(\ux)=\rho(r\ux+y_2)^{-1/\p}((1-\g)v_2(\ux))^{\frac{1-\g \p}{\p(1-\g)}}.
\eeq
From the continuity of $s_2$, we may define 
 $$
 \delta^+=\max \{\delta\geq 0: \quad s_2(x)= 0\,\,\forall x\in [\ux,\ux+\delta]\},
 $$
 and consider different cases.\par 
Case ($I$): If $\delta^+>0$, then 
 $$
 Dv_2(x)=\rho(rx+y_2)^{-1/\p}((1-\g)v_2(x))^{\frac{1-\g \p}{\p(1-\g)}}
 $$ 
 for $x\in (\ux,\ux+\delta^+]$ and
 $$
 \begin{aligned}
 D^2v_2(x)={}&-\frac{r\rho}{\p}(rx+y_2)^{-\frac{1}{\p}-1}((1-\g)v_2(x))^{\frac{1-\g \p}{\p(1-\g)}}\\
 {}&+\frac{\rho(1-\g \p)}{\p}(rx+y_2)^{-1/\p}((1-\g)v_2(x))^{\frac{1-\g \p}{\p(1-\g)}-1}\\
\geq  {}&-\frac{r\rho}{\p}(\rho x+y_2)^{-\g}\geq -\frac{r\rho}{\p}(\rho \ux+y_2)^{-\g},
\end{aligned}
 $$
 for $x\in (\ux,\ux+\delta^+)$. Hence, $\lim_{x\to \ux}s_2(x)D^2v_2(x)=0$ and
$$
\begin{aligned}
&\lc \rho-r\rc Dv_2(\ux)+\lambda_2\lc Dv_2(\ux)-Dv_1(\ux)\rc\\
={}&\lim_{x\to \ux}s_2(x)D^2v_2(x)+\lc 1-\frac{1}{\theta}\rc \rho Dv_2(\hx)+H_v(\ux,y_2,v_2(\ux),Dv_2(\ux))Dv_2(\ux),
\end{aligned}
$$
hence
\beq\label{Eq: Dv2 bdy}
\lc \rho-r\rc Dv_2(\ux)+\lambda_2\lc Dv_2(\ux)-Dv_1(\ux)\rc \geq 0.
\eeq
From \eqref{Eq: Dv2 bdy}, \eqref{Eq: Dv1 Dv2 ux} and $v_1(\ux)<v_2(\ux)$, we then find the desired contradiction with \eqref{rho-r2}.\par
Case ($II$): Now we consider the case $\delta^+=0$.\par
Case ($II1$): We first consider the case that $\delta^+=0$ and there exists a sufficiently small $\eps>0$ such that either $s_2(x)>0$ or $s_2(x)<0$ in $(\ux,\ux+\eps)$.
  \begin{itemize}
 \item[(1):]
 Suppose $s_2(x)<0$ in $(\ux,\ux+\eps)$ then $\liminf_{x\to \ux}s_2(x)D^2v_2(x)\geq 0$, therefore we have
\eqref{Eq: Dv2 bdy} and hence a contradiction with \eqref{rho-r2}.
 \item[(2):]
 Suppose $s_2(x)>0$ in $(\ux,\ux+\eps)$, we claim that it is impossible that $\lim_{x\to \ux}D^2v_2(x)= -\infty$. If $\lim_{x\to \ux}D^2v_2(x)= -\infty$, there exists $\hat{x}\in (\ux,\ux+\eps)$ such that
\beq\label{Eq: Ds2<0}
 \begin{aligned}
 Ds_2(x)={}&r+\p \rho^{\p}(Dv_2(x))^{-\p-1}((1-\g)v_2(x))^{\frac{1-\g \p}{1-\g}}D^2v_2(x)\\
 &-(1-\g \p)\rho^{\p}(Dv_2(x))^{1-\p}((1-\g)v_2(x))^{\frac{1-\g \p}{1-\g}-1} <0,\quad \forall x\in (\ux,\hx).
 \end{aligned}
\eeq
Therefore, $s_2(\hx)<0$, in contradiction with $s_2(x)>0$ for $x\in (\ux,\ux+\eps)$. 
Since $\lim_{x\to \ux}D^2v_2(x)= -\infty$ is not possible, we know there exists a sequence $x_n\to \ux$ such that $$\lim_{x_n\to \ux}s_2(x_n)D^2v_2(x_n)= 0$$ and we again obtain \eqref{Eq: Dv2 bdy}, in contradiction with \eqref{rho-r2}.
 \end{itemize}
Case ($II2$): We consider the case that $\delta^+=0$ and there exists $x_n\to \ux$ such that $s_2(x_n)=0$. Again, we claim it is impossible that $\lim_{x_n\to \ux}D^2v_2(x_n)= -\infty$. Otherwise, for sufficiently small $x_n$ we can show, similarly to the calculation in \eqref{Eq: Ds2<0}, that $Ds_2(x)<0$ for all $x\in (\ux,x_n)$, hence $s_2(x_n)<0$. Having ruled out the possibility that $\lim_{x_n\to \ux}D^2v_2(x_n)= -\infty$, we can extract a bounded subsequence of $D^2v_2(x_n)$ and obtain \eqref{Eq: Dv2 bdy}, in contradiction with \eqref{rho-r2}.
\end{proof}
The next result deals with the behavior of $s_2$ as $x\to +\infty$. 
\begin{prop}\label{Prop:s2 barx}
Suppose $\rho>r$ and $s_2(\ux)>0$. There exists $\bar{x}>\ux$ such that $s_2(\bar{x})=0$ and $s_2(x)<0$ for all $x\geq \bar{x}$.
\end{prop}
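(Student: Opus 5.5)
We split the statement into two claims. First, $s_2$ becomes negative somewhere for large $x$. Second, once $s_2$ vanishes at a point $\bar{x}$ beyond the region where it is positive, it cannot become nonnegative again.

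\emph{Step 1: $s_2$ takes negative values for large $x$.} We argue by contradiction and assume $s_2(x)\geq 0$ for all $x\geq x_0$. Then the trajectories of productive agents starting above $x_0$ never decrease while $\boldsymbol{y}=y_2$. We would rather use the differentiated HJB relation \eqref{Eq: Dv2}, valid a.e. by \Cref{Prop:v W2}. Since $s_2\geq 0$ and $D^2v_2\leq 0$, the term $s_2D^2v_2\leq 0$. Hence
$$
(\rho-r)Dv_2+\lambda_2(Dv_2-Dv_1)\leq \Big(1-\frac1\theta\Big)\rho Dv_2+H_v(x,y_2,v_2,Dv_2)Dv_2 .
$$
The condition $c_2\leq rx+y_2$ bounds $Dv_2$ from below as in the proof of \Cref{Prop: s1<0}. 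This yields $H_v(x,y_2,v_2,Dv_2)\leq -\rho(1-\frac1\theta)((1-\g)v_2)^{\frac{\p^{-1}-1}{1-\g}}(rx+y_2)^{1-\frac1\p}$. Comparison with $\check{\su}_1$ and $\check{\sv}_2$ from \Cref{Prop:sub-super} then shows that this quantity tends to $-\rho(1-\frac1\theta)\kappa$ as $x\to\infty$, where $\kappa=(b/r)^{\dots}$ is an explicit ratio. In the worst case we only know it is $\leq -\rho(1-\frac1\theta)(r/b)^{\text{power}}$. The right-hand side is therefore $\leq \varepsilon(x)Dv_2$ with $\varepsilon(x)\to \rho(1-\frac1\theta)(1-\kappa')$, a small quantity.

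On the left-hand side, \Cref{Prop: s1<0} gives $c_1>rx+y_1$. Combining this with $c_2\leq rx+y_2$, formula \eqref{c optimal}, and $v_1<v_2$, we get
$$
\Big(\frac{Dv_1}{Dv_2}\Big)^{\p}\leq \frac{rx+y_2}{rx+y_1}\to 1 .
$$
Thus $Dv_2-Dv_1\geq -o(1)Dv_2$, and the left-hand side is at least $(\rho-r-o(1))Dv_2$. Since $\rho>r$, we obtain a contradiction for $x$ large, provided the limiting constant on the right is strictly smaller than $\rho-r$.

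This is the main obstacle: the asymptotic value of $H_v$ must be controlled precisely enough. For that we would use the asymptotic homogeneity of $v_j$. Both $\check{\su}$ and $\check{\sv}$ behave like $x^{1-\g}$, and the true solution should satisfy $v_j(x)\sim (b x)^{1-\g}/(1-\g)$. We would prove this by a rescaling argument: set $v_j(\lambda x)\lambda^{\g-1}$ and pass to the limit $\lambda\to\infty$ via the stability of viscosity solutions, which gives the decoupled homogeneous problem whose solution is $\check{\sv}$ with $y=0$. In that limit $s_2/x\to r-b<0$ by \Cref{Lemma: b}, and this directly gives $s_2<0$ for large $x$. It is a cleaner route than the differentiated equation above. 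With local $C^1$ convergence from \Cref{Lemma: Dv converge}, the consumption $c_2(x)/x\to b$, so $s_2(x)\sim (r-b)x\to-\infty$.

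\emph{Step 2: at most one sign change.} Since $s_2(\ux)>0$ and $s_2<0$ for large $x$, continuity gives zeros of $s_2$. Let $\bar{x}$ be the largest zero, which exists because the zero set is closed and bounded by Step 1. Then $s_2<0$ on $(\bar{x},\infty)$, and hence on $[\bar x,\infty)$ except at $\bar x$ itself, where $s_2(\bar x)=0$. The statement's ``$s_2(x)<0$ for all $x\geq\bar{x}$'' should be read with equality at $\bar x$. If strictness for $x>\bar x$ is all that is required, this completes the proof. If instead $\bar{x}$ is meant to be the first zero, we would additionally show that $s_2$ cannot return to nonnegative values. For this we evaluate \eqref{Eq: Dv2} at a zero $\hat{x}>\bar{x}$, where $s_2D^2v_2=0$, and repeat the estimates of \Cref{Prop:s2boundary} to derive a contradiction.
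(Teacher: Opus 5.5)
\textbf{The gap is in Step~1, where you switch to the rescaling argument.} Write $w^\sigma_j(x)=\sigma^{\gamma-1}v_j(\sigma x)$. The only bounds you have, $\check{\mathsf{u}}_1\le v_j\le \check{\mathsf{v}}_2$, rescale to $(rx)^{1-\gamma}/(1-\gamma)$ and $(bx)^{1-\gamma}/(1-\gamma)$, which differ since $b>r$. Stability of viscosity solutions therefore only tells you that subsequential limits solve the scaled system on $(0,+\infty)$. To conclude that the limit is $(bx)^{1-\gamma}/(1-\gamma)$ you need a uniqueness result for that problem, for functions blowing up at $0$ with no boundary condition identified there. Proposition~\ref{comparison} concerns bounded functions with a state constraint at $\underline{x}$, so it does not provide this.

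There are two further inaccuracies. First, the scaled system is not decoupled: the term $\lambda_j(w_{\bar\jmath}-w_j)$ survives the scaling unchanged. Second, for $(bx)^{1-\gamma}/(1-\gamma)$, formula \eqref{c optimal} gives $c=\rho^{\psi}b^{1-\psi}x=(r+\psi(\rho-r))x$, not $bx$; the sign of $s_2$ is unaffected.

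The route is repairable. Let $W:=\max\{(b(x-\underline{x}))^{1-\gamma}/(1-\gamma),\check{\mathsf{u}}_1\}$. The first function is a strict classical subsolution on $(\underline{x},+\infty)$ because $r\underline{x}+y_1>0$. So $(W,W)$ is a bounded subsolution on $[\underline{x},+\infty)$ (it equals $\check{\mathsf{u}}_1$ near $\underline{x}$), and Proposition~\ref{comparison} gives $v_j\ge W$. This bound rescales to the same limit as $\check{\mathsf{v}}_2$, pins down the limit without any uniqueness theorem, and would even give $s_2(x)\sim-\psi(\rho-r)x$. As written, however, the key identification is asserted rather than proved.

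\textbf{The route you abandoned is the paper's proof, and it closes with what you already have.} The ``obstacle'' comes from bringing in the wrong comparison function. The bound
$H_v(x,y_2,v_2,Dv_2)\le-\rho(1-\frac1\theta)((1-\gamma)v_2)^{\frac{\psi^{-1}-1}{1-\gamma}}(rx+y_2)^{1-\frac1\psi}$
is decreasing in $v_2$, so only $v_2\ge\check{\mathsf{u}}_1$ matters. It yields
$H_v(x,y_2,v_2,Dv_2)\le-\rho(1-\frac1\theta)\left(\frac{rx+y_2}{rx+y_1}\right)^{1-\psi^{-1}}\to-\rho(1-\frac1\theta)$.
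No factor involving $b/r$ appears, and $\check{\mathsf{v}}_2$ is not needed.

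Hence the right-hand side of \eqref{Eq: Dv2} is at most $o(1)\,Dv_2$. Your estimate $(Dv_1/Dv_2)^{\psi}<\frac{rx+y_2}{rx+y_1}$ makes the left-hand side at least $(\rho-r-o(1))\,Dv_2$. Since $Dv_2>0$ and $\rho>r$, this is a contradiction for large $x$.

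Run this argument pointwise at every large $x$ with $s_2(x)\ge0$. When $s_2(x)=0$, read $s_2D^2v_2$ as a limit via Proposition~\ref{Prop:v W2}. Do not assume instead that $s_2\ge0$ on a whole half-line: refuting that only gives negative values at arbitrarily large points, not the boundedness of the zero set that your Step~2 uses.

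With that change, Step~2 with $\bar{x}$ the largest zero is correct, and it is the paper's reading ($s_2<0$ for $x>\bar{x}$). The ``first zero'' alternative is not what is meant and need not be addressed.
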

\begin{proof}
Let us assume that for all $\hx$ sufficiently large, $s_2(\hx)>0$ and look for a contradiction. Differentiating the HJB equation for $v_2$ at $\hx$ leads to \eqref{Eq: Dv2}. From $s_2(\hx)>0$, we obtain $c_2(\hx)<r\hx+y_2$. The comparison with subsolution found in \Cref{Prop:sub-super} yields  
$$
v_2(\hx)\geq \frac{(r\hx+y_1)^{1-\gamma}}{1-\gamma}.
$$ 
From the monotonicity of $H_v$ (\eqref{Hvp} and \eqref{Hvv}), we obtain
$$
H_v(\hx,y_2,v_2(\hx),Dv_2(\hx))<-\rho \lc 1-\frac{1}{\theta}\rc \lc \frac{r\hx+y_2}{r\hx+y_1}\rc^{1-\p^{-1}}.
$$
We deduce from \eqref{Eq: Dv2} that
$$
\rho-r+\lambda_2<\rho \lc 1-\frac{1}{\theta}\rc\lc 1- \lc \frac{r\hx+y_2}{r\hx+y_1}\rc^{1-\p^{-1}}\rc+\frac{\lambda_2Dv_1(\hx)}{Dv_2(\hx)}.
$$
From \Cref{Prop: s1<0}, $c_1(\hx)>r\hx+y_1$, hence 
$$
\frac{Dv_1(\hx)}{Dv_2(\hx)}=\lc \frac{c_2(\hx)}{c_1(\hx)}\rc^{1/\p}\lc \frac{v_1(\hx)}{v_2(\hx)}\rc^{\frac{1-\g \p}{\p(1-\g)}}<\lc \frac{r\hx+y_2}{r\hx+y_1}\rc^{\p^{-1}},
$$
therefore
$$
\rho-r<\rho \lc 1-\frac{1}{\theta}\rc\lc 1- \lc \frac{r\hx+y_2}{r\hx+y_1}\rc^{1-\p^{-1}}\rc+\lambda_2 \lc \lc \frac{r\hx+y_2}{r\hx+y_1}\rc^{\p^{-1}}-1\rc.
$$
{
Passing to the limit we obtain
$$
\lim_{\hx\to +\infty}\rho \lc 1-\frac{1}{\theta}\rc\lc 1- \lc \frac{r\hx+y_2}{r\hx+y_1}\rc^{1-\p^{-1}}\rc+\lambda_2 \lc \lc \frac{r\hx+y_2}{r\hx+y_1}\rc^{\p^{-1}}-1\rc=0,
$$
a contradiction with $\rho>r$.} The existence of $\bar{x}$ then follows from the continuity of $s_2$. 
\end{proof}

\section{Analysis of the Fokker-Planck-Kolmogorov equation}\label{Sec: FPK}
{}{
Let us state the weak formulation of the system of Fokker-Planck-Kolmogorov equations:
\begin{equation}\label{Eq: FP m}
		- \frac{\partial}{\partial x} \left[ (rx + y_j - c_j(x)) m_j(x) \right]+\lambda_{\bar \jmath}m_{\bar \jmath}(x) -\lambda_jm_j(x)=0 .
\end{equation} }
\begin{defn}\label{Def: FPK sol weak}
The measure $m$ with $m=\sum_{j\in \{1,2\}}m_j\otimes\delta_{y_j}(y)$ is a weak solution to \cref{Eq: FP m} if for all test functions $(\phi_1,\phi_2)\in \left(C_c^1([\ux,+\infty))\right)^2$ and $j\in \{1,2\}$,
\begin{equation}\label{Eq: weak FPK}
\int_{x\geq \ux}\lambda_j \phi_j(x)dm_j-\int_{x\geq \ux}\lambda_{\bar \jmath}\phi_j(x)dm_{\bar \jmath}=\int_{x\geq \ux}s_j(x)D\phi_j(x)dm_j.
\end{equation}
\end{defn}
{}{In the stationary regime, the singularities in the wealth distribution may appear only at $\ux$ or $\hat{x}>\ux$ such that $s_1(\hat{x})=s_2(\hat{x})=0$. It has been proven in \cref{Prop: s1<0} that such $\hat{x}$ does not exist. Therefore,} the wealth distribution of the population labeled $j$ has a density $g_j$ in $(\ux,+\infty)$ and may exhibit a Dirac mass at $\ux$, of the form $\mu_j \delta_{\ux}$. We can then write the probability measure as 
\beq\label{Eq: def m}
dm_j=g_jdx+\mu_j\delta_{\ux}.
\eeq
 We denote by $G_j(x)$ the cumulative distribution function: 
\beq\label{Eq: G}
G_j(x):=\mu_j +\int_{\ux}^xg_j(x)dx.
\eeq
We may rewrite \cref{Eq: weak FPK} as follows 
\begin{equation*}
\int_{x>\ux}\left(\lambda_j g_j(x)-\lambda_{\bar \jmath} g_{\bar \jmath}(x)\right)\phi_j(x)dx+(\lambda_j\mu_j-\lambda_{\bar \jmath}\mu_{\bar \jmath})\phi_j(\ux)
=\int_{x>\ux}s_j(x)g_j(x)D\phi_j(x)dx+\mu_js_j(\ux)D\phi_j(\ux).
\end{equation*}
\begin{lemma}
Given any $r$ {}{such that} $0\leq r<\rho$, let $m=\sum_{j\in \{1,2\}}m_j\otimes\delta_{y_j}(y)$ be a weak solution to the FPK equation. Then we have 
$$
\lim_{x\to +\infty}G_j(x)=\int_{\ux}^{+\infty}dm_j=\mu_j +\int_{\ux}^{+\infty}g_j(x)dx=\frac{\ld_{\bar \jmath}}{\ld_j+\ld_{\bar \jmath}}.
$$
\end{lemma}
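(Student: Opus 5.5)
The plan is to test the weak formulation \eqref{Eq: weak FPK} against functions that are identically constant on a large interval. The derivative term on the right-hand side then vanishes there, and what remains is a linear relation between the total masses of $m_1$ and $m_2$. Combined with the normalization in \eqref{MFG} (total mass one), this will pin down each mass as $\ld_{\bar \jmath}/(\ld_j+\ld_{\bar \jmath})$.

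Since $C_c^1([\ux,+\infty))$ does not contain constants, I will use cutoffs. Fix $\chi\in C^1_c([0,+\infty))$ with $0\le\chi\le1$, $\chi=1$ on $[0,1]$, $\chi=0$ on $[2,+\infty)$ and $|D\chi|\le 2$. For $R>\max(|\ux|,1)$, set $\phi^R(x)=\chi(x/R)$ and take $\phi_j=\phi^R$ in \eqref{Eq: weak FPK}; this is an admissible test function. The left-hand side equals $\ld_j\int\phi^R\,dm_j-\ld_{\bar \jmath}\int\phi^R\,dm_{\bar \jmath}$. Because $m_j$ is a finite nonnegative measure, dominated convergence sends this to $\ld_j m_j([\ux,\infty))-\ld_{\bar \jmath}m_{\bar \jmath}([\ux,\infty))$ as $R\to\infty$.

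The main step is to show that the right-hand side tends to zero. It is supported in $[R,2R]$ and is bounded in absolute value by
\[
\frac{2}{R}\int_{R\le x\le 2R}|s_j(x)|\,dm_j .
\]
Here $s_j(x)=rx+y_j-c_j(x)$. I will need a linear growth bound $|s_j(x)|\le C(1+|x|)$. The drift part $rx+y_j$ is clearly linear. For consumption, I would use \eqref{c optimal} together with the bounds from the sub- and supersolutions of \Cref{Prop:sub-super}. These give $((1-\g)v_j)^{\frac{1-\g\p}{1-\g}}\le (rx+y_2)^{1-\g\p}$ in the appropriate direction, up to a factor $(b/r)$. The factor $(Dv_j)^{-\p}$ requires a lower bound on $Dv_j$ that decays at most like $x^{-\g}$. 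I would obtain this from concavity, the bound $v_j\le\check\sv_2$, and a comparison of the slopes of $v_j$ and $\check\su_1$. An alternative route is to use directly that consumption cannot exceed a linear function of wealth, because the value at infinity is controlled by $\check\sv$.

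With $|s_j|\le C(1+x)$, on $[R,2R]$ we have $|s_j|/R\le 3C$. The bound then becomes $6C\,m_j([R,2R])$, which tends to $0$ by finiteness of $m_j$. I expect the growth bound on $c_j$ to be the main obstacle. If it proves delicate, a fallback is to use that $m_j$ has a finite first moment, which is implicit in the definition of $K[m]$, since the bound is then dominated by $(2/R)\int_{x\ge R}(\ldots)\,dm_j$. Another fallback is the a.e. density representation on $(\ux,\infty)$.

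Passing to the limit gives $\ld_j M_j=\ld_{\bar \jmath}M_{\bar \jmath}$, where $M_j=\mu_j+\int g_j$. The normalization $M_1+M_2=1$ then yields $M_j=\ld_{\bar \jmath}/(\ld_j+\ld_{\bar \jmath})$. Finally, $\lim_{x\to\infty}G_j(x)=M_j$ by monotone convergence applied to \eqref{Eq: G}.
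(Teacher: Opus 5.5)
Your proposal is correct and follows the paper's argument: testing the weak formulation \eqref{Eq: weak FPK} with constants removes the drift term and gives $\lambda_1 m_1([\ux,\infty))=\lambda_2 m_2([\ux,\infty))$, and the unit total mass then yields the claim. The only difference is that the paper substitutes $\phi_j=1$ directly, which is not literally in $C^1_c([\ux,+\infty))$, whereas your cutoff $\phi^R$ makes this step rigorous together with the linear growth of $c_j$; for $r>0$ that growth follows from $v_j\le\check{\sv}_2$ and the concavity chord bound $Dv_j(x)\ge (v_j(Kx)-v_j(x))/((K-1)x)$, using $v_j(Kx)\ge\check{\su}_1(Kx)$ with $K>b/r$.
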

\begin{proof}
By taking the test functions $\phi_j=1$ in \cref{Eq: weak FPK} we have
\beq\label{Eq: G1 G2}
\ld_1\int_{\ux}^{+\infty}dm_1=\ld_2\int_{\ux}^{+\infty}dm_2,
\eeq
we then obtain \eqref{Eq: G1 G2} from $\int_{\ux}^{+\infty}dm_1+\int_{\ux}^{+\infty}dm_2=1$. 
\end{proof}

Let $\bar{x}$ be defined as in \cref{Prop:s2 barx}. We first observe $g_1(x)=g_2(x)=0$ for all $x$ such that $x>\bar{x}$, because the optimal policies consist of decumulating wealth when $x>\bar{x}$, so the stationary distribution must vanish in this region (cf. \cite{meyn2012markov}). 

\begin{lemma}\label{Lemma: sjgj}
Assume $r$ satisfies $0\leq r<\rho$ and $s_2(\ux)>0$. Then we have 
\beq\label{Eq: sjgj}
s_1(x)g_1(x)+s_2(x)g_2(x)=0\quad \forall x>\ux.
\eeq
\end{lemma}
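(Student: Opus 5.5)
The idea is to add the two weak FPK equations and then integrate once. The total flux $s_1g_1+s_2g_2$ will turn out to be a function $\Phi$ whose distributional derivative vanishes on $(\ux,+\infty)$. So $\Phi$ is a constant, and the constant will be fixed by the behaviour for large $x$, or by the boundary at $\ux$.

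\textbf{Step 1: summing the equations.} Fix a single test function $\phi\in C_c^1((\ux,+\infty))$, supported away from $\ux$. Take $\phi_1=\phi_2=\phi$ in \eqref{Eq: weak FPK} for $j=1$ and $j=2$ and add the two identities. The left-hand sides cancel exactly:
\[
\lambda_1\!\int\phi\,dm_1-\lambda_2\!\int\phi\,dm_2+\lambda_2\!\int\phi\,dm_2-\lambda_1\!\int\phi\,dm_1=0 .
\]
Since $\phi$ vanishes near $\ux$, the Dirac masses play no role, and we get
\[
\int_{\ux}^{+\infty}\bigl(s_1(x)g_1(x)+s_2(x)g_2(x)\bigr)D\phi(x)\,dx=0\qquad\forall \phi\in C_c^1((\ux,+\infty)).
\]
The function $\Phi:=s_1g_1+s_2g_2$ is in $L^1_{loc}(\ux,+\infty)$, because $s_j$ is continuous and hence locally bounded, and $g_j\in L^1$. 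Its distributional derivative vanishes, so by the du Bois-Reymond lemma $\Phi\equiv C$ a.e. on $(\ux,+\infty)$.

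\textbf{Step 2: identifying the constant.} I would use the observation made just before the statement. By \cref{Prop:s2 barx} there is $\bar{x}$ such that $s_1<0$ and $s_2<0$ on $(\bar x,+\infty)$, and then $g_1=g_2=0$ for $x>\bar{x}$. Hence $\Phi=0$ on $(\bar x,+\infty)$, which forces $C=0$ and gives \eqref{Eq: sjgj} for a.e. $x>\ux$. To make this self-contained rather than cite \cite{meyn2012markov}, I would argue directly from the flux identity for a single $j$. For $x>\bar x$, take $\phi_j$ equal to a smoothed version of $\mathbf 1_{[x,\infty)}$, truncated far out. The $j$-equations then express $\frac{d}{dx}(s_jg_j)$ as the exchange terms $\pm(\lambda_jg_j-\lambda_{\bar\jmath}g_{\bar\jmath})$. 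Integrating from $x$ to $+\infty$ and using $s_j\le -\kappa<0$ there, which holds because $s_j$ is affine-dominated and negative, together with the integrability of $g_j$, I would obtain a Gronwall-type inequality showing that the total mass on $(x,\infty)$ vanishes.

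Alternatively, the constant can be read off at $\ux$ by taking test functions that equal $1$ near $\ux$. This yields $\lim_{x\to\ux^+}\Phi(x)=-\sum_j\mu_js_j(\ux)$, with sign conventions to be checked. We have $s_1(\ux)=0$ by \cref{Prop: s1<0}. With $s_2(\ux)>0$, mass cannot accumulate at $\ux$ in group 2, so $\mu_2=0$. Hence $C=0$ again. This gives a consistency check between the two determinations of $C$.

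\textbf{Step 3: upgrading a.e. to everywhere.} The identity holds everywhere once $g_j$ is taken as the continuous representative. Where $s_j\neq0$, the density $g_j$ solves a linear ODE with continuous coefficients, and by \cref{Prop: s1<0} the points where $s_1=s_2=0$ do not exist.

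\textbf{Main obstacle.} The hard step is Step 2: rigorously justifying that the constant is zero. I would try the right-tail route first, because the boundary route requires careful handling of the Dirac mass and the sign of $s_2(\ux)$, while in the right tail both drifts are strictly negative, which makes the tail estimate clean.
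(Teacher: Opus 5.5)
Your Step 1 and the first two sentences of Step 2 are the paper's proof. Summing the two equations gives $\frac{d}{dx}(s_1g_1+s_2g_2)=0$ on $(\ux,+\infty)$. The constant is then fixed by the observation, stated just before the lemma with a reference to \cite{meyn2012markov}, that $g_1=g_2=0$ for $x>\bar x$.

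The problem is the self-contained tail argument you say you would try first. On $(\bar x,+\infty)$ you have $\vert s_1\vert g_1+\vert s_2\vert g_2=-C$. A bound $s_j\le-\kappa<0$ there would not settle $C$. That bound does not follow from ``affine-dominated and negative'', and the paper does not prove it. Even granting it, it only gives the upper bound $g_1+g_2\le \vert C\vert/\kappa$, which is compatible with integrability, and no Gronwall argument turns it into $C=0$. To exclude $C\neq0$ from the tail you need the opposite kind of information: a growth bound $\vert s_j(x)\vert\le A(1+x)$, so that $g_1+g_2\ge \vert C\vert/(A(1+x))$ fails to be integrable. That bound would have to be proved separately.

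The boundary route, which you kept only as a consistency check, is the clean and fully rigorous one. Your bookkeeping of the Dirac masses there is off, though. After summing the two equations with $\phi_1=\phi_2=\phi\in C^1_c([\ux,+\infty))$, the left-hand sides cancel entirely, including the $\phi(\ux)$ terms. The masses $\mu_j$ survive only through $\mu_js_j(\ux)D\phi(\ux)$:
\[
\int_{\ux}^{+\infty}\Phi\,D\phi\,dx+\bigl(\mu_1s_1(\ux)+\mu_2s_2(\ux)\bigr)D\phi(\ux)=0 .
\]
With $\Phi\equiv C$ this becomes $-C\phi(\ux)+\bigl(\mu_1s_1(\ux)+\mu_2s_2(\ux)\bigr)D\phi(\ux)=0$. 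Since $\phi(\ux)$ and $D\phi(\ux)$ can be prescribed independently, you get $C=0$ directly, and separately $\mu_1s_1(\ux)+\mu_2s_2(\ux)=0$. So a test function equal to $1$ near $\ux$ gives $\lim_{x\to\ux^+}\Phi(x)=0$, not $-\sum_j\mu_js_j(\ux)$.

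This argument needs neither $\mu_2=0$, nor $s_1(\ux)=0$, nor any information at infinity. It closes Step 2 without the citation. It also recovers the paper's observation: on $(\bar x,+\infty)$, $\vert s_1\vert g_1+\vert s_2\vert g_2=0$ forces $g_1=g_2=0$.
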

\begin{proof}
From the FPK equation we can derive for all $x>\ux$,
$$
\frac{d}{dx}\lc s_1(x)g_1(x)+s_2(x)g_2(x)\rc =0.
$$
Moreover for $x>\bar{x}$, $s_1(x)g_1(x)+s_2(x)g_2(x)=0$, therefore we obtain \eqref{Eq: sjgj}.
\end{proof}

If $s_2(\ux)>0$, from \Cref{Prop:s2 barx} and the continuity of $s_2$, we know that there exists $\widehat{x}$
\beq\label{Eq: def hx}
\widehat{x}=\min_{x}\{x:\,\, s_2(x)=0\}.
\eeq
It is clear that $\widehat{x}\leq \bar{x}$ and $s_2(\widehat{x})=0$. Moreover, $s_2(x)>0$ for all $x<\widehat{x}$.
We recall that $\bar{x}$ in \Cref{Prop:s2 barx} is the last point where $s_2$ vanishes, i.e. $s_2(x)<0$ for all $x>\bar{x}$. 
\begin{prop}\label{Prop: FPK}
Assume $s_2(\ux)>0$ and let $\widehat{x}$ be defined by \eqref{Eq: def hx}. Then, there exists $\kappa_2$ such that the densities are given by 
\beq\label{Eq: g2}
{\text{For all}}\,\, x\in (\ux,\widehat{x}),\quad g_2(x)=\frac{\kappa_2}{s_2(x)}\exp\lc \int_{\ux}^x\lc -\frac{\ld_1}{s_1(z)}-\frac{\ld_2}{s_2(z)}\rc dz\rc,
\eeq
\beq\label{Eq: g1}
{\text{For all}}\,\, x\in (\ux,\widehat{x}),\quad g_1(x)=-\frac{\kappa_2}{s_1(x)}\exp\lc \int_{\ux}^x\lc -\frac{\ld_1}{s_1(z)}-\frac{\ld_2}{s_2(z)}\rc dz\rc.
\eeq
\end{prop}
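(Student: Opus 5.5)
On $(\ux,\widehat{x})$ we have $s_2>0$ by the definition of $\widehat{x}$ in \eqref{Eq: def hx}, and $s_1<0$ by \Cref{Prop: s1<0}. Both policies are continuous, so on every compact subinterval $[a,b]\subset(\ux,\widehat{x})$ the functions $1/s_1$ and $1/s_2$ are bounded and continuous. The plan is to use \Cref{Lemma: sjgj} to eliminate $g_1$ and reduce the system \eqref{Eq: FP m} to a scalar linear first-order ODE for $g_2$, which can then be integrated explicitly.

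\textbf{Step 1: reduction to one equation.} Since $s_2(\ux)>0$, \Cref{Lemma: sjgj} gives $s_1g_1+s_2g_2=0$ on $(\ux,+\infty)$. Set $q:=s_2g_2$; on $(\ux,\widehat{x})$ this yields
\[
g_2=\frac{q}{s_2},\qquad g_1=-\frac{q}{s_1}.
\]
The weak formulation \eqref{Eq: weak FPK} with $j=2$ and test functions supported in $(\ux,\widehat{x})$ gives, in the distributional sense,
\[
-\frac{d}{dx}(s_2g_2)+\lambda_1g_1-\lambda_2g_2=0 .
\]
Substituting, this becomes
\[
\frac{dq}{dx}=-\Big(\frac{\lambda_1}{s_1}+\frac{\lambda_2}{s_2}\Big)q .
\]
The coefficient is locally bounded, so $q$ is a distributional solution of a linear ODE with continuous coefficient. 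It is therefore absolutely continuous (indeed $C^1$) on $(\ux,\widehat{x})$ and equals
\[
q(x)=C\exp\Big(\int_{x_0}^x\Big(-\frac{\lambda_1}{s_1(z)}-\frac{\lambda_2}{s_2(z)}\Big)dz\Big)
\]
for any base point $x_0$ in the interval.

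\textbf{Step 2: moving the base point to $\ux$.} The formulas \eqref{Eq: g2}--\eqref{Eq: g1} integrate from $\ux$, so we need the integral near $\ux$ to make sense. At $\ux$ we have $s_1(\ux)=0$ (\Cref{Prop: s1<0}) and $s_2(\ux)>0$. The term $-\lambda_2/s_2$ is integrable near $\ux$. The term $-\lambda_1/s_1$ is positive and could be non-integrable, in which case the exponent would be $+\infty$ everywhere.

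I would control this using $s_1(\ux)=0$ together with \Cref{Cor: D^2v1}. Differentiate $s_1=rx+y_1-c_1$ using \eqref{c optimal}: $Ds_1$ contains the term $\psi\rho^{\psi}(Dv_1)^{-\psi-1}((1-\g)v_1)^{\frac{1-\g\psi}{1-\g}}D^2v_1$, exactly as in the computation \eqref{Eq: Ds2<0}. Since $D^2v_1\to-\infty$, we get $Ds_1\to+\infty$, hmm — but then $s_1$ would increase from $0$, which contradicts $s_1<0$. So the sign bookkeeping must be redone carefully. Either way, $|s_1(x)|$ behaves at least like $|x-\ux|^{\alpha}$ with $\alpha<1$, or like a function $\omega(x-\ux)$ with $\omega(t)/t\to\infty$.

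With such a bound, $\int_{\ux}1/|s_1|$ is finite, and we may take $x_0=\ux$ and $\kappa_2:=C\exp\big(\int_{x_0}^{\ux}\cdots\big)$. If this bound fails, I would instead read $\kappa_2$ as $\lim_{x\to\ux}s_2(x)g_2(x)$, i.e. the flux into the interior from the Dirac mass at $\ux$, and show the limit exists. One way is to test \eqref{Eq: weak FPK} with $\phi_j$ equal to $1$ near $\ux$: this identifies $\lim_{x\to\ux^+}s_2g_2$ with $\mu_2 s_2(\ux)$-type boundary terms ($\lambda_2\mu_2-\lambda_1\mu_1=\dots$), which then fixes $\kappa_2$.

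\textbf{Step 3: conclusion.} Dividing $q$ by $s_2$ and by $-s_1$ gives \eqref{Eq: g2} and \eqref{Eq: g1}. The main obstacle is Step 2, namely the integrability of $1/s_1$ at $\ux$, equivalently the existence and finiteness of the boundary flux. Step 1 is routine once \Cref{Lemma: sjgj} is available.
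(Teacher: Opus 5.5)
Your Step 1 is the same as the paper's argument. On $(\ux,\widehat{x})$ one has $s_2>0>s_1$, \Cref{Lemma: sjgj} eliminates $g_1$, and $q=s_2g_2$ solves the linear ODE \eqref{Eq: sg}.

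The gap is Step 2, which is the only nontrivial point, and your argument there does not work.

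\textbf{The sign.} In the formula for $Ds_1$ analogous to \eqref{Eq: Ds2<0}, the coefficient $\psi\rho^{\psi}(Dv_1)^{-\psi-1}((1-\g)v_1)^{\frac{1-\g\psi}{1-\g}}$ of $D^2v_1$ is positive. So \Cref{Cor: D^2v1} gives $Ds_1\to-\infty$, which is consistent with $s_1(\ux)=0$ and $s_1<0$; there is no contradiction.

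\textbf{The integrability claim.} $Ds_1\to-\infty$ only yields $\vert s_1(x)\vert/(x-\ux)\to+\infty$. Superlinear vanishing does not make $1/\vert s_1\vert$ integrable. For example, $\vert s_1(x)\vert=(x-\ux)\log\frac{1}{x-\ux}$ has derivative tending to $+\infty$, yet $\int_{\ux}^{\ux+1/2}\frac{dx}{\vert s_1(x)\vert}=+\infty$. So the second branch of your ``either way'' does not give the bound, and the first branch (a power lower bound with exponent $<1$) is never proved.

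\textbf{The fallback.} The flux identity $\lim_{x\to\ux^+}s_2g_2=\lambda_1\mu_1$ is correct; it is how the paper later identifies $\kappa_2$, using $\mu_2=0$ and $s_1(\ux)=0$. It does not repair the gap. If $1/s_1$ were not integrable, every solution $C\exp(\int_{x_0}^x\cdots)$ would tend to $0$ at $\ux$, and the exponent in \eqref{Eq: g2} with lower limit $\ux$ would be $+\infty$. The formula as stated genuinely requires integrability, unless you prove $\mu_1>0$ independently, which you do not.

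\textbf{What is missing.} You need a rate, namely a lower bound $\vert s_1(x)\vert\ge c\sqrt{x-\ux}$. The paper takes it from Appendix~\ref{Expansion ux}, where $s_1(x)\sim -C\sqrt{x-\ux}$ is derived. You can also get it directly from results you already invoke:
\begin{itemize}
\item Pass to the limit in the differentiated HJB equation \eqref{Eq: Euler ux}, and use \eqref{Eq: Hv ux} and \Cref{Cor: Dv1 Dv2}; this is one place where $s_2(\ux)>0$ enters. Then $L:=\lim_{x\to\ux}s_1(x)D^2v_1(x)$ exists and lies in $(0,+\infty)$. The key fact is that it is finite, not merely positive.
\item Write $Ds_1=r+K(x)D^2v_1(x)-M(x)$, with $K(x)\to K_0>0$ and $M$ bounded near $\ux$. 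This follows from continuity of $v_1$ and $Dv_1$ up to $\ux$ and from $Dv_1(\ux)>0$.
\item Then $D(s_1^2)=2s_1Ds_1\to 2K_0L>0$, so $s_1(x)^2\ge K_0L\,(x-\ux)$ near $\ux$.
\end{itemize}
Hence $1/\vert s_1\vert=O((x-\ux)^{-1/2})$ is integrable, the base point can be moved to $\ux$, and $\kappa_2=\lim_{x\to\ux}s_2(x)g_2(x)$.
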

\begin{proof}
For all $x\in (\ux,\widehat{x})$, $s_2(x)>0$ and $s_1(x)<0$, we can write
\beq\label{Eq: sg}
\frac{d}{dx}\lc s_2(x)g_2(x)\rc=\lc -\frac{\ld_1}{s_1(x)}-\frac{\ld_2}{s_2(x)}\rc \lc s_2(x)g_2(x)\rc.
\eeq
From $s_2(\ux)>0$ and $s_1(x)= O(\sqrt{x-\underline x})$ near $\ux$ (see Appendix \ref{Expansion ux}), we infer $-\frac {\lambda_1} {s_1(x)}-\frac {\lambda_2} {s_2(x)}$ is integrable in a neighborhood of $\ux$, which allows us to integrate (4.9) in $[\underline x, x)$  for any $x\in [\underline x, \widehat{x})$.  We obtain that there exists $\kappa_2>0$ such that for all $x\in  [\underline x, \widehat{x})$,
$$g_2(x) =\frac{\kappa_2} {s_2(x)}\exp\lc{\int_{\ux}^x-\frac {\lambda_1} {s_1(z)} -\frac {\lambda_2} {s_2(z)}dz}\rc.$$
It is clear that $\kappa_2=g_2(\underline x) s_2(\underline x)$.  We then deduce \cref{Eq: g1} in $( \underline x,\widehat{x})$ from \cref{Eq: sjgj}.
\end{proof}

\begin{prop}\label{Prop:s2 hatx}
Assume $s_2(\ux)>0$ and let $\widehat{x}$ be defined by \eqref{Eq: def hx}. Then, $g_j(x)=0$ for all $x> \widehat{x}$. 
\end{prop}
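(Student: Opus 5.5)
We must show $g_j(x)=0$ for all $x>\widehat{x}$, where $\widehat{x}$ is the first zero of $s_2$. The idea is that at $\widehat{x}$ the drift of both populations is nonpositive: $s_1(\widehat{x})<0$ by \Cref{Prop: s1<0}, and $s_2(\widehat{x})=0$. Hence no mass can be transported from $[\ux,\widehat{x}]$ into $(\widehat{x},+\infty)$. Mass living to the right of $\widehat{x}$ must therefore be self-sustaining, and the flux identity \eqref{Eq: sjgj} should rule this out.

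\textbf{Step 1: zero net flux to the right of $\widehat{x}$.} Take nonnegative test functions $\phi_1=\phi_2=\phi$ in \eqref{Eq: weak FPK}, with $\phi$ nondecreasing, $\phi=0$ on $[\ux,\widehat{x}]$, $\phi=1$ on $[\widehat{x}+\eta,\infty)$, cut off far beyond $\bar{x}$ (recall $g_j=0$ beyond $\bar{x}$). Summing the two equations $j=1,2$ makes the $\lambda$ terms cancel, which gives
$$\int_{x>\ux}\bigl(s_1g_1+s_2g_2\bigr)D\phi\,dx=0.$$
This is consistent with \Cref{Lemma: sjgj}, so it gives no information by itself. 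The real content must come from using a single equation, $j=1$ or $j=2$, separately.

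\textbf{Step 2: the equation for $m_1$ with the same $\phi$.} With $j=1$ we get
$$\lambda_1\int\phi\,dm_1-\lambda_2\int\phi\,dm_2=\int s_1g_1D\phi\,dx\le 0,$$
since $s_1<0$ on $(\ux,\infty)$. Letting $\eta\to0$ gives $\lambda_1 m_1((\widehat{x},\infty))\le\lambda_2 m_2((\widehat{x},\infty))$ plus a boundary term $s_1(\widehat{x})g_1(\widehat{x}^+)$. For $j=2$, the right side is $\int s_2g_2D\phi$. Here I plan to use \eqref{Eq: sjgj}, $s_2g_2=-s_1g_1\ge0$, to write it as $-\int s_1g_1D\phi\ge0$. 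Combining the two relations should force the tail masses and the boundary flux $-s_1(\widehat{x})g_1(\widehat{x})$ to vanish simultaneously.

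\textbf{Step 3: boundary flux at $\widehat{x}$ and the tail.} Continuity of $s_2g_2$ (as a flux) at $\widehat{x}$, with $s_2(\widehat{x})=0$, suggests that $s_2g_2\to0$, and hence $s_1g_1\to0$, at $\widehat{x}$ from the right. Since $s_1(\widehat{x})<0$, this gives $g_1(\widehat{x}^+)=0$. On $(\widehat{x},\infty)$, \eqref{Eq: sjgj} gives $g_1=-s_2g_2/s_1$. Substituting this into the $j=1$ equation yields the linear ODE $\frac{d}{dx}(s_1g_1)=\lambda_1g_1-\lambda_2g_2$ with zero data at $\widehat{x}$. Equivalently, $s_1g_1=:F$ satisfies $F'=(\lambda_1/s_1+\lambda_2/s_2)F$ away from zeros of $s_2$. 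Gronwall applied on intervals where $s_2\ne0$ gives $F\equiv0$, hence $g_1=0$; then \eqref{Eq: sjgj} gives $g_2=0$ wherever $s_2\ne0$. At isolated zeros of $s_2$, $g_2$ could a priori carry a Dirac mass. This should be excluded by the $j=1$ equation: with $g_1=0$ it forces $\lambda_2 m_2=0$ locally.

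\textbf{Main obstacle.} The delicate step is the behaviour at $\widehat{x}$ and at the other zeros of $s_2$, where $1/s_2$ is not integrable. There the ODE argument degenerates, and one must justify that no flux enters from the left and that no concentration occurs. I would first try to handle this with the weak formulation and test functions localized near these zeros, rather than with the explicit formulas. If that fails, the fallback is the probabilistic argument already invoked for $\bar{x}$: trajectories starting in $[\ux,\widehat{x}]$ never exceed $\widehat{x}$, since both drifts are $\le0$ at $\widehat{x}$. Combined with positivity of the density from \Cref{Prop: FPK} on $(\ux,\widehat{x})$ and uniqueness of the invariant measure, this shows that the invariant measure is supported in $[\ux,\widehat{x}]$.
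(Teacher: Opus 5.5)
There is a genuine gap. Steps 1--2 carry no information. Once \eqref{Eq: sjgj} holds, the $j=2$ identity is exactly the negative of the $j=1$ identity, so ``combining'' them leaves a single relation between $\lambda_1 m_1((\widehat x,\infty))-\lambda_2 m_2((\widehat x,\infty))$ and a boundary flux, and nothing is forced to vanish. Everything therefore rests on Step 3, which does not work as written.

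First, your ODE has the wrong sign. From \eqref{eq:FP}, $(s_1g_1)'=\lambda_2g_2-\lambda_1g_1$, so $F=s_1g_1$ satisfies $F'=-(\lambda_1/s_1+\lambda_2/s_2)F$, which is \eqref{Eq: sg}.

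Second, and more seriously, Gronwall from $F=0$ at a zero $a$ of $s_2$ is not available. $s_2$ is locally Lipschitz (since $v_2\in W^{2,\infty}_{loc}$, \Cref{Prop:v W2}), so $1/s_2$ is not integrable at $a$, and the linear ODE has nontrivial solutions vanishing at $a$. With your sign and $s_2\approx\alpha(x-a)>0$ to the right of $a$, you get $F\approx C(x-a)^{\lambda_2/\alpha}$. That gives a nonnegative, integrable $g_2\approx C(x-a)^{\lambda_2/\alpha-1}$, so your method cannot exclude mass exactly where it matters. With the correct sign, the same degeneracy appears on intervals where $s_2<0$.

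Third, the datum $F(\widehat x)=0$ is only heuristic: ``$s_2(\widehat x)=0$'' gives $s_2g_2\to0$ only if $g_2$ stays bounded, which is the point in question.

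Your probabilistic fallback does not close the gap either. Uniqueness of the invariant measure is not available at this stage; in the paper it is deduced from this very proposition, in the proof of \Cref{Prop: Kr}. Invariance of $[\ux,\widehat x]$ alone does not rule out a second invariant measure charging $(\widehat x,\infty)$. A probabilistic proof would need the opposite direction: transience of $(\widehat x,\infty)$, i.e.\ every trajectory enters $[\ux,\widehat x)$ almost surely, which uses $s_1<0$. It would also need an identification of weak solutions of \eqref{Eq: weak FPK} with invariant measures of the process.

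The missing idea, which is the paper's argument, is to split according to the sign of $s_2$ and to use integrability of $g_2$ instead of boundary data.

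\textbf{Where $s_2<0$.} No ODE is needed: $s_1g_1\le 0$ and $s_2g_2\le0$ sum to zero, so $g_1=g_2=0$.

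\textbf{Where $s_2>0$.} On each maximal interval $(a_k,b_k)$ of $\{s_2>0\}$ to the right of $\widehat x$, integrate \eqref{Eq: sg} from an interior point $\xi$:
\[
g_2(x)=\frac{C_k}{s_2(x)}\exp\Bigl(\int_\xi^x\Bigl(-\frac{\lambda_1}{s_1(z)}-\frac{\lambda_2}{s_2(z)}\Bigr)dz\Bigr),\qquad C_k=s_2(\xi)g_2(\xi)\ge 0.
\]
For $x\in(a_k,\xi)$, the $\lambda_2$-part of the exponent equals $\int_x^\xi \lambda_2/s_2\,dz\ge0$. The $\lambda_1$-part is bounded, because $s_1$ is bounded away from $0$ on $[a_k,\xi]$. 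Hence $g_2(x)\ge c\,C_k/s_2(x)\ge c' C_k/(x-a_k)$, which is not integrable at $a_k$. So $C_k=0$, and then $g_1=0$ by \eqref{Eq: sjgj}.

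\textbf{Where $s_2=0$.} On the interior of $\{s_2=0\}$, \eqref{Eq: sjgj} gives $g_1=0$, and then \eqref{eq:FP} gives $g_2=0$.

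So with the correct sign, nonzero solutions blow up at the left endpoint of every positivity interval of $s_2$. That non-integrability is precisely what your ``main obstacle'' paragraph leaves unresolved.
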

\begin{proof} The densities satisfy $g_j(x)=0$ on the set $\{x:\,\widehat{x}<x<\bar{x},\,s_2(x)<0\}$. This follows from \cref{Prop: s1<0} and \Cref{Lemma: sjgj}.\par
 Next, we show the densities $g_j(x)=0$ on the set $\{x:\,\widehat{x}<x<\bar{x},\,s_2(x)>0\}$. This is an open set, i.e. a countable or finite union of disjoint intervals $I_k=(a_k,b_k)$. We know that, if $a_k\neq  \bar{x}$ then
\beq\label{Eq: s Ds=0}
s_2(a_k)=s_2(b_k)=0,\quad Ds_2(a_k)\geq 0.
\eeq
{If $s_2(a_k)=Ds_2(a_k)=0$, then $s_2(x)=o(x-a_k)$. Therefore, \cref{Eq: s Ds=0} implies $s_2(x)=O(x-a_k)$ as $x\to a_{k,+}$.} We now choose $\xi\in I_k$ and denote $C_k=s_2(\xi)g_2(\xi)$. If $g_2(\xi)>0$, then by using \cref{Eq: sg}, we obtain
 \beq\label{Eq: g Ik}
 g_2(x)=\frac{C_k}{s_2(x)}\exp\lc \int_{\xi}^x\lc -\frac{\ld_1}{s_1(z)}-\frac{\ld_2}{s_2(z)}\rc dz\rc,\,\,g_1(x)=-\frac{C_k}{s_1(x)}\exp\lc \int_{\xi}^x\lc -\frac{\ld_1}{s_1(z)}-\frac{\ld_2}{s_2(z)}\rc dz\rc.
 \eeq
{We note that $\xi>a_k$, $s_1(x)$ is bounded away from $0$ if $x>\widehat{x}$. We thus infer that $\exp\lc \int_{\xi}^x\lc -\frac{\ld_1}{s_1(z)}\rc dz\rc$ is bounded and $\exp\lc \int_{\xi}^x\lc -\frac{\ld_2}{s_2(z)}\rc dz\rc\geq 0$ for all $x$ in a right neighborhood of $a_k$. We then deduce from $s_2(x)=O(x-a_k)$ that the density $g_2$ proposed in \cref{Eq: g Ik} is not integrable near $a_{k,+}$.} Therefore $C_k=0$ and $g_2(x)=0$ for all $x\in I_k$. Finally, \cref{Eq: sjgj} and $s_1(x)<0$ give $g_1(x)=0$ for all $x\in I_k$.\par
Finally, we consider $\check{x}$, such that $s_2(x)=0$ in a neighborhood of $\check{x}$. We deduce from \cref{Eq: sjgj}, $s_2(\check{x})=0$ and $s_1(\check{x})<0$ that $g_1(\check{x})=0$. From \cref{eq:FP}, we obtain $g_2(\check{x})=0$. 
\end{proof}
\begin{cor}
Assume $s_2(\ux)>0$. We have 
$$
\kappa_2=\ld_1\mu_1. 
$$
\end{cor}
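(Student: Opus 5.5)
We will test the weak formulation \eqref{Eq: weak FPK} with functions concentrated near the borrowing limit. Since $\kappa_2=s_2(\ux)g_2(\ux^+)$ by the proof of \Cref{Prop: FPK}, the claim says that the probability flux leaving $\ux$ through the productive agents equals the rate at which unproductive agents stuck at $\ux$ switch to income $y_2$. The key structural facts are $s_1(\ux)=0$ (\Cref{Prop: s1<0}), $s_2(\ux)>0$, and the limit $s_2(x)g_2(x)\to\kappa_2$ as $x\to\ux^+$ from \eqref{Eq: g2}. Together with \cref{Eq: sjgj}, this also gives $s_1(x)g_1(x)\to-\kappa_2$.

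First we show $\mu_2=0$. Take $j=2$ and a test function $\phi_2\in C^1_c([\ux,+\infty))$ with $\phi_2(\ux)=0$, $D\phi_2(\ux)=1$, supported in $[\ux,\ux+\eps)$ with $|\phi_2|\le\eps$. In the rewritten weak formulation, the boundary term on the right is $\mu_2s_2(\ux)D\phi_2(\ux)=\mu_2 s_2(\ux)$. The left side is $O(\eps)$. The interior term $\int s_2g_2D\phi_2$ can be kept $O(\eps)$ by choosing $\phi_2(x)=\eps\,\chi((x-\ux)/\eps)$ with $\chi(0)=0$, $\chi'(0)=1$ and $\chi$ compactly supported, because $s_2g_2$ is bounded near $\ux$. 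Letting $\eps\to0$ gives $\mu_2s_2(\ux)=0$, hence $\mu_2=0$. This is the expected fact that agents with positive drift cannot accumulate at $\ux$.

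Next we use $j=1$ with $\phi_1=\eta_\eps$, where $\eta_\eps(x)=\eta((x-\ux)/\eps)$, $\eta\equiv1$ near $0$, $\eta$ is supported in $[0,1)$, and $0\le\eta\le1$. The weak formulation gives
\[
\int_{x>\ux}(\ld_1g_1-\ld_2g_2)\eta_\eps\,dx+(\ld_1\mu_1-\ld_2\mu_2)=\int_{x>\ux}s_1g_1D\eta_\eps\,dx+\mu_1s_1(\ux)D\eta_\eps(\ux).
\]
The last term vanishes because $s_1(\ux)=0$. The first integral tends to $0$ by integrability of the densities. In the remaining integral, substitute $x=\ux+\eps t$, so that $D\eta_\eps\,dx=\eta'(t)\,dt$, and use $s_1g_1\to-\kappa_2$. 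The integral then tends to $-\kappa_2\int_0^1\eta'(t)\,dt=\kappa_2$. With $\mu_2=0$ this yields $\ld_1\mu_1=\kappa_2$.

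The main obstacle is justifying $s_1g_1\to-\kappa_2$ and the dominated convergence in the last step. By \cref{Eq: sjgj}, $s_1g_1=-s_2g_2$. By \eqref{Eq: g2}, $s_2g_2=\kappa_2\exp(\int_{\ux}^x(\cdots))$, which tends to $\kappa_2$ as $x\to\ux^+$. This uses the integrability near $\ux$ of $-\ld_1/s_1-\ld_2/s_2$, which the paper establishes through the expansion $s_1\sim\sqrt{x-\ux}$ in Appendix \ref{Expansion ux}. It also gives boundedness of $s_jg_j$ near $\ux$, so the limit passes inside the integral.
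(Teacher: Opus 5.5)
Your proof is correct, but it takes a different route from the paper's.

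The paper argues globally. It combines the total-mass identity \eqref{Eq: G1 G2} with the integral of $\frac{d}{dx}(s_2g_2)=\ld_1g_1-\ld_2g_2$ over $(\ux,\widehat{x}+1)$. It then uses \Cref{Prop:s2 hatx} to make the flux at the right endpoint vanish, and identifies the flux at $\ux$ as $\kappa_2$ through \eqref{Eq: g2}. The fact $\mu_2=0$ is only asserted there.

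You instead localize the weak formulation at $\ux$ with rescaled test functions. This does two things: it actually proves $\mu_2=0$, and it reads off the boundary balance $\ld_1\mu_1-\ld_2\mu_2=-\lim_{x\to\ux^+}s_1g_1$ directly. Beyond the already established formulas \eqref{Eq: g2} and \eqref{Eq: sjgj}, your argument uses only the behaviour of the measure in a right neighbourhood of $\ux$. In particular it needs neither \Cref{Prop:s2 hatx} nor the total-mass identity. The paper's version is shorter once those results are in hand.

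Two small remarks:
\begin{itemize}
\item Testing the $j=2$ equation with $\eta_\eps$ gives $\ld_1\mu_1-\ld_2\mu_2=\lim_{x\to\ux^+}s_2g_2=\kappa_2$ straight from \eqref{Eq: g2}. So the detour through $s_1g_1$ via \eqref{Eq: sjgj} is unnecessary.
\item The expansion in Appendix~\ref{Expansion ux} is $s_1(x)\sim -C\sqrt{x-\ux}$ with $C>0$, not $+\sqrt{x-\ux}$. What the integrability of $\ld_1/s_1$ near $\ux$ really uses is the resulting lower bound $|s_1(x)|\ge c\sqrt{x-\ux}$.
\end{itemize}
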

\begin{proof}
Note that $\mu_2=0$ because $s_2(\ux)>0$. From \eqref{Eq: G1 G2} and $\mu_2=0$, we deduce
$$
-\ld_1\mu_1=\ld_1\int_{\ux}^{\widehat{x}}g_1(x)dx+\ld_2\int_{\ux}^{\widehat{x}}g_2(x)dx.
$$
By integrating the FPK equation for $g_2$, we then obtain
$$
-\ld_1\mu_1=\int_{\ux}^{\widehat{x}+1}\frac{d}{dx}\lc s_2(z)g_2(z)\rc dz= s_2(\widehat{x}+1)g_2(\widehat{x}+1)-\lim_{x\to \ux}s_2(x)g_2(x).
$$
From \cref{Prop:s2 hatx}, $s_2(\widehat{x}+1)g_2(\widehat{x}+1)=0$. The result follows from \eqref{Eq: g2}.
\end{proof}

\begin{prop}\label{Prop: Kr}
 There exists $\hat{r}>0$, for all $r$ such that $\hat{r}\leq r< \rho$, the aggregate wealth $\mathcal{K}[r]$ is positive and depends continuously on $r$. 
\end{prop}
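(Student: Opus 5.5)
Choose $\hat r$ so that for $r\in[\hat r,\rho)$ we have $s_2(\ux)>0$; the explicit formulas of \Cref{Prop: FPK} then apply. Condition \eqref{rho-r2} is strict and continuous in $r$. At $r=\rho$ its left-hand side is $\lambda_2\bigl((\rho\ux+y_2)^{-1/\p}-(\rho\ux+y_1)^{-1/\p}\bigr)<0$, since $\rho\ux+y_j>0$ and $y_2>y_1$. Hence there is $\hat r_1<\rho$ such that \eqref{rho-r2} holds for all $r\in[\hat r_1,\rho]$, and \Cref{Prop:s2boundary 2} gives $s_2(\ux)>0$ there.

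\textbf{The density and uniqueness.} For such $r$, \Cref{Prop: FPK}, \Cref{Prop:s2 hatx} and the corollary $\kappa_2=\ld_1\mu_1$ show that the stationary measure is supported in $[\ux,\widehat x]$. Writing $E(x)=\exp\int_{\ux}^x\bigl(-\ld_1/s_1-\ld_2/s_2\bigr)dz$, we have $g_2=\ld_1\mu_1E/s_2$, $g_1=-\ld_1\mu_1E/s_1$, and $\mu_2=0$. The constant $\mu_1$ is fixed by the mass condition $\mu_1+\int g_1=\ld_2/(\ld_1+\ld_2)$. So $m^{(r)}$ is unique, and
\[
\mathcal K[r]=\mu_1\ux+\int_{\ux}^{\widehat x}x\,(g_1+g_2)\,dx .
\]

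\textbf{Positivity.} We show $\mathcal K[r]\to+\infty$ as $r\to\rho$. This is the blow-up announced in the introduction, proved in \Cref{Sec: MFG} through the nonexistence of an invariant measure at $r=\rho$; to avoid circularity I would prove it directly. Suppose $\mathcal K[r^\i]$ stays bounded along $r^\i\to\rho$. The measures $m^{(r^\i)}$ are then tight, and since $s_j^\i\to s_j^{(\rho)}$ locally uniformly (\Cref{Prop: stability r}), a limit point is an invariant measure for $r=\rho$. I expect that at $r=\rho$ the paper shows such a measure cannot exist because $s_2^{(\rho)}>0$ everywhere. If that is not available, one can argue from \Cref{Prop:s2 barx}: $\widehat x(r)\to\infty$, because the contradiction there degenerates when $\rho-r\to0$, and the mass spreads out. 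Once $\mathcal K\to+\infty$, we can shrink $\hat r\ge\hat r_1$ so that $\mathcal K[r]>0$ on $[\hat r,\rho)$.

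\textbf{Continuity.} This is the main obstacle. Fix $r^\i\to r\in[\hat r,\rho)$. By \Cref{Prop: stability r}, $s_j^\i\to s_j$ locally uniformly on $[\ux,R]$, so the $x$-derivative of $s_2$ at a zero cannot help. I would avoid passing to the limit in the explicit formulas, because $\widehat x$ may jump, $1/s_2$ is singular at $\widehat x$, and $1/s_1$ has a square-root singularity at $\ux$. Instead I would argue with weak solutions in three steps:
\begin{enumerate}
\item[(a)] \emph{Uniform support bound.} $\bar x(r^\i)\le \bar X$ uniformly for $r^\i$ in a compact subset of $[\hat r,\rho)$, by making the contradiction in \Cref{Prop:s2 barx} quantitative in $\hat x$ with constants continuous in $r$. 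This gives tightness and convergence of first moments.
\item[(b)] \emph{Passage to the limit.} Extract a subsequence $m^{(r^\i)}\rightharpoonup m$. Then pass to the limit in \eqref{Eq: weak FPK} using the uniform convergence of $s_j^\i$ on $[\ux,\bar X]$ and $\phi_j\in C^1_c$.
\item[(c)] \emph{Identification.} Uniqueness of the weak solution at $r$ identifies the limit. This uniqueness follows from the explicit structure: \Cref{Lemma: sjgj} and the integrability analysis show that any weak solution has this form, and the normalization fixes $\mu_1$.
\end{enumerate}
Then $\mathcal K[r^\i]=\int x\,dm^{(r^\i)}\to\mathcal K[r]$, since $x$ is bounded on the common support $[\ux,\bar X]$.
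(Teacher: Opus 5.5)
Your continuity argument is essentially the paper's. Both bound the supports of the $m^{(r^{(\iota)})}$ uniformly, extract a weakly convergent subsequence, pass to the limit in \eqref{Eq: weak FPK} using \Cref{Prop: stability r}, identify the limit by uniqueness, and conclude that first moments converge. The only difference is the support bound. The paper picks $M$ beyond $\widehat x$ with $s_2(M)<0$ at the limit rate, so local uniform convergence gives $s_2^{(\iota)}(M)<0$ and hence $\widehat x^{(\iota)}<M$ for large $\iota$. You instead make \Cref{Prop:s2 barx} quantitative, and that also works: its right-hand side depends only on $(r,\hat x)$ and tends to $0$ as $\hat x\to\infty$, uniformly for $r$ in compacts of $(0,\rho)$. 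Strictly, what you bound is $\widehat x$ rather than $\bar x$, but that is all you need. Your first step, deriving $s_2(\ux)>0$ near $\rho$ from \eqref{rho-r2}, supplies something the paper's proof takes for granted.

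The gap is in the positivity step. The paper's proof of this proposition only treats continuity; positivity near $\rho$ comes from the argument of \Cref{Prop: K blows}. Citing it is not circular, since it rests on \Cref{Prop: stability r}, \Cref{Prop: sj asmp} and \Cref{Prop: no sol FPK}, none of which uses the present result. Your tightness and limit argument is the same as that corollary's, but your reason for non-existence of an invariant measure at $r=\rho$ is wrong. The fact that $s_2^{(\rho)}>0$ everywhere does not rule one out, because $s_1<0$ and agents keep switching income states, so what matters is the balance of the two drifts at infinity.

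By \Cref{Prop: sj asmp}, $s_1\to\lambda_1(y_1-y_2)/(\rho+\lambda_1+\lambda_2)$ and $s_2\to\lambda_2(y_2-y_1)/(\rho+\lambda_1+\lambda_2)$. Hence the averaged drift $\lambda_2 s_1+\lambda_1 s_2$ vanishes at leading order. Integrability of the densities $g_j\propto E/\vert s_j\vert$ is therefore decided by the $O(x^{-1})$ corrections. The paper finds $-\lambda_1/s_1-\lambda_2/s_2=\rho\gamma(1+\psi)/(\rho x+y_1)+o(x^{-1})$, so $E$ grows like $x^{\gamma(1+\psi)+o(1)}$. Had that $x^{-1}$ coefficient been below $-1$, the same formulas would give an integrable density even though $s_2>0$ everywhere.

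Your fallback fails too. It is true that $\widehat x(r)\to\infty$, which follows from $s_2^{(\rho)}>0$ and local uniform convergence. But that only says the supports grow, and it is compatible with $m^{(r)}$ converging to a probability measure with finite first moment, which is precisely what must be excluded. The missing ingredient is the second-order expansion of the savings at $r=\rho$, or simply a citation of \Cref{Prop: no sol FPK}.
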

\begin{proof}
Let $r^\i \to r$ with $r< \rho$, we have $s^\i _1(x)<0$ and $s^\i_2(\ux)>0$. From \Cref{Prop: stability r} we know that $s^\i_j$ converges to $s_j$ locally uniformly. For any $x> \widehat{x}$ such that $s_2(x)<0$, we can obtain $s^\i_j(x)<0$ with $\iota$ sufficiently large. For any $M>\widehat{x}$, for $\iota$ sufficiently large, the measure $m^\i_j$ is supported in $[\ux,M]$. Hence we can extract a subsequence $(m^\i_1,m^\i_2)$ which converges weakly in the sense of measure and {the limit $(\tilde{m}_1,\tilde{m}_2)$ is supported in $[\ux,\widehat{x}]$.} By passing to the limit in the weak form of FPK equation \cref{Eq: weak FPK}, we obtain that for all test functions $(\phi_1,\phi_2)\in \left(C_c^1([\ux,M))\right)^2$,
\begin{equation}\label{Eq: weak sol FPK}
\int_{x\geq \ux}\lambda_j \phi_j(x)d\tilde{m}_j-\int_{x\geq \ux}\lambda_{\bar \jmath}\phi_j(x)d\tilde{m}_{\bar \jmath}=\int_{x\geq \ux}s_j(x)D\phi_j(x)d\tilde{m}_j.
\end{equation}
The solution of \cref{Eq: weak sol FPK} is unique, namely given $(m_1,m_2)$ defined by \cref{Eq: def m}. Therefore, the whole sequence $(m^\i_1,m^\i_2)$ converges weakly to $(m_1,m_2)$. Recalling \cref{Huggett}, we see that 
$$
\mathcal{K}[r^\i]=\sum_{j\in \{1,2\}}\int_{\ux}^Mxdm^\i_j,
$$
and conclude that $\mathcal{K}[r^\i]$ converges to $\mathcal{K}[r]$.
\end{proof}
With similar arguments as above, we can study the situation when $s_2(\underline{x})= 0$:
\begin{prop}
If $s_2(\underline x)=0$, then $m_1= \frac {\lambda_2}{\lambda_1+\lambda_2} \delta_{\underline x}$ and $m_2= \frac {\lambda_1}{\lambda_1+\lambda_2} \delta_{\underline x}$.
\end{prop}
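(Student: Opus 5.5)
The plan is to show that the densities $g_1,g_2$ vanish identically on $(\ux,+\infty)$. Once that is done, the masses follow from the normalization \eqref{normal N}, i.e. from the lemma giving $\int dm_j=\lambda_{\bar\jmath}/(\lambda_j+\lambda_{\bar\jmath})$.

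First I would use the weak formulation \eqref{Eq: weak FPK} with test functions supported away from $\ux$. This recovers, in the distributional sense on $(\ux,+\infty)$, the identity $\frac{d}{dx}(s_1g_1+s_2g_2)=0$. So $s_1g_1+s_2g_2$ is constant on $(\ux,+\infty)$. The identity holds beyond $\bar x$: if $s_2$ has a last zero, as in \Cref{Prop:s2 barx}, then $g_j$ vanishes for large $x$ because both policies decumulate there. More robustly, integrability of $g_j$ together with boundedness of $s_j$ on compacts and the sign of the flux forces the constant to be zero. This gives \eqref{Eq: sjgj} as in \Cref{Lemma: sjgj}, without using $s_2(\ux)>0$.

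Next I would split $(\ux,+\infty)$ according to the sign of $s_2$, exactly as in \Cref{Prop:s2 hatx}.

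\textbf{Where $s_2<0$.} By \Cref{Prop: s1<0} we also have $s_1<0$, so \eqref{Eq: sjgj} with $g_j\ge 0$ gives $g_1=g_2=0$.

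\textbf{Where $s_2=0$ on an interval.} There $s_1g_1=0$ with $s_1<0$ gives $g_1=0$, and the equation \eqref{eq:FP} for $j=2$ then gives $\lambda_2g_2=\lambda_1g_1=0$.

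\textbf{Where $s_2>0$.} Consider a component $(a,b)$ of $\{s_2>0\}$. Since $s_2(\ux)=0$, every such component has $s_2(a)=0$, including a possible component with $a=\ux$. This is the key difference with the case $s_2(\ux)>0$: no component starts with positive inflow at the boundary. On $(a,b)$, solving \eqref{Eq: sg} gives $g_2=\frac{C}{s_2}\exp\bigl(\int_\xi^x(-\lambda_1/s_1-\lambda_2/s_2)\bigr)$. As $x\to a^+$ we have $s_2(x)=O(x-a)$, because $s_2$ is Lipschitz by \Cref{Prop:v W2} and the explicit formula for $c_2$. The factor $\exp(-\int_\xi^x\lambda_2/s_2)$ is then at least $1$, since $x<\xi$ and $s_2>0$. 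So the density is of order $C/(x-a)$ times a bounded-below factor, which is not integrable unless $C=0$.

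\textbf{Main obstacle.} This last step is where I expect the difficulty, when $a=\ux$. There $s_1(x)\to0$ as well, like $O(\sqrt{x-\ux})$ by the appendix, so $\exp(-\int_\xi^x\lambda_1/s_1)$ stays bounded and positive; I would check this carefully. I would also check that $s_2(x)=O(x-\ux)$ near $\ux$. This follows from the Lipschitz bound on $s_2$ via the locally bounded $D^2v_2$. Near $\ux$ I would use the fact that $D^2v_2$ cannot tend to $-\infty$ while $s_2>0$, which is the argument of \eqref{Eq: Ds2<0}. Together these give non-integrability, hence $C=0$, so $g_2=0$ on $(a,b)$ and then $g_1=0$ by \eqref{Eq: sjgj}.

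Finally, all the mass sits at $\ux$, so $m_j=\mu_j\delta_{\ux}$. Taking $\phi_j\equiv 1$ near $\ux$ in \eqref{Eq: weak FPK} gives $\lambda_1\mu_1=\lambda_2\mu_2$. Combined with $\mu_1+\mu_2=1$, this yields $\mu_1=\lambda_2/(\lambda_1+\lambda_2)$ and $\mu_2=\lambda_1/(\lambda_1+\lambda_2)$. The weak formulation is then satisfied, since $s_j(\ux)=0$ kills the right-hand side.
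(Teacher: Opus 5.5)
Your route (flux identity, sign analysis, explicit solution on components of $\{s_2>0\}$, non-integrability at their left endpoints, then $\lambda_1\mu_1=\lambda_2\mu_2$) is what the paper means by ``similar arguments as above''. It goes through for every component $(a,b)$ of $\{s_2>0\}$ with $a>\ux$, where $s_1$ is bounded away from $0$. The gap is the case you flagged yourself: a component $(\ux,b)$, i.e.\ $s_2>0$ on a right neighbourhood of $\ux$. Neither of the two estimates you need there is justified as written.

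\textbf{First estimate: the bound on $s_2$.} The fact that ``$D^2v_2$ cannot tend to $-\infty$'' only gives a sequence $x_n\to\ux$ along which $D^2v_2$ stays bounded. It does not give a Lipschitz bound on $s_2$ near $\ux$, and \Cref{Prop:v W2} is interior only. This part is easy to repair without second-order information. By \eqref{c optimal}, $c_2$ is nondecreasing on $[\ux,+\infty)$, because $Dv_2>0$ is nonincreasing, $v_2<0$ is increasing, and $\frac{1-\g\p}{1-\g}<0$. Hence $s_2(x)-s_2(a)\le r(x-a)$ for $x>a\ge\ux$. So $0<s_2(x)\le r(x-a)$ on every component, and $\int_a^\xi dx/s_2(x)=+\infty$, including when $a=\ux$.

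\textbf{Second estimate: the $\lambda_1$-factor.} This is the real issue. You need $\exp\bigl(-\lambda_1\int_x^\xi dz/\lvert s_1(z)\rvert\bigr)$ to stay bounded below as $x\to\ux$, i.e.\ $\int_{\ux}^\xi dz/\lvert s_1(z)\rvert<\infty$. That requires a lower bound $\lvert s_1(x)\rvert\ge c\sqrt{x-\ux}$, not the upper bound $O(\sqrt{x-\ux})$ you quote.

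The two-sided square-root law of Appendix~\ref{Expansion ux} rests on $\varkappa=\lim_{x\to\ux}s_1D^2v_1>0$. From \eqref{Eq: Hv ux} alone one only gets $\varkappa\ge(\rho-r)Dv_1(\ux)+\lambda_1\bigl(Dv_1(\ux)-Dv_2(\ux)\bigr)$. The paper controls the last term through $Dv_1(\ux)>Dv_2(\ux)$, i.e.\ \Cref{Cor: Dv1 Dv2}, whose hypothesis is $s_2(\ux)>0$ — the opposite of yours.

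This is not cosmetic. If $s_1\approx-\alpha_1(x-\ux)$ and $s_2\approx\alpha_2(x-\ux)$, your formula gives $g_1,g_2\propto(x-\ux)^{\lambda_1/\alpha_1-\lambda_2/\alpha_2-1}$. That is integrable when $\lambda_1/\alpha_1>\lambda_2/\alpha_2$, so non-integrability alone would not force $C=0$.

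A possible starting point:
\begin{itemize}
\item In this sub-case $\lim_{x\to\ux}s_2D^2v_2$ exists by the differentiated equation for $v_2$, and it is $\le 0$ by concavity.
\item It cannot be $<0$: that would force $D^2v_2\to-\infty$, hence $s_2<0$ near $\ux$ by the argument of \eqref{Eq: Ds2<0}. So the limit is $0$.
\item This gives $\bigl(\frac{\rho}{\theta}-r-h_2\bigr)Dv_2(\ux)=\lambda_2\bigl(Dv_1(\ux)-Dv_2(\ux)\bigr)$, with $h_2=H_v(\ux,y_2,v_2(\ux),Dv_2(\ux))<0$.
\item When $\frac{\rho}{\theta}-r-h_2\ge0$, this yields $Dv_1(\ux)\ge Dv_2(\ux)$, and then $\varkappa\ge(\rho-r)Dv_1(\ux)>0$.
\end{itemize}
In general the sign of $\frac{\rho}{\theta}-r-h_2$ still has to be settled.

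\textbf{Two smaller points.}
\begin{itemize}
\item The flux constant is most cleanly shown to vanish by summing \eqref{Eq: weak FPK} over $j$ with $\phi_1=\phi_2=\phi$, allowing $\phi(\ux)\neq0$. The Dirac terms drop because $s_1(\ux)=s_2(\ux)=0$, and this gives $s_1g_1+s_2g_2=0$ a.e.\ directly.
\item The part of $\{s_2=0\}$ with empty interior is not covered by your case split. It is handled by noting that $s_2g_2\in W^{1,1}_{\rm loc}$ vanishes there, so its derivative $\lambda_1g_1-\lambda_2g_2$ vanishes a.e.\ there, while $g_1=0$ there because $s_1<0$.
\end{itemize}
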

\section{Existence of solution to the Mean Field Game system}\label{Sec: MFG}
\subsection{Nonexistence of invariant measures when $r=\rho$}
{In \cref{Sec: FPK} we have shown that the aggregate wealth depends continuously on the interest rate $r$. We now show that the aggregate wealth blows up as $r\to \rho$. For this purpose, we consider the limiting case $r=\rho$.  For $b$ defined in \cref{tab1} and $b=\rho$, the sub and supersolutions given in \cref{Prop:sub-super} become:}
\beq\label{Eq: sub- super r=rho}
\begin{aligned}
(\check{\su}_1,\check{\su}_1)=\lc \frac{(\rho x+y_1)^{1-\gamma}}{1-\gamma},\frac{(\rho x+y_1)^{1-\gamma}}{1-\gamma}\rc,\quad
(\check{\sv}_2,\check{\sv}_2)=\lc \frac{(\rho x+y_2)^{1-\gamma}}{1-\gamma},\frac{(\rho x+y_2)^{1-\gamma}}{1-\gamma}\rc.
\end{aligned}
\eeq
\begin{prop}
If $r=\rho$, then $s_2(x)>0$ for all $x\geq \ux$. 
\end{prop}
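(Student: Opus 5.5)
We argue by contradiction and follow the same pattern as in the proofs of \Cref{Prop: s1<0} and \Cref{Prop:s2 barx}. Suppose there is $\hx\geq \ux$ with $s_2(\hx)\leq 0$, i.e. $c_2(\hx)\geq \rho\hx+y_2$.

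\textbf{Step 1: the differentiated HJB equation for $v_2$.} We first write the differentiated HJB equation \eqref{Eq: Dv2} at $\hx$ with $r=\rho$. The term $(\rho-r)Dv_2$ then disappears, and we get
\[
\lambda_2\lc Dv_2(\hx)-Dv_1(\hx)\rc=s_2(\hx)D^2v_2(\hx)+\lc 1-\tfrac{1}{\theta}\rc\rho Dv_2(\hx)+H_v(\hx,y_2,v_2(\hx),Dv_2(\hx))Dv_2(\hx).
\]
When $s_2(\hx)=0$, or when $\hx=\ux$, we interpret $s_2D^2v_2$ as a limit, as in \Cref{Prop: s1<0}, using \Cref{Prop:v W2}. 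Since $s_2(\hx)\le 0$ and $D^2v_2\le 0$, we have $s_2(\hx)D^2v_2(\hx)\geq 0$ (in the limiting sense, a liminf along suitable points).

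\textbf{Step 2: a lower bound on $H_v$.} From $c_2(\hx)\ge \rho\hx+y_2$ and \eqref{c optimal} we get
\[
Dv_2(\hx)^{1-\p}\le \rho^{1-\p}\big((1-\g)v_2\big)^{\cdots}(\rho\hx+y_2)^{1-1/\p}.
\]
Inserting this into \eqref{Eq:Hv<0} gives
\[
H_v\geq -\rho\lc 1-\tfrac1\theta\rc\big((1-\g)v_2(\hx)\big)^{\frac{\p^{-1}-1}{1-\g}}(\rho\hx+y_2)^{1-1/\p}.
\]
The comparison $v_2\le \check{\sv}_2=(\rho x+y_2)^{1-\g}/(1-\g)$ from \eqref{Eq: sub- super r=rho} makes the factor $\big((1-\g)v_2\big)^{\cdots}(\rho\hx+y_2)^{1-1/\p}\le 1$. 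This is the computation already done in \Cref{Prop:s2boundary 2}. It yields $\lc 1-\frac1\theta\rc\rho+H_v\ge 0$, and hence $\lambda_2(Dv_2(\hx)-Dv_1(\hx))\ge 0$, i.e. $Dv_2(\hx)\ge Dv_1(\hx)$.

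\textbf{Step 3: the consumption ratio.} By \Cref{Prop: s1<0}, $c_1(\hx)\ge \rho\hx+y_1$, with equality only at $\ux$. Using \eqref{c optimal} together with $Dv_2\ge Dv_1$,
\[
\frac{c_2}{c_1}\le \lc\frac{(1-\g)v_2}{(1-\g)v_1}\rc^{\frac{1-\g\p}{1-\g}}.
\]
The exponent is negative and $(1-\g)v_2<(1-\g)v_1$ by \Cref{Prop: v2 v1}. Applying the sub- and supersolutions of \eqref{Eq: sub- super r=rho} (now $b=\rho=r$), the right-hand side is bounded by $\big(\frac{\rho\hx+y_2}{\rho\hx+y_1}\big)^{1-\g\p}$.

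On the other hand, $c_2/c_1\ge(\rho\hx+y_2)/c_1$. Since $c_1\ge \rho\hx+y_1$ points the wrong way for this bound, the comparison has to be set up as in \Cref{Prop: s1<0} but with the roles of the indices swapped. We would work with $c_1$ in the form $c_1=\rho\hx+y_1-s_1$ and use the differentiated equation for $v_1$, \eqref{Eq: Dv1} with $r=\rho$. Because $s_1\le0$, the same lower bound on $H_v$ (with $v_1\le v_2\le\check{\sv}_2$) gives $\lambda_1(Dv_1-Dv_2)\ge s_1D^2v_1+\dots$, which is consistent with Step 2 and forces $Dv_1=Dv_2$. Then $c_2/c_1=\big(v_2/v_1\big)^{\frac{1-\g\p}{1-\g}}>1$. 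Combining with Step 2 and with the strict inequality $H_v(v_2)>H_v(v_1)$ from \eqref{Hvv} produces a strict sign contradiction in the subtracted Euler equations, in the spirit of \Cref{Cor: Dv1 Dv2}.

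\textbf{Main obstacle.} The hard part is this last step: obtaining a genuinely strict inequality when $r=\rho$, since the slack $(\rho-r)Dv$ that was exploited earlier is gone. We expect to get strictness from the strict concavity of $H$ in $v$ (\eqref{Hvv}) together with $v_1<v_2$, by subtracting the two differentiated equations as in \Cref{Cor: Dv1 Dv2}. The boundary case $\hx=\ux$ would be handled by the limiting argument of \Cref{Prop:s2boundary 2}.
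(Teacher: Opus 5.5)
Your Steps 1--2 are correct and match the first half of the paper's argument. From $s_2(\hx)\le 0$ and $v_2\le\check{\sv}_2$ you get $\lc 1-\frac1\theta\rc\rho+H_v(\hx,y_2,v_2(\hx),Dv_2(\hx))\ge 0$, hence $Dv_2(\hx)\ge Dv_1(\hx)$.

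The gap is in Step 3, which never produces a contradiction. The consumption-ratio route of \Cref{Prop: s1<0} cannot work here, since $c_2\ge\rho\hx+y_2$ and $c_1>\rho\hx+y_1$ point the same way. Your claim that ``the same lower bound on $H_v$'' holds for index $1$ via $v_1\le v_2\le\check{\sv}_2$ is also not true as justified. Inserting $c_1>\rho\hx+y_1$ and $(1-\g)v_1>(\rho\hx+y_2)^{1-\g}$ into \eqref{Eq:Hv<0} only yields
\[
H_v(\hx,y_1,v_1(\hx),Dv_1(\hx))>-\rho\lc 1-\tfrac1\theta\rc\lc\frac{\rho\hx+y_2}{\rho\hx+y_1}\rc^{\p^{-1}-1},
\]
and the last factor exceeds $1$. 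No bound $v_1\le(\rho x+y_1)^{1-\g}/(1-\g)$ is available either, because the coupling term $\lambda_1(v_2-v_1)>0$ prevents the decoupled solution from being a supersolution. You also have the direction of \eqref{Hvv} reversed: it gives $H_v(\cdot,v_1,p)>H_v(\cdot,v_2,p)$. The final strict contradiction is then only announced, not derived.

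The missing idea is to transfer the bound from index $2$ to index $1$ through the joint monotonicity of $H_v$. It is decreasing in $v$ by \eqref{Hvv}, decreasing in $p$ by \eqref{Hvp}, and independent of $y$. With $v_1(\hx)<v_2(\hx)$ from \Cref{Prop: v2 v1} and $Dv_1(\hx)\le Dv_2(\hx)$ from your Step 2,
\[
H_v(\hx,y_1,v_1(\hx),Dv_1(\hx))>H_v(\hx,y_2,v_2(\hx),Dv_2(\hx))\ge-\lc 1-\tfrac1\theta\rc\rho ,
\]
and the inequality is strict because $v_1<v_2$. So in the differentiated equation for $v_1$ (with $r=\rho$), the term $\lc 1-\frac1\theta\rc\rho Dv_1+H_v Dv_1$ is strictly positive while $\lambda_1(Dv_1-Dv_2)\le 0$. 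This forces $s_1(\hx)D^2v_1(\hx)<0$, which is impossible for $\hx>\ux$ since $s_1(\hx)<0$ and $D^2v_1\le 0$.

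The strictness you feared losing with the slack $(\rho-r)Dv$ thus comes from $v_1<v_2$ alone; no subtraction of equations or consumption ratios is needed. Since this uses $s_1(\hx)<0$, the point $\hx=\ux$ must be handled separately. The paper does this by applying \Cref{Prop:s2boundary 2} with $r=\rho$: the first term of \eqref{rho-r2} vanishes and the second is negative.
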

\begin{proof}
It is clear from \Cref{Prop:s2boundary 2} that $s_2(\ux)>0$ if $r=\rho$.
Next we argue by contradiction and suppose $s_2(\hx)\leq 0$ for some $\hx>\ux$. \par
{\it{Step 1}}. Suppose $s_2(\hx)< 0$. From \Cref{Prop:v W2}, we can differentiate \Cref{Eq: Dv1} and \Cref{Eq: Dv2} at $\hx$:
\beq\label{Eq: Dv1 rho}
\lambda_1\lc Dv_1(\hx)-Dv_2(\hx)\rc=s_1(\hx)D^2v_1(\hx)+\lc 1-\frac{1}{\theta}\rc \rho Dv_1(\hx)+H_v(\hx,y_1,v_1(\hx),Dv_1(\hx))Dv_1(\hx),
\eeq
\beq\label{Eq: Dv2 rho}
\lambda_2\lc Dv_2(\hx)-Dv_1(\hx)\rc=s_2(\hx)D^2v_2(\hx)+\lc 1-\frac{1}{\theta}\rc \rho Dv_2(\hx)+H_v(\hx,y_2,v_2(\hx),Dv_2(\hx))Dv_2(\hx).
\eeq
From \cref{Hvp}, \cref{Hvv}, \cref{Eq: sub- super r=rho} and $s_2(\hx)< 0$ we can infer
$$
H_v(\hx,y_2,v_2(\hx),Dv_2(\hx))\geq -\lc 1-\frac{1}{\theta}\rc \rho ,
$$
hence
\beq\label{Eq: Dv2 rho r}
\lc 1-\frac{1}{\theta}\rc \rho Dv_2(\hx)+H_v(\hx,y_2,v_2(\hx),Dv_2(\hx))Dv_2(\hx)\geq 0. 
\eeq
Inequality \eqref{Eq: Dv2 rho r}, with $s_2(\hx)D^2v_2(\hx)\geq 0$ and \Cref{Eq: Dv2 rho} lead to $Dv_2(\hx)\geq Dv_1(\hx)$. This and $v_2(\hx)> v_1(\hx)$ yield
$$
H_v(\hx,y_1,v_1(\hx),Dv_1(\hx))>H_v(\hx,y_2,v_2(\hx),Dv_2(\hx))\geq -\lc 1-\frac{1}{\theta}\rc \rho,
$$
hence 
$$
\lc 1-\frac{1}{\theta}\rc \rho Dv_1(\hx)+H_v(\hx,y_1,v_1(\hx),Dv_1(\hx))Dv_1(\hx)>0.
$$
From \Cref{Eq: Dv1 rho}, we obtain $s_1(\hx)D^2v_1(\hx)<0$. Since $D^2v_1(\hx)<0$, this is in contradiction with $s_1(\hx)<0$ (obtained in \Cref{Prop: s1<0}).\par
{\it{Step 2}}. Suppose $s_2(\hx)=0$, we can differentiate \Cref{Eq: Dv2} in a neighborhood of $\hx$. Moreover, from \Cref{Prop:v W2} we have $\lim_{x\to \hx}s_2(x)D^2v_2(x)=0$. We proceed similarly as in {\it{Step 1}} and omit the details. 
\end{proof}
The following proposition describes the asymptotics of the savings in the limit $x\to +\infty$.  {}{The proof is in Appendix B.}
\begin{prop}\label{Prop: sj asmp}
{}{Assume that $r=\rho$.} As $x\to +\infty$, 
\beq\label{Eq: sj asmp}
   \begin{aligned}
  &s_j(x)\\
  ={}&\frac{\ld_j(y_j-y_{\bar \jmath})}{\rho+\lambda_j+\lambda_{\bar \jmath}}+o(1)\\
  ={}&\frac{\lambda_j(y_j-y_{\bar \jmath})}{\rho+\lambda_j+\lambda_{\bar \jmath}}+\frac{\g (1+\p)\lambda_1(y_{\bar \jmath}-y_j)^2}{2(\rho+\lambda_j+\lambda_{\bar \jmath})}\left[1-\frac{\lambda_j\lambda_{\bar \jmath}+\lambda_{\bar \jmath}^2+\rho \lambda_j}{(\rho+\lambda_j+\lambda_{\bar \jmath})^2}-\frac{\lambda_j}{(\rho+\lambda_j+\lambda_{\bar \jmath})}\right](\rho x+y_j)^{-1}\\
&+o((\rho x+y_1)^{-1}).
   \end{aligned}
  \eeq
\end{prop}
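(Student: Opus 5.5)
We plan a formal asymptotic expansion of the value functions at $r=\rho$, validated by the sub/supersolution bounds \eqref{Eq: sub- super r=rho}. These bounds force $(1-\g)v_j(x)=(\rho x+y)^{1-\g}$ with $y$ between $y_1$ and $y_2$. Accordingly we make the ansatz
\[
v_j(x)=\frac{(\rho x+\beta_j)^{1-\g}}{1-\g}\Bigl(1+a_j(\rho x+y_j)^{-1}+o\bigl((\rho x+y_j)^{-1}\bigr)\Bigr),
\]
where $\beta_j$ and $a_j$ are to be determined. Equivalently, we write $v_j=\frac{(\rho x+y_j)^{1-\g}}{1-\g}\bigl(1+\alpha_j z+\beta'_j z^2+o(z^2)\bigr)$ with $z=(\rho x+y_j)^{-1}$; this is convenient because the difference $y_2-y_1$ enters through $z$.

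\textbf{Step 1: substitute and match orders.} We substitute the ansatz into \eqref{HJB} with $r=\rho$, using the explicit Hamiltonian \eqref{def H}. For the decoupled problem ($\lambda_j=0$), $b=\rho$ and $\frac{(\rho x+y_j)^{1-\g}}{1-\g}$ is an exact solution with $c_j=\rho x+y_j$, i.e. $s_j=0$. With coupling, we expand $v_{\bar\jmath}$ around $\rho x+y_j$ using $\rho x+y_{\bar\jmath}=(\rho x+y_j)\bigl(1+(y_{\bar\jmath}-y_j)z\bigr)$, and expand $H$ in powers of $z$.
\begin{itemize}
\item At order $z$, the equations give a linear $2\times2$ system for $(\alpha_1,\alpha_2)$ with coefficients $\rho,\lambda_1,\lambda_2$.
\item At order $z^2$, they give the $\beta'_j$; the factors $\g$ and $(1+\p)$ arise from the second derivatives of $p^{1-\p}$ and of $((1-\g)v)^{\frac{1-\g\p}{1-\g}}$.
\end{itemize}
We then insert the result into \eqref{c optimal}, $c_j=\rho^{\p}(Dv_j)^{-\p}((1-\g)v_j)^{\frac{1-\g\p}{1-\g}}$, and into $s_j=\rho x+y_j-c_j$. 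The leading $\rho x+y_j$ cancels. The $O(1)$ term is $\frac{\lambda_j(y_j-y_{\bar\jmath})}{\rho+\lambda_1+\lambda_2}$, obtained as the solution of the order-$z$ linear system; its structure resembles a permanent-income correction. The $z$ term then yields the bracketed coefficient in \eqref{Eq: sj asmp}.

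\textbf{Step 2: rigorous justification (the main obstacle).} The expansion must be shown to describe the actual viscosity solution, not merely a formal one. We plan to build explicit barriers from the truncated expansion,
\[
\tilde v_j^{\pm}=\frac{(\rho x+y_j)^{1-\g}}{1-\g}\Bigl(1+\alpha_j z+(\beta'_j\pm\eta)z^2\Bigr),
\]
for small $\eta>0$, and check that they are super- and subsolutions of \eqref{HJB} for $x\ge X_\eta$. The sign of the $O(z^3)$ residual should be fixed by the $\pm\eta z^2$ perturbation, because $H_v<0$ (see \eqref{Eq:Hv<0}) makes the linearized operator proper.

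On $[\ux,X_\eta]$ we do not have barriers, so we apply the comparison argument of \Cref{comparison} on $[X_\eta,\infty)$. Boundary data at $X_\eta$ is handled by adding a decaying correction $\pm K(\rho x+y_j)^{1-\g-k}$, with $k$ large enough that it is negligible at order $z^2$ yet can absorb the mismatch at $X_\eta$. This gives $v_j=\tilde v_j+o\bigl((\rho x)^{1-\g}z^2\bigr)$.

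\textbf{Step 3: transfer to $Dv_j$ and $s_j$.} Value-level asymptotics must be transferred to $Dv_j$. We use concavity (\Cref{Prop: v concave}), in the spirit of \Cref{Lemma: Dv converge}, applied to the rescaled functions $w_R(\xi)=R^{\g-1}v_j(R\xi)$. This yields derivative asymptotics to the same relative order. Substituting into \eqref{c optimal} then gives $s_j$ up to $o(z)$.

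The delicate point is the one-order gain needed for $s_j$. Because the leading term cancels, $s_j$ depends on $Dv_j$ at relative order $z^2$, so the value expansion must be controlled to order $z^3$ relative. We would therefore push the barriers one order further, i.e. include a $z^3$ term, before applying the concavity argument.
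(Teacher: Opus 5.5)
The gap is in Step 3: concavity cannot transfer your value asymptotics to $Dv_j$ at the precision the statement requires. \Cref{Lemma: Dv converge} applied to $w_R(\xi)=R^{\gamma-1}v_j(R\xi)$ gives only the leading order $R^{\gamma}Dv_j(R\xi)\to\rho(\rho\xi)^{-\gamma}$, because the correction terms are not concave.

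Quantitatively, suppose $|v_j-\phi_j|\le\epsilon$ near $x$ and $|D^2\phi_j|\lesssim x^{-1-\gamma}$. Chord slopes of the concave $v_j$ then give only $|Dv_j-D\phi_j|\lesssim\sqrt{\epsilon\,x^{-1-\gamma}}$. This square-root loss is sharp: a concave function can carry an oscillation of amplitude $\epsilon$ and wavelength $\sqrt{\epsilon\,x^{1+\gamma}}$. So a value remainder $o(x^{1-\gamma}z^k)$ gives $Dv_j$ only to relative order $o(z^{k/2})$. Since $c_j\approx\rho x$ in \eqref{c optimal}, it gives $s_j$ only up to $o(x^{1-k/2})$.

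Your $z^2$ barriers therefore yield exactly the $o(1)$ form of \eqref{Eq: sj asmp}. Your $z^3$ barriers yield $s_j$ only up to $o(x^{-1/2})$, which swamps the $(\rho x+y_j)^{-1}$ term. That is the term that matters: \Cref{Prop: no sol FPK} works with $\lambda_2 s_1+\lambda_1 s_2$, in which the $O(1)$ parts cancel. Neither your ``same relative order'' claim nor your ``one order lost'' correction is right for the concavity route; it would need barriers through $z^4$.

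The transfer that costs only one order is to read $Dv_j$ off the equation itself. Since $v_j\in C^1$, \eqref{HJB} holds pointwise, so $Dv_j(x)$ is a root of $p\mapsto H(x,y_j,v_j(x),p)-\frac{\rho}{\theta}v_j(x)+\lambda_j(v_{\bar\jmath}(x)-v_j(x))$. Here $H$ is strictly convex in $p$ with $\partial_pH=s_j$. The root is selected by $s_1<0$ (\Cref{Prop: s1<0}) and $s_2>0$ (first proposition of \Cref{Sec: MFG}). Once the $o(1)$ form is known, $|s_j|$ is bounded away from zero, so an error $\delta$ in the values (or in the residual of a truncated expansion) costs only $O(\delta)$ in $Dv_j$. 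Values to $o(x^{1-\gamma}z^3)=o(x^{-2-\gamma})$ then give $Dv_j$ to relative $o(z^2)$, which is what you need. So your count of one extra order is correct, but only for this mechanism.

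This is also the paper's mechanism. Instead of barriers, it expands \eqref{HJB} directly using the a priori bounds \eqref{Eq: v12 r=rho}--\eqref{Eq: Dv2 rho2} and solves an algebraic $2\times 2$ system at first order. It then rescales the second-order remainder into \eqref{eq:10000}--\eqref{eq:10001} and recovers $DQ_j$ from the quadratic relation, choosing the root by boundedness of $Q_j$. The paper omits the proof of $Q_j\to\widehat Q_j$ and stops at $DQ_j=o(1)$. But $D\sq_j\sim D\widehat{\sq}_j$ actually needs $DQ_j=o(x^{-1})$, i.e.\ the same third-order value control. Your Step 2 barrier comparison is a sensible, genuinely different way to supply that value information, provided you pair it with the equation-based transfer rather than concavity.

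A minor correction to Step 1: the second-order value coefficients ($\widehat{\sq}_j$) do not depend on $\psi$, because the $\psi$-contributions from $p^{1-\psi}$ and $((1-\gamma)v)^{\frac{1-\gamma\psi}{1-\gamma}}$ cancel. The factor $(1+\psi)$ enters only when $(Dv_j)^{-\psi}$ is expanded to second order in \eqref{c optimal}.
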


{\cref{Prop: sj asmp} allows us to show that when $r=\rho$, there is no stationary measure. The following result is proved in Appendix \ref{Expansion details}.}
\begin{prop}\label{Prop: no sol FPK}
Assume that $r=\rho$.  Let  $v_j$ solve the HJB equations (1.4). {}{The FPK equation (4.1) does not have a solution.} In other words, there is no stationary probability measure  when $r=\rho$.
\end{prop}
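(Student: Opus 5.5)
We argue by contradiction: suppose $r=\rho$ and that the FPK system \eqref{Eq: FP m} admits a probability solution $m=(m_1,m_2)$ in the sense of \Cref{Def: FPK sol weak}. By the preceding proposition, $s_2(x)>0$ for all $x\geq\ux$. By \Cref{Prop: s1<0}, $s_1(x)<0$ for $x>\ux$ and $s_1(\ux)=0$. Hence $\mu_2=0$, and $m_j$ has a density $g_j$ on $(\ux,+\infty)$, with a possible Dirac mass $\mu_1\delta_{\ux}$ for $m_1$.

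\textbf{Step 1: the zero-flux identity.} The argument of \Cref{Lemma: sjgj} gives $\frac{d}{dx}(s_1g_1+s_2g_2)=0$ on $(\ux,+\infty)$, so $s_1g_1+s_2g_2\equiv C$ there. The earlier vanishing argument used $\bar x$, which no longer exists, so we identify $C$ differently. Integrability of $g_j$ gives a sequence $x_n\to+\infty$ along which $g_1(x_n)+g_2(x_n)\to0$. By \Cref{Prop: sj asmp}, $s_j$ is bounded at infinity, so $C=0$.

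\textbf{Step 2: explicit representation.} Exactly as in \Cref{Prop: FPK}, now valid on all of $(\ux,+\infty)$ since $s_2>0$ everywhere, we get
\[
g_2(x)=\frac{\kappa_2}{s_2(x)}\exp\Big(\int_{\ux}^x\Big(-\frac{\ld_1}{s_1(z)}-\frac{\ld_2}{s_2(z)}\Big)dz\Big),\qquad g_1=-\frac{s_2}{s_1}g_2 .
\]
Integrability near $\ux$ comes from $s_1=O(\sqrt{x-\ux})$ (Appendix \ref{Expansion ux}). Mass balance, as in the corollary computing $\kappa_2$, gives $\kappa_2=\ld_1\mu_1$. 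We have $\kappa_2\ne0$: otherwise $g_1=g_2=0$, and then $m_2=0$, which contradicts $\int dm_2=\ld_1/(\ld_1+\ld_2)>0$.

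\textbf{Step 3: non-integrability at infinity.} This is the main point. By the leading term of \Cref{Prop: sj asmp},
\[
s_1(x)\to \frac{\ld_1(y_1-y_2)}{\rho+\ld_1+\ld_2}<0,\qquad s_2(x)\to\frac{\ld_2(y_2-y_1)}{\rho+\ld_1+\ld_2}>0 .
\]
The exponent's integrand is therefore
\[
-\frac{\ld_1}{s_1}-\frac{\ld_2}{s_2}=\frac{\rho+\ld_1+\ld_2}{y_2-y_1}-\frac{\rho+\ld_1+\ld_2}{y_2-y_1}+o(1)=o(1),
\]
so the leading terms cancel exactly. This cancellation is the obstacle, and it is why the second-order expansion in \eqref{Eq: sj asmp} is needed.

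Write $s_j=a_j+\beta_j(\rho x+y_j)^{-1}+o(x^{-1})$. Expanding $\ld_j/s_j$ to order $x^{-1}$, the integrand equals
\[
\Big(\frac{\ld_1\beta_1}{a_1^2}+\frac{\ld_2\beta_2}{a_2^2}\Big)\frac{1}{\rho x}+o(x^{-1}).
\]
The exponential then behaves like $x^{\alpha}e^{o(\log x)}$, where $\alpha=\rho^{-1}\big(\ld_1\beta_1/a_1^2+\ld_2\beta_2/a_2^2\big)$. I would compute $\alpha$ from the explicit coefficients in \eqref{Eq: sj asmp} and aim to show $\alpha\geq -1$. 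The sign should follow from the bracket factor there and from $(y_2-y_1)^2>0$; this is the delicate algebra.

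With $\alpha\ge-1$, and using that $s_2$ is bounded away from $0$ at infinity, $g_2(x)\gtrsim x^{-1-o(1)}$ fails to be integrable on $(\ux,+\infty)$. To make this precise I would keep the $o(x^{-1})$ remainder honest, either by checking that it integrates to $o(\log x)$ and comparing with $x^{-1}$ directly, or by a borderline argument if $\alpha=-1$ exactly.

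Once non-integrability is established, $\kappa_2\neq0$ contradicts $m_2$ being a finite measure. Hence no stationary probability measure exists when $r=\rho$.
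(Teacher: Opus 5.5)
Your proposal follows the paper's route: the representation $g_2=\frac{\kappa_2}{s_2}\exp\int(-\ld_1/s_1-\ld_2/s_2)$ and $g_1=-s_2g_2/s_1$ on all of $(\ux,+\infty)$, exact cancellation of the $O(1)$ part of the integrand, and the growth rate read off from the $1/x$ coefficient in \eqref{Eq: sj asmp}. Your Step 1 is more careful than the paper. The paper simply asserts the representation formulas, even though \cref{Lemma: sjgj} used $\bar x$, which does not exist when $r=\rho$. Your identification of the flux constant, via $\liminf(g_1+g_2)=0$ and boundedness of $s_j$, is the right fix.

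What you leave undone is the decisive computation of $\alpha$, which is where the paper's proof actually lives. It closes in a few lines. Set $S=\rho+\ld_1+\ld_2$. The two brackets in \eqref{Eq: sj asmp} simplify to
\[
1-\frac{\ld_1\ld_2+\ld_2^2+\rho\ld_1}{S^2}-\frac{\ld_1}{S}=\frac{\rho(\rho+2\ld_2)}{S^2},\qquad
1-\frac{\ld_1\ld_2+\ld_1^2+\rho\ld_2}{S^2}-\frac{\ld_2}{S}=\frac{\rho(\rho+2\ld_1)}{S^2}.
\]
Both are positive and they sum to $2\rho/S$; this sum is exactly the identity the paper uses.

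Since $a_j^2=\ld_j^2(y_2-y_1)^2/S^2$, you get $\ld_j\beta_j/a_j^2=\frac{\g(1+\p)S}{2}\times(\text{bracket}_j)$. Summing over $j$ gives $\ld_1\beta_1/a_1^2+\ld_2\beta_2/a_2^2=\g(1+\p)\rho$, so $\alpha=\g(1+\p)>0$. Hence $-\ld_1/s_1-\ld_2/s_2=\g(1+\p)/x+o(1/x)$. The exponential grows like $x^{\g(1+\p)+o(1)}$, and $g_1,g_2\to+\infty$, which is the paper's conclusion.

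So your sign heuristic (positive brackets times $(y_2-y_1)^2$) is correct. The borderline case $\alpha=-1$ never arises, and that matters: with only an $o(x^{-1})$ remainder, integrability in that case could not have been decided.
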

{}{In the next result, we show $\mathcal{K}(r)$ becomes unbounded as $r\to \rho$. }
\begin{cor}\label{Prop: K blows}
For any constant $C_{\mathcal{K}}>0$, there exists $\bar{r}\in [0,\rho)$ such that the aggregate wealth 
\beq\label{Eq: K bar}
\mathcal{K}(\bar{r})>C_{\mathcal{K}}.
\eeq
\end{cor}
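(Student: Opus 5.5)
We argue by contradiction. Suppose there is $C_{\mathcal{K}}>0$ such that $\mathcal{K}(r)\le C_{\mathcal{K}}$ for every $r\in[\hat r,\rho)$, with $\hat r$ from \Cref{Prop: Kr}. Take a sequence $r^\i\to\rho$, $r^\i<\rho$. Let $m^\i=(m^\i_1,m^\i_2)$ be the corresponding stationary measures: densities $g^\i_j$ on $(\ux,+\infty)$ plus possible Dirac masses at $\ux$.

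\textbf{Tightness.} Since $x\ge \ux$ on the support and the total mass is $1$,
\[
\sum_j\int (x-\ux)\,dm^\i_j=\mathcal{K}(r^\i)-\ux\le C_{\mathcal{K}}-\ux .
\]
By Markov's inequality, $\sum_j m^\i_j([M,+\infty))\le (C_{\mathcal{K}}-\ux)/(M-\ux)$, uniformly in $\iota$. So the family is tight. By Prokhorov we extract a subsequence converging weakly (narrowly) to a pair $(\tilde m_1,\tilde m_2)$ of nonnegative measures on $[\ux,+\infty)$. Tightness guarantees that no mass escapes, so $\tilde m_1([\ux,\infty))+\tilde m_2([\ux,\infty))=1$. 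Passing to the limit in the mass identity of the lemma preceding \Cref{Lemma: sjgj} also gives $\tilde m_j([\ux,\infty))=\lambda_{\bar\jmath}/(\lambda_1+\lambda_2)$.

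\textbf{Passing to the limit in the FPK equation.} Fix test functions $(\phi_1,\phi_2)\in (C^1_c([\ux,+\infty)))^2$, supported in some $[\ux,R]$. By \Cref{Prop: stability r}, which allows $r^\i\le\rho$, $v^\i\to v$ in $C^1[\ux,R]$, where $v$ solves \eqref{HJB} with $r=\rho$. Together with the lower bound $c_{\min}$ of \Cref{col: c min}, formula \eqref{c optimal} then gives $c^\i_j\to c_j$ and $s^\i_j\to s_j$ uniformly on $[\ux,R]$. At $\ux$ we need the state-constrained optimal consumption; it is still the continuous function of $(v_j,Dv_j)$ given by \eqref{c optimal}, because $Dv_j$ is continuous up to $\ux$ (\Cref{Dv UC}). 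Hence $s^\i_jD\phi_j\to s_jD\phi_j$ uniformly on the compact support, and weak convergence of measures lets us pass to the limit in each term of \eqref{Eq: weak FPK}. Thus $(\tilde m_1,\tilde m_2)$ is a weak solution of the FPK system at $r=\rho$ and a probability measure. This contradicts \Cref{Prop: no sol FPK}, and the contradiction yields some $\bar r\in[0,\rho)$ with $\mathcal{K}(\bar r)>C_{\mathcal{K}}$.

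\textbf{Main obstacle.} The delicate point is ensuring that the limit object is genuinely a probability measure solving the FPK equation in the sense of \Cref{Def: FPK sol weak}, i.e., the sense in which \Cref{Prop: no sol FPK} asserts nonexistence. Two issues need care: mass concentrating at $\ux$, and the uniform convergence of $s^\i_j$ up to the boundary. Boundary mass is harmless, since Dirac masses at $\ux$ are allowed in the definition and test functions in $C^1_c([\ux,\infty))$ are continuous there. If \Cref{Prop: no sol FPK} is proved only for measures with finite first moment, that is also available: by lower semicontinuity under weak convergence, $\int x\,d\tilde m\le C_{\mathcal{K}}$. The existence of stationary measures $m^\i$ for $r^\i<\rho$ is taken from \Cref{Sec: FPK}, via \Cref{Prop: FPK} or the Dirac case.
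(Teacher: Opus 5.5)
Your proof is correct and follows the paper's argument essentially step by step: contradiction, tightness from the uniform first-moment bound, Prokhorov, passage to the limit in the weak formulation \eqref{Eq: weak FPK} via the $C^1$ convergence of \Cref{Prop: stability r}, and a contradiction with \Cref{Prop: no sol FPK}. One minor correction: the uniform convergence of $c^{(\iota)}_j$ on $[\ux,R]$ rests on $Dv_j$ being bounded away from zero there, which follows from strict monotonicity and concavity, not on the lower bound $c_{\min}$.
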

\begin{proof}
Suppose \cref{Eq: K bar} does not hold and we consider a sequence $r^\i\to \rho$, $r^\i<\rho$, such that 
$$
\mathcal{K}(r^\i)\leq C_{\mathcal{K}}.
$$
It is easy to obtain for all $\iota$,
$$
\int_{x\geq 0} \vert x\vert dm^\i_1+\int_{x\geq 0} \vert x\vert dm^\i_2\leq C_{\mathcal{K}}+\vert \ux\vert,\quad \int_{x\geq \ux} \vert x\vert dm^\i_1+\int_{x\geq \ux} \vert x\vert dm^\i_2\leq C_{\mathcal{K}}+2\vert \ux\vert.
$$
It follows that the sequence of probability measures $m^\i$ is tight (cf. \cite[Proposition D.5.2 and Lemma D.5.3]{meyn2012markov}). From Prokhorov’s theorem, we can extract a weakly convergent subsequence of $m^\i$. By passing to the limit in the weak form of FPK equation \cref{Eq: weak FPK}, we obtain a solution to \cref{Eq: weak FPK} with $r=\rho$ and a density defined by \cref{Eq: def m}, in contradiction with \cref{Prop: no sol FPK}.
\end{proof}

\subsection{Existence of solutions to the MFG system}
We now consider the existence of solutions to the Aiyagari model. 
\begin{thm}
Suppose 
\beq\label{cond exis}
  \frac{\rho}{\theta \lambda_2}>\lc \frac{y_2}{y_1}\rc^{1/\psi}-1.
\eeq
There exists a solution to the Aiyagari model \eqref{MFG}-\eqref{Aiyagari} with the equilibrium interest rate $r^*$ such that $0<r^*<\rho$. 
\end{thm}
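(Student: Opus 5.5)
The plan is an intermediate value argument on the excess demand function
$$
\Phi(r)=\mathcal{K}[r]-\mathcal{K}_d[r],\qquad \mathcal{K}_d[r]=N\left(\frac{A\alpha}{r+\delta}\right)^{\frac{1}{1-\alpha}},
$$
on an interval $[\hat r,\rho)$. Here $N$ is given by \eqref{normal N} and does not depend on $r$. A zero $r^*$ of $\Phi$ gives $\mathcal{K}[r^*]=\mathcal{K}_d[r^*]$, which is equivalent to \eqref{Aiyagari}. The pair $(v,m)$ obtained from \cref{Prop: Exist} and \cref{Sec: FPK} at $r=r^*$ then solves \eqref{MFG}. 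Since $\mathcal{K}_d$ is continuous and decreasing on $(0,\rho]$, continuity of $\Phi$ on $[\hat r,\rho)$ follows from \cref{Prop: Kr}. Those results, however, need $s_2(\ux)>0$, so that the measure has the form described in \cref{Prop: FPK} and \cref{Prop:s2 hatx}.

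First, I will use hypothesis \eqref{cond exis} to guarantee the regime $s_2(\ux)>0$ near $r=\rho$. Condition \eqref{rho-r2} in \cref{Prop:s2boundary 2} reads
$$
\frac{\rho-r}{\lambda_2}<\left(\frac{r\ux+y_2}{r\ux+y_1}\right)^{1/\psi}-1 .
$$
As $r\to\rho$ the left side tends to $0$. The right side is positive because $y_2>y_1$ and $r\ux+y_j>0$. Hence there is $\hat r<\rho$ such that $s_2(\ux)>0$ for all $r\in[\hat r,\rho)$. I interpret \eqref{cond exis} as the assumption guaranteeing that the favorable regime is the relevant one. When $\ux=0$, it implies that \cref{Prop:s2boundary} cannot apply: that proposition would force all savings to be negative for $r\le \rho/\theta$, which gives $\mathcal{K}[r]=\ux$ there. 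I will also choose $\hat r$ so that $\mathcal{K}[r]>0$ on $[\hat r,\rho)$, as in \cref{Prop: Kr}.

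The endpoints go as follows. As $r\to\rho^-$, \cref{Prop: K blows} gives values of $r$ with $\mathcal{K}[r]$ arbitrarily large, while $\mathcal{K}_d[r]\to N(A\alpha/(\rho+\delta))^{1/(1-\alpha)}$ stays bounded. Hence $\Phi(\bar r)>0$ for some $\bar r$ close to $\rho$. For the other sign, I need some $r_0\in[\hat r,\rho)$ with $\Phi(r_0)<0$. I will look at the smallest interest rate at which the regime $s_2(\ux)>0$ holds. At the threshold where \eqref{rho-r2} becomes an equality, and more generally whenever $s_2(\ux)=0$, the measure collapses to Dirac masses at $\ux$ (last proposition of \cref{Sec: FPK}), so $\mathcal{K}=\ux$. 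This should give, by continuity of $\mathcal{K}$ up to that threshold, a point where $\mathcal{K}[r]$ is close to $\ux\le$ a small value while $\mathcal{K}_d[r]>0$.

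The hard step is this lower endpoint. It requires continuity of $\mathcal{K}$ across the transition from $s_2(\ux)>0$ to $s_2(\ux)=0$, which \cref{Prop: Kr} does not cover. I would try to show that as $s_2^{(\iota)}(\ux)\downarrow 0$, the point $\widehat{x}^{(\iota)}\to\ux$. This should follow from the locally uniform convergence of $s_j^{(\iota)}$ (\cref{Prop: stability r}) together with a nondegeneracy of $s_2$ near $\ux$. The supports of $m^{(\iota)}$ then shrink to $\{\ux\}$ and $\mathcal{K}\to\ux<\mathcal{K}_d$. If $\ux$ is large, the sign must instead come from choosing $\hat r$ small. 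In that case \eqref{cond exis}, combined with \cref{Prop:s2boundary} at the threshold $r=\rho/\theta$, is what I expect to make the transition point exist inside $(0,\rho)$. The intermediate value theorem on $[r_0,\bar r]$ then yields $r^*\in(0,\rho)$.
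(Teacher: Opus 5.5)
\textbf{There is a genuine gap at the lower endpoint.} Your reformulation with $\Phi(r)=\mathcal{K}[r]-\mathcal{K}_d[r]$ and your upper endpoint via \cref{Prop: K blows} are fine and match the paper. The lower endpoint fails because you misread the role of \eqref{cond exis}.

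That hypothesis is exactly condition \eqref{rho-r} of \cref{Prop:s2boundary} evaluated at $r=0$. Dividing \eqref{rho-r} by $\lambda_2(r\ux+y_2)^{-1/\psi}$ gives $\frac{\rho-r\theta}{\theta\lambda_2}\ge\left(\frac{r\ux+y_2}{r\ux+y_1}\right)^{1/\psi}-1$. At $r=0$ this is \eqref{cond exis} with strict inequality. By continuity in $r$, \eqref{rho-r} then holds at some small $\hat r>0$. \cref{Prop: s1<0} and \cref{Prop:s2boundary} give $s_1,s_2<0$ on $(\ux,+\infty)$ and $s_j(\ux)=0$. So the invariant measure is concentrated at $\ux$, and $\mathcal{K}[\hat r]=\ux\le 0<\mathcal{K}_d[\hat r]$, i.e.\ $\Phi(\hat r)<0$. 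This is how the paper gets its lower endpoint.

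You assert the opposite, namely that \eqref{cond exis} prevents \cref{Prop:s2boundary} from applying. You later invoke that proposition ``at the threshold $r=\rho/\theta$''. There \eqref{rho-r} can never hold: its first term vanishes and the second is strictly negative.

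Your substitute argument does not close the gap either. The point where \eqref{rho-r2} becomes an equality is not a point where $s_2(\ux)=0$. \cref{Prop:s2boundary 2} is only a sufficient condition for $s_2(\ux)>0$, so its failure says nothing about the sign of $s_2(\ux)$. Without \eqref{cond exis} used as above, nothing in your proposal rules out $s_2(\ux)>0$ for every $r\in(0,\rho)$. In that case your ``smallest interest rate at which the regime $s_2(\ux)>0$ holds'' is $0$, and no sign change of $\Phi$ is produced.

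Once $\hat r$ is obtained correctly, the rest is the paper's route: the intermediate value theorem on $[\hat r,\bar r]$, with continuity of $\mathcal{K}$ taken from \cref{Prop: Kr}. Your observation that \cref{Prop: Kr} is proved under $s_2(\ux)>0$ is legitimate. The paper does use continuity of $\mathcal{K}$ across the change of regime without separate justification. But this is secondary to the missing lower endpoint.
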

\begin{proof}
We have shown, with \cref{Prop:Lip}, the existence and uniqueness of the solution to the HJB equation and optimal saving policy $s_j^{(r)}$ {}{for each $r$}. We recall the notations for the aggregate asset $K[m]$ and $\mathcal{K}(r)$ given by 
\cref{def K N} and \cref{Huggett}. We rewrite the equilibrium condition for the Aiyagari model \eqref{Aiyagari} as 
$$
r^*=\mathcal{B}(r^*),\quad {\text{s.t.}}\,\, \mathcal{B}(r)=A \alpha \left( \frac{\mathcal{K}(r)}{N} \right)^{\alpha - 1} - \delta.
$$
It follows directly from \cref{Prop: Kr} that $\mathcal{B}(r)$ depends continuously on $r$. Let us take 
$$
C_{\mathcal{K}}=\left(\frac{\delta}{A\alpha}\right)^{\frac{1}{\alpha-1}}N.
$$ 
From \cref{Prop: K blows}, there exists $\bar{r}>0$ such that $\mathcal{K}(\bar{r})>C_{\mathcal{K}}$, hence $\bar{r}-\mathcal{B}(\bar{r})>0$.\par
From condition \eqref{cond exis}, there exists $\hat{r}>0$ sufficiently small such that
{}{
\beq\label{bar r}
 \frac{\rho-\hat{r}\theta}{\theta \lambda_2}> \lc \frac{\hat{r}\ux+y_2}{\hat{r}\ux+y_1}\rc^{1/\psi}-1.
\eeq
}
We use $s^{(\hat{r})}_j$ to denote the saving policies corresponding to $r=\hat{r}$. With \Cref{Prop: s1<0} and \Cref{Prop:s2boundary}, we have $s^{(\hat{r})}_j(\ux)=0$, $s^{(\hat{r})}_j(x)< 0$ for all $x>\ux$. This gives $\mathcal{K}(\hat{r})=\ux\leq 0$. From the continuity of $\mathcal{K}(r)$, given by \Cref{Prop: Kr}, there exists $r_0$, $\hat{r}\leq r_0<\rho$ such that $\mathcal{K}(r_0)=0$, hence $r_0-\mathcal{B}(r_0)=-\infty$. From the intermediate value theorem, there exists $r^*\in (r_0,\bar{r})$ such that $r^*-\mathcal{B}(r^*)=0$. 
\end{proof}
The analysis of the Huggett model is very similar and we only state the result.  

\begin{thm}\label{Thm: Huggett}
Assume $\ux<B$ and \cref{cond exis}, then there exists a solution to the Huggett model \eqref{MFG}-\eqref{Huggett} with the equilibrium interest rate $r^*$ such that $0<r^*<\rho$. 
\end{thm}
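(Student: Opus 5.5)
The plan is to follow the proof of the existence theorem for the Aiyagari model, replacing the fixed-point map $r\mapsto \mathcal{B}(r)$ by the excess supply $\mathcal{E}(r):=\mathcal{K}(r)-B$. We then find a zero of $\mathcal{E}$ on an interval $[\hat r,\bar r]\subset(0,\rho)$ by the intermediate value theorem. Continuity of $\mathcal{E}$ on $[\hat r,\rho)$ comes from \cref{Prop: Kr}, which gives continuity of $r\mapsto\mathcal{K}(r)$ once $r$ is bounded away from $0$. That result relies on \cref{Prop: stability r} (local $C^1$ convergence of the value functions, hence local uniform convergence of $s_j^{(r)}$) and on the uniqueness of the weak solution of the FPK system. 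For the values of $r$ where $s_2(\ux)=0$, the measure is the explicit Dirac mass at $\ux$ (the last proposition of \cref{Sec: FPK}), so $\mathcal{K}(r)=\ux$ there. I would check that the passage to the limit in \cref{Prop: Kr} also covers sequences crossing between the regimes $s_2(\ux)>0$ and $s_2(\ux)=0$, since tightness holds uniformly for $r$ in a compact subset of $(0,\rho)$.

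\textbf{Lower end.} Using \eqref{cond exis}, choose $\hat r>0$ small enough that \eqref{bar r} holds. This inequality is exactly \eqref{rho-r} at $r=\hat r$: multiply \eqref{bar r} by $\lambda_2(\hat r\ux+y_2)^{-1/\psi}$ and rearrange. Hence \cref{Prop:s2boundary} gives $s_2^{(\hat r)}<0$ on $(\ux,+\infty)$ and $s_2^{(\hat r)}(\ux)=0$. Together with \cref{Prop: s1<0}, both populations decumulate, the invariant measure is concentrated at $\ux$, and $\mathcal{K}(\hat r)=\ux<B$. Thus $\mathcal{E}(\hat r)<0$, and this is where the assumption $\ux<B$ enters.

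\textbf{Upper end.} Apply \cref{Prop: K blows} with $C_{\mathcal{K}}=B$ (or $\max(B,0)+1$) to get $\bar r<\rho$ with $\mathcal{K}(\bar r)>B$. One must make sure $\bar r>\hat r$. The proof of \cref{Prop: K blows} argues along any sequence $r^\i\to\rho$, so in fact $\mathcal{K}(r)>C_{\mathcal{K}}$ for all $r$ close enough to $\rho$, and we may take $\bar r\in(\hat r,\rho)$. The intermediate value theorem then yields $r^*\in(\hat r,\bar r)\subset(0,\rho)$ with $\mathcal{K}(r^*)=B$. The associated pair $(v^{(r^*)},m^{(r^*)})$ solves \eqref{MFG}–\eqref{Huggett}, with $v$ given by \cref{Prop: Exist} and $m$ by \cref{Prop: FPK} (or the Dirac case).

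\textbf{Main obstacle.} The delicate step is continuity of $\mathcal{K}$ on all of $[\hat r,\bar r]$. \cref{Prop: Kr} as stated assumes $s_2(\ux)>0$ along the sequence. At parameters where $s_2^{(r)}(\ux)$ transitions to $0$, I would argue directly. Weak limits of $m^{(r^\i)}$ are supported in a fixed compact set, because $s_2^{(r)}<0$ beyond some $\bar x$ uniformly in $r$ near the limit, by locally uniform convergence of $s_2$ and \cref{Prop:s2 barx}. Any limit solves the weak FPK equation for the limiting $r$, and uniqueness identifies it, so $\mathcal{K}$ is continuous there as well. If this uniform support bound proves hard, I would fall back on tightness from uniform first-moment bounds, as in the proof of \cref{Prop: K blows}.
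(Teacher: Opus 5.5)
Your proposal is correct and follows the paper's route. The paper says only that the Huggett case is ``very similar'' to the Aiyagari theorem, and that argument is yours: $\mathcal{K}(\hat r)=\ux<B$ at a small $\hat r>0$ chosen via \eqref{cond exis} and \cref{Prop:s2boundary}, $\mathcal{K}(r)>B$ for $r$ close to $\rho$ by \cref{Prop: K blows}, continuity from \cref{Prop: Kr}, and the intermediate value theorem. Your extra care in ensuring $\bar r>\hat r$ and in checking continuity of $\mathcal{K}$ across the transition between $s_2(\ux)>0$ and $s_2(\ux)=0$ addresses points the paper passes over silently.
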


\section{Numerical examples}\label{Sec: numerical}
We report on some numerical tests with the Aiyagari model, fixing 
\beq
\begin{aligned}
&\text{Discount}: \rho=0.05,\,\,\text{income}: y_1=0.1,\,\,y_2=0.5,\\
&\text{Transition rates}: \lambda_1=0.4,\,\,\lambda_2=0.4,\,\,\text{Debt limit}:  \ux=-0.15.\\
&\text{TFP}: A=0.95,\,\,\alpha=0.35,\,\,\text{Depreciation}: \delta=0.1.
\end{aligned}
\eeq
We considered four different cases. Hereafter, we enumerate the different cases and the related interest rate $r^*$ at equilibrium:
\begin{itemize}
\item {\textit{Test 1:}} $\g=2$, $\psi=0.8$, $r^*=0.034$.
\item {\textit{Test 2:}} $\g=2$, $\psi=0.4$, $r^*=0.0246$.
\item {\textit{Test 3:}} $\g=4$, $\psi=0.4$, $r^*=0.018$.
\item {\textit{Test 4:}} $\g=1.2$, $\psi=0.4$, $r^*=0.02737$.
\end{itemize}
It is important to notice that only Test 2 and Test 4 satisfy the assumption $\g \p<1$. Test 1 and Test 3 do not actually fall into the theoretical framework of the present paper. Yet, we observe that our numerical algorithms continue to perform well in these settings. \par
For comparison, in the CRRA case with $\g=1/\p=2$ we obtain $r^*=0.027942$.\par
{\bf{Interpretations}}:
\begin{itemize}
\item Fixing $\g$, $r^*$ increases as $\psi$ increases. As agents are more willing to consume, the aggregate capital {}{decreases}. 
\item Fixing $\psi$, $r^*$ decreases as $\g$ increases. As agents are more risk averse, they favor more precautionary savings near $\ux$. \end{itemize}
We now plot the consumption and {}{saving policies}. For plotting the asset distributions, we truncated the upper range using a percentile-based threshold to avoid that the high values related to the Dirac mass hide the other ones. In the plots, we use solid lines for results from Test 1 and Test 3, dotted lines for Test 2 and Test 4. \par
{We observe in \cref{s34} that, given the same $\p$, agents in the model with higher risk aversion $\g$ (Test 3) exhibit higher savings when their asset level $x$ is close to the borrowing limit $\ux$. This occurs despite the fact that the equilibrium $r^*$ is substantially lower in Test 3 than in Test 4. Savings in Test 4 eventually exceed those in Test 3 as $x$ moves away from $\ux$. This pattern suggests that, near the borrowing constraint, the precautionary savings motive dominates the effect of interest rate differences, whereas the latter becomes more influential at higher asset levels.}

 \begin{figure}[ht!]
   \centering
    \caption{Consumption with $\g=2$: {\textit{Test 1}} and {\textit{Test 2}}}\label{c12}
    \includegraphics[width=0.5\linewidth]{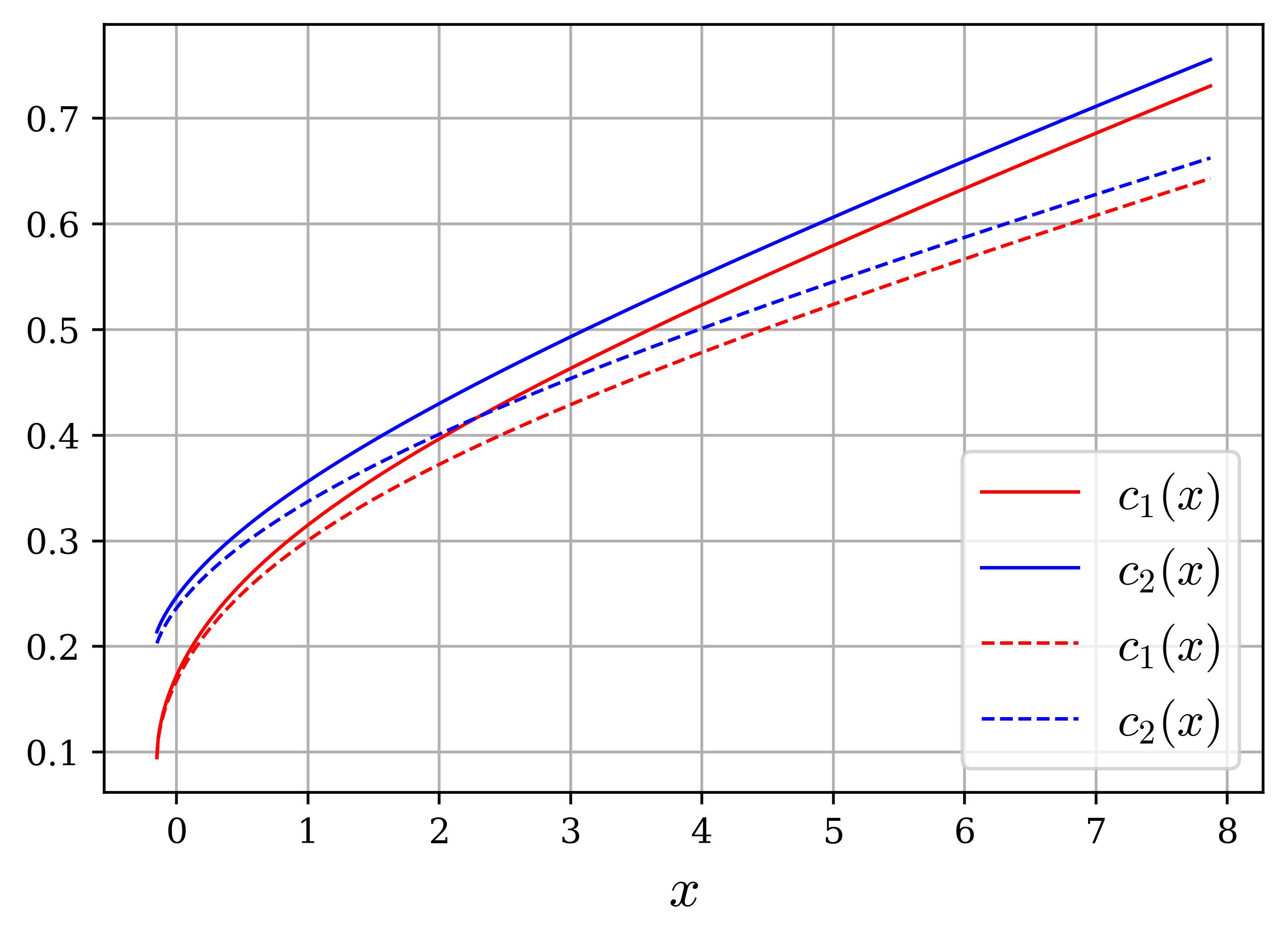}
\end{figure}

\begin{figure}[h!]
   \centering
    \caption{Consumption with $\psi=0.4$: {\textit{Test 3}} and {\textit{Test 4}}}\label{c34}
    \includegraphics[width=0.5\linewidth]{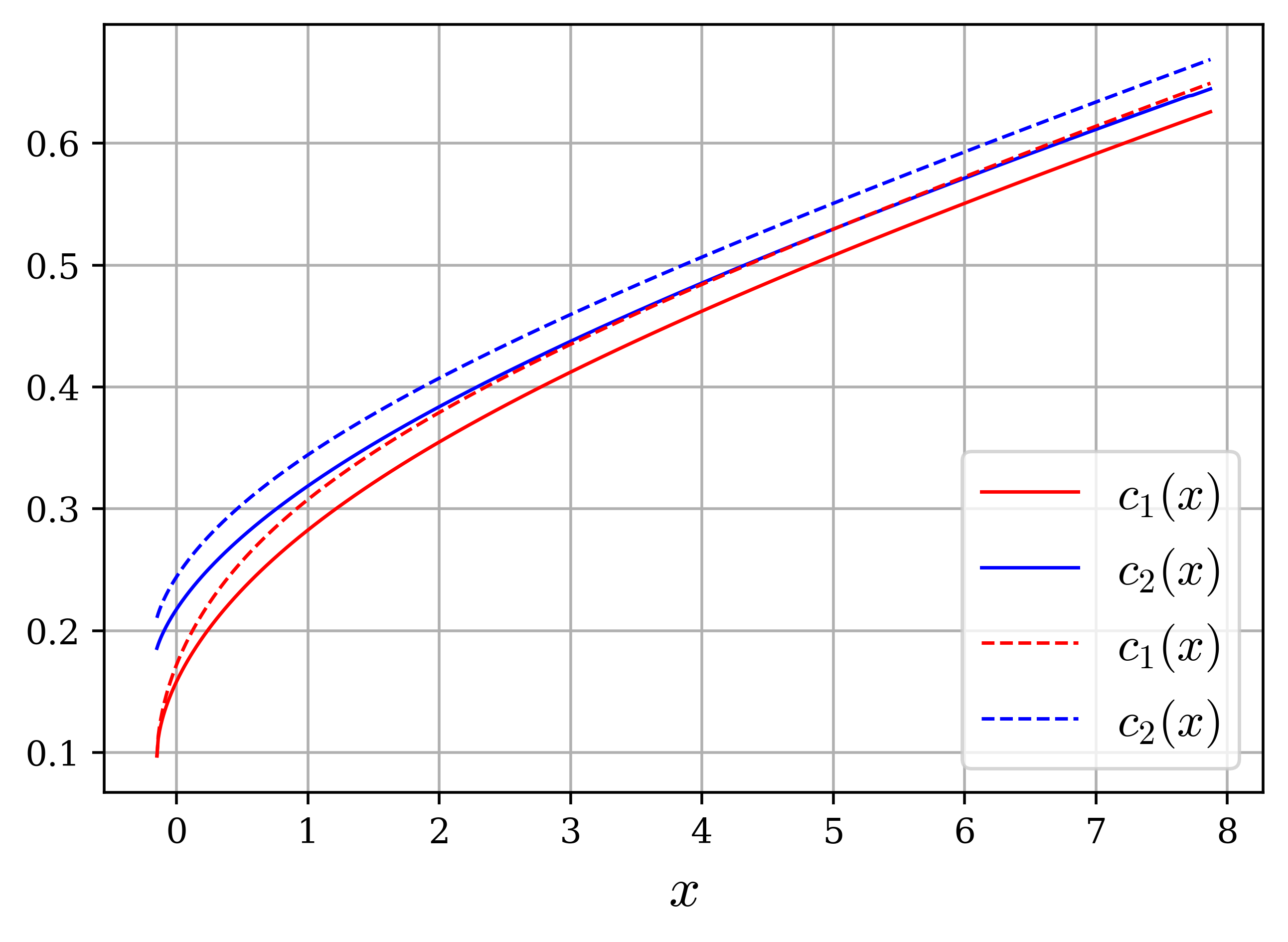}
\end{figure}

\begin{figure}[h!]
   \centering
    \caption{Saving with $\g=2$: {\textit{Test 1}} and {\textit{Test 2}}}\label{s12}
    \includegraphics[width=0.5\linewidth]{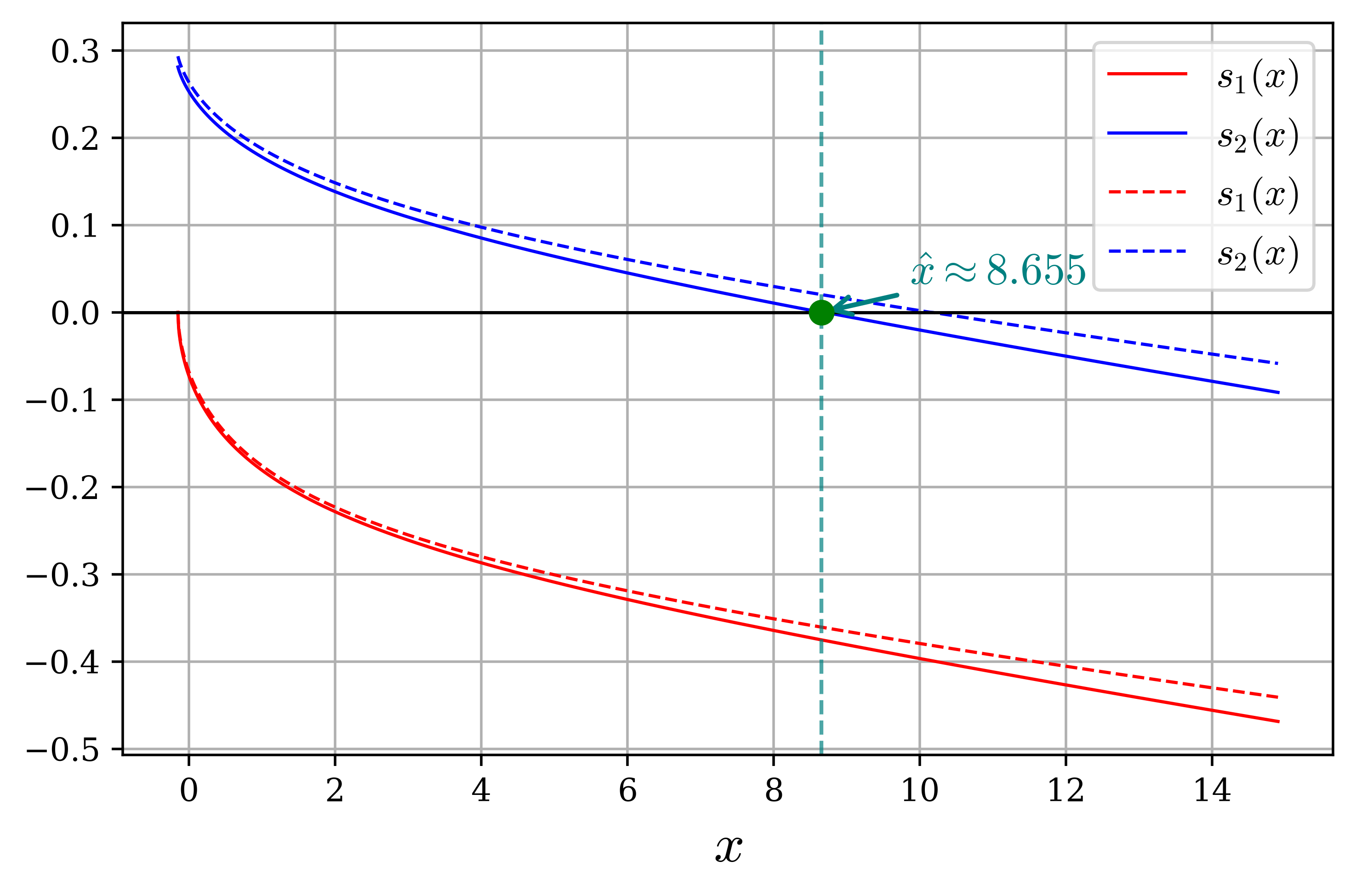}
\end{figure}
\begin{figure}[h!]
   \centering
    \caption{Saving with $\psi=0.4$: {\textit{Test 3}} and {\textit{Test 4}}}\label{s34}
    \includegraphics[width=0.5\linewidth]{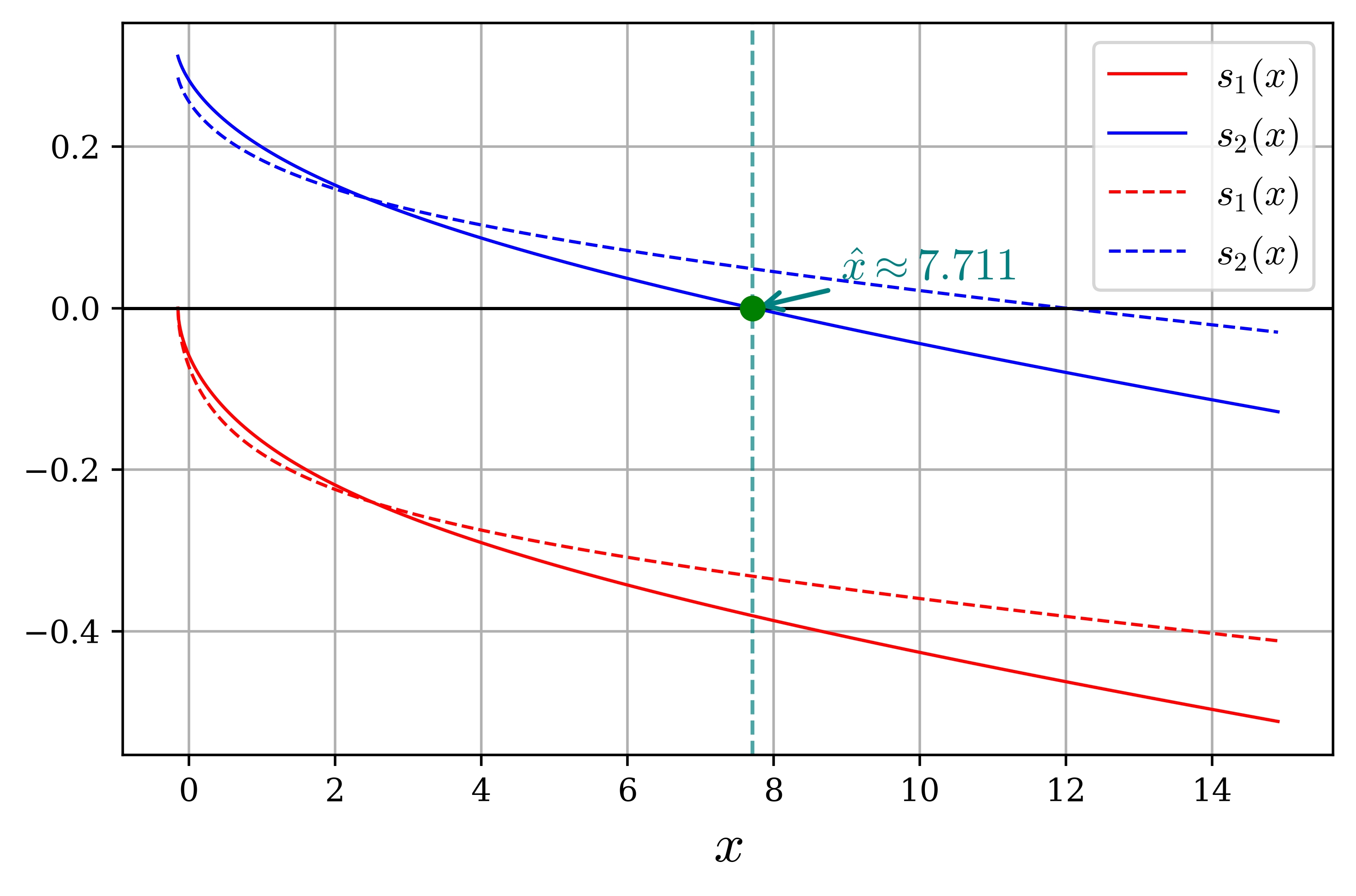}
\end{figure}

\begin{figure}[h!]
   \centering
    \caption{Asset distribution with $\g=2$: {\textit{Test 1}} and {\textit{Test 2}}}\label{g12}
    \includegraphics[width=0.5\linewidth]{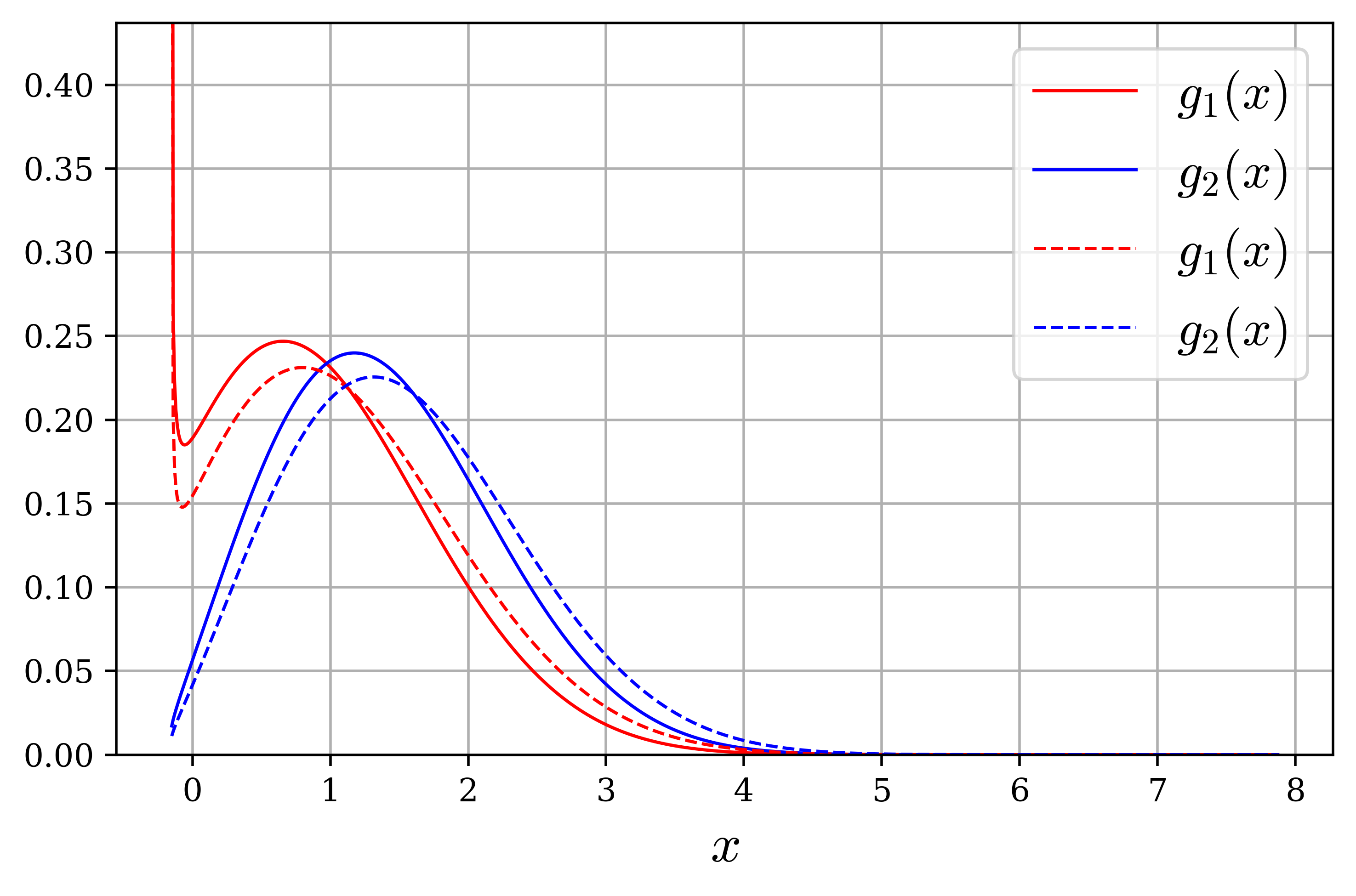}
\end{figure}
\begin{figure}[h!]
   \centering
    \caption{Asset distribution $\psi=0.4$: {\textit{Test 3}} and {\textit{Test 4}}}\label{g34}
    \includegraphics[width=0.5\linewidth]{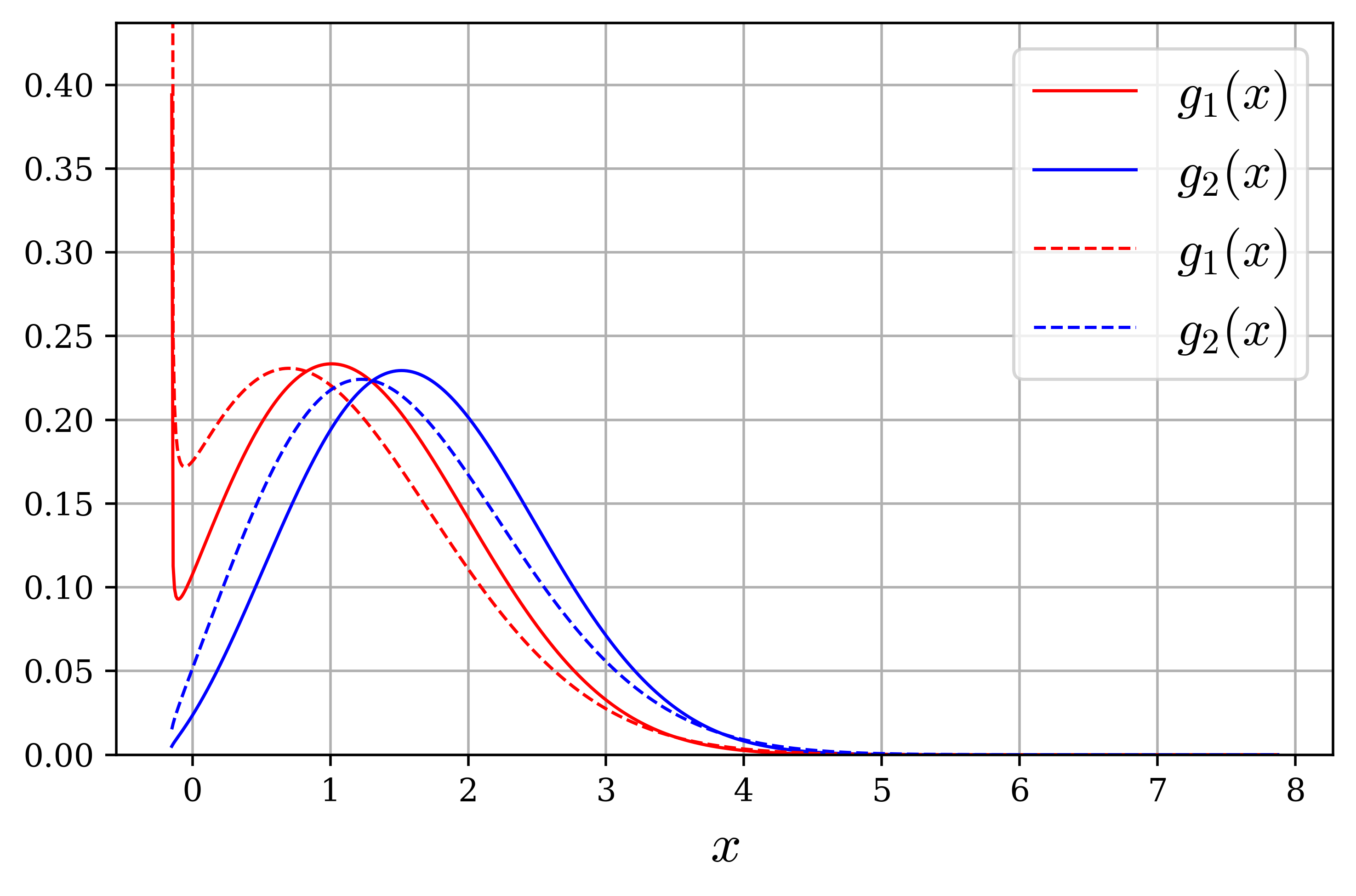}
\end{figure}

\section{Discussion and future works}
{}{In the near future}, we plan to address the other situation, namely  $\gamma\psi>1$,   when the agents favor early resolution of uncertainty. Many of the arguments contained in the present paper will have to be modified, and we plan to build on the existing literature on recursive utilities in discrete time.\par
Another direction of our future research concerns the analysis proposed in \cref{Sec: MFG} and Appendix \ref{Expansion details} in the limit situation when $r=\rho$,  that is based on a second order expansion of the saving policies as $x\to +\infty$. This method is specific to the two states income process in the present paper. It would be desirable to find a robust method that would work for general income processes. This may require using alternative arguments from the literature on the stability of Markov chains, see e.g. \cite{meyn2012markov}.\par
The present paper contains numerical results but, for brevity, we have chosen not to describe nor analyze the numerical methods that were used.  We plan to address the numerical analysis of two different methods (finite differences and a semi-Lagrangian method). More precisely, we plan to investigate the convergence of the scheme (with the Barles-Souganidis theory, cf. \cite{barles1991convergence}) and to study convergence rates as in \cite{Crandall1984}. \par 
Finally, the present paper considered models where there is no aggregate uncertainty impacting the economy globally. To consider aggregate uncertainty as in the Krusell-Smith model \cite{krusell1998income}, one needs to study a master equation on the space of {}{probability measures}, cf. \cite{cardaliaguet2019master}. The numerical simulations of the master equation in continuous-time heterogeneous agent models have been addressed in the recent literature \cite{bilal2023solving,gu2024global}. It would be interesting to investigate the master equation arising in models with recursive utility.
\begin{appendices}
\crefname{section}{Appendix}{Appendices}
\Crefname{section}{Appendix}{Appendices}

\makeatletter
\renewcommand{\@seccntformat}[1]{%
  \ifx#1\section
    Appendix~\csname the#1\endcsname\quad
  \else
    \csname the#1\endcsname\quad
  \fi}
\makeatother
\section{Proof of the strong comparison principle}\label{Comparison proof}
\begin{proof}[Proof of \Cref{comparison}]
We assume by contradiction that 

\beq\label{Eq:contra}
\max_j\sup_x(\mathsf{u}_j(x)-\mathsf{v}_j(x))=\delta>0.
\eeq
{\it{Step 1}}. First we consider the case when the $\sup$ in \eqref{Eq:contra} is achieved at $\ux$, i.e.
\beq\label{Eq:contra-bd}
\max_j\sup_x(\mathsf{u}_j(x)-\mathsf{v}_j(x))=\max_j(\mathsf{u}_j(\ux)-\mathsf{v}_j(\ux))=\delta.
\eeq
The supersolution is defined only in $(\ux,+\infty)$, but from its lower semicontinuity we can extend it to $\ux$ with $\mathsf{v}_j(\ux)=\liminf_{\substack{z\rightarrow \ux,\, z>\ux}} \mathsf{v}_j(z)$. {There exists a sequence $\zeta_k$ such that 
\beq\label{Eq: zeta_k}
\mathsf{v}_j(\zeta_k)\to \mathsf{v}_j(\ux)\quad {\text{as}}\quad \zeta_k\to \ux,\quad j\in \{1,2\}.
\eeq}
We denote $\e_k=\vert \zeta_k-\ux\vert$. Consider the   function
\beq
\psi_k(j,x,z)=\mathsf{u}_j(x)-\mathsf{v}_j(z)-\frac{\vert x-z\vert^2}{\e_k}-\left[\left(\frac{z-x}{\e_k}-1\right)_{-}\right]^2-\vert z-\ux\vert^2.
\eeq
Let $\psi_k$ attain its maximum at $(j_{k}, x_{k},z_{k})$. Since $\psi_k(j_{k},x_{k},z_{k})\geq \psi_k(j_{\bar{\jmath}_{k}},x_{k},z_{k})$, we obtain
\beq\label{v1 v2}
\mathsf{u}_{\bar{\jmath}_{k}}(x_{k})-\mathsf{u}_{j_{k}}(x_{k})\leq \mathsf{v}_{\bar{\jmath}_{k}}(z_{k})-\mathsf{v}_{j_{k}}(z_{k}).
\eeq
We now show 
\beq\label{psi_k}
\psi_k(j_{k},x_{k},z_{k})\geq \delta-o(1)>0. 
\eeq

From
$
\left[\left(\frac{\zeta_k-\ux}{\e_k}-1\right)_{-}\right]^2=0,
$
we have 
$$
\psi_k(j_{k}, x_{k},z_{k})\geq \max_j\psi_k(j,\ux,\zeta_k) =\max_j(\mathsf{u}_j(\ux)-\mathsf{v}_j(\zeta_k))-\vert \ux-\zeta_k\vert-\vert \zeta_k-\ux\vert^2.
$$
From \cref{Eq: zeta_k} we obtain
$
\max_j\psi_k(j,\ux,\zeta_k) =\delta-o(1)
$
and therefore \cref{psi_k}. From $\psi_k(j_{k}, x_{k},z_{k})> 0$ and the boundedness of $\mathsf{u}_{j_{k}}(x_{k})$ and $\mathsf{v}_{j_{k}}(z_{k})$ there exists a constant $C>0$ such that 
$
\frac{\vert x_{k}-z_{k}\vert^2}{\e_k}<C,
$
hence $x_{k}-z_{k}\to 0$ as $\e_k\to 0$. From \cref{psi_k} we deduce 
$$
\liminf_{k\to +\infty}(\mathsf{u}_{j_{k}}(x_{k})-\mathsf{v}_{j_{k}}(z_{k}))\geq \liminf_{k\to +\infty}\psi_k(j_{k},x_{k},z_{k})\geq \delta. 
$$
On the other hand, since $\mathsf{u}_{j_{k}}(x_{k})-\mathsf{v}_{j_{k}}(z_{k})$ is u.s.c., 
$$
\limsup_{k\to +\infty}(\mathsf{u}_{j_{k}}(x_{k})-\mathsf{v}_{j_{k}}(z_{k}))\leq \max_j(\mathsf{u}_j(\ux)-\mathsf{v}_j(\ux))=\delta,
$$
hence
$
\lim_{k\to +\infty}(\mathsf{u}_{j_{k}}(x_{k})-\mathsf{v}_{j_{k}}(z_{k}))=\delta. 
$
We then obtain 
\begin{equation*}
\frac{\vert x_{k}-z_{k}\vert^2}{\e_k}+
\left[\left(\frac{z_{k}-x_{k}}{\e_k}-1\right)_{-}\right]^2+
\vert z_{k}-\ux\vert^2\to 0,\qquad \text{as}\quad \e_k\to 0.
\end{equation*}
This gives 
$
z_{k}-x_{k}\geq \e_k-\e_ko(1),
$
which implies $z_{k}>\ux$, hence we can use the supersolution property at $z_{k}$. We set
$$
\Lambda_k=\frac{2}{\e_k}\left(\frac{z_{k}-x_k}{\e_k}-1\right)_{-}.
$$
Since $\psi_k(j_{k},x,z)$ achieves its maximum at $(x_{k},z_{k})$, we have
\beq\label{Eq: sub super ineq}
	\left\{\begin{aligned}
\quad  \frac{\rho}{\theta} \mathsf{u}_{j_{k}}(x_{k}) \leq &H\left(x_{k},y_{j_{k}},\mathsf{u}_{j_{k}}(x_{k}),\frac{2( x_{k}-z_{k} )}{\e_k}+\Lambda_k \right)+\lambda_{j_{k}}\left(\mathsf{u}_{\bar{\jmath}_{k}}(x_{k}) -\mathsf{u}_{j_{k}}(x_{k}) \right),\\
\quad \frac{\rho}{\theta}  \mathsf{v}_{j_{k}}(z_{k}) \geq & H\left(z_{k},y_{j_{k}},\mathsf{v}_{j_{k}}(z_{k}) ,\frac{2( x_{k}-z_{k} )}{\e_k}+\Lambda_k-2(z_{k}-\ux) \right)+\lambda_{j_{k}}\left(\mathsf{v}_{\bar{\jmath}_{k}}(z_{k}) -\mathsf{v}_{j_{k}}(z_{k}) \right).
	\end{aligned}\right.
\eeq
From the coercivity of $H$ and boundedness of $\mathsf{v}_{j_{k}}$, it follows that $\frac{2( x_{k}-z_{k} )}{\e_k}+\Lambda_k-2(z_{k}-\ux)$ is positive and bounded. Subtracting the two inequalities in \cref{Eq: sub super ineq} and using \cref{v1 v2}, for $k$ sufficiently large, we have 
\beq\label{Eq: u-v1}
\begin{aligned}
&\frac{\rho}{\theta}  (\mathsf{u}_{j_{k}}(x_{k})-\mathsf{v}_{j_{k}}(z_{k}))\\
\leq {}&\underbrace{H\left(x_{k},y_{j_{k}},\mathsf{u}_{j_{k}}(x_{k}),\frac{2( x_{k}-z_{k} )}{\e_k}+\Lambda_k \right)-H\left(x_{k},y_{j_{k}},\mathsf{v}_{j_{k}}(z_{k}),\frac{2( x_{k}-z_{k} )}{\e_k}+\Lambda_k \right)}_{(I)<0}\\
&+H\left(x_{k},y_{j_{k}},\mathsf{v}_{j_{k}}(z_{k}),\frac{2( x_{k}-z_{k} )}{\e_k}+\Lambda_k \right)-H\left(z_{k},y_{j_{k}},\mathsf{v}_{j_{k}}(z_{k}),\frac{2( x_{k}-z_{k} )}{\e_k}+\Lambda_k-2(z_{k}-\ux) \right)\\
\leq {}& \underbrace{r(x_{k}-z_{k})\left(\frac{2( x_{k}-z_{k} )}{\e_k}+\Lambda_k\right)}_{(II)<0}+2(rz_{k}+y_{j_{k}})(z_{k}-\ux)\\
+&\left[\underbrace{\frac{\rho^{\p}}{\p-1}\left(\frac{2( x_{k}-z_{k} )}{\e_k}+\Lambda_k \right)^{1-\p}
-\frac{\rho^{\p}}{\p-1}\left(\frac{2( x_{k}-z_{k} )}{\e_k}+\Lambda_k-2(z_{k}-\ux) \right)^{1-\p}}_{(III)<0}\right]((1-\g)\mathsf{v}_{j_{k}}(z_{k}))^{\frac{1-\g \p}{1-\g}}\\
\leq {}& 2(rz_{k}+y_{j_{k}})(z_k-\ux).
\end{aligned}
\eeq
In \eqref{Eq: u-v1}, $(I)$ follows from \eqref{Eq:Hv<0} and $
\lim_{k\to +\infty}(\mathsf{u}_{j_{k}}(x_{k})-\mathsf{v}_{j_{k}}(z_{k}))=\delta 
$, $(II)$ follows from $z_k>x_k$ and $(III)$ follows from the monotonicity of $(p^{1-\p})/(\p-1)$. We then obtain $\mathsf{u}_{j_{k}}(x_{k})-\mathsf{v}_{j_{k}}(z_{k})\to 0$ as $\e_k\to 0$, in contradiction with \eqref{psi_k}. We have shown that \eqref{Eq:contra-bd} cannot hold. \par
{\it{Step 2}}. Now we consider the case when {}{the supremum of $\mathsf{u}_j(x)-\mathsf{v}_j(x)$} is not achieved at $\ux$, i.e. 
\beq\label{Eq:contra-bd2}
\max_j\sup_x(\mathsf{u}_j(x)-\mathsf{v}_j(x))=\delta>\max_j(\mathsf{u}_j(\ux)-\mathsf{v}_j(\ux)).
\eeq
Let us define the function $\mathtt{h}(x)=\f{1}{2}\ln (1+\vert x-\ux \vert^2)$. From \eqref{Eq:contra-bd2}, one can show that for a sufficiently small $\beta>0$, we can find $0<\delta_2\leq \delta$ such that  
 \beq\label{Eq: delta2}
 \max_j\sup_x(\mathsf{u}_j(x)-\mathsf{v}_j(x)-\beta \mathtt{h}(x))>\delta_2>\max_j(\mathsf{u}_j(\ux)-\mathsf{v}_j(\ux)).
 \eeq
We now consider the function
\beq
\Phi(j,x,z)=\mathsf{u}_j(x)-\mathsf{v}_j(z)-\frac{\vert x-z\vert^2}{\e}-\beta \mathtt{h}(x).
\eeq
Since $\Phi(j,x,z)$ is u.s.c., and $\beta \mathtt{h}(x)\to +\infty$ as $x\to +\infty$, we can assume that $\Phi(j,x,z)$ achieves its maximum at some $(j_\e,x_\e,z_\e)$ such that $x_\e,z_\e<+\infty$ if $\beta>0$. Now we fix $\beta$ such that \cref{Eq: delta2} holds and 
\beq\label{Eq: beta}
\beta(\rho+y_2)<\frac{\delta_2 \rho}{4\theta},
\eeq
where $\theta$ is defined in \cref{tab1}. It is obvious that 
$
\Phi(j_\e,x_\e,z_\e)\geq \max_j\sup_x(\mathsf{u}_j(x)-\mathsf{v}_j(x)-\beta \mathtt{h}(x)),
$
hence \cref{Eq: delta2} yields
 $$\Phi(j_\e,x_\e,z_\e)>\delta_2\,\,{\text{for all}}\, \,\eps.$$
The boundedness of $\mathsf{u}_j$ and $\mathsf{v}_j$ yields
{}{
$$
\vert x_\e-z_\e \vert\to 0\quad {\text{as}}\quad \e\to 0.
$$
}
Classically, we then infer that
{}{
$$
\frac{\vert x_\e-z_\e \vert^2}{\e}\to 0\quad {\text{as}}\quad \e\to 0.
$$
}
We then extract subsequences $x_{\e_k}$ and {}{$z_{\e_k}$} that both converge to some $x^*$ and such that $j_{\e_k}$ converges to $j^*$. 
From the semicontinuity of $\mathsf{u}_j$ and $\mathsf{v}_j$, with \eqref{Eq: beta}, we get 
\beq\label{Eq:contra-3}
\su_{j^*}(x^*)-\sv_{j^*}(x^*)\geq \limsup_k(\mathsf{u}_{j_{\e_k}}(x_{\e_k})-\mathsf{v}_{j_{\e_k}}(z_{\e_k}))\geq \Phi(j_{\e_k},x_{\e_k},z_{\e_k})>\delta_2.
\eeq
It follows from \cref{Eq: delta2} and \cref{Eq:contra-3} that $x^*>\ux$. 
For brevity we write the sequence $(j_{k},x_k,y_k)$ instead of $(j_{\e_k},x_{\e_k},y_{\e_k})$. From \Cref{def vis sol} we obtain
\begin{equation}\label{Eq: u-v2}
\begin{aligned}
&\frac{\rho}{\theta} (\su_{j_k}(x_k)-\sv_{j_k}(z_k) )\\
\leq {}&\underbrace{H\left(x_k,y_{j_{k}},\mathsf{u}_{j_{k}}(x_k),\f{2(x_k-z_k)}{\e_k}+\beta D\mathtt{h}(x_k)\right)-H\left(x_k,y_{j_k},\mathsf{v}_{j_{k}}(z_{k}),\f{2(x_k-z_k)}{\e_k}+\beta D\mathtt{h}(x_k)\right)}_{(I)<0}\\
&+H\left(x_{k},y_{j_{k}},\mathsf{v}_{j_{k}}(z_{k}),\f{2(x_k-z_k)}{\e_k}+\beta D\mathtt{h}(x_k)\right)-H\left(z_{k},y_{j_{k}},\mathsf{v}_{j_{k}}(z_{k}),\f{2(x_k-z_k)}{\e_k}\right)\\
\leq {}& r(x_{k}-z_{k})\left(\f{2(x_k-z_k)}{\e_k}\right)+\underbrace{\beta(rx_{k}+y_{j_{k}}) D\mathtt{h}(x_k)}_{(II)}\\
+&\left[\underbrace{\frac{\rho^{\p}}{\p-1}\left(\f{2(x_\e-z_\e)}{\e}+\beta D\mathtt{h}(x_k)\right)^{1-\p}
-\frac{\rho^{\p}}{\p-1}\left(\f{2(x_k-z_k)}{\e_k}\right)^{1-\p}}_{(III)<0}\right]((1-\g)\mathsf{v}_{j_{k}}(z_{k}))^{\frac{1-\g \p}{1-\g}}\\
\leq {}& \f{2\rho(x_k-z_k)^2}{\e_k}+\frac{\delta_2 \rho}{4\theta}.
\end{aligned}
\end{equation}
In \eqref{Eq: u-v2}, $(I)$ and $(III)$ are dealt with in the same way as in \eqref{Eq: u-v1}, cf. {\it{Step 1}}. It follows from \eqref{Eq: beta}, with $D\mathtt{h}(x_k)<1$ and $x_kD\mathtt{h}(x_k)\leq 1$, that $(II)<\frac{\delta_2 \rho}{4\theta}$.
Passing to the limit yields $\su_{j^*}(x^*)-\sv_{j^*}(z^*)<\delta_2$, in contradiction with \eqref{Eq:contra-3}. \par
Combining {\it{Step 1}} and {\it{Step 2}}, we have shown that \eqref{Eq:contra} cannot hold, leading to the desired result. 
\end{proof}

\section{Asymptotic analysis as $x\to +\infty$ in the case $r=\rho$}\label{Expansion details}
\begin{proof}[{}{Proof of \cref{Prop: sj asmp}}]\par
From \cref{Prop: v2 v1} and \cref{Eq: sub- super r=rho}, we know that if $r=\rho$ then 
\beq\label{Eq: v12 r=rho}
\frac{\lc \rho x+y_1\rc^{1-\g}}{(1-\g)}\leq v_1(x)<v_2(x)\leq \frac{\lc \rho x+y_2\rc^{1-\g}}{(1-\g)}\quad \forall x>\ux.
\eeq
From $s_1(x)<0$ when $x>\ux$, we deduce
$$
\rho x+y_1< \rho^\p(Dv_1(x))^{-\psi}((1-\g)v_1(x))^{\frac{1-\g \p}{1-\g}}\leq\rho^\p(Dv_1(x))^{-\psi}(\rho x+y_2)^{1-\g \p},
$$
\beq\label{Eq: Dv1 rho2}
0<Dv_1(x)<\rho (\rho x+y_1)^{-\g}\lc \frac{\rho x+y_2}{\rho x+y_1}\rc^{1-\g \p}.
\eeq
Similarly, 
\beq\label{Eq: Dv2 rho2}
Dv_2(x)>\rho (\rho x+y_2)^{-\g}\lc \frac{\rho x+y_1}{\rho x+y_2}\rc^{1-\g \p}.
\eeq
We then proceed in 2 steps.\\
{\it{Step 1}}: From \cref{Eq: v12 r=rho}, \cref{Eq: Dv1 rho2} and \cref{Eq: Dv2 rho2}, we may look for a first order expansion of $v_j$ in the form
$$
v_j(x)=\frac{\lc \rho x+y_j\rc^{1-\g}}{(1-\g)}+\sz_j(x).
$$
with 
$$
\sz_j(x)=O(x^{-\g})\quad {\text{and}}\quad D\sz_j(x)=O(x^{-1-\g}).
$$
From 
$$
 \frac{\lc \rho x+y_2\rc^{1-\g}}{(1-\g)}-\frac{\lc \rho x+y_1\rc^{1-\g}}{(1-\g)}- \lc \rho x+y_1\rc^{-\g}(y_2-y_1)=O(x^{-1-\g}),
$$
we infer
 $$
 \begin{aligned}
 &\lc \frac{\rho}{\theta}+\lambda_j\rc v_j(x)-\lambda_jv_{\bar \jmath}(x)\\
 ={}& \frac{\rho\lc \rho x+y_j\rc^{1-\g}}{1-\p^{-1}}+\lambda_j\lc \rho x+y_j\rc^{-\g}(y_j-y_{\bar \jmath})+\lc\frac{\rho}{\theta}+\lambda_1\rc \sz_j(x)-\lambda_j\sz_{\bar \jmath}(x)+O(x^{-1-\g}).
 \end{aligned}
 $$
 On the other hand, from \Cref{def H} we deduce 
\beq
\begin{aligned}
&H(x,y_j,v_j(x),Dv_j(x))\\
={}&(\rho x+y_j)Dv_j(x)+\frac{\rho^{\p}}{\p-1}\lc Dv_j(x)\rc^{1-\p}((1-\g)v_j(x))^{\frac{1-\g \p}{1-\g}}\\
={}&\rho\lc \rho x+y_j\rc^{1-\g}+(\rho x+y_j)D\sz_j(x)+\underbrace{\frac{\rho^\p}{\p-1}\lc \rho\lc \rho x+y_j\rc^{-\g}+D\sz_j(x)\rc^{1-\p}((1-\g)v_j(x))^{\frac{1-\g \p}{1-\g}}}_{(I)}\\
={}&\frac{\rho\lc \rho x+y_j\rc^{1-\g}}{1-\p^{-1}}+\lc \frac{1}{\theta}-1\rc \rho \sz_j(x)+O(x^{-1-\g}).
\end{aligned}
\eeq
To obtain the expansion for ($I$), we observe
$$
((1-\g)v_j(x))^{\frac{1-\g \p}{1-\g}}= (\rho x+y_j)^{1-\g \p}\lc 1+(1-\g)\frac{\sz_j(x)}{(\rho x+y_j)^{1-\g}}\rc^{\frac{1-\g \p}{1-\g}},
$$
then, expanding to the order $O(x^{-1-\g})$, we obtain
 $$
 \begin{aligned}
& \frac{\rho^\p}{\p-1}\lc \rho\lc \rho x+y_j\rc^{-\g}+D\sz_j(x)\rc^{1-\p}(\rho x+y_j)^{1-\g \p}\lc 1+(1-\g)\frac{\sz_j(x)}{(\rho x+y_j)^{1-\g}}\rc^{\frac{1-\g \p}{1-\g}}\\
\sim{}&\frac{\rho}{\p-1}\lc \rho x+y_j\rc^{1-\g}\lc 1+\rho^{-1}\lc \rho x+y_j\rc^{\g}D\sz_j(x)\rc^{1-\p}\lc 1+(1-\g \p)\frac{\sz_j(x)}{(\rho x+y_j)^{1-\g}}\rc\\
\sim {}&\frac{\rho\lc \rho x+y_j\rc^{1-\g}}{\p-1}-\lc \rho x+y_j\rc D\sz_j(x)+\underbrace{\frac{1-\g \p}{\p-1}}_{=1/\theta-1}\rho\sz_j(x).
 \end{aligned}
  $$
  From the HJB equation for $v_j$ we obtain
  \beq\label{Eq:z}
 \lc \rho+\lambda_j\rc \sz_j(x)-\lambda_j\sz_{\bar \jmath}(x)= \lambda_j\lc \rho x+y_j\rc^{-\g}(y_{\bar \jmath}-y_j)+O(x^{-1-\g}).
  \eeq
Solving \eqref{Eq:z} up to terms of order $O(x^{-\g})$, we obtain that $\sz_j(x)\sim \widehat{\sz}_j(x)$ as $x\to +\infty$, where
  \beq\label{Eq: z_j}
  \widehat{\sz}_j(x)= \frac{\lambda_j \lc \rho x+y_j\rc^{-\g}(y_{\bar \jmath}-y_j)}{\rho+\lambda_j+\lambda_{\bar \jmath}}. \eeq
  Note that
  \beq\label{Eq: Dz_j}
  D\widehat{\sz}_j(x)= -\frac{\rho \g \lambda_j \lc \rho x+y_j\rc^{-1-\g}(y_{\bar \jmath}-y_j)}{\rho+\lambda_j+\lambda_{\bar \jmath}}.
  \eeq
  {\it{Step 2}}: Second order expansion. Let us set 
$$
v_j(x)=\frac{\lc \rho x+y_j\rc^{1-\g}}{(1-\g)}+\widehat{\sz}_j(x)+\sq_j(x),
$$
where, from the previous step, $\sq_j(x)=O(x^{-1-\gamma})$ as $x\to +\infty$. Unfortunately, we do not have yet any better information on $D \sq_j(x)$ than the estimate $D\sq_j(x)=O(x^{-1-\gamma})$.

The HJB equation for $v_j(x)$ becomes
\beq\label{Eq: HJB q_j}
\begin{aligned}
&\frac{\rho}{\theta}\lc\frac{\lc \rho x+y_j\rc^{1-\g}}{(1-\g)}+\widehat{\sz}_j(x)+\sq_j(x)\rc\\
={}&\lc \rho x+y_j\rc \lc \rho \lc \rho x+y_j\rc^{-\g}+D\widehat{\sz}_j(x)+D\sq_j(x)\rc\\
&+\underbrace{\frac{\rho}{\p-1}\lc \rho x+y_j\rc^{1-\g}\lc 1+\rho^{-1}\lc \rho x+y_j\rc^{\g}(D\widehat{\sz}_j(x)+D\sq_j(x))\rc^{1-\p}\lc 1+(1-\g)\frac{\widehat{\sz}_j(x)+\sq_j(x)}{(\rho x+y_j)^{1-\g}}\rc^{\frac{1-\g \p}{1-\g}}}_{(I)}\\
&+\lambda_j\lc \lc\frac{\lc \rho x+y_{\bar \jmath}\rc^{1-\g}}{(1-\g)}+\widehat{\sz}_{\bar \jmath}(x)+\sq_{\bar \jmath}(x)\rc-\lc\frac{\lc \rho x+y_j\rc^{1-\g}}{(1-\g)}+\widehat{\sz}_j(x)+\sq_j(x)\rc\rc.
\end{aligned}
\eeq
We aim at  simplifying \cref{Eq: HJB q_j} by using the a priori information on the behavior of $\sq_j$ at infinity.
We start with the term ($I$): we observe that
$$
\begin{aligned}
  &\left( 1 +\frac {(\rho x + y_j)^\gamma}{\rho} (D\widehat{\sz}_j(x)+D\sq_j(x) ) \right)^{1-\psi}\\
  = & 1+ \frac {1-\psi }{\rho} (\rho x + y_j)^\gamma (D\widehat{\sz}_j(x)+D\sq_j(x) )
    - \frac {(1-\psi)\psi }{2 \rho^2}(\rho x + y_j)^{2\gamma} (D\widehat{\sz}_j(x)+D\sq_j(x) )^2 + O(x^{-3}),
    \end{aligned}
 $$   
 and that
 $$
\begin{aligned}
  &\left(  1 + (1-\gamma) (\rho x + y_j)^{\gamma-1} (\widehat{\sz}_j(x)+\sq_j(x) ) \right)^{\frac {1-\gamma \psi} {1-\gamma}}\\
  =&{} 1+(1-\gamma\psi) (\rho x + y_j)^{\gamma-1}  (\widehat{\sz}_j(x)+\sq_j(x) )
  +\frac \gamma 2  (1-\gamma\psi)(1-\psi)
  (\rho x + y_j)^{2\gamma-2} \widehat{\sz}_j^2 (x) +O(x^{-3}).
\end{aligned}
$$
Hence the term ($I$) in \cref{Eq: HJB q_j} satisfies 
\begin{align*}
 (I)&=\frac {\rho \lc \rho x+y_j\rc^{1-\gamma}} {\psi-1}\cdot\\
{}&\left(
    \begin{array}[c]{l}
   \displaystyle    1 + \frac {1-\psi }{\rho} (\rho x + y_j)^\gamma (D\widehat{\sz}_j(x)+D\sq_j(x) )
      + (1-\gamma\psi)    (\rho x + y_j)^{\gamma-1}  (\widehat{\sz}_j(x)+\sq_j(x) )
      \\
    \displaystyle       - \frac {(1-\psi)\psi }{2 \rho^2}(\rho x + y_j)^{2\gamma} (D\widehat{\sz}_j(x)+D\sq_j(x) )^2 +\frac \gamma 2  (1-\gamma\psi)(1-\psi)
      (\rho x + y_j)^{2\gamma-2} \widehat{\sz}_j^2 (x)
     \\ 
       \displaystyle  +   \frac {1-\psi }{\rho} (1-\gamma\psi) 
      (\rho x + y_j)^{2\gamma-1}    (\widehat{\sz}_j(x)+\sq_j(x) )   (D\widehat{\sz}_j(x)+D\sq_j(x) )
      + O(x^{-3})
    \end{array}
\right)\\
  ={}&
                 \frac \rho {\psi-1}  \lc \rho x+y_j\rc^{1-\gamma}+
 \lc \frac{\rho}{\theta}-\rho\rc \lc \widehat{\sz}_j(x)+\sq_j(x)\rc\
                 -\lc \rho x+y_j\rc (D\widehat{\sz}_j(x)+D\sq_j(x))    \\
&+\underbrace{\frac{\p}{2\rho}\lc \rho x+y_j\rc^{\gamma +1} \vert D\widehat{\sz}_j(x)\vert^2}_{(I1)}
-\underbrace{(1-\g \p)\lc \rho x+y_j\rc^{\g}\widehat{\sz}_j(x)D\widehat{\sz}_j(x)}_{(I2)}\\
&-\underbrace{\frac{\rho(1-\g\p)\g}{2}\lc \rho x+y_j\rc^{\g-1}\vert \widehat{\sz}_j(x)\vert^2}_{(I3)}
                 \\                
                 &+\frac{\p\lc \rho x+y_j\rc^{\gamma +1}}{2\rho} \left(   \vert D{\sq}_j(x)\vert^2 + 2 D\widehat{\sz}_j(x) D{\sq}_j(x)\right)-(1-\g \p)\lc \rho x+y_j\rc^{\g}\widehat{\sz}_j(x)D\sq_j(x)
                 +O(x^{-\gamma -2}).
\end{align*}
   From \cref{Eq: z_j} and \cref{Eq: Dz_j}, we obtain
$$
(I1)-(I2)-(I3)=\frac{\rho \g}{2}\lc 
\frac{\lambda_1}{\rho+\lambda_1+\lambda_2}\rc^2\lc \rho x+y_1\rc^{-1-\g}(y_2-y_1)^2.
 $$
We also notice that $\widehat{\sz}_1$ satisfies the equation
  $$
  \lc \rho+\lambda_1\rc \widehat{\sz}_1(x)-\lambda_1\widehat{\sz}_2(x)
  = \lambda_1\lc \rho x+y_1\rc^{-\g}(y_2-y_1)+ \frac{\ld_1\ld_2\g(\rho x+y_1)^{-1-\g}(y_2-y_1)^2}{\rho+\ld_1+\ld_2}+O(x^{-2-\g}),
  $$
 and  we observe that
  $$
 \frac{\lc \rho x+y_2\rc^{1-\g}}{(1-\g)}-\frac{\lc \rho x+y_1\rc^{1-\g}}{(1-\g)}=\lc \rho x+y_1\rc^{-\g}(y_2-y_1)-\frac{\g}{2}\lc \rho x+y_1\rc^{-1-\g}(y_2-y_1)^2+O(x^{-2-\g}).
   $$
By using the equalities above,  \cref{Eq: HJB q_j} becomes: 
\beq\label{Eq: sq1}
   \begin{aligned}
&\lc \rho+\lambda_1\rc\sq_1-\lambda_1\sq_2    -\frac{\p}{2\rho}\lc \rho x+y_1\rc^{\gamma +1} \left(   \vert D{\sq}_1(x)\vert^2 + 2 D\widehat{\sz}_1(x) D{\sq}_1(x)\right)\\
&+(1-\g \p)\lc \rho x+y_1\rc^{\g}\widehat{\sz}_1(x)D\sq_1(x)\\
={}&{-\frac{\lambda_1\g}{2}}\lc \rho x+y_1\rc^{-1-\g}(y_2-y_1)^2\lc 1-\frac{\rho \lambda_1}{(\rho+\lambda_1+\lambda_2)^2}-\frac{2\lambda_2}{\rho+\lambda_1+\lambda_2}\rc+O(x^{-2-\g}),
    \end{aligned}
\eeq
and
\beq\label{Eq: sq2}
   \begin{aligned}
     &\lc \rho+\lambda_2\rc\sq_2-\lambda_2\sq_1
     -\frac{\p}{2\rho}\lc \rho x+y_2\rc^{\gamma +1} \left(   \vert D{\sq}_2(x)\vert^2 + 2 D\widehat{\sz}_2(x) D{\sq}_2(x)\right)\\
&+(1-\g \p)\lc \rho x+y_2\rc^{\g}\widehat{\sz}_2(x)D\sq_2(x)\\
={}&{-\frac{\lambda_2\g}{2}}\lc \rho x+y_2\rc^{-1-\g}(y_2-y_1)^2\lc 1-\frac{\rho \lambda_2}{(\rho+\lambda_1+\lambda_2)^2}-\frac{2\lambda_1}{\rho+\lambda_1+\lambda_2}\rc+O(x^{-2-\g}).
    \end{aligned}
\eeq
    We then rescale $\sq_j$ as follows: $\sq_j(x)= -(\rho x+y_j)^{-\gamma-1} Q_j(x)$.
    The a priori information on $\sq_j$ yields that $Q_j$ are smooth functions and that $Q_j(x)= O(1)$ and  $DQ_j(x)= O(1)$ as $x\to +\infty$. We then obtain from \cref{Eq: sq1} and \cref{Eq: sq2} that
    \begin{equation}
      \label{eq:10000}
   \begin{aligned}
     &\lc \rho+\lambda_1\rc Q_1-\lambda_1 Q_2
     +\frac{\p}{2\rho}  \vert DQ_1(x)\vert^2
+\frac{ \lambda_1}{\rho+\lambda_1+\lambda_2}(y_2-y_1) DQ_1
\\
={}&{\frac{\lambda_1\g}{2}}(y_2-y_1)^2\lc 1-\frac{\rho \lambda_1}{(\rho+\lambda_1+\lambda_2)^2}-\frac{2\lambda_2}{\rho+\lambda_1+\lambda_2}\rc+O(x^{-1}),
    \end{aligned}
  \end{equation}
  and
    \begin{equation}
      \label{eq:10001}
 \begin{aligned}
     &\lc \rho+\lambda_2\rc Q_2-\lambda_2 Q_1    +\frac{\p}{2\rho}  \vert DQ_2(x)\vert^2
+\frac{ \lambda_2}{\rho+\lambda_1+\lambda_2}(y_1-y_2) DQ_2
     \\
={}&{\frac{\lambda_2\g}{2}}(y_1-y_2)^2\lc 1-\frac{\rho \lambda_2}{(\rho+\lambda_1+\lambda_2)^2}-\frac{2\lambda_1}{\rho+\lambda_1+\lambda_2}\rc+O(x^{-1}),
    \end{aligned}
  \end{equation}
  We notice that the unique smooth and bounded solution of the following system of Hamilton-Jacobi equations posed in the full lines $\R\times \R$:
  $$
   \begin{aligned}
     &\lc \rho+\lambda_1\rc \widehat Q_1-\lambda_1 \widehat Q_2
     +\frac{\p}{2\rho}  \vert D\widehat Q_1(x)\vert^2
+ \frac{ \lambda_1 (y_2-y_1)}{\rho+\lambda_1+\lambda_2} D\widehat Q_1(x)
\\
={}&{\frac{\lambda_1\g}{2}}(y_2-y_1)^2\lc 1-\frac{\rho \lambda_1}{(\rho+\lambda_1+\lambda_2)^2}-\frac{2\lambda_2}{\rho+\lambda_1+\lambda_2}\rc, \\ \\
   &\lc \rho+\lambda_2\rc \widehat Q_2-\lambda_2 \widehat Q_1    +\frac{\p}{2\rho}  \vert D\widehat Q_2(x)\vert^2
+\frac{ \lambda_2 (y_1-y_2)}{\rho+\lambda_1+\lambda_2} D\widehat Q_2(x)
     \\
={}&{\frac{\lambda_2\g}{2}}(y_1-y_2)^2\lc 1-\frac{\rho \lambda_2}{(\rho+\lambda_1+\lambda_2)^2}-\frac{2\lambda_1}{\rho+\lambda_1+\lambda_2}\rc.
    \end{aligned}
  $$
  is given by the constants
  $$
  \begin{aligned}
    \widehat Q_1& =
    {\frac{\g \lambda_1(y_2-y_1)^2}{2(\rho+\lambda_1+\lambda_2)}}\lc1-\frac{\lambda_1\lambda_2+\lambda_2^2+\rho \lambda_1}{(\rho+\lambda_1+\lambda_2)^2}\rc ,\\
    \widehat Q_2&={\frac{\g \lambda_2(y_2-y_1)^2}{2(\rho+\lambda_1+\lambda_2)}}\lc1-\frac{\lambda_1\lambda_2+\lambda_1^2+\rho \lambda_2}{(\rho+\lambda_1+\lambda_2)^2}\rc.
  \end{aligned}
  $$
  This indicates that  when $x\to +\infty$,  $\sq_j(x)\sim \widehat{\sq}_j(x)$
 {}{(for brevity, we omit the proof of this point)}, where
  \beq
  \widehat{\sq}_1(x)=-\frac{\g \lambda_1(y_2-y_1)^2}{2(\rho+\lambda_1+\lambda_2)}\lc1-\frac{\lambda_1\lambda_2+\lambda_2^2+\rho \lambda_1}{(\rho+\lambda_1+\lambda_2)^2}\rc \lc \rho x+y_1\rc^{-1-\g},
  \eeq
  \beq
  \widehat{\sq}_2(x)=-\frac{\g \lambda_2(y_2-y_1)^2}{2(\rho+\lambda_1+\lambda_2)}\lc1-\frac{\lambda_1\lambda_2+\lambda_1^2+\rho \lambda_2}{(\rho+\lambda_1+\lambda_2)^2}\rc \lc \rho x+y_2\rc^{-1-\g}.
  \eeq
  Moreover, injecting the information that  $Q_j(x)\sim \widehat{Q}_j(x)$  as $x\to +\infty$
  into \eqref{eq:10000}-\eqref{eq:10001}, we see that as $x\to \infty$,
  $$
  \begin{aligned}
     &
     \frac{\p}{2\rho}  \vert DQ_1(x)\vert^2
+ \frac{ \lambda_1}{\rho+\lambda_1+\lambda_2}(y_2-y_1) DQ_1(x)
=o(1),\\
  &
     \frac{\p}{2\rho}  \vert DQ_2(x)\vert^2
+ \frac{ \lambda_2}{\rho+\lambda_1+\lambda_2}(y_1-y_2) DQ_2(x)
=o(1).
    \end{aligned}
$$
Since $Q_1$ and $Q_2$ are smooth and tend to constants at $+\infty$, the latter system of equations implies that  {}{$DQ_j(x)=o(1)$}  as $x\to +\infty$, $j=1,2$.
This implies that $D\sq_j(x)\sim D\widehat{\sq}_j(x)$ as $x\to \infty$, 
where
$$
\begin{aligned}
  D\widehat{\sq}_1(x)
  ={}&\frac{\rho \g(1+\g)\lambda_1(y_2-y_1)^2}{2(\rho+\lambda_1+\lambda_2)}\lc1-\frac{\lambda_1\lambda_2+\lambda_2^2+\rho \lambda_1}{(\rho+\lambda_1+\lambda_2)^2}\rc \lc \rho x+y_1\rc^{-2-\g},\\
  D\widehat{\sq}_2(x)
  ={}&\frac{\rho \g(1+\g)\lambda_2(y_2-y_1)^2}{2(\rho+\lambda_1+\lambda_2)}\lc1-\frac{\lambda_1\lambda_2+\lambda_1^2+\rho \lambda_2}{(\rho+\lambda_1+\lambda_2)^2}\rc \lc \rho x+y_2\rc^{-2-\g}.
  \end{aligned}
  $$
  
  We now derive the asymptotic behavior of the optimal consumption as $x\to +\infty$:
 \begin{align*}
 &c_1(x)\\
= {}&\rho^{\p}\lc \rho\lc \rho x+y_1\rc^{-\g}+D\widehat{\sz}_1(x)+D\sq_1(x)\rc^{-\p}((1-\g)v_1(x))^{\frac{1-\g \p}{1-\g}}\\
= {}& \lc \rho x+y_1\rc\left[1-\frac{\p\lc \rho x+y_1\rc^{\g}}{\rho}\left(D\widehat{\sz}_1(x)+D\widehat{\sq}_1(x)\right)+\frac{\lc \rho x+y_1\rc^{2\g}\p(1+\p)}{2\rho^2}\vert D\widehat{\sz}_1(x)\vert^2\right]\\
  {}&\cdot\left[1+(1-\g\p)\frac{\widehat{\sz}_1(x)+\widehat{\sq}_1(x)}{\lc \rho x+y_1\rc^{1-\g}}+\frac{(1-\g\p)\g(1-\p)}{2\lc \rho x+y_1\rc^{2-2\g}}\vert \widehat{\sz}_1(x)\vert^2\right]+o((\rho x+y_1)^{-1})\\
={}& \lc \rho x+y_1\rc+\frac{\lambda_1(y_{2}-y_1)}{\rho+\lambda_1+\lambda_{2}}+\frac{\g (1+\p)\lambda_1^2(y_2-y_1)^2}{2(\rho+\lambda_1+\lambda_2)^2}\lc \rho x+y_1\rc^{-1}\\
&-\frac{\g (1+\p)\lambda_1(y_2-y_1)^2}{2(\rho+\lambda_1+\lambda_2)}\lc1-\frac{\lambda_1\lambda_2+\lambda_2^2+\rho \lambda_1}{(\rho+\lambda_1+\lambda_2)^2}\rc \lc \rho x+y_1\rc^{-1}+o((\rho x+y_1)^{-1}) .
 \end{align*}
 The calculation for $c_2$ is similar. We can then deduce the expansion of $s_j$ in \cref{Eq: sj asmp}. 
\end{proof}

\begin{proof}[Proof of \Cref{Prop: no sol FPK}]
Suppose there exists an invariant measure $m=(m_1,m_2)$ when $r=\rho$, then it has densities $g_j$ defined on $[\ux,+\infty)$, $j\in \{2,1\}$, given by \cref{Eq: g2} and \cref{Eq: g1}. We write \cref{Eq: sj asmp} as 
\begin{align*}
s_1(x)={}&\frac{\lambda_1(y_{1}-y_2)}{\rho+\lambda_1+\lambda_{2}}+\frac{\g (1+\p)\lambda_1(y_2-y_1)^2}{2(\rho+\lambda_1+\lambda_2)}\left[\underbrace{1-\frac{\lambda_1\lambda_2+\lambda_2^2+\rho \lambda_1}{(\rho+\lambda_1+\lambda_2)^2}-\frac{\lambda_1}{(\rho+\lambda_1+\lambda_2)}}_{(I)}\right](\rho x+y_1)^{-1}\\
&+o((\rho x+y_1)^{-1}),\\
s_2(x)={}&\frac{\lambda_2(y_{2}-y_1)}{\rho+\lambda_1+\lambda_{2}}+\frac{\g (1+\p)\lambda_2(y_2-y_1)^2}{2(\rho+\lambda_1+\lambda_2)}\left[\underbrace{1-\frac{\lambda_1\lambda_2+\lambda_1^2+\rho \lambda_2}{(\rho+\lambda_1+\lambda_2)^2}-\frac{\lambda_2}{(\rho+\lambda_1+\lambda_2)}}_{(II)}\right](\rho x+y_2)^{-1}\\
&+o((\rho x+y_2)^{-1}).
 \end{align*}
We notice that in the equations for $s_1(x)$ and $s_2(x)$,
$$
(I)+(II)=1+\frac{\rho}{(\rho+\lambda_1+\lambda_2)}-\frac{\lambda_1+\lambda_2}{(\rho+\lambda_1+\lambda_2)}=\frac{2\rho}{(\rho+\lambda_1+\lambda_2)},
$$
it follows that as $x\to +\infty$, 
$$
\frac{\ld_2s_1(x)+\ld_1s_2(x)}{-s_1(x)s_2(x)}=\frac{\rho \g(1+\p)}{\rho x+y_1}+o((\rho x+y_1)^{-1}).
$$
With the formulas \cref{Eq: g2} and \cref{Eq: g1}, we obtain $g_2(x)$ and $g_1(x)$ tend to $+\infty$ as $x\to +\infty$, in contradiction with the fact that $m_1$ and $m_2$ have finite mass. 
 \end{proof}

\section{Asymptotic analysis near $x=\ux$}\label{Expansion ux}
We now consider the behavior of $s_1$ near $\ux$. From \cref{c optimal} and $s_1(\ux)=0$, we have 
\beq\label{Eq: Dv1 ux}
Dv_1(\ux)=\rho (r\ux+y_1)^{-\p^{-1}}((1-\g)v_1(\ux))^{\frac{\p^{-1}-\g}{1-\g}}.
\eeq
From \Cref{Prop: s1<0}, we can differentiate the HJB equation for $v_1$ at $x>\ux$ and pass to the limit $x\to \ux$, 
$$
(\z-r)Dv_1(\ux)+\ld_1(Dv_1(\ux)-Dv_2(\ux))=\lim_{x\to \ux}s_1(x)D^2v_1(x)+H_v(\ux,y_1,v_1(\ux),Dv_1(\ux))Dv_1(\ux).
$$
Moreover, $\lim_{x\to \ux}s_1(x)D^2v_1(x)>0$. We denote the constant 
$$
\varkappa:=(\z-r)Dv_1(\ux)+\ld_1(Dv_1(\ux)-Dv_2(\ux))-H_v(\ux,y_1,v_1(\ux),Dv_1(\ux))Dv_1(\ux),
$$
and we can derive from \Cref{Eq: Hv ux} and $\rho> r$ that $\varkappa>0$.\par
For $x>\ux$, we write
\beq\label{Eq: Dv1 q1}
Dv_1(x)=\rho (r\ux+y_1)^{-\p^{-1}}((1-\g)v_1({x}))^{\frac{\p^{-1}-\g}{1-\g}}+q_1(x).
\eeq
It is clear from \Cref{Eq: Dv1 ux} that $q_1(\ux)=0$. From the continuity and boundedness of $v_1$ near $\ux$ we know 
\beq\label{Eq: v1 o1}
((1-\g)v_1(x))^{\frac{\g-\p^{-1}}{1-\g}}=((1-\g)v_1(\ux))^{\frac{\g-\p^{-1}}{1-\g}}+o(1),
\eeq
hence $q_1(x)=o(1)$ as $x\to \ux$.\par 
We notice that since $\p^{-1}-\g>0$, $((1-\g)v)^{\frac{\p^{-1}-\g}{1-\g}}$ is an increasing function of $v$ on $(-\infty,0)$. From the concavity and monotonicity of $v_1$ we have
$$
Dv_1(x)<Dv_1(\ux)=\rho (r\ux+y_1)^{-\p^{-1}}((1-\g)v_1(\ux))^{\frac{\p^{-1}-\g}{1-\g}}<\rho (r\ux+y_1)^{-\p^{-1}}((1-\g)v_1({x}))^{\frac{\p^{-1}-\g}{1-\g}},
$$
hence $q_1(x)<0$. From $D^2v_1(x)\to -\infty$ and 
\beq\label{Eq: D^2v1}
D^2v_1(x)=\underbrace{\rho (\p^{-1}-\g) (r\ux+y_1)^{-\p^{-1}}((1-\g)v_1({x}))^{\frac{\p^{-1}-1}{1-\g}}Dv_1({x})}_{>0}+Dq_1(x),
\eeq
we have $Dq_1(x)\to -\infty$ as $x\to \ux$. From \cref{c optimal} and \cref{Eq: Dv1 q1}, the consumption near $\ux$ is
$$
\begin{aligned}
c_1(x)={}&\rho^{\p}\lc\rho (r\ux+y_1)^{-\p^{-1}}((1-\g)v_1(x))^{\frac{\p^{-1}-\g}{1-\g}}+q_1(x)\rc^{-\p}((1-\g)v_1(x))^{\frac{1-\g \p}{1-\g}}\\
={}&(r\ux+y_1)\lc1+\underbrace{\rho^{-1} (r\ux+y_1)^{\p^{-1}}((1-\g)v_1(x))^{\frac{\g-\p^{-1}}{1-\g}}q_1(x)}_{=o(1),\,\,{\text{from}} \,q_1(x)=o(1)\, {\text{and}}\,\cref{Eq: v1 o1}}\rc^{-\p}\\
={}& r\ux+y_1-\p \lc r\ux+y_1\rc^{1+\p^{-1}}((1-\g)v_1(x))^{\frac{\g-\p^{-1}}{1-\g}}q_1(x)+o(q_1(x)),
\end{aligned}
$$
it follows
$$
s_1(x)= r(x-\ux)+\p \lc r\ux+y_1\rc^{1+\p^{-1}}((1-\g)v_1(x))^{\frac{\g-\p^{-1}}{1-\g}}q_1(x)+o(q_1(x)).
$$
Using again \cref{Eq: v1 o1}, we obtain
\beq\label{Eq: s1 ux}
\begin{aligned}
s_1(x)= r(x-\ux)+\p \lc r\ux+y_1\rc^{1+\p^{-1}}((1-\g)v_1(\ux))^{\frac{\g-\p^{-1}}{1-\g}}q_1(x)+o(q_1(x)).
\end{aligned}
\eeq
By differentiating the HJB equation for $v_1$ at $x>\ux$, we infer that in a right neighborhood of $\ux$,
\beq\label{Eq: s1Dq1}
\begin{aligned}
s_1(x)Dq_1(x)= {}&\varkappa+\underbrace{H_v(\ux,y_1,v_1(\ux),Dv_1(\ux))Dv_1(\ux)-H_v(x,y_1,v_1(x),Dv_1(x))Dv_1(x)}_{(I)}\\
&+\underbrace{(\z-r)(Dv_1(x)-Dv_1(\ux))+\ld_1(Dv_1(x)-Dv_2(x)-Dv_1(\ux)+Dv_2(\ux))}_{(II)}\\
&+\underbrace{s_1(x)\lc(Dq_1(x)-D^2v_1(x)\rc}_{(III)}\\
= {}&\varkappa+o(1).
\end{aligned}
\eeq
To obtain $(I)+(II)=o(1)$, we used the continuity of $Dv_j$ at $\ux$ (from \cref{Dv UC}) and the estimates on $H_{vv}$ and $H_{vp}$ (cf. \cref{Hvp} and \cref{Hvv}). Moreover, $(III)=o(1)$ comes from \cref{Eq: D^2v1} and $s_1(x)\to 0$ as $x\to \ux$. Next we denote
\beq\label{Eq: Q1}
Q_1(x):=q^2_1(x),\quad Q_1(\ux)=0,\quad DQ_1(x)=2q_1(x)Dq_1(x).
\eeq
From \cref{Eq: Q1}, \cref{Eq: s1 ux} and \cref{Eq: s1Dq1},
\beq\label{Eq: Q1 DQ1}
\underbrace{r(x-\ux)Dq_1(x)}_{<0}+\p \lc r\ux+y_1\rc^{1+\p^{-1}}((1-\g)v_1(\ux))^{\frac{\g-\p^{-1}}{1-\g}}\frac{DQ_1(x)}{2}+o(DQ_1(x))= {\varkappa}+o(1).
\eeq
We obtain that in a right neighborhood of $\ux$,
$$
\p \lc r\ux+y_1\rc^{1+\p^{-1}}((1-\g)v_1(\ux))^{\frac{\g-\p^{-1}}{1-\g}}\frac{DQ_1(x)}{2}>{\varkappa/2}.
$$
This and $Q_1(0)=0$ yield, for $x-\ux$ sufficiently small,
$$
Q_1(x)>\frac{{\varkappa}}{\p \lc r\ux+y_1\rc^{1+\p^{-1}}((1-\g)v_1(\ux))^{\frac{\g-\p^{-1}}{1-\g}}}(x-\ux),
$$
 hence $x-\ux=O(Q_1(x))=o(q_1(x))$ as $x\to \ux$. This implies $r(x-\ux)Dq_1(x)=o(DQ_1(x))$, and then \cref{Eq: Q1 DQ1} yields
$$
\frac{\p \lc r\ux+y_1\rc^{1+\p^{-1}}((1-\g)v_1(\ux))^{\frac{\g-\p^{-1}}{1-\g}}}{2}DQ_1(x)+o(DQ_1(x))=\varkappa+o(1).
$$
This yields
$$
Q_1(x)=\frac{2\varkappa(x-\ux)}{\p \lc r\ux+y_1\rc^{1+\p^{-1}}}\cdot ((1-\g)v_1(\ux))^{\frac{\p^{-1}-\g}{1-\g}}+o(x-\ux).
$$
Therefore 
$$
q_1(x)=-\sqrt{\frac{2\varkappa(x-\ux)}{\p \lc r\ux+y_1\rc^{1+\p^{-1}}}\cdot ((1-\g)v_1(\ux))^{\frac{\p^{-1}-\g}{1-\g}}}+o(\sqrt{x-\ux}).
$$
We have found that, as $x\to \ux$,
$$
Dv_1(x)\sim \rho (r\ux+y_1)^{-\p^{-1}}((1-\g)v_1(\ux))^{\frac{\p^{-1}-\g}{1-\g}}-\sqrt{\frac{2\varkappa(x-\ux)}{\p \lc r\ux+y_1\rc^{1+\p^{-1}}}\cdot ((1-\g)v_1(\ux))^{\frac{\p^{-1}-\g}{1-\g}}}.
$$
We observe that this is consistent with the fact that $Dv_1$ is uniformly continuous but not Lipschitz continuous, given by \cref{Dv UC} and \cref{Cor: D^2v1}.
\end{appendices}\par
\bigskip
\noindent {\bf Acknowledgment.}
\small{Yves Achdou was  partially supported by the ANR (Agence Nationale de la Recherche) through project ANR-22-CE40-0010-01 and  by the chair Finance and Sustainable Development and FiME Lab (Institut Europlace de Finance).\par
This work was carried out while Qing Tang was a visitor at Laboratoire Jacques-Louis Lions, Universit\'e Paris Cit\'e, supported by a China Scholarship Council grant (No.~202406410221).}

 \small
 
\bibliographystyle{siam} 
\bibliography{HA_recursive}

\end{document}